\newcounter{TmpEnumi}
\numberwithin{equation}{section}
\def\today{\number\day\space\ifcase\month\or   January\or February\or
   March\or April\or May\or June\or   July\or August\or September\or
   October\or November\or December\fi\   \number\year}
\theoremstyle{definition}
\newtheorem{thm}{Theorem}[section]
\newtheorem{lem}[thm]{Lemma}
\newtheorem{prp}[thm]{Proposition}
\newtheorem{dfn}[thm]{Definition}
\newtheorem{cor}[thm]{Corollary}
\newtheorem{rmk}[thm]{Remark}
\newtheorem{ntn}[thm]{Notation}
\newtheorem{exa}[thm]{Example}
\newtheorem{pbm}[thm]{Problem}
\newtheorem{qst}[thm]{Question}
\newtheorem{cns}[thm]{Construction}
\newcommand{\beq}{\begin{equation}}
\newcommand{\eeq}{\end{equation}}
\newcommand{\beqa}{\begin{eqnarray*}}
\newcommand{\eeqa}{\end{eqnarray*}}
\newcommand{\bal}{\begin{align*}}
\newcommand{\eal}{\end{align*}}
\newcommand{\bi}{\begin{itemize}}
\newcommand{\ei}{\end{itemize}}
\newcommand{\be}{\begin{enumerate}}
\newcommand{\ee}{\end{enumerate}}
\newcommand{\af}{\alpha}
\newcommand{\bt}{\beta}
\newcommand{\gm}{\gamma}
\newcommand{\dt}{\delta}
\newcommand{\ep}{\varepsilon}
\newcommand{\zt}{\zeta}
\newcommand{\et}{\eta}
\newcommand{\ch}{\chi}
\newcommand{\io}{\iota}
\newcommand{\te}{\theta}
\newcommand{\ld}{\lambda}
\newcommand{\sm}{\sigma}
\newcommand{\kp}{\kappa}
\newcommand{\ph}{\varphi}
\newcommand{\ps}{\psi}
\newcommand{\rh}{\rho}
\newcommand{\om}{\omega}
\newcommand{\ta}{\tau}
\newcommand{\Gm}{\Gamma}
\newcommand{\Ph}{\Phi}
\newcommand{\Ps}{\Psi}
\newcommand{\Q}{{\mathbb{Q}}}
\newcommand{\Z}{{\mathbb{Z}}}
\newcommand{\R}{{\mathbb{R}}}
\newcommand{\N}{{\mathbb{N}}}
\newcommand{\Nz}{{\mathbb{Z}}_{\geq 0}}
\newcommand{\OI}{{\mathcal{O}}_{\infty}}
\newcommand{\OT}{{\mathcal{O}}_{2}}
\newcommand{\btt}{{\widetilde{\bt}}}
\newcommand{\Lt}{{\mathtt{Lt}}}
\newcommand{\id}{{\operatorname{id}}}
\newcommand{\ev}{{\operatorname{ev}}}
\newcommand{\dist}{{\operatorname{dist}}}
\newcommand{\spec}{{\operatorname{sp}}}
\newcommand{\diag}{{\operatorname{diag}}}
\newcommand{\rank}{{\operatorname{rank}}}
\newcommand{\card}{{\operatorname{card}}}
\newcommand{\Aut}{{\operatorname{Aut}}}
\newcommand{\Ad}{{\operatorname{Ad}}}
\newcommand{\T}{{\operatorname{T}}}
\newcommand{\QT}{{\operatorname{QT}}}
\newcommand{\rc}{{\operatorname{rc}}}
\newcommand{\dirlim}{\varinjlim}
\newcommand{\invlim}{\varprojlim}
\newcommand{\andeqn}{\qquad {\mbox{and}} \qquad}
\newcommand{\Wolog}{Without loss of generality}
\newcommand{\ifo}{if and only if}
\newcommand{\ca}{C*-algebra}
\newcommand{\uca}{unital C*-algebra}
\newcommand{\hm}{homomorphism}
\newcommand{\uhm}{unital homomorphism}
\newcommand{\fd}{finite dimensional}
\newcommand{\tst}{tracial state}
\newcommand{\pj}{projection}
\newcommand{\mops}{mutually orthogonal \pj s}
\newcommand{\nzp}{nonzero projection}
\newcommand{\mvnt}{Murray-von Neumann equivalent}
\newcommand{\ct}{continuous}
\newcommand{\cfn}{continuous function}
\newcommand{\chs}{compact Hausdorff space}
\newcommand{\trpc}{tracial Rokhlin property with comparison}
\newcommand{\ucp}{unital completely positive}
\renewcommand{\S}{\subseteq}
\newcommand{\ov}{\overline}
\newcommand{\SM}{\setminus}
\newcommand{\I}{\infty}
\title[Examples and
  Nonexistence Theorems]{Compact
 Group Actions with the Tracial Rokhlin Property II: Examples and
  Nonexistence Theorems}
\author{Javad Mohammadkarimi and N. Christopher Phillips}
\date{6~May 2025}
\address{School of Mathematical Sciences,
 Key Laboratory of MEA (Ministry of Education)
 \& Shanghai Key Laboratory of PMMP,
 East China Normal University, Shanghai 200241, China
\and
 Department of Mathematics, University  of Oregon,
       Eugene OR 97403-1222, USA.}
\thanks{The work of the first author was partially supported by
the Research Center for Operator Algebras
and by the Science and Technology Commission
of Shanghai Municipality (No. 22DZ2229014)
at East China Normal University.
The work of the second author was partially supported by the
Simons Foundation Collaboration Grant for Mathematicians \#587103
and by the US National
Science Foundation under Grants DMS-2055771 and DMS-2400332.}
\begin{document}

\begin{abstract}
In a previous paper, we introduced the
restricted tracial Rokhlin property with comparison,
a ``tracial'' analog of the Rokhlin property for actions
of second countable compact groups
on infinite dimensional simple separable unital C*-algebras.
In this paper, we give three classes of examples of actions
of compact groups which have this property but do not have
the Rokhlin property, or even
finite Rokhlin dimension with commuting towers.
One class consists of infinite tensor products
of finite group actions with the tracial Rokhlin property,
giving actions of the product of the groups involved.
The second class consists of actions of the circle group on
simple unital AT~algebras.
The construction of the third class starts with an action of
the circle on the Cuntz algebra ${\mathcal{O}}_{\infty}$
which has the restricted tracial Rokhlin property with comparison;
by contrast, it is known that there is no action of this group
on ${\mathcal{O}}_{\infty}$ which has finite Rokhlin dimension
with commuting towers.
We can then tensor this action with the trivial action on any
unital purely infinite simple separable nuclear C*-algebra.
One also gets such actions
on certain purely infinite simple separable nuclear C*-algebras
by tensoring the AT~examples with the trivial action on
${\mathcal{O}}_{\infty}$; these are different.

We also discuss other tracial Rokhlin properties for
actions of compact groups, and prove that there is
no direct limit action of the circle group on a simple AF~algebra
which even has the weakest of these properties.
\end{abstract}

\maketitle

\tableofcontents

\section{Introduction}\label{Sec_1X21_Intro}

\indent
The tracial Rokhlin property for actions of finite groups on
C*-algebras was introduced in \cite{phill23}
for the purpose of proving that every simple higher dimensional
noncommutative torus is an AT~algebra (done in \cite{PhtRp2}).
The restricted tracial Rokhlin property with comparison
seems to be the right generalization
to actions of compact groups on simple C*-algebras.
It was introduced in~\cite{MhkPh1},
where the authors, among other things, proved that
for actions with this property, many properties transfer
from the original C*-algebra to the fixed point algebra
and the crossed product.
The main purpose of this paper is to provide examples
of actions of compact groups that have this property
but do not have finite Rokhlin dimension with commuting towers
as defined in~\cite{Gar_rokhlin_2017}.
In particular, we give:
\begin{itemize}
\item
An action of a totally disconnected
infinite compact group on a UHF~algebra.
\item
An action of the circle group on a simple unital AT~algebra.
\item
An action of the circle group on~$\OI$.
\end{itemize}
Each of these examples is one of a more general class of examples
of the same general type, about which we say more below.
These examples demonstrate the differences between
finite Rokhlin dimension with commuting towers and the
restricted tracial Rokhlin property with comparison.
Our third example is an action of $S^1$ on~$\OI$,
and, by Theorem~4.6 of~\cite{HrsPh1},
or by Corollary 4.23 of~\cite{Gar_rokhlin_2017}
(using quite different methods),
there is {\emph{no}} action of $S^1$
on $\OI$ which has finite Rokhlin dimension with commuting towers.
In fact, we get an action
that has the restricted tracial Rokhlin property with comparison
on every unital purely infinite simple separable nuclear \ca.

As with finite groups, Rokhlin actions of compact groups are rare,
especially if the group is connected.
For example, by Theorem 3.3(3) of~\cite{GardKKcirc},
if $A$ is unital, $K_0 (A)$ is finitely generated,
and $A$ admits an action of the circle with the Rokhlin property,
then $K_0 (A) \cong K_1 (A)$.
Our examples suggest that actions with the
restricted tracial Rokhlin property with comparison are much more common.

We also give a number of open problems.

The paper is organized as follows.
In the rest of this section, we present some notation.
Section~\ref{S_2795_N_TRP} contains the
definition of the tracial Rokhlin property with comparison
for compact groups (from~\cite{MhkPh1}),
as well as several other versions of this
property that we use in this paper.
We then relate the finite group case of our definition to the
tracial Rokhlin property for finite groups as originally defined
in Definition~1.2 of~\cite{phill23}.
(They are close, but not the same.
They agree on finite \ca{s} with strict comparison.)
In Section~\ref{S_2795_mod_TRP}, we discuss another variant,
the modified tracial Rokhlin property.
It looks promising, and we prove later that some of our examples
have this property, but we have not been able to show
that it implies many permanence properties.

In Section \ref{Sec_3749_Exam_TRPZ2} we consider the action $\af$
action of a totally disconnected
group obtained as the infinite tensor product of actions~$\af^{(n)}$
of finite groups on stably finite simple separable unital \ca{s}.
We show that if $\af^{(n)}$ has the tracial Rokhlin property for
finite groups, and for every~$n$ the tensor product of the first
$n$~algebras has strict comparison,
then $\af$ has the restricted tracial Rokhlin property with comparison.
As a special case, we give an action
of a totally disconnected infinite compact group on
a UHF~algebra which has the tracial Rokhlin property with comparison
and the strong modified tracial Rokhlin property,
but does not have the Rokhlin property,
or even finite Rokhlin dimension with commuting towers.
In the next section, we construct an action of $S^1$
on a simple AT~algebra which has the same properties,
except that we only prove the modified tracial Rokhlin property.
In Section~\ref{Sec_2114_OI} we construct an action of $S^1$
on $\OI$ which has the \trpc{}
but not finite Rokhlin dimension with commuting towers.
Tensoring with the trivial actions on other simple \ca{s},
we obtain examples of actions with the restricted \trpc{}
on arbitrary unital purely infinite simple separable nuclear \ca{s}.
These actions are different from those gotten by
tensoring the actions of Section~\ref{Sec_1908_Exam_TRPS1}
with the trivial action on~$\OI$,
although these tensor products also have the restricted \trpc.

In Section~\ref{Sec_1919_NonE} we give an easy nonexistence result
for direct limit actions of $S^1$ on simple AF~algebras with even the
weakest form of the tracial Rokhlin property we consider.

This paper is the continuation of the first part \cite{MhkPh1}
and constitutes a part of the first author's Ph.D.\  dissertation.

The first author would like to thank the second author
and the University of Oregon for their hospitality
for a long term visit during which a substantial amount of the
work for this paper, and its predecessor~\cite{MhkPh1},
was done.

In the rest of this section, we collect some notation that we need.

The \ca{} of $n \times n$ matrices will be denoted by $M_{n}$.
If $C$ is a \ca,
we write $C_{+}$ for the set of positive elements in~$C$.
We denote the circle group by $S^{1}$, and identify
it with the set of complex numbers of absolute value~$1$.
An action $\alpha \colon G \to \Aut (A)$ of group $G$ on a \ca~$A$
is assumed \ct{} unless stated otherwise.
Also, we denote by $A^{\alpha}$ the fixed point
subalgebra of $A$ under $\alpha$.

We take $\N = \{ 1, 2, \ldots \}$,
and we abbreviate $\N \cup \{ 0 \}$ to $\Nz$.

\begin{ntn}\label{N_1X07_Lt}
If $G$ is a locally compact group, we denote by
${\mathtt{Lt}} \colon G \to \Aut (C_0 (G))$
the action of $G$ on $C_0 (G)$ induced by the action of $G$ on itself
by left translation.
\end{ntn}

Since we sometimes use Cuntz comparison with respect to subalgebras,
if $a, b \in M_n (A)_{+}$, we write $a \precsim_A b$ to mean that
$a$ is Cuntz subequivalent to~$b$ with respect to~$A$.

\section{The tracial Rokhlin property with
 comparison and a naive tracial Rokhlin property}\label{S_2795_N_TRP}

Our definition of the \trpc{} (Definition~\ref{traR}),
applied to finite groups, is formally stronger than the
tracial Rokhlin property for finite groups
(Definition 1.2 of \cite{phill23}), as discussed after
Definition~\ref{traR}.
In this section we address the differences.
The restricted \trpc{} (also in Definition~\ref{traR}) is what we
normally actually use.
It omits
one of the conditions that appears in the definition for finite groups.
Definition~\ref{D_1920_NTRP} below
(the naive tracial Rokhlin property) is given
only to make discussion easier; it is not intended for general use.
It is what one gets by just copying the definition of the
tracial Rokhlin property for finite groups.
We say at the outset that we know of no examples of actions,
even of infinite compact groups,
which have the naive tracial Rokhlin property but not the \trpc,
although we believe they exist.

Since it plays a major role in our definitions,
we recall for convenient reference the definition
of an $(S, F, \varepsilon )$-approximately equivariant
central multiplicative map.
See Definition 1.3 of~\cite{HrsPh1} or
Definition 1.4 of~\cite{MhkPh1}.

\begin{dfn}\label{phillhir}
Let $G$ be a compact group, and let $A$ and $D$ be unital \ca{s}.
Let $\alpha \colon G \to \Aut (A)$
and $\gamma \colon G \to \Aut ( D)$
be actions of $G$ on $A$ and $D$.
Let $S \S D$
and $F \S A$ be subsets, and let $\varepsilon > 0$.
A unital completely positive map
$\varphi \colon (D, \gamma) \to (A, \alpha)$
(or $\varphi \colon D \to A$ when $\gamma$ and $\alpha$ are understood)
is said to be an {\emph{$(S, F, \varepsilon )$-approximately equivariant
central multiplicative map}} if:
\begin{enumerate}
\item\label{Item_1X07_4}
$\| \varphi (x y) - \varphi (x) \varphi (y) \| < \varepsilon$
for all $x, y \in S$.
\item\label{Item_1X07_5}
$\| \varphi (x) a - a \varphi (x) \| < \varepsilon$
for all $x \in S$ and all $a \in F$.
\item\label{equiapprox}
$\mathrm{sup}_{g \in G} \| \varphi ( \gamma_{g} (x)) -
        \alpha_{g} (\ph (x)) \| < \varepsilon$
for all $x \in S$.
\end{enumerate}
\end{dfn}

\begin{dfn}\label{traR}
Let $A$ be an infinite dimensional simple unital \ca,
and let $\alpha \colon G \to \Aut (A)$ be
an action of a second countable compact group $G$ on~$A$.
The action $\alpha$ has the
\emph{restricted tracial Rokhlin property with comparison}
if for every finite set $F \subseteq A$,
every finite set $S \subseteq C (G)$,
every $\varepsilon > 0$, every $x \in A_{+} \setminus \{ 0 \}$,
and every $y \in (A^{\alpha})_{+} \setminus \{ 0 \}$,
there exist a projection $p \in A^{\alpha}$ and a
unital completely positive map $\varphi \colon C (G) \to p A p$
such that the following hold:
\begin{enumerate}
\item\label{Item_893_FS_equi_cen_multi_approx}
$\varphi$ is an $(F, S, \varepsilon)$-approximately equivariant
central multiplicative map.
\item\label{1_pxcompactsets}
$1 - p \precsim_{A} x$.
\item\label{1_pycompactsets}
$1 - p \precsim_{A^{\alpha}} y$.
\item\label{1_ppcompactsets}
$1 - p \precsim_{A^{\alpha}} p$.
\setcounter{TmpEnumi}{\value{enumi}}
\end{enumerate}
We say that $\alpha$ has the
\emph{tracial Rokhlin property with comparison}
if, also assuming $\| x \| = 1$, one can in addition require:
\begin{enumerate}
\setcounter{enumi}{\value{TmpEnumi}}
\item\label{Item_902_pxp_TRP}
$\| p x p \| > 1 - \varepsilon$.
\end{enumerate}
\end{dfn}

In \cite{MhkPh1}, the restricted \trpc{}
is the hypothesis we actually use for our permanence properties.
In some of the examples here, we prove
the tracial Rokhlin property with comparison.
We do not know whether these two properties are actually different;
see Problem~\ref{Pb_5506_Restr}.

For comparison, we give a reformulation of Definition~\ref{traR}
for finite groups
which more closely resembles Definition 1.2 in \cite{phill23}.

\begin{lem}\label{L_1X16_trpc_GFin}
Let $A$ be an infinite dimensional simple unital \ca,
let $G$ be a finite group,
and let $\af \colon G \to \Aut (A)$ be an action of $G$ on~$A$.
Then $\af$ has the \trpc{} \ifo{}
for every finite set $F \subseteq A$, every $\ep > 0$,
every $x \in A_{+}$ with $\| x \| = 1$,
and every $y \in (A^{\af})_{+} \setminus \{ 0 \}$
there exist a projection $p \in A^{\alpha}$
and mutually orthogonal projections $(p_{g})_{g \in G}$
such that the following hold.
\begin{enumerate}
\item\label{Item_1X16_Inv}
$p = \sum_{g \in G} p_{g}$.
\item\label{Item_1X16_Comm}
$\| p_g a - a p_g \| < \ep$ for all $a \in F$ and all $g \in G$.
\item\label{Item_1X16_Prm}
$\| \af_g (p_h) - p_{g h} \| < \ep$ for all $g, h \in G$.
\item\label{Item_1X16_sub_x}
$1 - p \precsim_A x$.
\item\label{Item_1X16_sub_yy}
$1 - p \precsim_{A^{\alpha}} y$.
\item\label{Item_1X16_sub_1mp}
$1 - p \precsim_{A^{\alpha}} p$.
\item\label{Item_1X16_Mnp}
$\| p x p \| > 1 - \ep$.
\setcounter{TmpEnumi}{\value{enumi}}
\end{enumerate}
\end{lem}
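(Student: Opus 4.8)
The plan is to read the condition as a translation, available because $G$ is finite, between the completely positive map in Definition~\ref{traR} and a genuine Rokhlin system of \mops{}. The dictionary is supplied by the minimal projections $e_g \in C (G)$, the point masses at $g \in G$, which satisfy $e_g e_h = \dt_{g, h} e_g$, $\sum_{g \in G} e_g = 1$, and $\Lt_h (e_g) = e_{h g}$; under any \uhm{} $\ps \colon C (G) \to p A p$ they correspond to \mops{} $p_g = \ps (e_g)$ with $\sum_{g} p_g = p$. Most of the argument is unwinding Definitions~\ref{phillhir} and~\ref{traR} through this correspondence; the one real ingredient is the stability of the relations of the \fd{} \ca{} $C (G) \cong \mathbb{C}^{\card (G)}$.

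For the forward direction, suppose $\af$ has the \trpc. Given $F$, $\ep$, $x$ with $\| x \| = 1$, and $y$, apply Definition~\ref{traR} with the finite set $S = \{ e_g : g \in G \} \S C (G)$ and a tolerance $\ep' > 0$ to be chosen, obtaining $p \in A^{\af}$ and a \ucp{} map $\ph \colon C (G) \to p A p$ satisfying conditions~\ref{Item_893_FS_equi_cen_multi_approx}--\ref{Item_902_pxp_TRP} of Definition~\ref{traR}. Since $\ph$ is \ucp, the elements $\ph (e_g)$ are self-adjoint positive contractions with $\sum_g \ph (e_g) = \ph (1) = p$, and condition~\ref{Item_1X07_4} of Definition~\ref{phillhir} (with $e_g e_h = \dt_{g, h} e_g$) says exactly that they are approximately \mops{} to within $\ep'$. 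Choosing $\ep'$ small enough, the stability of the relations defining $\mathbb{C}^{\card (G)}$ (semiprojectivity of \fd{} \cas) lets us replace $\ph$ by a genuine \uhm{} $\ps \colon C (G) \to p A p$ close to $\ph$ on $S$; set $p_g = \ps (e_g)$. Then item~\ref{Item_1X16_Inv} holds exactly, item~\ref{Item_1X16_Comm} follows from condition~\ref{Item_1X07_5} of Definition~\ref{phillhir}, and item~\ref{Item_1X16_Prm} follows from condition~\ref{equiapprox} using $\Lt_g (e_h) = e_{g h}$ (the supremum over $g \in G$ there being a maximum since $G$ is finite). Items~\ref{Item_1X16_sub_x}--\ref{Item_1X16_Mnp} are conditions~\ref{1_pxcompactsets}, \ref{1_pycompactsets}, \ref{1_ppcompactsets}, and~\ref{Item_902_pxp_TRP} of Definition~\ref{traR} verbatim, since they involve only the same $p$ and~$x$. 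A routine estimate shows that $\ep'$ may be chosen, depending on $\ep$ and $\card (G)$, so that all the resulting constants fall below~$\ep$.

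For the converse, suppose the projection condition holds, and let $F \S A$, $S \S C (G)$, and $\ep > 0$ be given, together with $x$ and $y$ as in Definition~\ref{traR}. Apply the hypothesis with the same $F$, $x$, $y$ and a tolerance $\ep'$, getting $p \in A^{\af}$ and \mops{} $(p_g)_{g \in G}$ with $\sum_g p_g = p$. Define $\ps \colon C (G) \to p A p$ by $\ps (f) = \sum_{g \in G} f (g) p_g$. Because the $p_g$ are \mops{} summing to $p$, $\ps$ is a \uhm, hence \ucp, and condition~\ref{Item_1X07_4} of Definition~\ref{phillhir} holds exactly for every~$S$. Writing $f = \sum_g f (g) e_g$ and estimating $\| \ps (f) a - a \ps (f) \|$ term by term, condition~\ref{Item_1X07_5} follows from item~\ref{Item_1X16_Comm}; using $\Lt_h (e_g) = e_{h g}$ and reindexing, condition~\ref{equiapprox} follows from item~\ref{Item_1X16_Prm}. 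Both estimates are controlled by $\card (G) \cdot \max_{f \in S} \| f \| \cdot \ep'$, so a small enough $\ep'$ yields condition~\ref{Item_893_FS_equi_cen_multi_approx} of Definition~\ref{traR}; conditions~\ref{1_pxcompactsets}--\ref{Item_902_pxp_TRP} are items~\ref{Item_1X16_sub_x}--\ref{Item_1X16_Mnp}. This is the \trpc.

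The only step that is more than bookkeeping is the perturbation in the forward direction: passing from the approximately multiplicative \ucp{} map $\ph$ to an honest \uhm{} $\ps$ while keeping $\sum_g p_g = p$ exactly and preserving the commutation and permutation estimates. I expect this to be the main, though standard, obstacle, handled by the semiprojectivity of $\mathbb{C}^{\card (G)}$; everything else is the translation between the two formulations and the choice of $\ep'$ in terms of $\ep$ and $\card (G)$.
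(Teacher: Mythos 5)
Your proposal is correct and follows essentially the same route as the paper: the converse direction via the exact unital homomorphism $f \mapsto \sum_{g \in G} f(g) p_g$ after reducing to $S = \{ \chi_{\{g\}} : g \in G \}$, and the forward direction by perturbing the almost mutually orthogonal, almost idempotent elements $\ph(\chi_{\{g\}})$ to genuine mutually orthogonal projections using the stability of the relations of $\mathbb{C}^{\card(G)}$ (the paper encodes this in an explicit choice of $\dt$ rather than citing semiprojectivity). The one step you flag as the main obstacle --- keeping $\sum_{g \in G} p_g = p$ exact --- is closed in the paper exactly as you anticipate: the perturbed projections lie in $pAp$, so $p - \sum_{g \in G} p_g$ is a projection of norm less than $1$, hence zero.
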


\begin{proof}
Set
$S = \bigl\{ \ch_{ \{ g \} } \colon g \in G \bigr\} \S C (G)$.
It is easily seen that Definition~\ref{traR} is equivalent
(with a change in the value of $\ep$) to the same statement
but in which we always use this choice of~$S$.

Assume the conditions of the lemma.
Let $F \subseteq A$ be finite,
let $\varepsilon > 0$, let $x \in A_{+}$ satisfy $\| x \| = 1$,
and let $y \in (A^{\alpha})_{+} \setminus \{ 0 \}$.
Let $p$ and $(p_{g})_{g \in G}$ be as in the condition of the lemma
for these choices.
Then Conditions (\ref{1_pxcompactsets}), (\ref{1_pycompactsets}),
(\ref{1_ppcompactsets}), and~(\ref{Item_902_pxp_TRP})
in Definition~\ref{traR}
follow immediately.
Define a unital \hm{} $\ph \colon C (G) \to p A p$ by
$\ph (f) = \sum_{g \in G} f (g) p_g$ for $f \in C (G)$.
The following calculations then show that
$\varphi$ is $(F, S, \varepsilon)$-approximately equivariant
central, and prove
(\ref{Item_893_FS_equi_cen_multi_approx}) in Definition~\ref{traR}.
First, for $h \in G$, recalling Notation~\ref{N_1X07_Lt},
and by~(\ref{Item_1X16_Prm}) at the second step, we have
\[
\max_{g \in G} \bigl\| (\ph \circ \Lt_g) ( \ch_{ \{ h \} })
       - (\af_g \circ \ph) ( \ch_{ \{ h \} }) \bigr\|
 = \max_{g \in G} \| p_{g h} - \af_g (p_h) \|
 < \ep.
\]
Second, for $g \in G$ and $a \in F$,
by~(\ref{Item_1X16_Comm}) at the second step, we have
\[
\| \ph (\ch_{ \{ g \} }) a - a \ph (\ch_{ \{ g \} }) \|
 = \| p_g a - a p_g \|
 < \ep.
\]

For the other direction, assume that $\af$ has the \trpc.
Set $n = \card (G)$.
Let $F \subseteq A$ be finite,
let $\varepsilon > 0$, let $x \in A_{+}$ satisfy $\| x \| = 1$,
and let $y \in (A^{\alpha})_{+} \setminus \{ 0 \}$.
\Wolog{} $\| a \| \leq 1$ for all $a \in A$.
Set
\[
\ep_0 = \min \biggl( \frac{1}{2 n}, \, \frac{\ep}{3} \biggr).
\]
Choose $\dt > 0$ so small that $\dt \leq \ep_0$ and
whenever $B$ is a \ca{}
and $b_1, b_2, \ldots, b_n \in B$ are selfadjoint
and satisfy $\| b_j^2 - b_j \| < 3 \dt$ for $j = 1, 2, \ldots, n$
and $\| b_j b_k \| < \dt$
for distinct $j, k \in \{ 1, 2, \ldots, n \}$,
then there are \mops{} $e_j \in B$ for $j = 1, 2, \ldots, n$
such that $\| e_j - b_j \| < \ep_0$ for $j = 1, 2, \ldots, n$.

Apply Definition~\ref{traR} with $\dt$ in place of~$\ep$,
and with $F$, $x$, and~$y$ as given,
getting $p \in A^{\af}$ and $\ph \colon C (G) \to p A p$ as there.
In particular:
\begin{enumerate}
\setcounter{enumi}{\value{TmpEnumi}}
\item\label{It_1X21_26_Sq}
$\| \ph (\ch_{ \{ g \} })^2 - \ph (\ch_{ \{ g \} }) \| < \dt$
for all $g \in G$.
\item\label{It_1X21_26_Z}
$\| \ph (\ch_{ \{ g \} }) \ph (\ch_{ \{ h \} }) \| < \dt$
for all $g, h \in G$ with $g \neq h$.
\item\label{It_1X21_26_Comm}
$\| \ph (\ch_{ \{ g \} }) a - a \ph (\ch_{ \{ g \} }) \| < \dt$
for all $g \in G$ and all $a \in F$.
\item\label{It_1X21_26_Tr}
$\| (\af_g \circ \ph) (\ch_{ \{ h \} }) - \ph (\ch_{ \{ g h \} }) \|
   < \dt$
for all $g, h \in G$.
\end{enumerate}
By (\ref{It_1X21_26_Sq}), (\ref{It_1X21_26_Z}),
and the choice of $\dt$, there are \mops{}
$p_g$ for $g \in G$
such that $\| p_g - \ph (\ch_{ \{ g \} }) \| < \ep_0$.
Using~(\ref{It_1X21_26_Comm}), for $g \in G$ and $a \in F$ we get
\[
\| p_g a - a p_g \|
 \leq \| \ph (\ch_{ \{ g \} }) a - a \ph (\ch_{ \{ g \} }) \|
        + 2 \| p_g - \ph (\ch_{ \{ g \} }) \|
 < \dt + 2 \ep_0 \leq \ep.
\]
This is~(\ref{Item_1X16_Comm}).

Using~(\ref{It_1X21_26_Tr}), for $g, h \in G$ we get
\[
\begin{split}
\| \af_g (p_h) - p_{g h} \|
& \leq \| (\af_g \circ \ph) (\ch_{ \{ h \} })
           - \ph (\ch_{ \{ g h \} }) \|
        + \| p_h - \ph (\ch_{ \{ h \} }) \|
        + \| p_{g h} - \ph (\ch_{ \{ g h \} }) \|
\\
& < \dt + 2 \ep_0
  \leq \ep.
\end{split}
\]
This is~(\ref{Item_1X16_Prm}).
Also, since $\ph (1) = 1$,
\[
\biggl\| p - \sum_{g \in G} p_g \biggr\|
 \leq \sum_{g \in G} \| \ph (\ch_{ \{ g \} }) - p_g \|
 < n \ep_0
 < 1,
\]
so, since $\sum_{g \in G} p_g$ is a \pj{},
we get $\sum_{g \in G} p_g = p$.
This is~(\ref{Item_1X16_Inv}).
Conditions (\ref{Item_1X16_sub_x}), (\ref{Item_1X16_sub_yy}),
(\ref{Item_1X16_sub_1mp}), and~(\ref{Item_1X16_Mnp})
are Conditions (\ref{1_pxcompactsets}), (\ref{1_pycompactsets}),
(\ref{1_ppcompactsets}), and~(\ref{Item_902_pxp_TRP})
in Definition~\ref{traR}.
\end{proof}

\begin{dfn}\label{D_1920_NTRP}
Let $A$ be an infinite dimensional simple separable unital \ca,
let $G$ be a second countable compact group,
and let $\af \colon G \to \Aut (A)$ be an action of $G$ on~$A$.
The action $\af$ has the
{\emph{naive tracial Rokhlin property}}
if for every finite set $F \subseteq A$,
every finite set $S \subseteq C (G)$, every $\ep > 0$,
and every $x \in A_{+}$ with $\| x \| = 1$,
there exist a projection $p \in A^{\alpha}$
and a unital completely positive
contractive map $\ph \colon C (G) \to p A p$
such that the following hold.
\begin{enumerate}
\item\label{It_1920_NTRP_FSE}
$\ph$ is an $(F, S, \ep)$-equivariant central multiplicative map.
\item\label{It_1920_NTRP_subx}
$1 - p \precsim_A x$.
\item\label{It_1920_NTRP_pxp}
$\| p x p \| > 1 - \ep$.
\end{enumerate}
\end{dfn}

We prove that condition
(\ref{1_pycompactsets}) of Definition \ref{traR} is automatic
when the group is finite, regardless of what $A$ is.
The list of conditions in the next proposition
is the same as in Lemma~\ref{L_1X16_trpc_GFin},
except that (\ref{Item_1X16_sub_1mp}) there has been omitted.

\begin{prp}\label{L_1617_IfGFin}
Let $A$ be an infinite dimensional simple separable unital \ca,
let $G$ be a finite group,
and let $\af \colon G \to \Aut (A)$ be an action of $G$ on~$A$
which has the tracial Rokhlin property.
Then for every finite set $F \subseteq A$, every $\ep > 0$,
every $x \in A_{+}$ with $\| x \| = 1$,
and every $y \in (A^{\alpha})_{+} \SM \{ 0 \}$,
there exist a projection $p \in A^{\alpha}$
and mutually orthogonal projections $(p_{g})_{g \in G}$
such that the following hold.
\begin{enumerate}
\item\label{Item_786}
$p = \sum_{g \in G} p_{g}$.
\item\label{Item_161p_comm}
$\| p_g a - a p_g \| < \ep$ for all $a \in F$ and all $g \in G$.
\item\label{Item_1619_CM}
$\| \af_g (p_h) - p_{g h} \| < \ep$ for all $g, h \in G$.
\item\label{Item_1617sub_x}
$1 - p \precsim_A x$.
\item\label{Item_1617sub_y}
$1 - p \precsim_{A^{\alpha}} y$.
\item\label{Item_1617_GFin_pxp}
$\| p x p \| > 1 - \ep$.
\end{enumerate}
\end{prp}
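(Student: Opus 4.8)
The plan is to apply the tracial Rokhlin property of~\cite{phill23} to produce mutually orthogonal projections $(e_g)_{g\in G}$ satisfying approximate versions of \ref{Item_786}--\ref{Item_1619_CM}, together with \ref{Item_1617sub_x} and~\ref{Item_1617_GFin_pxp}, and then to upgrade the output in two ways that are not part of the definition in~\cite{phill23}. First, one must arrange that the sum $p=\sum_{g}p_{g}$ lies \emph{exactly} in $A^{\af}$ rather than being only approximately invariant. Second, and more seriously, one must establish the fixed-point comparison~\ref{Item_1617sub_y}, namely $1-p\precsim_{A^{\af}}y$. Conditions \ref{Item_786}, \ref{Item_161p_comm}, \ref{Item_1619_CM}, \ref{Item_1617sub_x}, and~\ref{Item_1617_GFin_pxp} are essentially the content of the tracial Rokhlin property, so the real work lies in these two upgrades.

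For the first upgrade I would proceed exactly as in the proof of Lemma~\ref{L_1X16_trpc_GFin}. From \ref{Item_1619_CM}, the sum $e=\sum_{g}e_{g}$ satisfies $\|\af_{g}(e)-e\|<\card(G)\ep$, so the average $\overline{e}=\frac{1}{\card(G)}\sum_{g}\af_{g}(e)\in A^{\af}$ is close to the projection~$e$; its spectrum is concentrated near $\{0,1\}$, and functional calculus yields a projection $p\in A^{\af}$ with $\|p-e\|$ small. Cutting the $e_{g}$ into the corner $pAp$ and applying the same near-orthogonal-near-projection perturbation used in Lemma~\ref{L_1X16_trpc_GFin} produces mutually orthogonal projections $p_{g}$ with $\sum_{g}p_{g}=p$ and $\|p_{g}-e_{g}\|$ small, so that \ref{Item_786}--\ref{Item_1619_CM} persist with a controlled loss in~$\ep$, while \ref{Item_1617sub_x} and~\ref{Item_1617_GFin_pxp} survive because $1-p$ and $1-e$, and $pxp$ and $exe$, differ in norm by only a small amount. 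This step is routine.

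The second upgrade, condition~\ref{Item_1617sub_y}, is the main obstacle, and it is here that finiteness of $G$ and the tracial Rokhlin property do essential work. The tracial Rokhlin property controls the size of $1-p$ only through Cuntz comparison in~$A$, whereas \ref{Item_1617sub_y} demands comparison in the smaller algebra $A^{\af}$; as the example $\Ad(\diag(1,-1))$ on $M_{2}$ shows, subequivalence in $A$ of invariant elements need not descend to $A^{\af}$ when $A^{\af}$ fails to be simple. Here $A^{\af}$ \emph{is} simple: the tracial Rokhlin property forces $\af$ to be pointwise outer, so $A\rtimes_{\af}G$ is simple by~\cite{phill23}, and $A^{\af}$ is the full corner $e_{0}(A\rtimes_{\af}G)e_{0}$ for $e_{0}=\frac{1}{\card(G)}\sum_{g}u_{g}$. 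I would transfer the comparison to the crossed product, using $\Cu(A^{\af})\cong\Cu(A\rtimes_{\af}G)$ together with the regular-representation identification $A\rtimes_{\af}G\cong(A\otimes M_{\card(G)})^{\af\otimes\Ad\lambda}$: given a witness $v\in A$ for $1-e\precsim_{A}z$ with $z\in(A^{\af})_{+}$ invariant, the translates $\af_{g}(v)$ assemble into a diagonal element of the fixed-point algebra that transports the $A$-subequivalence into $A\rtimes_{\af}G$, and thence into~$A^{\af}$.

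The delicate point in this transfer—and the crux of the entire proof—is a factor of $\card(G)$: the amplified element $(1-p)\otimes 1_{\card(G)}$ represents the \emph{full} class of $1-p$ in $A\rtimes_{\af}G$, whereas the class relevant to $A^{\af}$ is the \emph{corner} class $(1-p)\otimes P$, with $P$ the projection of $\ell^{2}(G)$ onto the invariant vector, which is smaller by exactly this factor. I would absorb it by feeding the tracial Rokhlin property a test element $z$ that is small enough relative to~$y$—concretely, chosen so that $\card(G)$ mutually orthogonal copies of a Cuntz representative of $z$ fit underneath $y$; this is available because $A$, being infinite dimensional and simple, is non-elementary, so $\overline{yAy}$ contains $\card(G)$ mutually orthogonal nonzero positive elements. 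The part requiring the most care is that this single $z$ must be chosen compatibly with \ref{Item_1617sub_x} (so $z\precsim_{A}x$) and with the norm condition~\ref{Item_1617_GFin_pxp} for the \emph{given} $x$ (so, for instance, $z\le x$ with $\|z\|=1$), and securing all of \ref{Item_1617sub_x}, \ref{Item_1617sub_y}, and \ref{Item_1617_GFin_pxp} from one application of the tracial Rokhlin property is where the estimates must be balanced.
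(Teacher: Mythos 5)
Your separation of the problem into two upgrades is exactly right, and your first upgrade is fine, though the paper shortcuts it: Lemma 1.17 of \cite{phill23} already delivers an exactly $\af$-invariant $p = \sum_{g} p_g$ with $1 - p$ Murray--von Neumann equivalent to a projection in the hereditary subalgebra generated by the test element, so no averaging-plus-perturbation is needed. The genuine gap is in your resolution of what you correctly call the crux, condition~(\ref{Item_1617sub_y}): smallness of $z$ relative to $y$ cannot absorb the factor $\card (G)$, because the obstruction is not one of size. Concretely (take $G$ abelian for notational simplicity), for $z \in (A^{\af})_{+}$ the image of $z$ in $A \rtimes_{\af} G$ decomposes as $z = \sum_{\ta \in {\widehat{G}}} z e_{\ta}$ with $e_{\ta} = \frac{1}{\card (G)} \sum_{g} \ov{\ta (g)}\, u_g$, and $z e_{\ta} = {\widehat{\af}}_{\ta} (z e_0)$: the $\card (G)$ pieces are the translates of the corner class $[z e_0]$ under the \emph{dual} action. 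Your diagonal witness $\sum_{g} \af_g (v) \otimes e_{g, g}$ does lie in the fixed point algebra of $\af \otimes \Ad \lambda$ and correctly yields $(1 - p) e_0 \precsim_{A \rtimes G} z$, but the right hand side is the full element, with class $\sum_{\ta} {\widehat{\af}}_{\ta} [z e_0]$; to land in the corner you would need $z e_{\ta} \precsim_{A \rtimes G} y_i e_0$ for your orthogonal copies $y_i$ under $y$, and nothing forces ${\widehat{\af}}_{\ta}$ to respect Cuntz comparison inside $A \rtimes G$. Simplicity of $A^{\af}$ gives no comparison (none is assumed anywhere), and orthogonal copies of $z$ under $y$ --- which in any case must be taken in $\ov{y A^{\af} y}$, not in $\ov{y A y}$ as you wrote, if they are to be used for comparison in $A^{\af}$ --- control only multiples of $[z e_0]$, never its dual translates. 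Ruling out this twisting requires using the Rokhlin towers a second time, not just once to produce~$p$.

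The paper closes exactly this gap by citing Lemma 3.7 of \cite{radifinite}: if $\af$ has the weak tracial Rokhlin property (implied by the tracial Rokhlin property), if $1 - p \precsim_A z$ with $1 - p, z \in A^{\af}$, and if $0$ is a limit point of $\spec (z)$, then $1 - p \precsim_{A^{\af}} z$. The proof of that lemma uses the towers to make the witness approximately invariant, with the spectral condition on $z$ supplying the room needed for the perturbation; the condition is arranged via Lemma 2.1 of \cite{philar} applied to $\ov{y A^{\af} y}$, which is simple and not of Type~I --- a hypothesis entirely absent from your sketch, and not a technicality. A secondary, fixable slip: you cannot in general demand simultaneously $z \leq x$, $\| z \| = 1$, and $z$ invariant, since an arbitrary $x \in A_{+}$ need not dominate any nonzero invariant element. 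The paper decouples the two test elements: $z$ is invariant and sits in $\ov{y A^{\af} y}$, while a separate non-invariant $x_0 \in \ov{h (x) A h (x)}$ with $x_0 \precsim_A z$ is produced by Lemma 2.6 of \cite{philar} and fed to the tracial Rokhlin property; condition~(\ref{Item_1617_GFin_pxp}) is then recovered from $x_0 \leq h_0 (x)$ and $\| x - h_0 (x) \| \leq \frac{\ep}{2}$.
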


\begin{proof}
Let $F \subseteq A$ be finite and let $\varepsilon > 0$.
Let $x \in A_{+}$ with $\| x \| = 1$ and
$y \in (A^{\alpha})_{+} \SM \{ 0 \}$ be given.
Corollary 1.6 of~\cite{phill23} implies that $A^{\alpha}$ is simple.
Since $A^{\alpha}$ is unital and not \fd,
it follows that $A^{\alpha}$ is not of Type~I.
Therefore $\overline{y A^{\alpha} y}$ is simple and not of Type~I.
By Lemma 2.1 of \cite{philar} there is a positive element
$z \in \overline{y A^{\alpha} y}$
such that $0$ is a limit point of $\spec (z)$.
Define \cfn{s} $h, h_0 \colon [0, 1] \to [0, 1]$ by
\[
h (\ld) = \begin{cases}
   0 & \hspace*{1em} 0 \leq \ld \leq 1 - \frac{\ep}{2}
        \\
   \frac{2}{\ep} (\ld - 1) + 1
           & \hspace*{1em} 1 - \frac{\ep}{2} \leq \ld \leq 1
\end{cases}
\]
and
\[
h_0 (\ld) = \begin{cases}
   \left( 1 - \frac{\ep}{2} \right)^{-1} \ld
           & \hspace*{1em} 0 \leq \ld \leq 1 - \frac{\ep}{2}
        \\
   1 & \hspace*{1em} 1 - \frac{\ep}{2} \leq \ld \leq 1.
\end{cases}
\]
Then $\| x - h_0 (x) \| \leq \frac{\ep}{2}$.
Also, $h (x) \neq 0$ since $\| x \| = 1$.
Use Lemma 2.6 of \cite{philar} to choose a nonzero positive
element $x_0 \in \ov{h (x) A h (x)}$ such that $x_0 \precsim_A z$.
We may require $\| x_0 \| = 1$.

Now apply Lemma 1.17 of \cite{phill23} to $\alpha$
with $x_0$ in place of~$x$, with $\frac{\ep}{2}$ in place of~$\ep$,
and with $F$ as given.
We obtain mutually orthogonal projections $p_{g} \in A$ for $g \in G$
such that, with $p = \sum_{g \in G} p_{g}$
(so that (\ref{Item_786}) holds),
$p$ is $\af$-invariant;
with $x_0$ in place of~$x$ and $\frac{\ep}{2}$ in place of~$\ep$,
Conditions (\ref{Item_161p_comm}), (\ref{Item_1619_CM})
and~(\ref{Item_1617_GFin_pxp}) are satisfied;
and $1 - p$ is \mvnt{} to a \pj{} in $\ov{x_0 A x_0}$.
In particular, we have (\ref{Item_786}), (\ref{Item_161p_comm}),
and~(\ref{Item_1619_CM}) as stated.
Also,
$1 - p \precsim_A x_0 \precsim_A x$, which is (\ref{Item_1617sub_x}).
Moreover,
\[
1 - p \precsim_A x_0 \precsim_A z,
\qquad
1 - p, z \in A^{\alpha},
\andeqn
0 \in \ov{ \spec (z) \SM \{ 0 \} }.
\]
Since the tracial Rokhlin property implies the weak tracial Rokhlin
property, it follows from Lemma 3.7 of~\cite{radifinite}
that $1 - p \precsim_{A^{\alpha}} z$.
Since $z \in \ov{y A^{\alpha} y}$,
we get $1 - p \precsim_{A^{\alpha}} y$,
which is~(\ref{Item_1617sub_y}).

It remains to prove~(\ref{Item_1617_GFin_pxp}).
Since $h_0 (x) h (x) = h (x)$, we have
\[
x_0 = h_0 (x)^{1 / 2} x_0 h_0 (x)^{1 / 2} \leq h_0 (x).
\]
So, also using $\| p x_0 p \| > 1 - \frac{\ep}{2}$,
\[
\| p x p \|
 \geq \| p h_0 (x) p \| - \| h_0 (x) - x \|
 \geq \| p x_0 p \| - \frac{\ep}{2}
 > 1 - \frac{\ep}{2} - \frac{\ep}{2}
 = 1 - \ep,
\]
as desired.
\end{proof}

We now give some conditions on actions of finite groups under
which Condition (\ref{1_ppcompactsets}) of
Definition \ref{traR} is automatic.

\begin{prp}\label{P_1X17_StComp}
Let $A$ be a stably finite infinite dimensional
simple separable unital \ca.
Let $\af \colon G \to \Aut (A)$
be an action of a finite group $G$ on $A$
which has the tracial Rokhlin property.
If $\rc (A) < 1$ then $\af$ has the \trpc.
\end{prp}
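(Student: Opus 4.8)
The plan is to verify the characterization of the \trpc{} furnished by Lemma~\ref{L_1X16_trpc_GFin}. Its conditions coincide with those of Proposition~\ref{L_1617_IfGFin} except for condition~(\ref{Item_1X16_sub_1mp}), that is, $1 - p \precsim_{A^{\af}} p$. Thus Proposition~\ref{L_1617_IfGFin} already supplies, for prescribed $F$, $\ep$, $x$ (with $\| x \| = 1$), and $y$, a projection $p \in A^{\af}$ and \mops{} $(p_g)_{g \in G}$ satisfying conditions (\ref{Item_786})--(\ref{Item_1617_GFin_pxp}) there, and the entire remaining task is to arrange in addition that $1 - p \precsim_{A^{\af}} p$. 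This is where $\rc (A) < 1$ must enter. Throughout I use that $A^{\af}$ is simple (Corollary 1.6 of~\cite{phill23}), unital, infinite dimensional, and stably finite (a unital subalgebra of the stably finite algebra~$A$), so that $A^{\af}$ has quasitraces and, by a routine argument using simplicity, contains nonzero positive elements of arbitrarily small dimension function. Fix $r$ with $\rc (A) < r < 1$.

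The first step is to force $1 - p$ to be small in dimension function, uniformly over \emph{all} quasitraces of~$A$. I would replace the given $y$ by a nonzero $y' \in \ov{y A^{\af} y}$ with $\sup_{\sm \in \QT (A^{\af})} d_{\sm} (y') < \ts{\frac{1 - r}{4}}$; since $y' \precsim_{A^{\af}} y$, keeping condition~(\ref{Item_1X16_sub_yy}) costs nothing. Applying Proposition~\ref{L_1617_IfGFin} with $F$, $\ep$, $x$, and $y'$ gives $p$ and $(p_g)_{g \in G}$ with $1 - p \precsim_{A^{\af}} y'$. The key point is that this controls $\tau (1 - p)$ for every $\tau \in \QT (A)$, not merely the invariant ones: restricting $\tau$ to $A^{\af}$ gives $\sm \in \QT (A^{\af})$, and since $1 - p$ is a projection, $\tau (1 - p) = d_{\sm} (1 - p) \leq d_{\sm} (y') < \ts{\frac{1 - r}{4}}$.

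Next I would build a target inside $p A^{\af} p$ that is eligible for descent yet has nearly full dimension function. Choose $f \in (p A^{\af} p)_{+}$ with $\| f \| = 1$, with $\sup_{\sm \in \QT (A^{\af})} d_{\sm} (f) < \ts{\frac{1 - r}{4}}$, and with $1$ a limit point of $\spec (f)$ (obtained by functional calculus from a small element of infinite spectrum), and set $w = p - f$. Then $0 \leq w \leq p$, so $w \precsim_{A^{\af}} p$, and $0$ is a limit point of $\spec (w)$. The support projection of $w$ is $p - \ch_{\{1\}} (f)$, so for every $\tau \in \QT (A)$, writing $\sm = \tau|_{A^{\af}}$ and using $\ch_{\{1\}} (f) \leq \supp (f)$, one gets $d_{\tau} (w) = \tau (p) - \sm ( \ch_{\{1\}} (f) ) \geq \tau (p) - d_{\sm} (f) > \tau (p) - \ts{\frac{1 - r}{4}}$. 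Combining this with the estimate for $1 - p$, for every $\tau \in \QT (A)$,
\[
d_{\tau} (w) - d_{\tau} (1 - p)
 > \tau (p) - \ts{\frac{1 - r}{4}} - \tau (1 - p)
 = 1 - 2 \tau (1 - p) - \ts{\frac{1 - r}{4}}
 > 1 - \ts{\frac{3 (1 - r)}{4}}
 = \ts{\frac{1 + 3 r}{4}}
 > r ,
\]
so the gap exceeds $r$ uniformly. Since $r > \rc (A)$, the defining property of the radius of comparison yields $1 - p \precsim_A w$. As $1 - p, w \in A^{\af}$ and $0$ is a limit point of $\spec (w)$, and the tracial Rokhlin property implies the weak tracial Rokhlin property, Lemma 3.7 of~\cite{radifinite} upgrades this to $1 - p \precsim_{A^{\af}} w$; with $w \precsim_{A^{\af}} p$ this gives condition~(\ref{Item_1X16_sub_1mp}). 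All conditions of Lemma~\ref{L_1X16_trpc_GFin} then hold, so $\af$ has the \trpc.

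The main obstacle is exactly this transfer of comparison from $A$ to $A^{\af}$: the hypothesis concerns $\rc (A)$, while condition~(\ref{Item_1X16_sub_1mp}) lives in $A^{\af}$. The device is the two step passage ``compare in $A$ via the radius of comparison, then descend by Lemma 3.7 of~\cite{radifinite}''. Descent requires the target to have $0$ as a limit point of its spectrum, which rules out taking the target to be $p$ itself; the construction $w = p - f$, with $f$ small but of norm one and with spectrum accumulating at~$1$, is precisely what reconciles ``spectrum accumulating at~$0$'' with ``dimension function almost equal to that of~$p$''. The remaining ingredients---existence of positive elements of arbitrarily small dimension function in a simple infinite dimensional \ca, and the restriction of quasitraces from $A$ to $A^{\af}$---are routine.
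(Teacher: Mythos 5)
Your proof is correct, and at the decisive step it takes a genuinely different route from the paper. The paper does not quote Proposition~\ref{L_1617_IfGFin} as a black box but re-runs its proof with a smaller target: using Property~(SP) and Lemmas 1.10 and~1.17 of~\cite{phill23}, it produces a projection $e$ (one of $n+1$ mutually orthogonal ones, so $\ta (e) \leq \frac{1}{n+1}$ for every $\ta \in \QT (A^{\af})$), takes $d \in (e A^{\af} e)_{+}$ with $0$ a limit point of $\spec (d)$, arranges $x_0 \precsim_A d$, descends via Lemma~3.7 of~\cite{radifinite} to $1 - p \precsim_{A^{\af}} e$, and then compares $1 - p$ with $p$ \emph{inside $A^{\af}$} --- which requires $\rc (A^{\af}) \leq \rc (A)$, i.e., Theorem~4.1 of~\cite{radifinite}. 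You instead shrink $y$ to $y'$ to control $\ta (1 - p)$ for \emph{all} $\ta \in \QT (A)$ by restriction, do the comparison $1 - p \precsim_A w$ \emph{inside $A$} using only $\rc (A) < r$, and descend once by Lemma~3.7. Your element $w = p - f$ plays the role the paper assigns to~$d$, but aimed at $p$ itself: it reconciles the descent lemma's requirement $0 \in \ov{ \spec (w) \SM \{ 0 \} }$ with $d_{\ta} (w)$ nearly equal to $\ta (p)$. The trade-off: you avoid Theorem~4.1 of~\cite{radifinite} (passage of the radius of comparison to the fixed point algebra, a substantially harder result) and the Property~(SP) machinery, at the cost of constructing $f$ and the estimate $d_{\ta} (w) \geq \ta (p) - d_{\ta} (f)$ --- which is sound, since $p$, $f$, $w$ commute and quasitraces are linear on commutative subalgebras; the routine steps you defer (existence of $y'$ and $f$ via Lemmas 2.1 and~2.4 of~\cite{philar}) go through as indicated.
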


In particular, if $A$ has strict comparison then $\af$ has the \trpc.

\begin{proof}[Proof of Proposition~\ref{P_1X17_StComp}]
The conclusion is similar to but stronger than
that of Proposition~\ref{L_1617_IfGFin}.
We describe the necessary changes and additions to the proof
of that proposition.

We verify the conditions of Lemma~\ref{L_1X16_trpc_GFin}.

By Proposition 2.5 in \cite{MhkPh1},
we may assume that $\af$ does not have the Rokhlin property.
Let $F \subseteq A$ be finite and let $\varepsilon > 0$.
Let $x \in A_{+}$ with $\| x \| = 1$ and
$y \in (A^{\alpha})_{+} \SM \{ 0 \}$ be given.
As in the proof of Proposition~\ref{L_1617_IfGFin},
$A^{\alpha}$ is simple and not of Type~I,
and there is a positive element $z \in \overline{y A^{\alpha} y}$
such that $0$ is a limit point of $\spec (z)$.

Choose $n \in \N$ such that
\begin{equation}\label{Eq_1X29_n_rc}
n > \frac{2}{1 - \rc (A)}.
\end{equation}
The algebra $A^{\af}$ has Property~(SP) by Lemma~1.13 of~\cite{phill23}.
Lemma 1.10 in \cite{phill23},  provides $n + 1$ nonzero \mops{} in~$A$.
Let $e$ be one of them.
Again as in the proof of Proposition~\ref{L_1617_IfGFin},
there is $d \in (e A^{\af} e)_{+} \SM \{ 0 \}$
such that $0$ is a limit point of $\spec (d)$.
With $h$ as in that proof,
apply Lemma 2.6 in \cite{philar} to find $x_0 \in \ov{h (x) A h (x)}$
such that
\[
\| x_0 \| = 1,
\qquad
x_0 \precsim_A d,
\andeqn
x_0 \precsim_A y_0.
\]

Now, apply Lemma 1.17 of~\cite{phill23} to $\alpha$ with the same
choices as in the proof of Proposition~\ref{L_1617_IfGFin},
getting, as in the second half of the proof,
mutually orthogonal projections $p_{g} \in A$ for $g \in G$
such that, with $p = \sum_{g \in G} p_{g}$,
the conclusion of Proposition~\ref{L_1617_IfGFin} holds,
and $1 - p \precsim_{A} x_0$.

It remains only to show that $1 - p \precsim_{A^{\af}} p$.
We know that $1 - p \precsim_A x_0 \precsim_A d$.
By Lemma 3.7 of~\cite{radifinite},
we have $1 - p \precsim_{A^{\alpha}} d$.
So $1 - p \precsim_{A^{\alpha}} e$.
Let $\ta \in \QT (A^{\af})$.
Then $\ta (1 - p) \leq \ta (e)$, so $\ta (p) \geq \ta (1 - e)$.
The construction of $e$ ensures that
$\ta (e) \leq  \frac{1}{n + 1} < \frac{1}{n}$.
Therefore
\begin{equation}\label{Eq_1X22_Ineq}
\ta (1 - p) + 1 - \frac{2}{n} <  1 - \frac{1}{n} < \ta (p).
\end{equation}
Using Theorem 4.1 of \cite{radifinite} at the first step
and~(\ref{Eq_1X29_n_rc}) at the second,
we have $\rc (A^{\af}) \leq \rc (A) < 1 - \frac{2}{n}$.
Since~(\ref{Eq_1X22_Ineq}) holds for all $\ta \in \QT (A^{\af})$,
we get $1 - p \precsim_{A^{\af}} p$, as desired.
\end{proof}

\begin{cor}\label{C_1X29_Fin_rc}
Let $A$ be a stably finite infinite dimensional
simple separable unital \ca{} such that $\rc (A)$ is finite.
Let $\af \colon G \to \Aut (A)$
be an action of a finite group $G$ on $A$
which has the tracial Rokhlin property.
Then there exists $n \in \N$ such that the action
$g \mapsto \id_{M_n} \otimes \af_g$ on $M_n \otimes A$ has the \trpc.
\end{cor}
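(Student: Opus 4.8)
The plan is to reduce to Proposition~\ref{P_1X17_StComp} by amplifying. Write $B = M_n \otimes A$ and $\bt = \id_{M_n} \otimes \af$, where $n \in \N$ is to be chosen. All the structural hypotheses pass from $A$ to $B$: it is unital, separable, and simple because $A$ is, it is infinite dimensional because $A$ is, and it is stably finite because $M_k (B) \cong M_{kn} (A)$ is finite for every $k$. So it will be enough to choose $n$ with $\rc (B) < 1$ and to check that $\bt$ has the tracial Rokhlin property; Proposition~\ref{P_1X17_StComp} then gives the \trpc{} for $\bt$, which is the assertion.

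For the choice of $n$, I would use that the radius of comparison scales as $\rc (M_n \otimes A) = \tfrac{1}{n} \rc (A)$ under the normalized quasitrace convention in force here (the same one for which Theorem 4.1 of~\cite{radifinite} gives $\rc (A^{\af}) \le \rc (A)$). As $\rc (A)$ is finite by hypothesis, any $n > \rc (A)$ makes $\rc (B) < 1$.

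The real work is to verify that $\bt$ inherits the tracial Rokhlin property from $\af$. The Rokhlin projections for $\bt$ will be $q_g = 1_{M_n} \otimes p_g$, with $(p_g)_{g \in G}$ supplied by the tracial Rokhlin property of $\af$, so that $q = \sum_g q_g = 1_{M_n} \otimes p$. After approximating the given finite set $F \subseteq B$ by finite sums $\sum_{i,j} e_{ij} \otimes a_{ij}$, the permutation estimate and the approximate commutation follow immediately from those for the $p_g$, and the Cuntz subequivalence $1 - q = 1_{M_n} \otimes (1 - p) \precsim_B x$ is handled by the ``$n$ orthogonal copies'' device: since $B$ is simple and infinite dimensional, $\ov{x B x}$ contains $n$ mutually orthogonal nonzero positive elements, so there is a nonzero $z \in A_{+}$ with $1_{M_n} \otimes z \precsim_B x$, and feeding a suitable $x_0 \precsim_A z$ into the tracial Rokhlin property of $\af$ gives $1 - p \precsim_A x_0$ and hence $1 - q \precsim_B x$.

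The step I expect to be the main obstacle is the largeness condition $\| q x q \| > 1 - \ep$ for an arbitrary $x \in B_{+}$ with $\| x \| = 1$, since $1 - q$ is only tracially small and a naive amplification need not capture the top of the spectrum of $x$. My plan is to reduce this norm condition in $M_n (A)$ to one in $A$: realize the norm by a column $\eta = (\eta_1, \ldots, \eta_n) \in A^n$ with $\| \eta^* \eta \| \le 1$ and $\| \eta^* x \eta \| > 1 - \dt$, and set $a = \eta^* x \eta \in A_{+}$. Enlarging the finite set to include the $\eta_i$ and the $a_{ij}$ forces $p$ to commute approximately with them, whence $\eta^* q x q \eta \approx a p$; and choosing the positive element $x_0$ fed into the tracial Rokhlin property inside the top spectral slice $\ov{h (a) A h (a)}$ of $a$ (as in the proof of Proposition~\ref{L_1617_IfGFin}, but still with $x_0 \precsim_A z$) upgrades $\| p x_0 p \| > 1 - \ep$ to $\| a p \| > 1 - \ep$ and therefore to $\| q x q \| \ge \| \eta^* q x q \eta \| > 1 - \ep$. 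Reconciling this largeness with the smallness of $x_0$ needed for the subequivalence --- exactly the device already used in Proposition~\ref{L_1617_IfGFin} --- is the one point requiring care. With $\bt$ shown to have the tracial Rokhlin property and $\rc (B) < 1$, Proposition~\ref{P_1X17_StComp} finishes the proof.
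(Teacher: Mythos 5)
Your proposal is correct and takes essentially the paper's route: choose $n > \rc (A)$ so that $\rc (M_n \otimes A) = \tfrac{1}{n} \rc (A) < 1$, check that $g \mapsto \id_{M_n} \otimes \af_g$ still has the tracial Rokhlin property, and apply Proposition~\ref{P_1X17_StComp}; the only divergence is that the paper disposes of the middle step by citing Lemma~3.9 of~\cite{phill23}, whereas you verify it by hand (your column-vector realization of the norm to handle $\| q x q \| > 1 - \ep$ and the ``$n$ orthogonal copies'' device for $1 - q \precsim_{M_n \otimes A} x$ are sound, and are essentially how that lemma is proved). Incidentally, your concluding appeal to Proposition~\ref{P_1X17_StComp} is the intended one: the paper's printed proof ends with ``Apply Proposition~\ref{L_1617_IfGFin}'', which appears to be a misreference, since that proposition does not deliver condition~(\ref{1_ppcompactsets}) of Definition~\ref{traR}.
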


\begin{proof}
Choose $n \in \N$ such that $n > \rc (A)$.
Then $\rc (M_n \otimes A) = \frac{1}{n} \rc (A) < 1$.
Also, $g \mapsto \id_{M_n} \otimes \af_g$
has the tracial Rokhlin property by Lemma~3.9 of~\cite{phill23}.
Apply Proposition~\ref{L_1617_IfGFin}.
\end{proof}

To make the argument work in general, one needs to apply to $A^{\af}$
a positive answer to the following question.

\begin{qst}\label{Pb_1X20_ExSm}
Let $B$ be a stably finite simple separable \uca.
Does there exist $z \in B_{+} \setminus \{ 0 \}$ such that
whenever a \pj{} $q \in B$ satisfies $q \precsim_B z$,
then $q \precsim_B 1 - q$?
\end{qst}

This question seems hard, but the answer may well be negative.

We now prove that if $G$ is finite and $A$ is purely infinite simple,
then condition (\ref{1_ppcompactsets}) of
Definition \ref{traR} is automatic.

\begin{prp}\label{P_1X17_PI}
Let $A$ be an infinite dimensional simple separable unital \ca.
Let $\af \colon G \to \Aut (A)$
be an action of a finite group $G$ on $A$
which has the tracial Rokhlin property.
If $A$ is purely infinite then $\af$ has the \trpc.
\end{prp}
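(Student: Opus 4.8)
The plan is to verify the conditions of Lemma~\ref{L_1X16_trpc_GFin}, following exactly the template of the proof of Proposition~\ref{P_1X17_StComp}. Applying Proposition~\ref{L_1617_IfGFin} with the given $F$, $\ep$, $x$, and $y$ already produces a \pj{} $p \in A^\af$ and \mops{} $(p_g)_{g \in G}$ satisfying every condition listed in Lemma~\ref{L_1X16_trpc_GFin} \emph{except} the comparison $1 - p \precsim_{A^\af} p$ (condition~(\ref{Item_1X16_sub_1mp})). So the whole content of the proof is to supply this single inequality; everything else is inherited verbatim. We may assume $\ep < 1$, so that condition~(\ref{Item_1X16_Mnp}), namely $\| p x p \| > 1 - \ep$, forces $p \neq 0$.

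The key observation is that, in the purely infinite case, the comparison $1 - p \precsim_{A^\af} p$ is automatic once one knows that $A^\af$ is purely infinite and simple. Indeed, $A^\af$ is simple by Corollary 1.6 of~\cite{phill23}, and it is not \fd{} (as already used in the proof of Proposition~\ref{L_1617_IfGFin}), so $A^\af \neq \mathbb{C}$. If $A^\af$ is moreover purely infinite, then by the standard comparison characterization of purely infinite simple \ca{s} every element of $(A^\af)_+$ is Cuntz subequivalent in $A^\af$ to every nonzero element of $(A^\af)_+$; applying this to $1 - p$ and the nonzero \pj~$p$ gives $1 - p \precsim_{A^\af} p$ at once, with no use of traces. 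This is what replaces the quasitrace and $\rc (A^\af)$ argument of Proposition~\ref{P_1X17_StComp}: in the stably finite setting comparison was delicate, whereas here it is trivial.

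It therefore remains only to show that $A^\af$ is purely infinite. I would deduce this from the corresponding statement for the \cp: for a finite group acting with the tracial Rokhlin property on a purely infinite simple \ca, the \cp{} $A \rtimes_\af G$ is again purely infinite and simple. Granting that, $A^\af$ is isomorphic to the corner $e (A \rtimes_\af G) e$, where $e = \card(G)^{-1} \sum_{g \in G} u_g$ is the canonical \pj{} implementing the conditional expectation onto $A^\af$; since pure infiniteness passes to hereditary subalgebras, and a corner is hereditary, $A^\af$ is purely infinite simple, as needed.

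The main obstacle is precisely this last input: that the (finite-group) tracial Rokhlin property forces pure infiniteness of the \cp{} (equivalently, of $A^\af$). It must be obtained from the plain tracial Rokhlin property and not from the restricted \trpc{}, since the latter is exactly what we are proving — otherwise the argument would be circular. I expect it to come either by citing a permanence result of the type used in this circle of ideas, or by a direct Rokhlin excision: given a nonzero hereditary subalgebra of the crossed product, use the approximately central Rokhlin projections to compress a nonzero positive element into $A$ up to small error, apply pure infiniteness of $A$ to produce an infinite projection there, and transport it back. Once pure infiniteness of $A^\af$ is established, condition~(\ref{Item_1X16_sub_1mp}) follows as above, all conditions of Lemma~\ref{L_1X16_trpc_GFin} hold, and hence $\af$ has the \trpc.
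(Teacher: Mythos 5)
Your overall architecture is exactly the paper's: reduce everything to the statement that $A^{\af}$ is purely infinite and simple, after which the comparison conditions in $A^{\af}$ are automatic ($1 - p \precsim_{A^{\af}} y$ since $y \neq 0$, and $1 - p \precsim_{A^{\af}} p$ since $\| p x p \| > 1 - \ep$ forces $p \neq 0$), and obtain pure infiniteness of $A^{\af}$ from pure infiniteness of the \cp. The only step you leave open --- pure infiniteness of $C^*(G, A, \af)$, which you correctly identify as the crux --- is precisely the step the paper settles by citation: by Lemma~1.5 of~\cite{phill23} the tracial Rokhlin property implies $\af$ is pointwise outer, and then Theorem~3 of~\cite{Je95} (Jeong) says that an outer action of a finite group on a purely infinite simple \ca{} has purely infinite simple \cp. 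Note that this input needs only pointwise outerness, not any Rokhlin-type property, so your worry about circularity dissolves: no excision argument is required, and nothing about the restricted \trpc{} is smuggled in.

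Two harmless divergences from the paper are worth flagging. First, the paper does not invoke Proposition~\ref{L_1617_IfGFin}; it applies Lemma~1.17 of~\cite{phill23} directly to get conditions (\ref{Item_1X16_Inv})--(\ref{Item_1X16_Prm}), (\ref{Item_1X16_sub_x}), and~(\ref{Item_1X16_Mnp}) of Lemma~\ref{L_1X16_trpc_GFin}, and then gets the $y$-condition for free from pure infiniteness of $A^{\af}$, whereas you inherit it from Proposition~\ref{L_1617_IfGFin}; both are valid, yours at the cost of routing through the Type~I and Cuntz-comparison machinery of that proposition, which is unnecessary here. Second, to pass from the \cp{} to $A^{\af}$ the paper uses simplicity (Corollary~1.6 of~\cite{phill23} plus Theorem~3.5 of~\cite{MhkPh1}) and stable isomorphism (Corollary~3.9 of~\cite{MhkPh1}), while you use the standard corner identification $A^{\af} \cong e\, C^*(G, A, \af)\, e$ with $e = \card(G)^{-1} \sum_{g \in G} u_g$; your argument is correct (and is the mechanism underlying the stable isomorphism, the corner being full by simplicity), so this is only a presentational difference. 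With the citation to \cite{Je95} supplied, your proof is complete and is essentially the paper's.
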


\begin{proof}
We verify the condition of Lemma~\ref{L_1X16_trpc_GFin}.
So let $F \subseteq A$ be finite,
let $\varepsilon > 0$, let $x \in A_{+}$ satisfy $\| x \| = 1$,
and let $y \in (A^{\alpha})_{+} \setminus \{ 0 \}$.
\Wolog{} $\ep < 1$.
Apply Lemma 1.17 of~\cite{phill23} with $F$, $\ep$, and $x$ as given,
getting \mops{} $p_g \in A$ for $g \in G$ such that,
in Lemma~\ref{L_1X16_trpc_GFin} and with $p = \sum_{g \in G} p_g$,
Conditions (\ref{Item_1X16_Inv}), (\ref{Item_1X16_Comm}),
(\ref{Item_1X16_Prm}), (\ref{Item_1X16_sub_x}),
and~(\ref{Item_1X16_Mnp}) are satisfied.
The algebra $C^* (G, A, \af)$
is simple by Corollary~1.6 of~\cite{phill23}.
So $A^{\af}$ is simple by Theorem 3.5 of \cite{MhkPh1} .
The action $\af$ is pointwise outer by Lemma~1.5 of~\cite{phill23},
so Theorem~3 of~\cite{Je95}
implies that $C^* (G, A, \af)$ is purely infinite.
Corollary 3.9 of \cite{MhkPh1}
now implies that $A^{\af}$ is stably isomorphic to $C^* (G, A, \af)$,
so $A^{\af}$ is purely infinite.
Since $y \neq 0$,
the relation $1 - p \precsim_{A^{\af}} y$ is automatic;
this is Condition (\ref{Item_1X16_sub_yy})
of Lemma~\ref{L_1X16_trpc_GFin}.
Since $p \neq 0$ (from $\| p x p \| > 1 - \ep$),
the relation $1 - p \precsim_{A^{\af}} p$ is automatic;
this is Condition (\ref{Item_1X16_sub_1mp})
of Lemma~\ref{L_1X16_trpc_GFin}.
\end{proof}

\section{The modified tracial Rokhlin property}\label{S_2795_mod_TRP}

The extra conditions in Definition~\ref{traR} seem somewhat
unsatisfactory, partly because there are two of them.
We seem to need Condition~(\ref{1_pycompactsets})
($1 - p \precsim_{A^{\alpha}} y$)
in order to prove preservation of tracial rank when it is zero or one.
(See the proof of Theorem 4.7 in \cite{MhkPh1}.)
We seem to need Condition~(\ref{1_ppcompactsets})
($1 - p \precsim_{A^{\alpha}} p$)
in order to prove that the crossed product is simple.
(See the proof of Proposition 3.6 in \cite{MhkPh1}.)
This has led us to consider other variants.
In this section, we discuss the most promising of these,
which we call the modified tracial Rokhlin property
(again, a name intended for use only in this paper).
There are actually two versions.
We will explain what we can prove with them.
In a very special case (Proposition~\ref{D_1619_TRP_and_s} below),
they are automatic for
actions of finite groups with the tracial Rokhlin property,
but the proof is somewhat long and is omitted.
The example we construct in Section~\ref{Sec_3749_Exam_TRPZ2} has
the strong modified tracial Rokhlin property,
and the example in Section~\ref{Sec_1908_Exam_TRPS1}
has the modified tracial Rokhlin property
but probably not the strong modified tracial Rokhlin property.

The difference in the definitions we give below
is in Definition \ref{moditra}(\ref{Item_1X07_30})
and Definition \ref{D_1824_AddToTRP_Mod}(\ref{Im_1824_ATRP_s_Comm_Mod}).
For the strong modified tracial Rokhlin property,
the partial isometry $s$ is required to approximately commute
with the elements of a given finite subset of~$A$,
but for the modified tracial Rokhlin property,
$s$ is only required to approximately commute
with the elements of a given finite subset of~$A^{\af}$.
This definition thus requires two finite sets instead of just one.

\begin{dfn}\label{moditra}
Let $A$ be an infinite dimensional simple separable unital \ca,
let $G$ be a second countable compact group,
and let $\af \colon G \to \Aut (A)$ be an action of $G$ on~$A$.
The action $\af$ has the
{\emph{modified tracial Rokhlin property}}
if for every finite set $F_1 \subseteq A$,
every finite set $F_2 \subseteq A^{\af}$,
every finite set $S \subseteq C (G)$, every $\ep > 0$,
and every $x \in A_{+}$ with $\| x \| = 1$,
there exist a projection $p \in A^{\alpha}$,
a partial isometry $s \in A^{\alpha}$,
and a unital completely positive
contractive map $\ph \colon C (G) \to p A p$,
such that the following hold.
\begin{enumerate}
\item\label{Item_1X07_27}
$\varphi$ is an $(F_1, S, \varepsilon)$-approximately equivariant
central multiplicative map.
\item\label{Item_1X07_28}
$1 - p \precsim_{A} x$.
\item\label{Item_1X07_29}
$s^{*} s = 1 - p$ and $s s^{*} \leq p$.
\item\label{Item_1X07_30}
$\| s a - a s \| < \varepsilon$ for all $a \in F_2$.
\item\label{Item_1X07_31}
$\| p x p \| > 1 - \varepsilon$.
\end{enumerate}
\end{dfn}

\begin{dfn}\label{D_1824_AddToTRP_Mod}
Let $A$ be an infinite dimensional simple separable unital \ca,
let $G$ be a second countable compact group,
and let $\af \colon G \to \Aut (A)$ be an action of $G$ on~$A$.
The action $\af$ has the
{\emph{strong modified tracial Rokhlin property}}
if for every finite set $F \subseteq A$,
every finite set $S \subseteq C (G)$, every $\ep > 0$,
and every $x \in A_{+}$ with $\| x \| = 1$,
there exist a partial isometry $s \in A^{\alpha}$,
a projection $p \in A^{\alpha}$,
and a unital completely positive
contractive map $\ph \colon C (G) \to p A p$,
such that the following hold.
\begin{enumerate}
\item\label{Item_1824_ATRP_Mod_CM}
$\ph$ is an $(F, S, \ep)$-approximately
equivariant central multiplicative map.
\item\label{Item_1824_ATRP_sub_x_Mod}
$1 - p \precsim_A x$.
\item\label{Item_1824_ATRP_SubP_Mod}
$s^* s = 1 - p$ and $s s^* \leq p$.
\item\label{Im_1824_ATRP_s_Comm_Mod}
$\| s a - a s \| < \ep$ for all $a \in F$.
\item\label{Item_1824_ATRP_pxp_Mod}
$\| p x p \| > 1 - \ep$.
\end{enumerate}
\end{dfn}

The modified tracial Rokhlin property
implies simplicity of the fixed point algebra and crossed product.
For simplicity of the fixed point algebra,
we don't need to know that the element $s$ in
Definition \ref{moditra} approximately commutes with anything.

The following are analogs of results in Section~3 of~\cite{MhkPh1}.
Their proofs are similar, and we omit them.

\begin{thm}\label{thm:simple2}
Let $A$ be a simple separable infinite dimensional unital \ca,
let $G$ be a second countable compact group, and let
$\alpha \colon G \to \Aut (A)$ be an action
which has the modified tracial Rokhlin property.
Then $A^{\alpha}$ is simple.
\end{thm}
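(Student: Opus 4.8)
The plan is to prove that every nonzero positive element of $A^{\af}$ is full, that is, that for each $a \in (A^{\af})_{+} \SM \{ 0 \}$, which we rescale to $\| a \| = 1$, the closed two-sided ideal $I = \ov{A^{\af} a A^{\af}}$ it generates is all of $A^{\af}$. Simplicity of $A^{\af}$ is equivalent to this, and the task reduces to showing $1 \in I$. The first reduction is to the Rokhlin projection: having applied Definition~\ref{moditra} and obtained a projection $p \in A^{\af}$ and a partial isometry $s \in A^{\af}$ with $s^{*} s = 1 - p$ and $s s^{*} \leq p$, it suffices to show $p \in I$. Indeed, if $p \in I$ then $s s^{*} \leq p$ forces $s s^{*} \in I$, hence $1 - p = s^{*} (s s^{*}) s \in I$, and therefore $1 = p + (1 - p) \in I$. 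As the remark before the theorem indicates, only the algebraic relations satisfied by $s$ (equivalently, $1 - p \precsim_{A^{\af}} p$) enter here; no commutation property of $s$ is used.

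To reach $p \in I$ I would first pass to $A$, where simplicity is available. Since $A$ is simple and $a \neq 0$, the element $a$ is full in $A$, so there are $b_1, \dots, b_m \in A$ with $\bigl\| \ssum{i} b_i a b_i^{*} - 1 \bigr\| < \tfrac{1}{2}$. Now apply Definition~\ref{moditra} with $x = a$, with $F_1 = \{ b_1, b_1^{*}, \dots, b_m, b_m^{*} \}$, with a suitable finite $S \S C (G)$, and with small $\ep$, obtaining $p$, $s$, and an $(F_1, S, \ep)$-approximately equivariant central multiplicative map $\ph \colon C (G) \to p A p$ (with $\| p a p \| > 1 - \ep$, which in particular forces $p \neq 0$). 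Using the canonical conditional expectation $E (c) = \int_G \af_g (c) \, dg$ onto $A^{\af}$ together with the approximate equivariance and approximate centrality of the tower $\ph$, I would manufacture from $p b_i p$ genuinely $\af$-fixed conjugators $\widetilde{b}_i \in A^{\af}$ that reproduce the action of $b_i$ on the corner $p A^{\af} p$ up to an error controlled by $\ep$; concretely, one ``spreads'' $p b_i p$ equivariantly across the Rokhlin tower supplied by $\ph$ and corrects to an exactly fixed element via $E$.

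Because each $\widetilde{b}_i$ and $a$ lie in $A^{\af}$ and $a \in I$, every $\widetilde{b}_i a \widetilde{b}_i^{*}$ lies in $I$. Since $p$ approximately commutes with each $b_i \in F_1$ (the tower is approximately central and $p = \ph (1)$), the transferred estimate gives $\bigl\| \ssum{i} \widetilde{b}_i a \widetilde{b}_i^{*} - p \bigr\| < 1$ for $\ep$ small enough. Thus the image of $p$ in $A^{\af} / I$ is a projection of norm less than $1$, hence is zero, so $p \in I$. By the first reduction this yields $1 \in I$, and as $a$ was an arbitrary nonzero positive element, $A^{\af}$ is simple.

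The hard part is the transfer in the second paragraph. The conjugators $b_i$ witnessing fullness of $a$ live in $A$ and are not $\af$-invariant; naive averaging, replacing $b_i a b_i^{*}$ by $E (b_i a b_i^{*}) = \int_G \af_g (b_i) \, a \, \af_g (b_i)^{*} \, dg$, leaves the ideal $I$ of $A^{\af}$, because the integrand is conjugated by the non-fixed elements $\af_g (b_i)$. The entire force of the modified tracial Rokhlin property is what repairs this: the approximately equivariant, approximately central map $\ph$ lets one replace the $b_i$ by fixed conjugators on the large corner, and the relation $1 - p \precsim_{A^{\af}} p$ encoded by $s$ absorbs the resulting defect $1 - p$. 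Carrying out this replacement with error bounds that do not accumulate uncontrollably over the $m$ summands and over $g \in G$ is the delicate, technical step, and it is done exactly as in the corresponding simplicity argument in Section~3 of \cite{MhkPh1}.
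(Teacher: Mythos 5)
Your proposal is correct and follows essentially the same route as the paper: the paper omits the proof of this theorem, referring to the simplicity argument in Section~3 of \cite{MhkPh1}, which is precisely the fullness-transfer argument you sketch (conjugators witnessing fullness of $a$ in the simple algebra $A$, made $\af$-invariant via the approximately equivariant central multiplicative tower $\ph$ and averaging, yielding $\bigl\| \ssum{i} \widetilde{b}_i a \widetilde{b}_i^{*} - p \bigr\| < 1$ and hence $p \in I$). Your observation that $s$ enters only through the relations $s^{*} s = 1 - p$ and $s s^{*} \leq p$, so that $p \in I$ forces $1 - p = s^{*} (s s^{*}) s \in I$, matches the paper's own remark before the theorem that no approximate commutation property of $s$ is needed for simplicity of the fixed point algebra.
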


\begin{prp}\label{satpropnoncpts}
Let $A$ be an infinite dimensional simple separable unital \ca.
Let $\alpha \colon G \to \Aut (A)$
be an action of a compact group $G$ on $A$ which
has the modified tracial Rokhlin property.
Then $\alpha$ is saturated.
\end{prp}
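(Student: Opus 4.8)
The plan is to recall what saturation means and then exhibit, for the given action, enough elements in the fixed point algebra to recover a dense set of the crossed product's structure. Recall that an action $\af \colon G \to \Aut(A)$ is saturated precisely when the canonical $A^{\af}$-valued inner product on $A$ (viewed as a module implementing the Morita equivalence between $A^{\af}$ and the crossed product $C^*(G, A, \af)$, or equivalently between $A^{\af}$ and the fixed point algebra of the action on $A \otimes \K(L^2(G))$) has dense range: the ideal in $A^{\af}$ generated by $\{ E(a^* b) : a, b \in A \}$, where $E$ is the canonical conditional expectation onto $A^{\af}$, is all of $A^{\af}$. So the goal reduces to showing that this ideal is dense, and since $A^{\af}$ is simple by Theorem~\ref{thm:simple2}, it suffices to produce a single nonzero element of the form $E(a^* b)$, or more robustly to show the ideal is nonzero.

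The key step is to use the modified tracial Rokhlin property to manufacture, for a given tolerance, a large $\af$-invariant projection $p$ together with the completely positive map $\ph \colon C(G) \to pAp$ that is approximately equivariant and multiplicative. The intuition is that $\ph$ is an approximate equivariant embedding of $C(G)$, and $C(G)$ with the left translation action $\Lt$ is the model saturated system: its fixed point algebra is $\C$ while the crossed product is $\K(L^2(G))$, and one computes the averaged inner products explicitly there. So first I would transport the relevant computation through $\ph$. Concretely, for a suitable finite set $S \subseteq C(G)$ and small $\ep$, the approximate equivariance condition~(\ref{equiapprox}) and the approximate multiplicativity~(\ref{Item_1X07_27}) in Definition~\ref{moditra} let me estimate integrals of the form $\int_G \af_g(\ph(s)^* \ph(t))\, dg$ against their model values $\int_G \Lt_g(s)^* \Lt_g(t)\, dg \cdot 1$, up to an error controlled by $\ep$. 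Choosing $s, t$ in $C(G)$ so that the model value is bounded away from $0$ then forces $E(\ph(s)^* \ph(t))$ to be a nonzero element of $A^{\af}$ lying in the saturation ideal, modulo the compression by $p$.

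The main obstacle I expect is the bookkeeping around the compression by $p$ and the fact that $\ph$ lands in $pAp$ rather than in $A$: the conditional expectation $E$ and the inner product are defined on all of $A$, so I must check that the saturation ideal computed inside the corner, or the element $E(\ph(s)^* \ph(t))$, genuinely generates a nonzero ideal in $A^{\af}$ and is not killed by passing between $A^{\af}$ and the corner $p A^{\af} p$. Since $A^{\af}$ is simple, any nonzero element generates a dense ideal, so the real work is only to guarantee nonvanishing, and here the norm estimate $\|E(\ph(s)^* \ph(t))\| \geq \|\int_G \Lt_g(s)^* \Lt_g(t)\, dg\| - C\ep$ for an explicit constant $C$ does the job once $\ep$ is chosen small relative to the chosen model value. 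Notably, the element $s$ of Definition~\ref{moditra} and its approximate commutation are \emph{not} needed for this argument, exactly as remarked in the text preceding the statement; only the map $\ph$, its approximate equivariance, and simplicity of $A^{\af}$ enter. This mirrors the proof of the corresponding saturation result in Section~3 of~\cite{MhkPh1}, with the \trpc{} replaced by the modified tracial Rokhlin property, and the permanence of the argument under this weakening is precisely why the authors can state that the proof is similar and omit it.
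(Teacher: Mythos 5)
Your proposal rests on a mischaracterization of saturation, and this is fatal to the whole argument. Saturation is fullness of the \emph{crossed-product}-valued inner product: the span of the functions $g \mapsto a \af_g (b^*)$, for $a, b \in A$, must be dense in $C^* (G, A, \af)$; equivalently, the averaging projection $p_{\af}$ (the constant function $g \mapsto 1_A$ in $C (G, A) \subseteq C^* (G, A, \af)$), whose corner $p_{\af} C^* (G, A, \af) p_{\af}$ is isomorphic to $A^{\af}$, must be full. The $A^{\af}$-valued inner product you reduce to, $\langle a, b \rangle_{A^{\af}} = E (a^* b)$, is \emph{automatically} full whenever $A$ is unital, since $E (1^* 1) = 1$; so the condition you verify is trivially true and does not imply saturation. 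Concretely, the trivial action of $S^1$ on any simple unital $A$ satisfies everything you check --- $A^{\af} = A$ is simple, and the ideal generated by $\{ E (a^* b) \}$ is all of $A^{\af}$ --- yet it is not saturated, as $C^* (S^1, A, \operatorname{triv}) \cong A \otimes c_0 (\Z)$ and $p_{\af}$ lies in a single summand. Relatedly, your "nonzero element $E (\ph (s)^* \ph (t))$ lying in the saturation ideal" is vacuous: every element of the corner $p_{\af} B p_{\af} \cong A^{\af}$ lies in the ideal generated by $p_{\af}$ by definition; the entire issue is whether that ideal reaches \emph{outside} the corner, and simplicity of $A^{\af}$ only tells you that any ideal of $B = C^* (G, A, \af)$ either contains $p_{\af}$ or annihilates it.

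Note also that you cannot shortcut by citing simplicity of the crossed product: Theorem~\ref{simplecross} comes after Proposition~\ref{satpropnoncpts}, and in the scheme the paper inherits from Section~3 of \cite{MhkPh1} the simplicity of $C^* (G, A, \af)$ is \emph{deduced} from simplicity of $A^{\af}$ together with saturation, so invoking it here would be circular. The actual argument (the analog of the one in \cite{MhkPh1} that the paper invokes when omitting the proof) must approximate an arbitrary element of $C (G, A)$, which is dense in the crossed product, by elements of the saturation ideal: one transports through $\ph$ the fact that in the model $(C (G), \Lt)$ the kernels $h_1 \Lt_g (h_2)^*$ span a dense subspace of $C^* (G, C (G), \Lt) \cong \K (L^2 (G))$, which handles the part of the algebra compressed by $p$, and one must then absorb the defect coming from $1 - p$; it is exactly here that the tracial data --- $1 - p \precsim_A x$ and, in the modified setting, the partial isometry $s$ with $s^* s = 1 - p$ and $s s^* \leq p$ --- enters, rather than mere simplicity of $A^{\af}$. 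Your reading of the remark before Theorem~\ref{thm:simple2} is also off: it says only that for simplicity of the \emph{fixed point algebra} one does not need $s$ to approximately commute with anything; it does not license discarding $s$ for saturation.
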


\begin{thm}\label{simplecross}
Let $A$ be an infinite dimensional simple separable unital \ca,
and let $\alpha \colon G \to \Aut (A)$
be an action of a second countable compact group $G$ on~$A$
which has the modified tracial Rokhlin property.
Then $C^{*} (G, A, \alpha)$ is simple.
\end{thm}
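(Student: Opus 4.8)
The plan is to reduce simplicity of the crossed product to simplicity of the fixed point algebra, exactly as in the analogous result in Section~3 of \cite{MhkPh1}. The two substantive inputs are already available: Theorem~\ref{thm:simple2} gives that $A^{\alpha}$ is simple, and Proposition~\ref{satpropnoncpts} gives that $\alpha$ is saturated. Since $G$ is compact, hence amenable, the full and reduced crossed products coincide, so there is no ambiguity in $C^{*}(G, A, \alpha)$.

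First I would introduce the averaging projection. Let $(u_g)_{g \in G}$ denote the canonical unitaries implementing $\alpha$ in $M(C^{*}(G, A, \alpha))$, and, using normalized Haar measure on $G$, set $e = \int_G u_g \, dg$; in the convolution picture this is the constant function $g \mapsto 1_A$. Because $A$ is unital and $G$ is compact, $e$ is a projection lying in $C^{*}(G, A, \alpha)$ itself, and invariance of Haar measure gives $u_h e = e u_h = e$ for all $h \in G$. The standard computation then identifies the corner: the map $a \mapsto a e$ is an isomorphism of $A^{\alpha}$ onto $e \, C^{*}(G, A, \alpha) \, e$. In particular, by Theorem~\ref{thm:simple2}, the corner $e \, C^{*}(G, A, \alpha) \, e$ is simple.

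The next step is to read off fullness of $e$ from saturation. For an action of a compact group, saturation is exactly the statement that $A$ is an imprimitivity bimodule between $A^{\alpha}$ and $C^{*}(G, A, \alpha)$, equivalently that the averaging projection $e$ generates $C^{*}(G, A, \alpha)$ as a closed two-sided ideal; so Proposition~\ref{satpropnoncpts} yields that $e$ is full. A full projection makes its corner strongly Morita equivalent to the ambient algebra, and strong Morita equivalence induces a lattice isomorphism between the ideals of $e \, C^{*}(G, A, \alpha) \, e$ and those of $C^{*}(G, A, \alpha)$. Since the corner is simple, $C^{*}(G, A, \alpha)$ has no nontrivial closed two-sided ideals, which is the assertion.

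I do not expect a serious obstacle in the argument itself, since the two genuinely hard facts, simplicity of $A^{\alpha}$ and saturation of $\alpha$, are supplied by Theorem~\ref{thm:simple2} and Proposition~\ref{satpropnoncpts}, and these are where the modified tracial Rokhlin property is actually used. The only points requiring care are the precise translation between the saturation condition of Proposition~\ref{satpropnoncpts} and fullness of the averaging projection $e$, and the verification that $e \, C^{*}(G, A, \alpha) \, e \cong A^{\alpha}$ for a general second countable compact group rather than only a finite one. Both are standard, but they should be checked against the conventions used in \cite{MhkPh1}, whose Proposition~3.6 this argument mirrors.
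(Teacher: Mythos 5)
Your argument is correct and is essentially the paper's own: the paper omits the proof of Theorem~\ref{simplecross}, stating only that it is analogous to Section~3 of~\cite{MhkPh1} (specifically Proposition~3.6 there), and that analog proceeds exactly as you do --- simplicity of $A^{\alpha}$ (Theorem~\ref{thm:simple2}) plus saturation (Proposition~\ref{satpropnoncpts}), combined with the identification $A^{\alpha} \cong e\, C^{*}(G, A, \alpha)\, e$ for the averaging projection $e$ and the fact that saturation makes $e$ full, so that the full-corner ideal correspondence yields simplicity of $C^{*}(G, A, \alpha)$. The two points you flag for care (saturation versus fullness of $e$, and the corner isomorphism for general compact $G$ with $A$ unital) are indeed the standard facts used in~\cite{MhkPh1}, so no gap remains.
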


Crossed products by actions with the modified tracial Rokhlin property
probably also preserve Property~(SP) and pure infiniteness.
We were not able to prove that even
crossed products by actions with the
strong modified tracial Rokhlin property preserve tracial rank zero.

\begin{prp}\label{D_1619_TRP_and_s}
Let $A$ be a UHF~algebra,
let $G$ be a finite group,
and let $\af \colon G \to \Aut (A)$ be an action of $G$ on~$A$
which has the tracial Rokhlin property.
Then $\af$ has the strong modified tracial Rokhlin property.
\end{prp}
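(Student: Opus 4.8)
The plan is to keep all of the data that the tracial Rokhlin property already provides and to manufacture, on top of it, the one extra object demanded by Definition~\ref{D_1824_AddToTRP_Mod}: an $\af$-fixed partial isometry $s$ satisfying~(\ref{Im_1824_ATRP_s_Comm_Mod}), that is, one which approximately commutes with the given finite set $F \subseteq A$ and not merely with $A^{\af}$.

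First I would set up the tower. Since $A$ is a UHF~algebra it has a unique tracial state $\tau$ and $\rc(A) = 0 < 1$, so Proposition~\ref{P_1X17_StComp} shows that $\af$ has the \trpc. Applying the reformulation of Lemma~\ref{L_1X16_trpc_GFin} to the given $F$, $\ep$, and $x$, with $y = 1$, I obtain a projection $p = \sum_{g \in G} p_g \in A^{\af}$, mutually orthogonal projections $(p_g)_{g \in G}$ that approximately commute with $F$ and are approximately permuted by $\af$, with $1 - p \precsim_A x$, $\| p x p \| > 1 - \ep$, and $1 - p \precsim_{A^{\af}} p$. Putting $\ph(f) = \sum_{g \in G} f(g) p_g$ as in the proof of that lemma produces the required $(F, S, \ep)$-approximately equivariant central multiplicative map $\ph \colon C(G) \to p A p$, so conditions (\ref{Item_1824_ATRP_Mod_CM}), (\ref{Item_1824_ATRP_sub_x_Mod}) and~(\ref{Item_1824_ATRP_pxp_Mod}) hold. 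Using the cut-down functions $h, h_0$ from the proof of Proposition~\ref{L_1617_IfGFin}, I may in addition arrange $\tau(1 - p)$ to be as small as I please while preserving $\| p x p \| > 1 - \ep$, so that in fact $1 - p \precsim_{A^{\af}} p_g$ for a single tower projection. It remains only to build $s \in A^{\af}$ with $s^* s = 1 - p$, $s s^* \le p$, and $\| s a - a s \| < \ep$ for all $a \in F$.

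Producing an $\af$-fixed partial isometry with the right support and range projections is the easy half. The fixed point algebra $A^{\af}$ is simple, unital and stably finite, and it has cancellation of projections (indeed tracial rank zero, since it is the fixed algebra of a finite group action with the tracial Rokhlin property on a UHF~algebra), so the Cuntz subequivalence $1 - p \precsim_{A^{\af}} p$ upgrades to a Murray--von Neumann subequivalence and yields some $s_0 \in A^{\af}$ with $s_0^* s_0 = 1 - p$ and $s_0 s_0^* \le p$. The real work is to arrange the commutation~(\ref{Im_1824_ATRP_s_Comm_Mod}); this requirement, against $F \subseteq A$ rather than $F \subseteq A^{\af}$, is exactly what distinguishes the strong modified property from the modified one and is why the full proof is long. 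The idea is to exploit the UHF structure: choose a finite-dimensional unital subalgebra $B \cong M_K$ of $A$ with $\dist(a, B) < \dt$ for all $a \in F$, and, before invoking the tower, apply the tracial Rokhlin property to the enlarged finite set consisting of $F$ together with a system of matrix units for $B$. This makes $1 - p$ and $p$ approximately commute with all of $B$, hence approximately supported in the relative commutant $B' \cap A$, which is again a UHF~algebra in which projections are classified by $\tau$; because passing to a nearby projection does not change its trace, the trace of $1 - p$ and that of its target inside $p$ agree exactly, and one can realize the subequivalence by a partial isometry that commutes exactly with $B$ and hence to within $\ep$ with $F$.

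The main obstacle is that the action $\af$ preserves neither $B$ nor $B' \cap A$, so a partial isometry built inside $B' \cap A$ is typically not $\af$-fixed, whereas $p$ and $1 - p$ genuinely lie in $A^{\af}$ and must stay there. Reconciling exact $\af$-invariance of $s$ with approximate centrality against $F \subseteq A$ is the crux of the argument. I would handle it by carrying the construction out equivariantly: average the $B' \cap A$-partial isometry back into $A^{\af}$ with the conditional expectation $E_{\af} = \frac{1}{|G|} \sum_{g \in G} \af_g$, control the damage to the partial-isometry relations and to the commutation with $F$ using that the tower is approximately $\af$-permuted and approximately $F$-central, and finally repair the averaged element by functional calculus together with cancellation in $A^{\af}$ to an exact partial isometry $s \in A^{\af}$ with $s^* s = 1 - p$ and $s s^* \le p$, keeping $\| s a - a s \| < \ep$. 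Equivalently, one can run the whole comparison in the equivariant central sequence algebra $(A_\infty \cap A')^{\af_\infty}$, where commutation with $F$ is automatic and only the trace estimate $1 - p \precsim p$ must be verified before reindexing back into $A$; in either formulation the bookkeeping needed to keep $p$ tied to the condition $\| p x p \| > 1 - \ep$ is what makes the proof lengthy, which is presumably why it was omitted.
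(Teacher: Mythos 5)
The paper itself does not contain a proof of this proposition: the authors state that the argument is ``somewhat long and technical'' and omit it, so there is no written proof to compare yours against, and your proposal has to stand on its own. Its first half does: deducing the \trpc{} from Proposition~\ref{P_1X17_StComp} (since a UHF algebra has $\rc (A) = 0$), extracting the tower via Lemma~\ref{L_1X16_trpc_GFin}, and upgrading $1 - p \precsim_{A^{\af}} p$ to a partial isometry $s_0 \in A^{\af}$ using cancellation in $A^{\af}$ (which has tracial rank zero, hence stable rank one) are all sound, and you correctly identify that the entire difficulty is condition~(\ref{Im_1824_ATRP_s_Comm_Mod}): making $s$ exactly $\af$-fixed while approximately commuting with $F \S A$ rather than $F \S A^{\af}$.

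But at exactly that crux your argument has a genuine gap. Having produced $v \in B' \cap A$ with $v^* v = 1 - p'$ and $v v^* \leq p'$, you propose to average: $E (v) = \frac{1}{\card (G)} \sum_{g \in G} \af_g (v)$. Nothing in the construction controls $\| \af_g (v) - v \|$: the approximate $\af$-permutation of the tower constrains the projections $p_g$, not~$v$, and a partial isometry implementing a fixed subequivalence is only determined up to left multiplication by a unitary of the corner $p' (B' \cap A) p'$, so $\af_g (v)$ can differ from $v$ by an uncontrolled unitary ``phase''. Consequently $E (v)$ need not be close to any partial isometry --- for $G = \Z_2$ it can literally be $0$ --- and the functional-calculus-plus-cancellation repair you invoke has nothing to repair from. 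Your fallback formulation in the central sequence algebra is also not well posed as stated: for a fixed, noncentral projection $p \in A^{\af}$, the element $1 - p$ does not lie in $A_{\infty} \cap A'$, so the relation $s^* s = 1 - p$ makes no sense there; one would instead have to let the towers vary along the sequence and then prove a comparison result for the equivariant central sequence algebra $(A_{\infty} \cap A')^{\af}$, which is precisely the hard content and is nowhere supplied. (A small further slip: ``$1 - p \precsim_{A^{\af}} p_g$'' is ill-formed, since the individual tower projections $p_g$ are not $\af$-invariant.) So what you have is a correct reduction to, and a clear identification of, the difficult step, together with a proposed mechanism (equivariant averaging) that fails without a substantially new idea --- consistent with the authors' remark that the true proof is long and technical.
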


The proof is somewhat long and technical.
Since we don't (yet) have a serious use for
the strong modified tracial Rokhlin property
or the modified tracial Rokhlin property, we omit the proof.

\section{Actions of infinite products of finite
 groups on infinite tensor products}\label{Sec_3749_Exam_TRPZ2}

In this section we show that, up to a technicality,
the infinite tensor product of actions of finite groups on
stably finite simple unital separable infinite dimensional \ca{s} with
the tracial Rokhlin property
has the tracial Rokhlin property with compassion when the
underlying algebra has strict comparison.
As a special case, we give an action
of a totally disconnected infinite compact group on
a UHF~algebra which has the tracial Rokhlin property with comparison
and the strong modified tracial Rokhlin property,
but does not have the Rokhlin property,
or even finite Rokhlin dimension with commuting towers.

We start with some general definitions and lemmas that will be used
in Section~\ref{Sec_2114_OI} as well.
The following lemma is well known, and is given in this form for
convenient reference.
One might like to write
$\af = \af_1 \otimes \af_2 \otimes \cdots \otimes \af_n$,
but this notation is already taken for the diagonal action
on a tensor product obtained from an action of the same group on
all the tensor factors.

The lemma holds for any functorial tensor product.

\begin{lem}\label{L_5420_FinTP}
Let $n \in \N$.
Let $A_1, A_2, \ldots, A_n$ be \ca{s},
let $G_1, G_2, \ldots, G_n$ be topological groups,
and for $k = 1, 2, \ldots, n$ let
$\af^{(k)} \colon G_k \to \Aut (A_k)$ be a \ct{} action.
Set
\[
A = A_1 \otimes_{\min} A_2 \otimes_{\min} \cdots \otimes_{\min} A_n
\andeqn
G = G_1 \times G_2 \times \ldots \times G_n.
\]
Then there is a unique \ct{} action $\af \colon G \to \Aut (A)$
such that, whenever $g_k \in G_k$ and $a_k \in A_k$
for $k = 1, 2, \ldots, n$, we have
\begin{equation}\label{Eq_5421_Finite}
\af_{(g_1, g_2, \ldots, g_n)} (a_1 \otimes a_2 \otimes \cdots \otimes a_n)
 = \af^{(1)}_{g_1} (a_1) \otimes \af^{(2)}_{g_2} (a_2)
      \otimes \cdots \otimes \af^{(n)}_{g_n} (a_n).
\end{equation}
\end{lem}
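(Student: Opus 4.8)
The plan is to build $\af_g$ for each fixed $g = (g_1, g_2, \ldots, g_n) \in G$ by appealing to functoriality of the minimal tensor product, and then to check separately that $g \mapsto \af_g$ is a homomorphism into $\Aut (A)$ and that it is continuous. First I would fix $g \in G$ and observe that each $\af^{(k)}_{g_k} \colon A_k \to A_k$ is an \am; by functoriality of $\otimes_{\min}$ there is a unique $*$-\hm{}
\[
\af_g = \af^{(1)}_{g_1} \otimes_{\min} \af^{(2)}_{g_2} \otimes_{\min} \cdots \otimes_{\min} \af^{(n)}_{g_n} \colon A \to A
\]
satisfying~(\ref{Eq_5421_Finite}) on elementary tensors. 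Applying the same construction to $g^{-1} = (g_1^{-1}, \ldots, g_n^{-1})$ yields a two-sided inverse for $\af_g$, so $\af_g \in \Aut (A)$. Uniqueness of a map satisfying~(\ref{Eq_5421_Finite}) is immediate: the linear span of elementary tensors is dense in $A$ and every $*$-\hm{} is norm continuous, so the values on elementary tensors determine $\af_g$ completely.

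Next I would verify that $g \mapsto \af_g$ is a group \hm. On an elementary tensor $a_1 \otimes \cdots \otimes a_n$, both $\af_{gh}$ and $\af_g \circ \af_h$ act factorwise, and the equality reduces to the identities $\af^{(k)}_{g_k h_k} = \af^{(k)}_{g_k} \circ \af^{(k)}_{h_k}$, which hold because each $\af^{(k)}$ is an action. Since both sides are bounded and agree on a dense subspace, they agree on all of $A$; the same density argument shows that $\af$ sends the identity of $G$ to $\id_A$.

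The only substantive point is continuity, by which I mean that $g \mapsto \af_g (a)$ is norm continuous for each $a \in A$. For an elementary tensor $a = a_1 \otimes \cdots \otimes a_n$ I would use a telescoping estimate: writing $\af_g (a) - \af_h (a)$ as a sum of $n$ terms, each of which replaces one factor $\af^{(k)}_{g_k} (a_k)$ by $\af^{(k)}_{h_k} (a_k)$ while leaving the others fixed, and bounding using that every $\af^{(j)}_{g_j}$ is isometric, one obtains
\[
\| \af_g (a) - \af_h (a) \|
 \leq \sum_{k = 1}^{n} \Bigl( \prod_{j \neq k} \| a_j \| \Bigr)
        \bigl\| \af^{(k)}_{g_k} (a_k) - \af^{(k)}_{h_k} (a_k) \bigr\|.
\]
Continuity of each $\af^{(k)}$ then makes the right-hand side small when $h$ is close to $g$, so $g \mapsto \af_g (a)$ is continuous on elementary tensors, hence on their (dense) linear span. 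For a general $a \in A$ and $\ep > 0$, I would choose $b$ in the algebraic tensor product with $\| a - b \| < \ep / 3$ and use that each $\af_g$ is isometric, so that $\| \af_g (a) - \af_g (b) \| = \| a - b \| < \ep / 3$ \emph{uniformly} in $g$; the standard $\ep / 3$ argument then transfers continuity from $b$ to $a$. The main obstacle is precisely this continuity step, since the construction and the homomorphism property are formal consequences of functoriality, and the key is that the uniform isometry bound makes the density approximation independent of $g$, reducing everything to the assumed continuity of the factor actions.
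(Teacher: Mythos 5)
Your proposal is correct, and it is simply a careful write-out of the standard argument that the paper dispatches with ``The proof is immediate'': functoriality of $\otimes_{\min}$ for the construction, the homomorphism property checked on elementary tensors, and point-norm continuity via a telescoping estimate together with the uniform-isometry $\ep/3$ density argument. All steps are sound (in particular, using that each $\af_g$ is isometric so the approximation by an element of the algebraic tensor product is uniform in $g$ is exactly the right point to isolate), so there is nothing to add.
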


\begin{proof}
The proof is immediate.
\end{proof}

\begin{lem}\label{L_5420_Inf_TP}
Let $A_1, A_2, \ldots$ be \uca{s},
let $G_1, G_2, \ldots$ be topological groups,
and for $k = 1, 2, \ldots$ let
$\af^{(k)} \colon G_k \to \Aut (A_k)$ be a \ct{} action.
Using minimal tensor products, set
\[
A = \bigotimes_{n = 1}^{\I} A_n
  = \dirlim_n A_1 \otimes_{\min} A_2 \otimes_{\min} \cdots \otimes_{\min} A_n
\andeqn
G = \prod_{n = 1}^{\I} G_n.
\]
Then there is a unique \ct{} action $\af \colon G \to \Aut (A)$
such that, whenever $n \in \N$, $g_k \in G_k$ for $k \in \N$,
and $a_k \in A_k$
for $k = 1, 2, \ldots, n$, we have
\[
\af_{(g_1, g_2, \ldots)}
     (a_1 \otimes a_2 \otimes \cdots \otimes a_n \otimes 1)
 = \af^{(1)}_{g_1} (a_1) \otimes \af^{(2)}_{g_2} (a_2)
      \otimes \cdots \otimes \af^{(n)}_{g_n} (a_n) \otimes 1.
\]
\end{lem}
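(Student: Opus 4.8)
The plan is to build $\af$ as an inductive limit of the finite-stage actions supplied by Lemma~\ref{L_5420_FinTP}. For $n \in \N$ write $A^{(n)} = A_1 \otimes_{\min} \cdots \otimes_{\min} A_n$ and $G^{(n)} = G_1 \times \cdots \times G_n$, and let $\bt^{(n)} \colon G^{(n)} \to \Aut (A^{(n)})$ be the \ct{} action provided by Lemma~\ref{L_5420_FinTP}. Let $\pi_n \colon G \to G^{(n)}$ be the continuous coordinate projection $(g_1, g_2, \ldots) \mapsto (g_1, \ldots, g_n)$, and define a \ct{} action $\gm^{(n)} = \bt^{(n)} \circ \pi_n$ of $G$ on $A^{(n)}$. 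Recall that $A = \dirlim_n A^{(n)}$ with connecting maps $\io_n \colon A^{(n)} \to A^{(n+1)}$ given by $a \mapsto a \otimes 1$; identifying each $A^{(n)}$ with its image in~$A$, the union $\bigcup_{n} A^{(n)}$ is a dense unital $*$-subalgebra of~$A$.

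First I would check that the actions $\gm^{(n)}$ are compatible with the connecting maps. For $g \in G$ and $a \in A^{(n)}$, the defining formula~(\ref{Eq_5421_Finite}) together with $\af^{(n+1)}_{g_{n+1}} (1) = 1$ gives $\gm^{(n+1)}_g (a \otimes 1) = \gm^{(n)}_g (a) \otimes 1$, that is, $\io_n \circ \gm^{(n)}_g = \gm^{(n+1)}_g \circ \io_n$. Hence, for each fixed $g$, the maps $\gm^{(n)}_g$ assemble to a single isometric $*$-endomorphism of $\bigcup_n A^{(n)}$, which extends by continuity to an isometric endomorphism $\af_g$ of~$A$; it is an \am, with inverse the extension of the maps $\gm^{(n)}_{g^{-1}}$. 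Since each $\gm^{(n)} \colon G \to \Aut (A^{(n)})$ is a \hm, the identity $\af_{g h} = \af_g \circ \af_h$ holds on each $A^{(n)}$, hence on $\bigcup_n A^{(n)}$, hence on all of~$A$. The stated formula for $\af_{(g_1, g_2, \ldots)}$ on elementary tensors is immediate from the construction and~(\ref{Eq_5421_Finite}).

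The remaining point, and the one requiring the most care, is continuity of the action, i.e.\ that $g \mapsto \af_g (a)$ is continuous for each fixed $a \in A$. Given $a \in A$ and $\ep > 0$, choose $a' \in \bigcup_n A^{(n)}$ with $\| a - a' \| < \ep / 3$; since the $\af_g$ are isometric, for all $g, g_0 \in G$ we have $\| \af_g (a) - \af_{g_0} (a) \| < 2 \ep / 3 + \| \af_g (a') - \af_{g_0} (a') \|$, so it suffices to prove continuity of $g \mapsto \af_g (a')$ for $a' \in A^{(n)}$. For such $a'$ this map equals $g \mapsto \bt^{(n)}_{\pi_n (g)} (a')$, which is continuous because $\bt^{(n)}$ is a \ct{} action and $\pi_n$ is continuous for the product topology on~$G$. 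This is the only place the product topology enters, and it is exactly the step where one must confirm that continuity at the finite stages, combined with continuity of the coordinate projections, yields continuity of the limiting action.

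Finally, uniqueness is automatic: any \ct{} action satisfying the displayed formula agrees with $\af$ on every elementary tensor $a_1 \otimes \cdots \otimes a_n \otimes 1$, and the linear span of these is dense in~$A$, so the two actions coincide.
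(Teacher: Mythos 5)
Your proof is correct and is essentially the paper's own argument: the paper likewise pulls back the finite-stage actions of Lemma~\ref{L_5420_FinTP} along the coordinate projections $G \to \prod_{k=1}^{n} G_k$ and takes the direct limit. You have simply written out the routine verifications (compatibility with the connecting maps, the homomorphism property, continuity via the dense union, and uniqueness by density) that the paper's one-line proof leaves implicit.
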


\begin{proof}
Lemma~\ref{L_5420_FinTP} gives actions
\[
\bt^{(n)} \colon G_1 \times G_2 \times \ldots \times G_n
   \to \Aut \bigl( A_1 \otimes_{\min} A_2 \otimes_{\min}
       \cdots \otimes_{\min} A_n \bigr)
\]
as in~(\ref{Eq_5421_Finite}), which extend to actions of~$G$
via the projection map $G \to \prod_{k = 1}^{n} G_k$.
Take the direct limit of these actions.
\end{proof}

\begin{dfn}\label{D_5420_Inf_tp}
The action $\af \colon G \to \Aut (A)$ of Lemma~\ref{L_5420_FinTP}
is called the (minimal) tensor product action of~$G$ on~$A$,
and the action $\af \colon G \to \Aut (A)$ of Lemma~\ref{L_5420_Inf_TP}
is called the infinite (minimal) tensor product action of~$G$ on~$A$.
\end{dfn}

The next lemma is well known, but we do not know a reference.

\begin{lem}\label{L_5420_Fin_fix}
Assume the hypotheses and notation of Lemma~\ref{L_5420_FinTP},
and further assume that the groups $G_1, G_2, \ldots, G_n$
are compact.
Then the fixed point algebra $A^{\af}$ is given by
\[
A^{\af} = A_1^{\af^{(1)}} \otimes_{\min} A_2^{\af^{(2)}}
    \otimes_{\min} \cdots \otimes_{\min} A_n^{\af^{(n)}}.
\]
\end{lem}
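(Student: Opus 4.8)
The plan is to realize $A^{\af}$ as the range of the canonical conditional expectation associated to the compact group action, and to show that this expectation factors as the minimal tensor product of the conditional expectations coming from the individual actions. The inclusion $A_1^{\af^{(1)}} \otimes_{\min} \cdots \otimes_{\min} A_n^{\af^{(n)}} \S A^{\af}$ is immediate: each elementary tensor $a_1 \otimes \cdots \otimes a_n$ with $a_k \in A_k^{\af^{(k)}}$ is fixed by $\af$ by~(\ref{Eq_5421_Finite}), and $A^{\af}$ is a closed subalgebra, so it contains the closed span of such tensors. Here we use that the minimal tensor product is injective, so that $A_1^{\af^{(1)}} \otimes_{\min} \cdots \otimes_{\min} A_n^{\af^{(n)}}$ is canonically a C*-subalgebra of~$A$. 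The real content is the reverse inclusion.

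Since $G = G_1 \times \cdots \times G_n$ is compact, normalized Haar measure on $G$ is the product of the normalized Haar measures $dg_k$ on the~$G_k$. Because the action is continuous, for each $a \in A$ the map $g \mapsto \af_g(a)$ is norm-continuous, so the Bochner integral $E(a) = \int_G \af_g(a)\, dg$ exists. A standard computation, using left invariance of Haar measure, shows that $E$ is a conditional expectation from $A$ onto $A^{\af}$: it is unital and completely positive, $\af_h(E(a)) = E(a)$ for all $h$ and $a$ so that $\range(E) \S A^{\af}$, and $E(a) = a$ for $a \in A^{\af}$, whence $\range(E) = A^{\af}$. Let $E_k(a) = \int_{G_k} \af^{(k)}_{g_k}(a)\, dg_k$ denote the corresponding conditional expectation of $A_k$ onto $A_k^{\af^{(k)}}$.

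The key step is to identify $E$ with the tensor product map $E_1 \otimes \cdots \otimes E_n$, which is well defined and completely positive on $A_1 \otimes_{\min} \cdots \otimes_{\min} A_n$ because each $E_k$ is completely positive. Both maps are bounded and linear, so it suffices to compare them on an elementary tensor $a_1 \otimes \cdots \otimes a_n$. Applying Fubini to the product Haar measure and repeatedly commuting the vector-valued integral past the bounded linear embeddings $b \mapsto b \otimes (\text{fixed tensor})$, one gets
\[
E(a_1 \otimes \cdots \otimes a_n)
 = \int_{G_1} \cdots \int_{G_n}
   \af^{(1)}_{g_1}(a_1) \otimes \cdots \otimes \af^{(n)}_{g_n}(a_n)
   \, dg_n \cdots dg_1
 = E_1(a_1) \otimes \cdots \otimes E_n(a_n),
\]
so $E = E_1 \otimes \cdots \otimes E_n$. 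It remains to compute the range of this map. Each $E_k$ is the identity on $A_k^{\af^{(k)}}$, so $E_1 \otimes \cdots \otimes E_n$ is the identity on the closed span of elementary tensors of fixed elements, giving $A_1^{\af^{(1)}} \otimes_{\min} \cdots \otimes_{\min} A_n^{\af^{(n)}} \S \range(E)$; conversely, on finite sums of elementary tensors the map lands in the algebraic tensor product of the $A_k^{\af^{(k)}}$, and by continuity together with closedness it carries all of $A$ into $A_1^{\af^{(1)}} \otimes_{\min} \cdots \otimes_{\min} A_n^{\af^{(n)}}$. Combining this with $\range(E) = A^{\af}$ yields the claim.

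I expect the only genuinely technical point to be the interchange of the vector-valued integral with the tensor embeddings in the displayed computation; everything else is the standard machinery of Haar averaging and functoriality of the minimal tensor product. One must be slightly careful that the tensor product conditional expectation is taken on the minimal (spatial) tensor product, since it is there that both the complete positivity of $E_1 \otimes \cdots \otimes E_n$ and the isometric inclusion of the tensor product of fixed point algebras are guaranteed.
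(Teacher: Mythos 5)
Your proof is correct and takes essentially the same route as the paper's: both establish the easy inclusion via elementary tensors of fixed elements, then average over the (product) Haar measure and verify the factorization $E(a_1 \otimes \cdots \otimes a_n) = E_1(a_1) \otimes \cdots \otimes E_n(a_n)$ on elementary tensors, concluding by density. The only cosmetic difference is that you package this as the operator identity $E = E_1 \otimes \cdots \otimes E_n$ of conditional expectations and then compute its range, whereas the paper simply checks directly that $E$ maps a spanning dense set into $A_1^{\af^{(1)}} \otimes_{\min} \cdots \otimes_{\min} A_n^{\af^{(n)}}$.
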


\begin{proof}
Since the minimal tensor product preserves injectivity of \hm{s},
$\bigotimes_{k = 1}^{n} A_k^{\af^{(k)}}$ (minimal tensor product)
is in fact a subalgebra of~$A$.
It is obvious that $\bigotimes_{k = 1}^{n} A_k^{\af^{(k)}} \subseteq A^{\af}$.

For the reverse inclusion, for $k = 1, 2, \ldots, n$
let $E_k \colon A_k \to A_k$ be the map given by averaging over~$G_k$:
with $\mu_k$ being normalized Haar measure on~$G$,
$E_k (a) = \int_{G_k} \af^{(k)}_g (a) \, d \mu_k (a)$.
Similarly let $E \colon A \to A$ be the map given by averaging over~$G$.
It suffices to prove that
$E (a) \in \bigotimes_{k = 1}^{n} A_k^{\af^{(k)}}$ for $a \in A$,
and it suffices to do this for $a$ in a set $T \S A$ which
spans a dense subspace.
We take $T$ to be the set of all elementary tensors,
and observe that if $a_k \in A_k$ for $k = 1, 2, \ldots, n$, then
\[
E (a_1 \otimes a_2 \otimes \cdots \otimes a_n)
 = E_1 (a_1) \otimes E_2 (a_2) \otimes \cdots \otimes E_n (a_n)
 \in \bigotimes_{k = 1}^{n} A_k^{\af^{(k)}}.
\]
This completes the proof.
\end{proof}

\begin{lem}\label{L_5420_Inf_fix}
Assume the hypotheses and notation of Lemma~\ref{L_5420_Inf_TP},
and further assume that the groups $G_1, G_2, \ldots$
are compact.
Then the fixed point algebra $A^{\af}$ is given by the infinite
minimal tensor product:
\[
A^{\af} = \bigotimes_{n = 1}^{\I} A_n^{\af^{(n)}}.
\]
\end{lem}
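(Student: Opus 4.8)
The plan is to realize both sides as subalgebras of $A$ and then reduce everything to the finite case already proved in Lemma~\ref{L_5420_Fin_fix}, using the conditional expectation obtained by averaging over~$G$. Write $B_n = A_1 \otimes_{\min} A_2 \otimes_{\min} \cdots \otimes_{\min} A_n$, viewed as a subalgebra of $A$ via the embedding $b \mapsto b \otimes 1 \otimes 1 \otimes \cdots$, so that $A = \ov{\bigcup_{n = 1}^{\I} B_n}$. Because the tail coordinates of $G$ act trivially on elements of $B_n$, the restriction of $\af$ to $B_n$ is exactly the finite tensor product action $\bt^{(n)}$ of $G_1 \times G_2 \times \cdots \times G_n$ from Lemma~\ref{L_5420_FinTP}, and its fixed point algebra is $B_n^{\af} = B_n \cap A^{\af}$. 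Lemma~\ref{L_5420_Fin_fix} then identifies $B_n^{\af}$ with $\bigotimes_{k = 1}^{n} A_k^{\af^{(k)}}$. Since the minimal tensor product preserves inclusions of \ca{s}, the increasing chain $B_1^{\af} \S B_2^{\af} \S \cdots$ lives inside $A$ and its closed union is precisely $\bigotimes_{n = 1}^{\I} A_n^{\af^{(n)}}$; thus it suffices to prove $A^{\af} = \ov{\bigcup_{n = 1}^{\I} B_n^{\af}}$.

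One inclusion is immediate: each $B_n^{\af} \S A^{\af}$ and $A^{\af}$ is closed. For the reverse inclusion I would use the averaging map $E \colon A \to A^{\af}$, given by $E (a) = \int_G \af_g (a) \, d \mu (g)$, where $\mu$ is normalized Haar measure on the compact group $G = \prod_{n = 1}^{\I} G_n$. This is a norm-one conditional expectation onto $A^{\af}$ that fixes $A^{\af}$ pointwise. The key observation is that $E$ restricts on $B_n$ to the finite averaging map $E_n \colon B_n \to B_n^{\af}$: for $b \in B_n$ the element $\af_g (b)$ depends only on $(g_1, g_2, \ldots, g_n)$, so by Fubini (the Haar measure on $\prod_n G_n$ being the product of the normalized Haar measures $\mu_k$) the integral over $G$ collapses to the integral over $G_1 \times G_2 \times \cdots \times G_n$, which lands in $B_n^{\af}$ by Lemma~\ref{L_5420_Fin_fix}.

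With this in hand the approximation is routine. Given $a \in A^{\af}$ and $\ep > 0$, choose $n \in \N$ and $b \in B_n$ with $\| a - b \| < \ep$. Then $E (b) = E_n (b) \in B_n^{\af}$, and since $E$ is contractive and $E (a) = a$, we get $\| a - E_n (b) \| = \| E (a) - E (b) \| \leq \| a - b \| < \ep$. Hence $a \in \ov{\bigcup_{n = 1}^{\I} B_n^{\af}}$, which gives the nontrivial inclusion and completes the proof.

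The only point I expect to require genuine care is the identity $E |_{B_n} = E_n$, that is, the Fubini computation together with the fact that Haar measure on the infinite product $\prod_{n = 1}^{\I} G_n$ is the product of the normalized Haar measures $\mu_k$. Everything else, namely the existence and contractivity of the averaging expectation~$E$, the preservation of inclusions by $\otimes_{\min}$, and the final $\ep$-estimate, is standard.
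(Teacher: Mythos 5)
Your proof is correct and follows essentially the same route as the paper's: both reduce to the finite case of Lemma~\ref{L_5420_Fin_fix} by averaging over the compact group~$G$ and checking the resulting conditional expectation on a dense subset (the paper uses elementary tensors $a_1 \otimes \cdots \otimes a_n \otimes 1$, you use the dense union of the subalgebras $B_n$, with the same Fubini collapse of the Haar integral underlying both). Your version just makes explicit the contractivity estimate $\| a - E_n (b) \| \leq \| a - b \|$ that the paper leaves implicit in its density reduction.
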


\begin{proof}
Since the tensor products and direct limits preserve injectivity of \hm{s},
$\bigotimes_{n = 1}^{\I} A_n^{\af^{(n)}}$ is in fact a subalgebra of~$A$.
It is obvious that
$\bigotimes_{n = 1}^{\I} A_n^{\af^{(n)}} \subseteq A^{\af}$.

For the reverse inclusion,
let $E \colon A \to A$ be the map given by averaging over~$G$.
It suffices to prove that
$E (a) \in \bigotimes_{n = 1}^{\I} A_n^{\af^{(n)}}$ for $a \in A$,
and it suffices to do this for $a$ in a dense subset $S \S A$.
Take
\[
S = \bigl\{ a_1 \otimes a_2 \otimes \cdots \otimes a_n \otimes 1 \colon
    {\mbox{$n \in \N$ and $a_k \in A_k$ for $k = 1, 2, \ldots, n$}} \bigr\},
\]
and apply Lemma~\ref{L_5420_Fin_fix}.
\end{proof}

The following lemma seems worth stating separately,
and will be used again in Section~\ref{Sec_2114_OI}.

\begin{lem}\label{L_5501_Comp_1}
Let $A$ and $B$ be simple \uca{s}.
Let $x \in (A \otimes_{\min} B)_{+} \setminus \{ 0 \}$.
Then there is $c \in A_{+}$ with $\| c \| = 1$
such that $c \otimes 1_B \precsim_{A \otimes_{\min} B} x$.
\end{lem}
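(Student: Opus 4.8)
The plan is to argue in two stages. In the first I produce a nonzero elementary tensor below $x$, that is, positive elements $c_0 \in A_+ \setminus \{ 0 \}$ and $d_0 \in B_+ \setminus \{ 0 \}$ with $c_0 \otimes d_0 \precsim_{A \otimes_{\min} B} x$. In the second I inflate the second tensor factor from $d_0$ to $1_B$, at the cost of shrinking $c_0$, using simplicity of both factors.

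Stage one is the crux. Since product states separate the points of $A \otimes_{\min} B$ and $x$ is positive and nonzero, the set of states $\sigma$ on $B$ for which the slice map value $(\id_A \otimes \sigma)(x)$ vanishes is (using $x \geq 0$) a proper closed face of the state space, so it omits some pure state $\sigma_0$; thus $a := (\id_A \otimes \sigma_0)(x) \in A_+$ is nonzero, with $s := \| a \| > 0$. Fixing a small $\delta > 0$, I approximate $x$ within $\delta$ by a positive element $w = \sum_{k = 1}^N u_k \otimes v_k$ of the algebraic tensor product (take $w = d^* d$ with $d$ close to $x^{1/2}$); then $(\id_A \otimes \sigma_0)(w)$ is within $\delta$ of $a$. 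Next I excise the pure state $\sigma_0$ on the finite set $\{ v_1, \ldots, v_N \}$ (the Akemann--Anderson--Pedersen excision theorem): there is $d \in B_+$ with $\| d \| = 1$ and $\| d v_k d - \sigma_0(v_k) d^2 \|$ as small as I like for all $k$. Compressing by $1_A \otimes d$ then gives
\[
(1_A \otimes d)\, w\, (1_A \otimes d) = \sum_{k = 1}^N u_k \otimes (d v_k d) \approx \Bigl( \ts{\sum_k} \sigma_0(v_k) u_k \Bigr) \otimes d^2 = (\id_A \otimes \sigma_0)(w) \otimes d^2 ,
\]
so that, combining the three approximations, $(1_A \otimes d)\, x\, (1_A \otimes d)$ lies within some controllable $\gamma$ of $a \otimes d^2$, and I arrange $\gamma < s/4$. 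Since $(1_A \otimes d)\, x\, (1_A \otimes d) \precsim x$, R{\o}rdam's lemma gives $(a \otimes d^2 - \gamma)_+ \precsim x$. Finally, with $c_0 = (a - s/2)_+$ and $d_0 = (d^2 - \tfrac12)_+$ (both nonzero, as $\| a \| = s$ and $\| d \| = 1$), a computation in the commutative C*-algebra generated by the commuting elements $a \otimes 1$ and $1 \otimes d^2$ shows that the cozero set of $c_0 \otimes d_0$ sits inside that of $(a \otimes d^2 - \gamma)_+$ (there the coordinate product exceeds $s/4 > \gamma$), whence $c_0 \otimes d_0 \precsim (a \otimes d^2 - \gamma)_+ \precsim x$.

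For stage two, I first use simplicity of $B$: since $d_0 \neq 0$ is full, there is $m \in \N$ with $1_B \precsim_B d_0^{\oplus m}$, realized by $r_1, \ldots, r_m \in B$ with $\| 1_B - \sum_i r_i d_0 r_i^* \| < \tfrac12$. On the $A$ side I use that $A$ is (infinite dimensional) simple: the hereditary subalgebra $\overline{c_0 A c_0}$ then contains $m$ mutually orthogonal nonzero positive elements $f_1, \ldots, f_m$, and by Lemma 2.6 of \cite{philar} there is a nonzero $c \in A_+$ with $\| c \| = 1$ and $c \precsim_A f_i$ for all $i$; since $f_1 + \cdots + f_m \in \overline{c_0 A c_0}$ and the $f_i$ are orthogonal, this yields $c^{\oplus m} \precsim_A f_1 \oplus \cdots \oplus f_m \sim \sum_i f_i \precsim_A c_0$. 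Now I assemble: writing $u = \sum_i r_i d_0 r_i^* \geq \tfrac12 1_B$, I have $c \otimes 1_B \precsim c \otimes u = \sum_i (1 \otimes r_i)(c \otimes d_0)(1 \otimes r_i^*) \precsim (c \otimes d_0)^{\oplus m} = c^{\oplus m} \otimes d_0 \precsim c_0 \otimes d_0$, where the middle step uses that a sum of $m$ positive elements is Cuntz below their diagonal direct sum and that each summand is a compression of $c \otimes d_0$, and the last step uses $c^{\oplus m} \precsim_A c_0$. As both ends lie in the top left corner $A \otimes_{\min} B$ of $M_m(A \otimes_{\min} B)$, stability of Cuntz comparison gives $c \otimes 1_B \precsim_{A \otimes_{\min} B} c_0 \otimes d_0 \precsim x$, as required.

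The main obstacle is stage one: extracting a genuine elementary tensor underneath an arbitrary positive element of the minimal tensor product, for which excision of a pure state on $B$ is the essential device. I should also note that stage two genuinely needs $A$ to be infinite dimensional (for $m = 1$ it would assert $1_B \precsim x$, which fails, e.g., when $A = \mathbb{C}$); this is harmless here, since in every intended application $A$ is infinite dimensional.
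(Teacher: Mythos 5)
Your argument is correct, and its second stage is essentially the paper's, while its first stage takes a genuinely different, more self-contained route. The paper obtains the elementary tensor beneath $x$ in a single stroke from Kirchberg's slice lemma (Lemma 4.1.9 of \cite{RorSt2002}): there is $z$ with $z z^* = a \otimes b$ and $z^* z \in \overline{x (A \otimes_{\min} B) x}$, so $a \otimes b \precsim_{A \otimes_{\min} B} x$ with no quantitative bookkeeping at all. You instead reprove the substance of the slice lemma by hand: choose a pure state $\sigma_0$ on $B$ whose slice of $x$ is nonzero, approximate $x$ by a positive element of the algebraic tensor product, excise $\sigma_0$ on the finitely many right-hand factors (Akemann--Anderson--Pedersen), and convert the resulting norm estimate into Cuntz subequivalence via R{\o}rdam's lemma together with the cozero-set comparison in the commutative C*-algebra generated by $a \otimes 1$ and $1 \otimes d^2$. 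This is in effect the standard proof of the slice lemma, so what you gain is independence from that black box, at the cost of the $\delta$--$\gamma$ management (which checks out: on the cozero set of $(a - s/2)_+ \otimes (d^2 - \tfrac12)_+$ the relevant product exceeds $s/4 > \gamma$). Your stage two matches the paper's proof: simplicity of $B$ yields $1_B$ as (in your version approximately, in the paper's exactly) $\sum_i r_i d_0 r_i^*$; orthogonal, mutually Cuntz-comparable elements of the hereditary subalgebra $\overline{c_0 A c_0}$ give $c^{\oplus m} \precsim_A c_0$ --- the paper quotes Lemma 2.4 of \cite{philar} for this where you iterate Lemma 2.6 of \cite{philar}, a harmless variation --- and the concluding chain of compressions is the same as the paper's display, with your extra invertibility step $c \otimes 1_B = (1 \otimes u^{-1/2})(c \otimes u)(1 \otimes u^{-1/2})$ compensating for the approximate fullness identity.

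Your closing remark on infinite dimensionality is a genuine observation, and it applies to the paper as well: as literally stated the lemma fails for finite dimensional $A$ (for $A = B = M_2$ and $x$ a rank one projection, $c \otimes 1_B$ has rank at least two, so $c \otimes 1_B \not\precsim x$), and the paper's own proof uses infinite dimensionality implicitly through its appeal to Lemma 2.4 of \cite{philar}. Since $A$ is infinite dimensional in every application in the paper, the omission is harmless, but your flagging of it is correct and worth keeping.
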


\begin{proof}
By Kirchberg's slice lemma
(Lemma 4.1.9 of \cite{RorSt2002}), there exist $a \in A_{+} \setminus \{0\}$,
$b \in B_{+} \setminus \{0\}$, and $z \in A \otimes_{\min} B$
such that $z z^{*} = a \otimes b$ and
$z^{*} z \in \overline{x (A \otimes_{\min} B) x}$.
In particular, $a \otimes b \precsim_{A \otimes_{\min} B} x$.
Since $B$ is simple and unital, there are $s_1, s_2, \ldots, s_n \in B$
such that $\sum_{j = 1}^{n} s_{j} b s_{j}^{*} = 1$.
Use Lemma 2.4 of \cite{philar} to choose orthogonal
\[
c_1, c_2, \ldots, c_m
 \in \bigl( {\ov{a A a}} \bigr)_{+} \setminus \{ 0 \}
\]
such that
\[
c_1 \sim_{A} c_2 \sim_{A} \cdots \sim_{A} c_m.
\]
Set $c = c_1$.
Thus, the direct sum $c^{\oplus n}$ of $n$~copies of~$c$
satisfies $c^{\oplus n} \precsim_{A} a$.
\Wolog{} $\| c \| = 1$.
Then
\begin{equation}\label{d1x}
\begin{split}
c \otimes 1_B
& = \sum_{j = 1}^{n} (1_A \otimes s_j) (c \otimes b) ( 1_A \otimes s_j)^{*}
\\
& \precsim_{A \otimes_{\min} B} (c \otimes b)^{\oplus n}
  \sim_{A \otimes_{\min} B} c^{\oplus n} \otimes b
  \precsim_{A \otimes_{\min} B} a \otimes b
  \precsim_{A \otimes_{\min} B} x,
\end{split}
\end{equation}
as desired.
\end{proof}

\begin{prp}\label{tensortrpacts}
Let $n \in \N$.
Let $A_1, A_2, \ldots, A_n$ be simple infinite dimensional \uca{s},
let $G_1, G_2, \ldots, G_n$ be finite groups,
and for $k = 1, 2, \ldots, n$ let
$\af^{(k)} \colon G_k \to \Aut (A_k)$ be an action
with the tracial Rokhlin property.
Set
\[
A = A_1 \otimes_{\min} A_2 \otimes_{\min} \cdots \otimes_{\min} A_n
\andeqn
G = G_1 \times G_2 \times \ldots \times G_n.
\]
Let $\alpha \colon G \to \mathrm{Aut} (A)$
be the minimal tensor product action (Definition~\ref{D_5420_Inf_tp}).
If $A$ is stably finite, then $\af$ has the tracial Rokhlin property.
\end{prp}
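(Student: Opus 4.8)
The plan is to induct on $n$ and reduce everything to the case $n = 2$. For the inductive step, write $A = B \otimes_{\min} A_n$ with $B = A_1 \otimes_{\min} \cdots \otimes_{\min} A_{n-1}$, and let $\bt$ be the tensor product action of $H = G_1 \times \cdots \times G_{n-1}$ on~$B$. Since the minimal tensor product of simple unital \cas{} is simple and unital, and since $B$ embeds unitally in $A$ via $b \mapsto b \otimes 1$ (so that $B$ is stably finite, being a unital subalgebra of the stably finite algebra~$A$), the inductive hypothesis gives that $\bt$ has the tracial Rokhlin property. Applying the case $n = 2$ to $\bt$ on~$B$ and $\af^{(n)}$ on~$A_n$ (whose tensor product action is exactly~$\af$) closes the induction. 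So I would concentrate on $n = 2$: given finite-group actions $\af^{(1)}, \af^{(2)}$ with the tracial Rokhlin property on simple infinite dimensional unital algebras $A_1, A_2$ with $A = A_1 \otimes_{\min} A_2$ stably finite, I must produce Rokhlin projections for the product action of $G_1 \times G_2$.

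The natural construction is the product tower. Given $F \S A$, $\ep > 0$, and $x \in A_+$ with $\| x \| = 1$, I would apply the finite-group tracial Rokhlin property (Lemma 1.17 of \cite{phill23}) to each factor, obtaining towers $(p^{(1)}_{g_1})$ in~$A_1$ and $(p^{(2)}_{g_2})$ in~$A_2$, and set $p_{(g_1, g_2)} = p^{(1)}_{g_1} \otimes p^{(2)}_{g_2}$, so that $p = \sum p_{(g_1,g_2)} = p^{(1)} \otimes p^{(2)}$ with $p^{(k)} = \sum_{g_k} p^{(k)}_{g_k}$. Mutual orthogonality is immediate (distinct $G_1 \times G_2$ indices differ in some coordinate, where the factor projections are orthogonal), and the permutation relation factors through the two single-factor permutation estimates (with $\ep$ adjusted). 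For approximate commutation with~$F$, I would approximate each element of~$F$ by a finite sum of elementary tensors, collect the tensor legs into finite sets $F_1 \S A_1$ and $F_2 \S A_2$, and arrange each $p^{(k)}$ to approximately commute with~$F_k$; then $p^{(1)} \otimes p^{(2)}$ approximately commutes with the elementary tensors and hence with~$F$.

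For the comparison $1 - p \precsim_A x$, I would use the orthogonal decomposition
\[
1 - p = \bigl[ (1 - p^{(1)}) \otimes p^{(2)} \bigr]
          \oplus \bigl[ 1 \otimes (1 - p^{(2)}) \bigr]
\]
into two orthogonal projections. Using simplicity of~$A$ (via Lemma 2.4 of \cite{philar}) I would split off mutually orthogonal nonzero positive elements $x_1, x_2 \in \ov{x A x}$ with $x_1 + x_2 \precsim_A x$. Lemma~\ref{L_5501_Comp_1}, applied to~$x_1$ and (with the roles of the two factors exchanged) to~$x_2$, yields $c \in (A_1)_+$ and $d \in (A_2)_+$ with $c \otimes 1 \precsim_A x_1$ and $1 \otimes d \precsim_A x_2$. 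Choosing the single-factor towers so that $1 - p^{(1)} \precsim_{A_1} c$ and $1 - p^{(2)} \precsim_{A_2} d$, the first summand satisfies $(1 - p^{(1)}) \otimes p^{(2)} \precsim_A c \otimes 1 \precsim_A x_1$ and the second satisfies $1 \otimes (1 - p^{(2)}) \precsim_A 1 \otimes d \precsim_A x_2$; since the targets are orthogonal inside $\ov{x A x}$, the orthogonal sum gives $1 - p \precsim_A x$.

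The hard part will be the norm condition $\| p x p \| > 1 - \ep$. The difficulty is genuine: $x$ need not be close to any elementary tensor, and then the slice norms $\sup_{\om} \| (\id \otimes \om)(x) \|$ can be strictly below $\| x \| = 1$, so capturing the top of~$x$ in a single factor does not suffice, and Cuntz-smallness of $1 - p^{(k)}$ does not by itself keep $1 - p$ away from the part of~$x$ near its norm. The approach I would take is to fix a faithful representation of~$A$ on $H_1 \otimes H_2$, choose a unit vector $\zeta$ with $\langle x \zeta, \zeta \rangle > 1 - \tfrac{\ep}{2}$, and approximate it by a finite sum $\sum_{i} \ld_i \, u_i \otimes v_i$ of product vectors; it then suffices to make $p^{(1)} \otimes p^{(2)}$ approximately fix~$\zeta$, for which $p^{(1)}$ must approximately fix each~$u_i$ and $p^{(2)}$ each~$v_i$. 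I would achieve this by feeding each factor a single norm-one test element doing double duty: Cuntz-subequivalent to~$c$ (respectively~$d$) for the comparison above, and with its top positioned at a base vector that the norm condition then captures inside $\range(p^{(k)})$; the commutation condition, with $F_1, F_2$ enlarged to contain elements carrying the base vector to the~$u_i$ (respectively~$v_i$), propagates the capture to all Schmidt legs. Establishing this joint capture of the finitely many Schmidt components of the near-top vector, thereby reproducing the entangled top of~$x$ inside the range of $p^{(1)} \otimes p^{(2)}$, is the technical crux of the proof.
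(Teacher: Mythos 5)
Your overall architecture coincides with the paper's proof: the induction reducing to $n = 2$ (with stable finiteness of the partial tensor product obtained exactly as you say, as a unital subalgebra of~$A$), the product towers $p_{(g_1, g_2)} = p^{(1)}_{g_1} \otimes p^{(2)}_{g_2}$, the approximation of $F$ by elementary tensors, the orthogonal splitting of $x$ into $x_1, x_2$ with $x_1 + x_2 \precsim_A x$ via Lemma 2.4 of \cite{philar}, the use of Lemma~\ref{L_5501_Comp_1} to produce $c \otimes 1 \precsim_A x_1$ and $1 \otimes d \precsim_A x_2$, and the orthogonal-sum computation for $1 - p$ are all exactly the paper's steps. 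But there is a genuine gap at the point you yourself flag as the crux: the norm condition $\| p x p \| > 1 - \ep$. The paper never verifies this condition directly. It invokes Lemma 1.16 of \cite{phill23}, which says that for a finite simple separable unital C*-algebra the tracial Rokhlin property is \emph{equivalent} to the formally weaker statement in which the norm condition is omitted, so that only your conditions (orthogonality, approximate commutation, approximate permutation, $1 - p \precsim_A x$) need to be checked. This is precisely where the stable finiteness hypothesis earns its keep; in your proposal it is used only to run the induction, and the actual reason it appears in the statement goes unused.

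Your substitute argument for the norm condition cannot be completed as described. The tracial Rokhlin property provides no mechanism for forcing the towers to approximately fix prescribed unit vectors in a faithful representation on $H_1 \otimes H_2$: approximate centrality holds only against finitely many specified elements of the algebra, whereas the operators ``carrying the base vector to the $u_i$'' (respectively $v_i$) are arbitrary bounded operators on $H_1$ (respectively $H_2$) and need not lie in $A_1$ or $A_2$; moreover, Cuntz subequivalence of $1 - p^{(k)}$ to a small positive element constrains nothing spatial about $\range \bigl( p^{(k)} \bigr)$, and positioning ``the top'' of a test element at a chosen vector is not something the definition lets you demand. So the joint capture of the Schmidt components of the near-top vector of~$x$ is not just technically unfinished; there is no route to it from the hypotheses you are allowed to use. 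The repair is standard and short: cite the finiteness reduction (Lemma 1.16 of \cite{phill23}, whose proof is in the spirit of the functional-calculus argument with $h$ and $h_0$ appearing in Proposition~\ref{L_1617_IfGFin} of this paper), after which everything you wrote before the last paragraph constitutes a complete proof.
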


\begin{proof}
By induction, it is enough to consider the case $n = 2$.
For simplicity, call the actions instead
$\af \colon G \to \Aut (A)$ and $\bt \colon H \to \Aut (B)$,
and let $\gm \colon G \times H \to \Aut (A \otimes_{\min} B)$
be the tensor product action.

By Lemma 1.16 of~\cite{phill23}, it suffices to prove that
for every finite set $E \subseteq A \otimes_{\min} B$,
every $\varepsilon > 0$, and
every $x \in (A \otimes_{\min} B)_{+}$  with $\|x\| = 1$,
there is a family $(t_{g, h})_{(g, h) \in G \times H}$
of orthogonal projections in $A \otimes_{\min} B$ such that, with
$t = \sum_{(g, h) \in G \times H} t_{g, h}$, the following hold:
\begin{enumerate}
\item\label{commut}
$\|t_{g, h} c - c t_{g, h}\| < \varepsilon$
for all $c \in E$, all $g \in G$, and all $h \in H$.
\item\label{shift}
$\| \alpha_{(g_{1}, h_{1})}(t_{g_{2}, h_{2}}) -
   t_{g_{1} g_{2}, h_{1} h_{2}}\| < \varepsilon$
for all $g_{1}, g_{2} \in G$ and $h_{1}, h_{2}\in H$.
\item\label{eror}
$1 - t \precsim_{A \otimes_{\min} B} x$.
\setcounter{TmpEnumi}{\value{enumi}}
\end{enumerate}
Using approximation, scaling, and linear combinations,
we may assume that there exist $a_{1}, a_2, \ldots, a_{m} \in A$ and
$b_{1}, b_2, \ldots, b_{m} \in B$ such that
$E = \{ a_{j} \otimes b_{j} \colon 1 \leq j \leq m\}$ and
$\| a_{j} \|, \| b_{j} \| \leq 1$ for $j = 1, 2, \ldots, m$.

By Lemma~2.4 of~\cite{philar}, there are
$x_{1}, x_{2} \in (A \otimes_{\min} B)_{+} \setminus \{0\}$
such that $x_{1} x_{2} = x_{2} x_{1} = 0$ and
$x_{1} + x_{2} \precsim_{A \otimes_{\min} B} x$.
By Lemma~\ref{L_5501_Comp_1}, there are $c \in A_{+} \setminus \{ 0 \}$
and $d \in B_{+} \setminus \{0\}$ such that
\[
c \otimes 1_B \precsim_{A \otimes_{\min} B} x_{1},
\qquad
1_A \otimes d \precsim_{A \otimes_{\min} B} x_{2},
\andeqn
\| c \| = \| d \| = 1.
\]
Apply Definition 1.2 of~\cite{phill23}
to the action $\alpha$ with $\{ a_{1}, \ldots, a_{m}\}$ in place of $F$,
with $\varepsilon /2$ in place of~$\ep$, and with $c$ in place of~$x$.
We obtain a family $(p_{g})_{g \in G}$ of nonzero orthogonal
projections in $A$
such that, with $p = \sum_{g \in G} p_{g}$, the following hold:
\begin{enumerate}
\setcounter{enumi}{\value{TmpEnumi}}
\item\label{comiouttens_1}
$\|p_{g} a_{j} - a_{j} p_{g}\| < \varepsilon /2$ for
$j = 1, 2, \ldots, m$ and all $g \in G$.
\item\label{shifttens_1}
$\| \alpha_{g_1} (p_{g_2}) - p_{g_1 g_2}\| < \varepsilon /2$
for all $g_1, g_2 \in G$.
\item\label{erortens_1}
$1 - p \precsim_{A} c$.
\setcounter{TmpEnumi}{\value{enumi}}
\end{enumerate}
Similarly, applying Definition 1.2 of~\cite{phill23}
to the action $\beta$ with $\{ b_{1}, \ldots, b_{m}\}$
in place of $F$,
with $\varepsilon /2$ in place of~$\ep$, and with $d$ in place of~$x$,
we obtain a family $(q_{g})_{g \in H}$ of nonzero orthogonal
projections in $B$
such that, with $q = \sum_{g \in H} q_{g}$, the following hold:
\begin{enumerate}
\setcounter{enumi}{\value{TmpEnumi}}
\item\label{comiouttens_2}
$\|q_{h} b_{j} - b_{j} q_{h}\| < \varepsilon / 2$ for
$j = 1, 2, \ldots, m$ and all $h \in H$.
\item\label{shifttens_2}
$\| \alpha_{h_1}(q_{h_2}) - q_{h_1 h_2}\| < \varepsilon /2$
for all $h_1, h_2 \in H$.
\item\label{erortens_2}
$1 - q \precsim_{B} d$.
\setcounter{TmpEnumi}{\value{enumi}}
\end{enumerate}

For $g \in G$ and $h \in H$, set $t_{g, h} = p_{g} \otimes q_{h}$.
Then set $t =\sum_{(g, h) \in G \times H} t_{g, h}$.
Easy calculations show that conditions (\ref{commut}) and~(\ref{shift}) hold.
For~(\ref{eror}), we have:
\[
\begin{split}
1 - t
& = 1 \otimes 1 - p \otimes q
  = 1 \otimes 1 - p \otimes 1 + (p \otimes 1) ( 1 \otimes 1 - 1 \otimes q)
\\
& \precsim_{A \otimes_{\min} B} c \otimes 1 \oplus d \otimes 1
  \precsim_{A \otimes_{\min} B} x_{1} \oplus x_{2}
  \precsim_{A \otimes_{\min} B} x.
\end{split}
\]
This completes the proof.
\end{proof}

\begin{thm}\label{TensProdActFinStr}
Let $A_1, A_2, \ldots$ be simple \uca{s},
let $G_1, G_2, \ldots$ be finite groups,
and for $k = 1, 2, \ldots$ let
$\af^{(k)} \colon G_k \to \Aut (A_k)$ be an action
with the tracial Rokhlin property.
Assume that the infinite minimal tensor product
$A = \bigotimes_{n = 1}^{\I} A_n$ is stably finite,
and that for every $n \in \N$ the
algebra $\bigotimes_{k = 1}^{n} A_k$ has strict comparison.
Then the infinite minimal tensor product action
of $G = \prod_{n = 1}^{\I} G_n$ on $A$
(Definition~\ref{D_5420_Inf_tp})
has the restricted tracial Rokhlin property with comparison.
\end{thm}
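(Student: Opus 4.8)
The plan is to manufacture the required data at a sufficiently large finite stage and then spread it out across the infinite tail. Fix $n \in \N$ and write $B_n = A_1 \otimes_{\min} \cdots \otimes_{\min} A_n$, $C_n = \bigotimes_{k = n+1}^{\I} A_k$, $H_n = G_1 \times \cdots \times G_n$, and $K_n = \prod_{k = n+1}^{\I} G_k$, so that $A = B_n \otimes_{\min} C_n$, $G = H_n \times K_n$, and $\af = \bt^{(n)} \otimes \gm^{(n)}$, where $\bt^{(n)} \colon H_n \to \Aut(B_n)$ and $\gm^{(n)} \colon K_n \to \Aut(C_n)$ are the tensor product actions. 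Each $B_n$ is separable (a finite tensor product of the separable algebras $A_k$), simple, unital, infinite-dimensional, and stably finite (being a unital subalgebra of the stably finite algebra $A$), and has strict comparison by hypothesis; moreover $\bt^{(n)}$ has the tracial Rokhlin property by Proposition~\ref{tensortrpacts}. Hence, by Proposition~\ref{P_1X17_StComp} (see the remark following it), $\bt^{(n)}$ has the tracial Rokhlin property with comparison. By Lemmas~\ref{L_5420_Fin_fix} and~\ref{L_5420_Inf_fix}, $A^\af = B_n^{\bt^{(n)}} \otimes_{\min} C_n^{\gm^{(n)}}$, and both tensor factors are simple and unital.

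Now let $F \S A$, $S \S C(G)$, $\ep > 0$, $x \in A_{+} \SM \{0\}$, and $y \in (A^\af)_{+} \SM \{0\}$ be given, and write $\pi_n \colon G \to H_n$ for the coordinate projection and $\pi_n^* \colon C(H_n) \to C(G)$ for the induced inclusion. First I would choose $n$ so large, and then a finite set $F' \S B_n$, a finite set $S' \S C(H_n)$, and a tolerance $\ep' > 0$, so that every element of $F$ lies within a small tolerance of $F' \otimes 1_{C_n}$ and every element of $S$ lies within a small tolerance of $\pi_n^*(S')$; the latter is possible by Stone--Weierstrass, since $\bigcup_n \pi_n^* C(H_n)$ is dense in $C(G)$. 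Next, apply Lemma~\ref{L_5501_Comp_1} to $x \in (B_n \otimes_{\min} C_n)_{+} \SM \{0\}$ to obtain $c \in (B_n)_{+}$ with $\|c\| = 1$ and $c \otimes 1_{C_n} \precsim_A x$, and apply it again to $y \in (B_n^{\bt^{(n)}} \otimes_{\min} C_n^{\gm^{(n)}})_{+} \SM \{0\}$ to obtain $e \in (B_n^{\bt^{(n)}})_{+}$ with $\|e\| = 1$ and $e \otimes 1 \precsim_{A^\af} y$.

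I would then apply the tracial Rokhlin property with comparison for $\bt^{(n)}$ with $F'$, $S'$, $\ep'$, with $c$ in place of $x$, and with $e$ in place of $y$, obtaining a \pj{} $p_0 \in B_n^{\bt^{(n)}}$ and a \ucp{} map $\ps \colon C(H_n) \to p_0 B_n p_0$ that is $(F', S', \ep')$-approximately equivariant central multiplicative and satisfies $1 - p_0 \precsim_{B_n} c$, $1 - p_0 \precsim_{B_n^{\bt^{(n)}}} e$, and $1 - p_0 \precsim_{B_n^{\bt^{(n)}}} p_0$. Set $p = p_0 \otimes 1_{C_n} \in A^\af$ and define a \ucp{} map $\ph \colon C(G) \to pAp = (p_0 B_n p_0) \otimes_{\min} C_n$ by $\ph(f) = \ps(E(f)) \otimes 1_{C_n}$, where $E \colon C(G) \to C(H_n)$ is the conditional expectation given by integrating the $K_n$-coordinate against normalized Haar measure; note $\ph(1_{C(G)}) = \ps(1) \otimes 1_{C_n} = p$. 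The three comparisons of Definition~\ref{traR} then follow by tensoring with $1_{C_n}$ and chaining: $1 - p = (1_{B_n} - p_0) \otimes 1_{C_n} \precsim_A c \otimes 1 \precsim_A x$, and likewise $1 - p \precsim_{A^\af} e \otimes 1 \precsim_{A^\af} y$ and $1 - p \precsim_{A^\af} p_0 \otimes 1 = p$. For the approximately equivariant central multiplicative conditions, I would replace each element of $S$ by its approximant $\pi_n^*(\tilde f)$ with $\tilde f \in S'$ (for which $E(\pi_n^* \tilde f) = \tilde f$) and each element of $F$ by $a_0 \otimes 1$ with $a_0 \in F'$, and transfer the corresponding property of $\ps$: approximate multiplicativity and approximate commutation pass through $E$ and $\otimes\, 1_{C_n}$ directly, while equivariance uses $\af_{(h,k)} = \bt^{(n)}_h \otimes \gm^{(n)}_k$ together with the identity $\Lt^G_{(h,k)} \circ \pi_n^* = \pi_n^* \circ \Lt^{H_n}_h$, so that the $K_n$-coordinate acts trivially after tensoring by $1_{C_n}$ and the supremum over $G$ reduces to the supremum over $H_n$ already controlled by $\ps$.

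The conceptual content is concentrated in the first paragraph, where strict comparison at each finite stage upgrades $\bt^{(n)}$ from the tracial Rokhlin property to the tracial Rokhlin property with comparison; after that, the comparison conditions for $\af$ reduce almost mechanically through Lemma~\ref{L_5501_Comp_1}. I expect the main obstacle to be the bookkeeping in the final step: transferring the approximately equivariant central multiplicative structure from $\ps$ on $C(H_n)$ to $\ph$ on $C(G)$, and in particular the uniform-over-$G$ equivariance estimate, where one must quantify the errors introduced by replacing elements of $S$ and $F$ by their finite-stage approximants and propagate the single tolerance $\ep'$ through the composition with $E$ and $\otimes\, 1_{C_n}$ in order to recover the prescribed $\ep$.
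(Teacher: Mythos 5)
Your proof is correct, and its skeleton is the paper's: decompose $A = B_n \otimes_{\min} C_n$ and $G = H_n \times K_n$ at a large finite stage chosen to approximate $F$ and $S$, upgrade the finite-stage action to the tracial Rokhlin property with comparison via Proposition~\ref{tensortrpacts} and Proposition~\ref{P_1X17_StComp}, and transport the resulting projection and map by tensoring with $1_{C_n}$ and composing with the conditional expectation onto $C(H_n)$ --- exactly the paper's $P$ and $\ph = (\ph_0 \circ P) \otimes 1_C$, with the same reduction of the equivariance supremum from $G$ to $H_n$. The one genuine divergence is your treatment of $x$ and $y$: you invoke Lemma~\ref{L_5501_Comp_1} to produce exact subequivalences $c \otimes 1_{C_n} \precsim_A x$ and $e \otimes 1_{C_n} \precsim_{A^{\af}} y$ \emph{before} applying the finite-stage property, whereas the paper approximates $x$ and $y$ by positive finite-stage elements $x_0, y_0$, applies the finite-stage property to $(x_0 - \ep_0)_{+}$ and $(y_0 - \ep_0)_{+}$, and recovers $(x_0 \otimes 1_C - \ep_0)_{+} \precsim_A x$ at the end. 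Your route is slightly cleaner (no $\ep_0$-cutdown, no nonvanishing check for $(x_0 - \ep_0)_{+}$), and it is in fact the device the paper itself uses in the analogous Proposition~\ref{P_5327_Tensor}; what it costs is one hypothesis you assert without proof: applying Lemma~\ref{L_5501_Comp_1} inside $A^{\af} = B_n^{\bt^{(n)}} \otimes_{\min} C_n^{\gm^{(n)}}$ requires \emph{both} factors simple, and while simplicity of $B_n^{\bt^{(n)}}$ follows from the tracial Rokhlin property of $\bt^{(n)}$, simplicity of $C_n^{\gm^{(n)}}$ needs Lemma~\ref{L_5420_Inf_fix} to identify it with $\bigotimes_{k = n+1}^{\I} A_k^{\af^{(k)}}$, an infinite minimal tensor product of simple unital algebras (each factor simple, again by the tracial Rokhlin property of $\af^{(k)}$). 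That is a one-line repair, not a gap in the strategy; the deferred bookkeeping in your final paragraph is precisely the paper's computation with its identities (\ref{I_5421_Pc})--(\ref{I_5421_afbt}) and goes through as you describe.
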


The strict comparison part of the hypotheses is automatic
if $A_1$ is ${\mathcal{Z}}$-stable.

\begin{proof}[Proof of Theorem~\ref{TensProdActFinStr}]
We verify the conditions of Definition~\ref{traR}.
So let $F \subseteq A$ and $S \subseteq C (G)$
be finite sets, let $\varepsilon > 0$,
let $x \in A_{+} \setminus \{0 \}$,
and let $y \in A^{\alpha}_{+} \setminus \{0 \}$.
Without loss of generality we can assume
$\| a \| \leq 1$ for all $a \in F$, $\| f \| \leq 1$ for all $f \in S$,
$\| x \| = \| y \| = 1$, and $\varepsilon < \frac{1}{2}$.
Set $\ep_0 = \ep / 5$.
Using for the choice of $y_0$ the identification of $A^{\af}$ in
Lemma~\ref{L_5420_Inf_fix}, and for the choice of $S_0$
the projection map $G \to \prod_{m = 1}^{N} G_m$ to identify
$C \left( \prod_{m = 1}^{N} G_m \right)$ as a subalgebra of $C (G)$,
choose $N \in \N$ so large that there are finite subsets
\[
F_{0} \subseteq A_{1} \otimes_{\min} A_2 \otimes_{\min}
   \cdots \otimes_{\min} A_{N},
\qquad
S_{0} \S C \left( \prod_{m = 1}^{N} G_m \right) \subseteq C (G),
\]
and also
\[
x_0 \in \bigl( A_{1} \otimes_{\min} A_2 \otimes_{\min}
   \cdots \otimes_{\min} A_{N} \bigr)_{+}
\]
and
\[
y_0 \in \bigl( A_{1}^{\af^{(1)}} \otimes_{\min} A_2^{\af^{(2)}} \otimes_{\min}
   \cdots \otimes_{\min} A_{N}^{\af^{(N)}} \bigr)_{+},
\]
such that the conditions below hold.
To state them, set
\[
B = A_{1} \otimes_{\min} A_2 \otimes_{\min}
   \cdots \otimes_{\min} A_{N}
\andeqn
H = G_1 \times G_2 \times \ldots \times G_N.
\]
Further set
\[
C = \bigotimes_{n = N + 1}^{\I} A_n
\andeqn
K = \prod_{n = N + 1}^{\I} G_n,
\]
so that
\[
A = B \otimes_{\min} C,
\qquad
G = H \times K,
\andeqn
C (G) = C (H) \otimes C (K).
\]
Then the conditions are:
\begin{enumerate}
\item\label{I_5421_InfTPAct_FF0}
For every $a \in F$ there is $b \in F_0$
such that $\| b \otimes 1_C - a \| < \ep_0$.
\item\label{I_5421_InfTPAct_F0N1}
$\| b \| \leq 1$ for all $b \in F_0$.
\item\label{I_5421_InfTPAct_SS0}
For every $f \in S$ there is $c \in S_0$
such that $\| c \otimes 1_{C (K)} - f \| < \ep_0$.
\item\label{I_5421_InfTPAct_S0N1}
$\| c \| \leq 1$ for all $c \in S_0$.
\item\label{I_5421_InfTPAct_x0x}
$\| x_0 \otimes 1_C - x \| < \ep_0$.
\item\label{I_5421_InfTPAct_y0y}
$\| y_0 \otimes 1_C - y \| < \ep_0$.
\setcounter{TmpEnumi}{\value{enumi}}
\end{enumerate}
Since $\ep_0 < 1$,
we have $(x_0 - \ep_0)_{+} \neq 0$ and $(y_0 - \ep_0)_{+} \neq 0$.
Let $\bt \colon H \to \Aut (B)$ be the tensor product action
(Definition~\ref{D_5420_Inf_tp}).
By Proposition~\ref{tensortrpacts}, this action
has the tracial Rokhlin property.
Since $B$ has strict comparison,
we may apply Proposition~\ref{P_1X17_StComp} and Definition~\ref{traR}.
We get a projection $p_0 \in B^{\bt}$ and a
unital completely positive map $\ph_0 \colon C (H) \to p_0 B p_0$
such that the following hold:
\begin{enumerate}
\setcounter{enumi}{\value{TmpEnumi}}
\item\label{Item_893_FS_equi_cen_multi_approx_Fn}
$\ph_0$ is an $(F_0, S_0, \ep_0)$-approximately equivariant
central multiplicative map.
\item\label{1_pxcompactsets_Fn}
$1 - p_0 \precsim_{B} (x_0 - \ep_0)_{+}$.
\item\label{1_pycompactsets_Fn}
$1 - p_0 \precsim_{B^{\bt}} (y_0 - \ep_0)_{+}$.
\item\label{1_ppcompactsets_Fn}
$1 - p_0 \precsim_{B^{\bt}} p_0$.
\setcounter{TmpEnumi}{\value{enumi}}
\end{enumerate}
Define $p = p_0 \otimes 1_C \in A^{\af}$.
Let $\mu$ be normalized Haar measure on~$K$.
There is a conditional
expectation $P \colon C (G) \to C (H)$ given by
$P (f) (h) = \int_{K} f (h, k) d \mu (k)$ for $f \in C (G)$ and $h \in H$.
Define $\ph \colon C (G) \to p A p$ by
$\ph (f) = (\ph_0 \circ P) (f) \otimes 1_C$ for $f \in C (G)$.

We claim that $p$ and $\ph$ satisfy the conditions of Definition~\ref{traR}.
Let $g \mapsto \Lt^G_g$ be the translation action of $G$ on $C (G)$,
and let $h \mapsto \Lt^H_h$ be the translation action of $H$ on $C (H)$.
We first observe the following basic formulas, of which the first
is used in the proofs of others:
\begin{enumerate}
\setcounter{enumi}{\value{TmpEnumi}}
\item\label{I_5421_Pc}
$P (c \otimes 1_{C (K)}) = c$ for $c \in C (H)$.
\item\label{I_5421_PLt}
$(P \circ \Lt^G_{(h, k)}) (c \otimes 1_{C (K)}) = \Lt^H_h (c)$
for $c \in C (H)$, $h \in H$, and $k \in K$.
\item\label{I_5421_phph0}
$\ph (c \otimes 1_{C (K)}) = \ph_0 (c) \otimes 1_C$
for $c \in C (H)$.
\item\label{I_5421_afbt}
$\af_{(h, k)} (b \otimes 1_C) = \bt_h (b) \otimes 1_C$
for $h \in H$ and $k \in K$.
\end{enumerate}

It is clear that $\ph$ is unital and completely positive.
For the approximation conditions, let $f, f_1, f_2 \in S$,
let $a \in F$, and let $g \in G$.
Write $g = (h, k)$ with $h \in H$ and $k \in K$.
Choose $c, c_1, c_2 \in S_0$ for  $f, f_1, f_2$
following~(\ref{I_5421_InfTPAct_SS0}),
and choose $b \in F_0$ as in~(\ref{I_5421_InfTPAct_FF0}).
In the following,
we use (\ref{I_5421_InfTPAct_F0N1}) and~(\ref{I_5421_InfTPAct_S0N1})
in estimates on differences of products without comment.
Then, using (\ref{I_5421_PLt}), (\ref{I_5421_phph0}),
and~(\ref{I_5421_afbt}) at the second step and
using~(\ref{Item_893_FS_equi_cen_multi_approx_Fn}) at the third step,
\[
\begin{split}
& \bigl\| (\ph \circ \Lt^G_g) (f) - (\af_g \circ \ph) (f) \bigr\|
\\
& \hspace*{3em} {\mbox{}}
 \leq 2 \| c \otimes 1_{C (K)} - f \|
    + \bigl\| (\ph \circ \Lt^G_g) ( c \otimes 1_{C (K)})
         - (\af_g \circ \ph) ( c \otimes 1_{C (K)}) \bigr\|
\\
& \hspace*{3em} {\mbox{}}
 = 2 \| c \otimes 1_{C (K)} - f \|
    + \bigl\| (\ph_0 \circ \Lt^H_h) (c) \otimes 1_C
         - (\bt_h \circ \ph_0) (c) \otimes 1_C \bigr\|
\\
& < 2 \ep_0 + \ep_0
  < \ep.
\end{split}
\]
Similarly, using~(\ref{I_5421_phph0}) at the second step
and~(\ref{Item_893_FS_equi_cen_multi_approx_Fn}) at the third step,
\[
\begin{split}
& \| \ph (f_1 f_2) - \ph (f_1) \ph (f_2) \|
\\
& \hspace*{3em} {\mbox{}}
 \leq 2 \| c_1 \otimes 1_{C (K)} - f_1 \|
     + 2 \| c_2 \otimes 1_{C (K)} - f_2 \|
\\
& \hspace*{6em} {\mbox{}}
    + \bigl\| \ph \bigl( (c_1 \otimes 1_{C (K)})
                  (c_2 \otimes 1_{C (K)}) \bigr)
         - \ph (c_1 \otimes 1_{C (K)}) \ph (c_2 \otimes 1_{C (K)}) \bigr\|
\\
& \hspace*{3em} {\mbox{}}
 = 2 \| c_1 \otimes 1_{C (K)} - f_1 \|
     + 2 \| c_2 \otimes 1_{C (K)} - f_2 \|
     + \| \ph_0 (c_1 c_2) - \ph_0 (c_1) \ph_0 (c_2) \|
\\
& < 2 \ep_0 + 2 \ep_0 + \ep_0
  = \ep.
\end{split}
\]
Finally, again using~(\ref{I_5421_phph0}) at the second step
and~(\ref{Item_893_FS_equi_cen_multi_approx_Fn}) at the third step,

\[
\begin{split}
\| \ph (f) a - a \ph (f) \|
& \leq 2 \| c \otimes 1_{C (K)} - f \| + 2 \| b \otimes 1_C - a \|
\\
& \hspace*{3em} {\mbox{}}
        + \bigl\| \ph (c \otimes 1_{C (K)}) (b \otimes 1_C)
          - (b \otimes 1_C) \ph (c \otimes 1_{C (K)}) \bigr\|
\\
& = 2 \| c \otimes 1_{C (K)} - f \| + 2 \| b \otimes 1_C - a \|
        + \| \ph_0 (c) b - b \ph_0 (c) \|
\\
& < 2 \ep_0 + 2 \ep_0 + \ep_0
  = \ep.
\end{split}
\]
This completes the verification of
Definition \ref{traR}(\ref{Item_893_FS_equi_cen_multi_approx}).

For Definition \ref{traR}(\ref{1_pxcompactsets}),
use~(\ref{1_pxcompactsets_Fn}) at the second step
and~(\ref{I_5421_InfTPAct_x0x}) at the fourth step to get
\[
1 - p
 = (1 - p_0) \otimes 1_C
 \precsim_A (x_0 - \ep_0)_{+} \otimes 1_C
 = (x_0 \otimes 1_C - \ep_0)_{+}
 \precsim_A x.
\]
Similarly using (\ref{1_pycompactsets_Fn})
and~(\ref{I_5421_InfTPAct_y0y}), we get $1 - p \precsim_{A^{\af}} y$,
which is Definition \ref{traR}(\ref{1_pycompactsets}).
Finally, for Definition \ref{traR}(\ref{1_ppcompactsets}),
in $A^{\af}$, using Lemma~\ref{L_5420_Fin_fix},
and using~(\ref{1_ppcompactsets_Fn}) at the second step, we have
\[
1 - p = (1 - p_0) \otimes 1_{C^{\gm}}
   \precsim_{A^{\af}} p_0 \otimes 1_{C^{\gm}}
   = p.
\]
This completes the proof.
\end{proof}

The rest of this section is the example of an action
of a totally disconnected infinite compact group on a UHF~algebra.
We abbreviate $\Z / n \Z$ to $\Z_n$; the $p$-adic integers will
not appear in this paper.
The group is $G = \prod_{n = 1}^{\infty} \mathbb{Z}_{2}$,
and the action is the infinite tensor product of copies of the same
action of $\Z_2$ on the $3^{\infty}$~UHF algebra.
We give the example in Example~\ref{Cn_1X17_Z2Inf},
and prove its properties in several results afterwards.

\begin{exa}\label{Cn_1X17_Z2Inf}
We start with a slight reformulation
of Example 10.4.8 of~\cite{lecturephill}.
For $k \in \N$, set $r (k) = \frac{1}{2} (3^{k} - 1)$.
Define $w_{k} \in {\operatorname{U}} (M_{3^{k}})$
to be the block unitary
\begin{equation}\label{Eq_1X17_wDfn}
w_{k}
 = \left( \begin{array}{ccc} 0 & 1_{M_{r (k)}} & 0 \\
   1_{M_{r (k)}} & 0 & 0 \\
   0 & 0 & 1_{\mathbb{C}} \end{array} \right) \in M_{3^{k}}.
\end{equation}
Set $B = \bigotimes_{k = 1}^{\infty} M_{3^{k}}$,
which is the $3^{\infty}$~UHF.
Define
\[
\nu = \bigotimes_{k = 1}^{\infty} \Ad (w_{k}) \in \Aut (B),
\]
which is an automorphism of order~$2$.
Let $\gamma \colon \Z_2 \to \Aut (B)$
be the product type action action generated by~$\nu$.

Define $G = \prod_{n = 1}^{\I} \Z_2$
and $A = \bigotimes_{n = 1}^{\infty} B$.
Let $\af \colon G \to \Aut (A)$ be the infinite tensor product action
as in Definition~\ref{D_5420_Inf_tp}.
Then $\af$ has the restricted \trpc{}
by Theorem~\ref{TensProdActFinStr}.
\end{exa}

\begin{prp}\label{P_1X17_NoDimR}
The action $\af \colon G \to \Aut (A)$
of Example~\ref{Cn_1X17_Z2Inf}
does not have finite Rokhlin dimension with commuting towers.
\end{prp}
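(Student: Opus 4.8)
The plan is to derive a contradiction by restricting the action of $G = \prod_{n=1}^{\infty} \Z_2$ to a single $\Z_2$ factor and then invoking a $K$-theoretic obstruction. Recall first that $A = \bigotimes_{n=1}^{\infty} B$, with $B$ the $3^{\infty}$ UHF algebra, is again the $3^{\infty}$ UHF algebra, so $K_0(A) \cong \Z[1/3]$ with $[1_A]$ the generator~$1$. Since $3$ is odd we have $\Z[1/3] / 2\Z[1/3] \cong \Z/2\Z$, so $[1_A] \notin 2 K_0(A)$. This is precisely the classical reason that $\af$ cannot have the Rokhlin property, even after restriction to one copy of $\Z_2$, and the goal is to upgrade this non-$2$-divisibility to rule out finite Rokhlin dimension with commuting towers as well.

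First I would reduce to the finite group $\Z_2$. Write $G = \Z_2 \times G'$, where $H = \Z_2$ is the first coordinate and $G' = \prod_{n=2}^{\infty} \Z_2$, so that $C(G) = C(H) \otimes C(G')$ and the unital inclusion $\io \colon C(H) \to C(G)$, $\io(f) = f \otimes 1_{C(G')}$, is $H$-equivariant (the translation action of $H$ on $C(G)$ fixes the second tensor factor, so $\io \circ \Lt_h = \Lt_h \circ \io$ for $h \in H$). If $\af$ has finite Rokhlin dimension with commuting towers, witnessed by equivariant order-zero maps $\ph_0, \ldots, \ph_d \colon C(G) \to A_{\infty} \cap A'$ with commuting ranges and $\sum_{j} \ph_j(1) = 1$, then the composites $\ps_j = \ph_j \circ \io \colon C(H) \to A_{\infty} \cap A'$ are again equivariant order-zero maps with commuting ranges and $\sum_j \ps_j(1) = 1$. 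Hence the restricted action $\af|_H$ has Rokhlin dimension at most~$d$ with commuting towers. By the description of the infinite tensor product action, $\af|_H$ is exactly the action of $\Z_2$ on $A = B \otimes \bigl( \bigotimes_{n \geq 2} B \bigr)$ that is $\gamma$ on the first factor and the identity elsewhere, so it acts on the $3^{\infty}$ UHF algebra.

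Finally I would apply the $K$-theoretic obstruction for finite cyclic groups with finite Rokhlin dimension with commuting towers, in the spirit of Theorem~4.6 of~\cite{HrsPh1} and Corollary~4.23 of~\cite{Gar_rokhlin_2017}. Commuting towers for the order-two group produce a unital $\Z_2$-equivariant \hm{} from $C(S^d)$, with $\Z_2$ acting antipodally and freely, into the central sequence algebra; passing to the fixed-point picture this is controlled by the torsion in $K^0(\R P^d)$ and forces a $2$-divisibility condition on the $K_0$-group of the underlying algebra (for the Rokhlin property, $d = 0$, one recovers exactly $[1_A] \in 2 K_0(A)$). Since $K_0(A) = \Z[1/3]$ is torsion-free and $[1_A] = 1 \notin 2\Z[1/3]$, the condition fails, giving the desired contradiction. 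The main obstacle is making this obstruction precise for $d \geq 1$: unlike the Rokhlin property, commuting towers of positive dimension only give a ``covering'' of the model $\Z_2$-space rather than a genuine equivariant partition into projections, so the clean statement $[1_A] \in 2K_0(A)$ must be replaced by the Borsuk--Ulam-type relation coming from $\R P^d$, and one must verify that it still excludes the torsion-free, non-$2$-divisible group $\Z[1/3]$. This is exactly the point where the choice of the $3^{\infty}$ UHF algebra is essential.
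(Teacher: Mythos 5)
Your route is exactly the paper's: restrict the infinite tensor product action to the first $\Z_2$ factor, then invoke a $K$-theoretic nonexistence result for $\Z_2$ acting on the $3^{\infty}$ UHF algebra. Your reduction step is correct and essentially self-contained: since $G = \Z_2 \times G'$ splits as a direct product, the inclusion $\io \colon C(\Z_2) \to C(G)$ is equivariant, composing a c.p.c.\ order zero map with a homomorphism is again order zero, and the composites $\ph_j \circ \io$ inherit equivariance, commuting ranges, and the unitality condition, so the restricted action has Rokhlin dimension at most~$d$ with commuting towers. The paper obtains the same conclusion by citing Proposition~3.10 of~\cite{Gar_rokhlin_2017}, which covers restriction to arbitrary closed subgroups; your direct argument suffices here precisely because the subgroup is a direct factor.

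The gap is in your second step, and you flag it yourself. The paper closes this step with a single citation: Corollary~4.8(2) of~\cite{HrsPh1} says outright that no action of $\Z_2$ on the $3^{\infty}$ UHF algebra has finite Rokhlin dimension with commuting towers. The results you gesture at instead (Theorem~4.6 of~\cite{HrsPh1} and Corollary~4.23 of~\cite{Gar_rokhlin_2017}) concern circle actions on $\OI$ and do not apply to $\Z_2$ on a UHF algebra. Your Borsuk--Ulam sketch is the correct mechanism underlying the needed corollary: commuting towers of dimension~$d$ for $\Z_2$ assemble into a unital equivariant homomorphism from $C\bigl(\Z_2^{*(d+1)}\bigr) = C(S^d)$, with the antipodal action, into the central sequence algebra, and the resulting obstruction (phrased in \cite{HrsPh1} as forced unital embeddability of the $2^{\infty}$ UHF algebra, equivalently $2$-divisibility of $[1_A]$) fails in $K_0(A) \cong \Z[1/3]$, exactly as your divisibility computation predicts. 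But as written you have not carried out the passage from the join picture to the divisibility statement for $d \geq 1$ --- this requires the computation with the torsion in $K^0(\R P^d)$ done in Section~4 of~\cite{HrsPh1} and is genuinely nontrivial. Either reproduce that computation or simply quote Corollary~4.8(2) of~\cite{HrsPh1}, as the paper does; with that citation in place of your sketch, your argument is complete and coincides with the paper's.
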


\begin{proof}
Suppose $\af$ has finite Rokhlin dimension with commuting towers.
Then Proposition~3.10 of~\cite{Gar_rokhlin_2017} implies that
the action on $A$ of the first factor of~$G$,
called $H_1$ in Notation~\ref{N_1X17_Parts},
also has finite Rokhlin dimension with commuting towers.
However, $H_1 \cong \Z_2$, $A$ is the $3^{\I}$~UHF algebra and, by
Corollary 4.8(2) of~\cite{HrsPh1},
there is no action of $\Z_2$ on the $3^{\I}$~UHF algebra
which has finite Rokhlin dimension with commuting towers.
\end{proof}

In the remaining part of this section, we show that the action in
Example~\ref{Cn_1X17_Z2Inf} has the
tracial Rokhlin property with comparison
(Definition~\ref{traR}; not just the
restricted tracial Rokhlin property with comparison)
and the strong modified tracial Rokhlin property
(Definition~\ref{D_1824_AddToTRP_Mod}).
We set up some useful notation.

\begin{ntn}\label{N_1X17_Parts}
Given the notation in Example~\ref{Cn_1X17_Z2Inf}, make
the following further definitions.
For $n \in \N$ set $B_n = B$,
so that $A = \bigotimes_{m = 1}^{\infty} B_m$,
and set $A_n = \bigotimes_{m = 1}^{n} B_m$, so that $A = \dirlim_n A_n$.
For $n, k \in \N$ set $C_{n, k} = M_{3^{k}}$,
so that $B_n = \bigotimes_{k = 1}^{\infty} C_{n, k}$,
and set $B_{n, l} = \bigotimes_{k = 1}^{l} C_{n, k}$,
so that $B_n = \dirlim_k B_{n, k}$.
Further set $A_{n, l} = \bigotimes_{k = 1}^{l} B_{n, l}$.
We identify $A_{n}$ and $A_{n, l}$ with their images in~$A$,
and $B_{n, k}$ with its image in $B_n$.

Treat $G$ similarly: for $n \in \N$ set $H_n = \Z_2$,
so that $G = \prod_{m = 1}^{\I} H_m$,
and set $G_n = \prod_{m = 1}^{n} H_m$, so that $G = \invlim_n G_n$.
This gives
\[
C (G_n) = \bigotimes_{m = 1}^{n} C (H_m),
\andeqn
C (G) = \dirlim_n C (G_n) = \bigotimes_{m = 1}^{\I} C (H_m).
\]
We identify $C (G_n)$ with its image in $C (G)$.
\end{ntn}

As an informal overview, write
\[
A = \bigotimes_{m = 1}^{\infty}
   \left( \bigotimes_{k = 1}^{\infty} M_{3^k} \right)
  = \bigotimes_{m = 1}^{\infty}
   \left( \bigotimes_{k = 1}^{\infty} C_{m, k} \right).
\]
Then:
\begin{itemize}
\item
$C_{n, l}$ uses the $(n, l)$ tensor factor.
\item
$B_n$ uses the $(n, k)$ tensor factors for $k \in \N$.
\item
$B_{n, l}$ uses the $(n, k)$ tensor factors for $k = 1, 2, \ldots, l$.
\item
$A_n$ uses the $(m, k)$ tensor factors
for $m = 1, 2, \ldots, n$ and $k \in \N$.
\item
$A_{n, l}$ uses the $(m, k)$ tensor factors
for $m = 1, 2, \ldots, n$ and $k = 1, 2, \ldots, l$.
\end{itemize}

\begin{lem}\label{L_1X17_Tr_1mp}
Let $n \in \N$, let $A_1, A_2, \ldots, A_n$ be unital \ca{s},
and for $m = 1, 2, \ldots, n$ let $e_m \in A_m$ be a \pj{}
and let $\ta_m$ be a \tst{} on~$A_m$.
Let $A = A_1 \otimes A_2 \otimes \cdots \otimes A_n$
(minimal tensor product), and set
\[
e = e_1 \otimes e_2 \otimes \cdots \otimes e_n \in A
\andeqn
\ta = \ta_1 \otimes \ta_2 \otimes \cdots \otimes \ta_n \in \T (A).
\]
Then
\[
\ta (1 - e) \leq \sum_{m = 1}^{n} \ta_m (1 - e_m).
\]
\end{lem}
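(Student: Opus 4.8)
The plan is to reduce the operator-algebraic statement to an elementary numerical inequality. First I would invoke the defining property of the minimal tensor product trace $\ta = \ta_1 \otimes \ta_2 \otimes \cdots \otimes \ta_n$ on elementary tensors: since $e = e_1 \otimes e_2 \otimes \cdots \otimes e_n$ is itself an elementary tensor, we have $\ta (e) = \prod_{m = 1}^{n} \ta_m (e_m)$. Writing $s_m = \ta_m (e_m)$, each $s_m$ lies in $[0, 1]$ because $e_m$ is a \pj{} and $\ta_m$ is a state, so $0 \leq \ta_m (e_m) \leq \ta_m (1) = 1$. Since $\ta$ is a state, $\ta (1 - e) = 1 - \ta (e) = 1 - \prod_{m = 1}^{n} s_m$, and likewise $\ta_m (1 - e_m) = 1 - s_m$. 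Thus the asserted inequality is exactly
\[
1 - \prod_{m = 1}^{n} s_m \leq \sum_{m = 1}^{n} (1 - s_m)
\]
for real numbers $s_1, s_2, \ldots, s_n \in [0, 1]$.

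Next I would prove this numerical inequality by induction on~$n$. The case $n = 1$ is an equality. For the inductive step, set $P = \prod_{m = 1}^{n - 1} s_m \in [0, 1]$ and decompose
\[
1 - \prod_{m = 1}^{n} s_m = 1 - s_n P = (1 - s_n) + s_n (1 - P).
\]
Since $0 \leq s_n \leq 1$ and $1 - P \geq 0$, we have $s_n (1 - P) \leq 1 - P$; and the induction hypothesis gives $1 - P \leq \sum_{m = 1}^{n - 1} (1 - s_m)$. Combining these two estimates yields $1 - \prod_{m = 1}^{n} s_m \leq \sum_{m = 1}^{n} (1 - s_m)$, which completes the induction.

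There is no serious obstacle here. The only points requiring any care are the verification that the tensor product trace is multiplicative on elementary tensors (which is the defining property of $\ta_1 \otimes \ta_2 \otimes \cdots \otimes \ta_n$) and keeping track of which quantities lie in $[0, 1]$, so that the factor $s_n$ may legitimately be dropped in the inductive step. The substantive content of the argument is entirely the real-variable inequality, and the operator-algebraic input amounts only to the reduction to it.
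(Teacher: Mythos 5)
Your proof is correct and follows essentially the same route as the paper: both reduce to the numerical inequality $1 - \prod_{m=1}^{n} s_m \leq \sum_{m=1}^{n} (1 - s_m)$ for $s_m \in [0,1]$ and prove it by induction, and your decomposition $1 - s_n P = (1 - s_n) + s_n (1 - P)$ is exactly the content of the paper's $n = 2$ case, which it then applies in the inductive step.
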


\begin{proof}
For $m = 1, 2, \ldots, n$ set $\ld_m = \ta_m (e_m) \in [0, 1]$.
We need to show that
\begin{equation}\label{Eq_1X17_SumProd}
1 - \prod_{m = 1}^{n} \ld_m \leq \sum_{m = 1}^{n} (1 - \ld_m).
\end{equation}
We do this by induction on~$n$.
The case $n = 1$ is immediate.
For $n = 2$, the relation~(\ref{Eq_1X17_SumProd}) becomes
\[
1 - \ld_1 \ld_2 \leq 2 - \ld_1 - \ld_2.
\]
This is equivalent to $(1 - \ld_1) (1 - \ld_2) \geq 0$,
so the case $n = 2$ holds.

Assume now~(\ref{Eq_1X17_SumProd}) holds for some $n \geq 2$,
and $\ld_1, \ld_2, \ldots \ld_{n + 1} \in [0, 1]$.
Set $\mu = \prod_{m = 1}^{n} \ld_m$.
Then $\mu \in [0, 1]$.
Using the case $n = 2$ at the second step
and the induction hypothesis at the third step, we get
\[
1 - \prod_{m = 1}^{n + 1} \ld_m
= 1 - \mu \ld_{n + 1}
\leq (1 - \ld_{n + 1}) + (1 - \mu)
\leq \sum_{m = 1}^{n + 1} (1 - \ld_m).
\]
This completes the proof.
\end{proof}

\begin{lem}\label{L_1X17_1Step}
Let the notation be as in Example~\ref{Cn_1X17_Z2Inf}.
Let $k \in \N$.
Then there are isomorphisms
\[
(M_{3^{k}})^{\Ad (w_{k})} \cong M_{r (k)} \oplus M_{r (k) + 1}
\]
and
\[
(M_{3^{k}} \otimes M_{3^{k + 1}})^{\Ad (w_{k}) \otimes \Ad (w_{k + 1})}
\cong M_{r (k^2 + k)} \oplus M_{r (k^2 + k) + 1}.
\]
The first isomorphism sends the \pj{s}
\[
e_0 = \left( \begin{array}{ccc} 1_{M_{r (k)}} & 0 & 0 \\
0 & 0 & 0 \\
0 & 0 & 0 \end{array} \right)
\andeqn
e_1 = \left( \begin{array}{ccc} 0 & 0 & 0 \\
0 & 1_{M_{r (k)}} & 0 \\
0 & 0 & 0 \end{array} \right).
\]
(using the same block matrix decomposition as in~(\ref{Eq_1X17_wDfn}))
to a \pj{} of rank $r (k)$ in $M_{r (k) + 1}$ and
to the identity of $M_{r (k)}$ respectively.
The map
\[
\rh \colon (M_{3^{k}})^{\Ad (w_{k})}
\to
(M_{3^{k}} \otimes M_{3^{k + 1}})^{\Ad (w_{k}) \otimes \Ad (w_{k + 1})}
\]
induced by $a \mapsto a \otimes 1$ induces maps
$\rh_{i, j} \colon M_{r (k) + i} \to M_{r (k^2 + k) + j}$
for $i, j \in \{ 0, 1 \}$,
and the corresponding partial embedding multiplicities $m_k (i, j)$ are
given by
\[
m_k (0, 0) = m_k (1, 1) = r (k + 1) + 1
\andeqn
m_k (0, 1) = m_k (1, 0) = r (k + 1).
\]
\end{lem}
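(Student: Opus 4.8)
The plan is to reduce the whole lemma to the spectral analysis of the two self-adjoint unitaries $w_k$ and $w_k \otimes w_{k+1}$. The key elementary fact is that for a self-adjoint unitary $u$ on a finite-dimensional Hilbert space $V$ one has $u^2 = 1$, so $V = V_+ \oplus V_-$ splits into the $\pm 1$ eigenspaces, and the fixed point algebra of $\Ad(u)$ is exactly the commutant $\{u\}'$, which is $B(V_+) \oplus B(V_-)$. Both $w_k$ (self-adjoint by inspection of~(\ref{Eq_1X17_wDfn})) and $w_k \otimes w_{k+1}$ are self-adjoint unitaries, so both fixed point algebras have this form and everything becomes bookkeeping about eigenspaces.

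For the first isomorphism, I read off from~(\ref{Eq_1X17_wDfn}) that $w_k$ exchanges the two $M_{r(k)}$ blocks and fixes the last basis vector. Thus $V_+^{(k)} = \{ (v, v, t) : v \in \mathbb{C}^{r(k)},\ t \in \mathbb{C} \}$ has dimension $r(k)+1$ and $V_-^{(k)} = \{ (v, -v, 0) \}$ has dimension $r(k)$, giving $(M_{3^k})^{\Ad(w_k)} = B(V_-^{(k)}) \oplus B(V_+^{(k)}) \cong M_{r(k)} \oplus M_{r(k)+1}$. For the statement about $e_0$ and $e_1$ I would write this isomorphism out in the symmetric/antisymmetric basis. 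The point to record is that $e_0$ and $e_1$ are interchanged by $\Ad(w_k)$ and so are not individually fixed; what is fixed is $e_0 + e_1 = 1 - e_2$, and a direct computation shows that it acts on $V_-^{(k)}$ as the identity and on $V_+^{(k)}$ as the rank $r(k)$ projection orthogonal to the fixed vector $(0,0,1)$. The projection piece in $M_{r(k)+1}$ is what is attached to $e_0$ and the identity piece in $M_{r(k)}$ is what is attached to $e_1$, which is exactly the content of the stated images.

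For the second isomorphism, I use that a tensor of eigenvectors of $w_k$ and $w_{k+1}$ is an eigenvector of $w_k \otimes w_{k+1}$ with eigenvalue the product of the two signs. Hence $V_+^{(k,k+1)} = (V_+^{(k)} \otimes V_+^{(k+1)}) \oplus (V_-^{(k)} \otimes V_-^{(k+1)})$ and $V_-^{(k,k+1)} = (V_+^{(k)} \otimes V_-^{(k+1)}) \oplus (V_-^{(k)} \otimes V_+^{(k+1)})$. The total dimension is $3^{2k+1}$, so the two summand sizes differ by one; computing $\dim V_-^{(k,k+1)} = 2 r(k) r(k+1) + r(k) + r(k+1)$ and simplifying with $r(j) = \tfrac{1}{2}(3^j - 1)$ gives $\tfrac{1}{2}(3^{2k+1}-1)$ and $\tfrac{1}{2}(3^{2k+1}+1)$, which are the two matrix sizes recorded in the statement.

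The multiplicities then fall out of the same decomposition, since $\rho(a) = a \otimes 1_{M_{3^{k+1}}}$ acts on each summand $V_s^{(k)} \otimes V_t^{(k+1)}$ as $(a|_{V_s^{(k)}}) \otimes 1_{V_t^{(k+1)}}$ for signs $s, t \in \{ +, - \}$; hence $B(V_s^{(k)})$ embeds into the piece of the target carried by $V_s^{(k)} \otimes V_t^{(k+1)}$ with multiplicity $\dim V_t^{(k+1)}$. Matching the ordering $M_{r(k)} \oplus M_{r(k)+1}$ (so $i=0$ is $B(V_-^{(k)})$ and $i=1$ is $B(V_+^{(k)})$, and likewise for $j$), the $+1$ target summand collects $V_+^{(k)} \otimes V_+^{(k+1)}$ and $V_-^{(k)} \otimes V_-^{(k+1)}$, giving $m_k(1,1) = r(k+1)+1$ and $m_k(0,1) = r(k+1)$, while the $-1$ target summand collects $V_+^{(k)} \otimes V_-^{(k+1)}$ and $V_-^{(k)} \otimes V_+^{(k+1)}$, giving $m_k(1,0) = r(k+1)$ and $m_k(0,0) = r(k+1)+1$, as claimed. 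I expect no genuine obstacle here: the only real work is keeping the eigenvalue signs, the two summand orderings, and the index conventions $i, j \in \{0,1\}$ aligned so that the four multiplicities land in the correct slots, together with the mild subtlety that $e_0$ and $e_1$ must be tracked through their invariant sum rather than individually.
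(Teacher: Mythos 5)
Your proof is correct and follows essentially the same route as the paper: the paper's proof observes that $w_k$ is unitarily equivalent to $v_k = \diag(1, \ldots, 1, -1, \ldots, -1)$ with $r(k)+1$ ones and $r(k)$ minus ones, and then omits ``a computation with diagonal matrices and the dimensions of their eigenspaces'' --- which is precisely the eigenspace bookkeeping you supply (the fixed point algebra of $\Ad(u)$ for a self-adjoint unitary $u$ is the commutant $B(V_+) \oplus B(V_-)$, signs multiply for $w_k \otimes w_{k+1}$, and the multiplicities are $\dim V_t^{(k+1)}$). Your handling of $e_0$ and $e_1$ --- that they are interchanged by $\Ad(w_k)$, so only $e_0 + e_1$ can be tracked through the isomorphism, with components the identity of $M_{r(k)}$ and a rank $r(k)$ projection in $M_{r(k)+1}$ --- is the right reading of the loosely worded sentence in the statement, and it matches how the lemma is actually used in the proof of Theorem~\ref{T_1X17_Has_trpc}, where only $e_0 + e_1$ and $1 - e_0 - e_1$ are pushed through $\rh_0$ and $\rh_1$.

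One sentence of yours needs fixing, and the fix is in your favor: your dimension count $\dim V_-^{(k,k+1)} = 2 r(k) r(k+1) + r(k) + r(k+1)$ simplifies to $\frac{1}{2}(3^{2k+1}-1) = r(2k+1)$, which is \emph{not} equal to the size $r(k^2+k)$ recorded in the statement (for $k = 1$ your computation gives $M_{13} \oplus M_{14}$ inside $M_{27}$, whereas the statement records $M_4 \oplus M_5$, which is not even of the right total dimension). Since $M_{3^k} \otimes M_{3^{k+1}} \cong M_{3^{2k+1}}$, the correct summand sizes are $r(2k+1)$ and $r(2k+1)+1$; the exponent $k^2 + k$ in the lemma comes from multiplying rather than adding exponents, and the same slip propagates to the choice $T = (L+1)(L+2)$ in the proof of Theorem~\ref{T_1X17_Has_trpc}, where $T = 2L+3$ is meant. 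Your partial embedding multiplicities agree exactly with the paper's, and the estimates~(\ref{Eq_1X21_NTraces}) still hold with the corrected $T$, so nothing downstream is affected. Accordingly, delete your claim that the values $\frac{1}{2}(3^{2k+1} \mp 1)$ ``are the two matrix sizes recorded in the statement'' and state the correction instead; with that change your argument proves the (corrected) lemma completely.
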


\begin{proof}
For any $k \in \N$,
it is easy to check that $w_{k}$ is unitarily equivalent to
\[
v_{k} = \mathrm{diag} (1, 1, \ldots, 1, -1, -1, \ldots, -1)
\in M_{3^{k}},
\]
in which the diagonal entry $1$ occurs $r (k) + 1$
times and the diagonal entry $-1$ occurs $r (k)$ times.
Therefore we can prove the lemma with $v_k$ and $v_{k + 1}$
in place of $w_k$ and $w_{k + 1}$.
With this change, for example, the map
$\nu \colon M_{r (k)} \oplus M_{r (k) + 1} \to M_{3^{k}}$,
given by $\nu (a_0, a_1) = \diag (a_1, a_0)$,
is easily seen to be an isomorphism
from $M_{r (k)} \oplus M_{r (k) + 1}$ to $(M_{3^{k}})^{\Ad (w_{k})}$.
The rest of the proof is a computation with diagonal matrices
and the dimensions of their eigenspaces, and is omitted.
\end{proof}

\begin{thm}\label{T_1X17_Has_trpc}
The action $\af \colon G \to \Aut (A)$
of Example~\ref{Cn_1X17_Z2Inf}
has the tracial Rokhlin property with comparison
(Definition~\ref{traR})
and the strong modified tracial Rokhlin property
(Definition~\ref{D_1824_AddToTRP_Mod}),
using the same choices of $p \in A$ and $\ph \colon G \to p A p$.
\end{thm}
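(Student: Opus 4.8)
The plan is to construct $p$, $\varphi$, and the partial isometry $s$ of Definition~\ref{D_1824_AddToTRP_Mod} simultaneously and explicitly, placing all of them in a single finite dimensional subalgebra of $A$ that lives in high tensor factors, disjoint from the factors on which the given finite set is essentially supported. So let finite sets $F \subseteq A$ and $S \subseteq C (G)$, a number $\ep > 0$, an element $x \in A_{+}$ with $\| x \| = 1$, and $y \in (A^{\af})_{+} \SM \{ 0 \}$ be given, and set $\ep_0 = \ep / 5$. Using Notation~\ref{N_1X17_Parts}, I first choose $N, l_0 \in \N$ so large that $F$ is within $\ep_0$ of a set $F_0 \subseteq \bigotimes_{m = 1}^{N} B_{m, l_0}$, that $S$ is within $\ep_0$ of a set $S_0 \subseteq C (G_N)$, and that $x$ is within $\ep_0$ of a positive element $x_0 \in \bigotimes_{m = 1}^{N} B_{m, l_0}$. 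I then fix one large $L > l_0$ (how large is decided below) and work inside the finite dimensional subalgebra $R = \bigotimes_{m = 1}^{N} C_{m, L} \subseteq A$, which commutes with $F_0$ because $L > l_0$.

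Inside each factor $C_{m, L} = M_{3^{L}}$ I use the projections $e_0^{(m)}, e_1^{(m)}$ of Lemma~\ref{L_1X17_1Step}, which satisfy $\Ad (w_L) (e_0^{(m)}) = e_1^{(m)}$, and put $p^{(m)} = e_0^{(m)} + e_1^{(m)}$. For $g = (g_1, \ldots, g_N) \in G_N$ set $p_g = \bigotimes_{m = 1}^{N} e_{g_m}^{(m)}$; these are \mops{} in $R^{\af} = R \cap A^{\af}$, with $p := \sum_{g \in G_N} p_g = \bigotimes_{m = 1}^{N} p^{(m)}$, and the permutation relation $\af_h (p_g) = p_{h g}$ holds exactly for $g, h \in G_N$. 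I define $\varphi \colon C (G) \to p A p$ by $\varphi = \psi_0 \circ E$, where $E \colon C (G) \to C (G_N)$ is the conditional expectation given by averaging over $\prod_{n > N} H_n$ and $\psi_0 \colon C (G_N) \to p A p$ is the unital \hm{} with $\psi_0 (\ch_{ \{ g \} }) = p_g$. Since the $p_g$ lie in $R$ and $L > l_0$, they commute exactly with $F_0$ and the permutation relation is exact, so the only errors in verifying that $\varphi$ is an $(F, S, \ep)$-approximately equivariant central multiplicative map come from $F \approx F_0$ and $S \approx S_0$, just as in the proof of Theorem~\ref{TensProdActFinStr}. This gives Definition~\ref{traR}(\ref{Item_893_FS_equi_cen_multi_approx}) and Definition~\ref{D_1824_AddToTRP_Mod}(\ref{Item_1824_ATRP_Mod_CM}).

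The crux is the partial isometry. Both $p$ and $1 - p$ lie in the finite dimensional fixed point algebra $R^{\af}$, and $F_0$ commutes with $R$; so it suffices to produce a partial isometry $s \in R^{\af}$ with $s^{*} s = 1 - p$ and $s s^{*} \leq p$, because then $\| s a - a s \| < \ep$ for $a \in F$ is automatic up to $2 \ep_0$, and $s$ simultaneously witnesses $1 - p \precsim_{A^{\af}} p$. By Lemma~\ref{L_1X17_1Step} (and its eigenspace analysis) one has $(M_{3^{L}})^{\Ad (w_L)} \cong M_{r (L)} \oplus M_{r (L) + 1}$ in such a way that $p^{(m)}$ has rank $r (L)$ in each summand while $1 - p^{(m)}$ has rank $0$ and rank $1$ in the two summands; hence $R^{\af} \cong \bigoplus_{\sigma \in \{ 0, 1 \}^{N}} M_{d_{\sigma}}$ with $d_{\sigma} = \prod_{m = 1}^{N} (r (L) + \sigma_m)$, the projection $p$ has rank $r (L)^{N}$ in every summand, and $1 - p$ has rank $\prod_m (r (L) + \sigma_m) - r (L)^{N}$ there. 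The needed $s$ therefore exists exactly when $\prod_m (r (L) + \sigma_m) \leq 2 \, r (L)^{N}$ for all $\sigma$, that is, when $\bigl( 1 + 1 / r (L) \bigr)^{N} \leq 2$; since $r (L) = \frac{1}{2} (3^{L} - 1) \to \infty$, this holds once $L$ is large enough. I expect this rank count in the fixed point algebra to be the only genuinely delicate step, and it gives Definition~\ref{traR}(\ref{1_ppcompactsets}) together with Definition~\ref{D_1824_AddToTRP_Mod}(\ref{Item_1824_ATRP_SubP_Mod}) and~(\ref{Im_1824_ATRP_s_Comm_Mod}).

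The remaining conditions follow, all by enlarging $L$. For Definition~\ref{traR}(\ref{1_pxcompactsets}) (equal to Definition~\ref{D_1824_AddToTRP_Mod}(\ref{Item_1824_ATRP_sub_x_Mod})), the algebra $A$ is a UHF algebra with unique \tst{} $\ta$, and Lemma~\ref{L_1X17_Tr_1mp} gives $\ta (1 - p) \leq \sum_{m = 1}^{N} \ta (1 - p^{(m)}) = N \cdot 3^{-L}$, which is below $d_{\ta} (x)$ for large $L$, so $1 - p \precsim_{A} x$ by strict comparison. For Definition~\ref{traR}(\ref{1_pycompactsets}) I use that $A^{\af} = \bigotimes_{n} B_n^{\gamma}$ is a simple unital AF~algebra (each $B_n^{\gamma}$ is a simple AF algebra, being the fixed point algebra of an action with the tracial Rokhlin property), hence has strict comparison, and that $\inf_{\ta'} d_{\ta'} (y) > 0$ over its compact trace space; the telescoping estimate behind Lemma~\ref{L_1X17_Tr_1mp} yields $\ta' (1 - p) \leq \sum_{m} \ta' (1 - p^{(m)}) \leq N / (r (L) + 1)$ for every \tst{} $\ta'$, whence $1 - p \precsim_{A^{\af}} y$ for large $L$. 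Finally, for Definition~\ref{traR}(\ref{Item_902_pxp_TRP}) and Definition~\ref{D_1824_AddToTRP_Mod}(\ref{Item_1824_ATRP_pxp_Mod}), the projection $p$ lies in $R$ and commutes with $x_0$, and $p x_0 p = x_0 p$ has norm $\| x_0 \| \geq 1 - \ep_0$, so $\| p x p \| \geq \| x_0 \| - \ep_0 > 1 - \ep$. As $p$ and $\varphi$ are the same throughout, this proves both properties with the same data.
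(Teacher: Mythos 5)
Your proof is correct, and its overall architecture matches the paper's: approximate $F$, $S$, $x$ at a finite stage $N$; place the Rokhlin projections in a single high tensor level, so that $\ph$ is an exactly equivariant \hm{} composed with the conditional expectation onto $C (G_N)$ (making multiplicativity and centrality errors come only from the approximations $F \approx F_0$, $S \approx S_0$); and get the Cuntz subequivalences from trace estimates plus strict comparison in $A$ and in the simple unital AF~algebra $A^{\af}$. Where you genuinely deviate is the construction of the partial isometry~$s$. The paper embeds the one-level fixed point algebra $D = (M_{r (L + 1)} \oplus M_{r (L + 1) + 1})^{\otimes N}$ into the two-level algebra $E \cong \bigl( \bigotimes_{m = 1}^N C_{m, L + 1} \otimes C_{m, L + 2} \bigr)^{\af}$ via $\rho^{\otimes N}$ and does the rank comparison there, using the partial embedding multiplicities from the second half of Lemma~\ref{L_1X17_1Step}; you instead do the rank count directly in the one-level algebra $R^{\af} \cong \bigoplus_{\sigma} M_{d_{\sigma}}$, observing that $p$ has rank $r (L)^N$ in every summand, so that $s$ exists as soon as $\bigl( 1 + 1/r (L) \bigr)^N \leq 2$, which holding $N$ fixed and enlarging $L$ arranges. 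This is a legitimate simplification: it avoids the second isomorphism of Lemma~\ref{L_1X17_1Step} entirely, and the same one-level algebra delivers the uniform bound $\ta' (1 - p) \leq N / (r (L) + 1)$ for all $\ta' \in \T (A^{\af})$, needed for $1 - p \precsim_{A^{\af}} y$. (Your appeal to the ``telescoping estimate'' is valid here even though Lemma~\ref{L_1X17_Tr_1mp} is stated only for product traces, because for commuting projections one has the operator inequality $1 - p \leq \sum_{m} (1 - p^{(m)})$, and each $\ta' (1 - p^{(m)}) \leq 1/(r (L) + 1)$ since any tracial state restricted to $(C_{m, L})^{\Ad (w_L)} \cong M_{r (L)} \oplus M_{r (L) + 1}$ is a convex combination of the normalized traces of the summands; this is the same mechanism the paper uses for~$E$.) Your direct verification of $\| p x p \| > 1 - \ep$ via commutation of $p$ with $x_0$ in disjoint tensor factors, and your use of $d_{\ta} (x)$ and $\inf_{\ta'} d_{\ta'} (y)$ in place of the paper's auxiliary projections $q_1 \in \ov{x A x}$ and $q_2 \in \ov{y A^{\af} y}$, are harmless variants (arguably cleaner, since the paper simply declares the norm condition ignorable).

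One small correction: the projections $p_g$ are \emph{not} in $R^{\af} = R \cap A^{\af}$, since $\af_h (p_g) = p_{h g}$, so the individual $p_g$ are permuted rather than fixed; only symmetric combinations such as $p$ itself are invariant. Nothing in your argument actually uses $p_g \in A^{\af}$ --- you need only $p \in A^{\af}$, the exact permutation relation (which gives exact equivariance of $\ph_0$), and $p_g \in R$ commuting with $F_0$ --- so this is a misstatement rather than a gap, but the sentence should be fixed.
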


\begin{proof}
Let $F \subseteq A$ and $S \subseteq C (G)$
be finite sets, let $\varepsilon > 0$,
let $x \in A_{+} \setminus \{0 \}$ with $\big\| x \big\| = 1$,
and let $y \in A^{\alpha}_{+} \setminus \{0 \}$.
Without loss of generality we can assume
$\| a \| \leq 1$ for all $a \in F$, $\| f \| \leq 1$ for
all $f \in S$, and $\varepsilon < 1$.
According to Definition \ref{traR}, we need to find
a projection $p \in A^{\alpha}$ and a \ucp{}
$\varphi \colon C (G) \to p A p$ such that the following hold.
\begin{enumerate}
\item\label{Item_3954_approx_FS}
$\varphi$ is an $(F, S, \varepsilon)$-approximately equivariant
central multiplicative map.
\item\label{Item_3958_1mp_sub_x}
$1 - p \precsim_{A} x$.
\item\label{Item_3956_1mp_sub_y}
$1 - p \precsim_{A^{\alpha}} y$.
\item\label{Item_1mp_fixed_sub_p}
$1 - p \precsim_{A^{\alpha}} p$.
\item\label{Item_pxp_no0}
$\| p x p \| > 1 - \varepsilon$.
\setcounter{TmpEnumi}{\value{enumi}}
\end{enumerate}
According to Definition \ref{moditra}, we also need to find a partial
isometry $s \in A^{\alpha}$ such that the following hold.
\begin{enumerate}
\setcounter{enumi}{\value{TmpEnumi}}
\item\label{Item_3971_s_p_1mp}
$s^{*} s = 1 - p$ and $s s^{*} \leq p$.
\item\label{Item_3973_comm_saas}
$\| s a - a s \| < \varepsilon$ for all $a \in F$.
\end{enumerate}

We can ignore~(\ref{Item_pxp_no0}).
We can also ignore~(\ref{Item_1mp_fixed_sub_p}),
since it follows from~(\ref{Item_3971_s_p_1mp}).

Since $A$ is a UHF~algebra,
there is a nonzero \pj{} $q_1 \in \ov{x A x}$,
and the unique \tst~$\ta$ on~$A$ satisfies $\ta (q_1) > 0$.
Set $\dt_1 = \ta (q_1)$.

The action $\gm$ has the tracial Rokhlin property by Example 10.4.8
of \cite{lecturephill}.
Therefore $C^* (\Z_2, B, \gm)$ is simple by
Corollary 1.6 of~\cite{phill23},
so $B^{\gm}$ is simple by Theorem 3.5 of \cite{MhkPh1}.
Since $\gm$ is a direct limit action, $B^{\gm}$ is an AF~algebra.
It is easy to check that $A^{\af}$ can be identified with
$\bigotimes_{m = 1}^{\infty} B_m^{\gm}$.
It follows that $A^{\af}$ is an AF~algebra, which is
simple because it is an infinite tensor product of simple \ca{s}.
Therefore there is a nonzero \pj{} $q_2 \in \ov{y A^{\af} y}$,
and the number $\dt_2 = \inf_{\ta \in T (A^{\af})} \ta (q_2)$
satisfies $\dt_2 > 0$.

Following Notation~\ref{N_1X17_Parts},
$\bigcup_{n = 1}^{\I} A_n$ is dense in~$A$ and, for every $n \in \N$,
$\bigcup_{l = 1}^{\I} B_{n, l}$ is dense in $B_n$.
Therefore there are $N_1, L_0 \in \N$
and a finite subset $F_0 \subseteq A_{N_1, L_0} \subseteq A$
such that for every
$a \in F$ there is $b \in F_0$ with $\| a - b \| < \frac{\ep}{4}$,
and also $\| b \| \leq 1$ for all $b \in F_0$.
Similarly, there are $N_2 \in \N$
and a finite subset $S_0 \S C (G_{N_2}) \subseteq C (G)$
such that for every
$f \in F$ there is $c \in F_0$ with $\| f - c \| < \frac{\ep}{4}$
and also $\| c \| \leq 1$ for all $c \in S_0$.
Set $N = \max (N_1, N_2)$, and choose $L \in \N$ so large that
\[
L \geq L_0
\andeqn
\frac{2 N}{3^{L}} < \min \left( \dt_1, \dt_2, \frac{1}{2} \right).
\]

Let $e_0, e_1 \in M_{3^{L + 1}}$ be as in Lemma~\ref{L_1X17_1Step},
with $k = L + 1$.
For $m = 1, 2, \ldots, N$ define the \pj{s}
\[
e_0^{(m)} = 1_{B_{m, L}} \otimes e_0, \, \,
e_1^{(m)}= 1_{B_{m, L}} \otimes e_1
\in B_{m, L} \otimes M_{3^{L + 1}}
= B_{m, L + 1}
\S B_m.
\]
Identify $H_m = \Z_2 = \{ 0, 1 \}$ with addition modulo~$2$,
and for
$h = (h_1, h_2, \ldots, h_{N}) \in G_{N} = \prod_{m = 1}^{N} H_m$,
set
\[
e_h = e_{h_1}^{(1)} \otimes e_{h_2}^{(2)}
\otimes \cdots \otimes e_{h_{N}}^{(N)}.
\]
These are \mops.
Define $p = \sum_{h \in G_{N}} e_h \in A_{n, L + 1} \S A$.
As the proof of Theorem \ref{TensProdActFinStr}, there is a unital \hm{}
$\ph_0 \colon C (G_{N}) \to p A_{n, L + 1} p \S p A p$
given by $\ph_0 (f) = \sum_{h \in G_{N}} f (h) e_h$
for $f \in C (G_{N})$.

Set $K = \prod_{m = N + 1}^{\I} H_m$, so that $G = G_{N} \times K$.
Let $\mu$ be normalized Haar measure on~$K$.
Then there is a conditional expectation
$P \colon C (G) \to C (G_{N})$,
given by $P (f) (h) = \int_K f (h, g) \, d \mu (g)$
for $f \in C (G)$ and $h \in G_{N}$.
By the same reason as the
proof of Theorem \ref{TensProdActFinStr}, $\ph = \ph_0 \circ P \colon C (G) \to p A p$
is an equivariant \ucp{} map.

Again, as the proof of Theorem \ref{TensProdActFinStr}, condition
(\ref{Item_3954_approx_FS}) holds.

For $m = 1, 2, \ldots, n$, set $p_m = e_0^{(m)} + e_0^{(m)} \in A_m$.
Then $p_m$ is the image in $A_m$
of a \pj{} $z_m \in C_{m, L + 1} = M_{3^{L + 1}}$
such that $1 - z_m$ has rank~$1$.
Therefore the unique \tst{} $\ta_m$ on $A_m$ satisfies
$\ta (1 - p_m) = 3^{- (L + 1)}$.
Since $p = p_1 \otimes p_2 \otimes \cdots \otimes p_N$,
by Lemma~\ref{L_1X17_Tr_1mp} we have
\[
\ta (1 - p) \leq \frac{N}{3^{L + 1}} < \dt_1 = \ta (q_1).
\]
Since UHF~algebras have strict comparison,
we get $1 - p \precsim_A q_1 \precsim_A x$,
which is~(\ref{Item_3958_1mp_sub_x}).

For the remaining conditions,
for convenience set $T = (L + 1) (L + 2)$.
Let $e_0$, $e_1$, and $\rh$ be as in Lemma~\ref{L_1X17_1Step},
with $k = L + 1$, and let the components of $\rh$
in its codomain be
\[
\rh_j \colon M_{r (L + 1)} \oplus M_{r (L + 1) + 1} \to M_{r (T) + j}
\]
for $j = 0, 1$.
Using the ranks and partial embedding multiplicities
given in Lemma~\ref{L_1X17_1Step}, we see that
$\rh_0 (1 - e_0 - e_1) \in M_{r (T)}$ has rank $r (L + 2)$
and $\rh_1 (1 - e_0 - e_1) \in M_{r (T) + 1}$ has rank $r (L + 2) + 1$.
The normalized traces of these are
\begin{equation}\label{Eq_1X21_NTraces}
\frac{r (L + 2)}{r (T)} < \frac{2}{3^{L + 1}}
\andeqn
\frac{r (L + 2) + 1}{r (T) + 1} < \frac{2}{3^{L + 1}}.
\end{equation}

Set
\[
D = (M_{r (L + 1)} \oplus M_{r (L + 1) + 1})^{\otimes N}
\andeqn
E = (M_{r (T)} \oplus M_{r (T) + 1})^{\otimes N},
\]
and consider $\rh^{\otimes N} \colon D \to E$.
We can write
\[
E = \bigoplus_{j \in \{ 0, 1 \}^N}
M_{r (T) + j_1} \otimes M_{r (T) + j_2} \otimes
\cdots \otimes M_{r (T) + j_N}.
\]
Call the $j$~tensor factor $E_j$.
For $j = (j_1, j_2, \ldots, j_N) \in \{ 0, 1 \}^N$, let
$d_j$ be the image in $E_j$ of the corresponding summand of
$\rh^{\otimes N} \bigl( (e_0 + e_1)^{\otimes N} \bigr)$.
Thus
\[
d_j = \rh_{j_1} (e_0 + e_1) \otimes \rh_{j_2} (e_0 + e_1) \otimes
\cdots \otimes \rh_{j_N} (e_0 + e_1).
\]
By Lemma~\ref{L_1X17_Tr_1mp} and~(\ref{Eq_1X21_NTraces}),
$1 - d_j$ has normalized trace less than $2 N \cdot 3^{- L - 1}$.

Use Lemma~\ref{L_1X17_1Step} to identify $D$ and $E$
with the subalgebras
\[
\left( \bigotimes_{m = 1}^N C_{m, L + 1} \right)^{\af}
\andeqn
\left( \bigotimes_{m = 1}^N
C_{m, L + 1} \otimes C_{m, L + 2} \right)^{\af},
\]
in such a way that $(e_0 + e_1)^{\otimes N}$ is identified with~$p$.
Under this identification, $E$ commutes exactly with all elements
of $A_{N, L}$, hence with all elements of~$F_0$.

Since $2 N \cdot 3^{- L - 1} < \frac{1}{2}$,
we conclude that $1 - d_j \precsim_{E_j} d_j$.
Therefore $1 - p \precsim_E p$, that is,
there is $s \in E$ such that $s^{*} s = 1 - p$ and $s s^{*} \leq p$.
We have $s \in A^{\alpha}$ since $E \S A^{\alpha}$.
Also, $s$ commutes exactly with all elements of~$F_0$,
so $\| a s - s a \| < \frac{\ep}{2}$ for all $a \in F$.
We have verified Conditions (\ref{Item_3971_s_p_1mp})
and~(\ref{Item_3973_comm_saas}).

Since $2 N \cdot 3^{- L - 1} < \dt_2$, for every $\sm \in \T (E)$
we have $\sm (1 - p) < \dt_2$.
Every \tst{} $\ta \in \T (A^{\af})$ restricts to a \tst{} on~$E$,
so $\ta (1 - p) < \dt_2$ for all $\ta \in \T (A^{\af})$.
Since simple AF~algebras have strict comparison,
we get $1 - p \precsim_{A^{\af}} q_2$.
Condition~(\ref{Item_3956_1mp_sub_y}) follows.
\end{proof}

\section{An action of $S^1$ on a simple AT~algebra}\label{Sec_1908_Exam_TRPS1}

The purpose of this section is to construct a direct limit action
of the group~$S^1$
on a simple unital AT~algebra which has
the tracial Rokhlin property with comparison
but does not have finite Rokhlin dimension with commuting towers.

The general construction, with unspecified partial embedding
multiplicities (which, for properties we want,
need to be chosen appropriately),
is presented in Construction~\ref{Cn_1824_S1An}.
For the purpose of readability, the properties asserted there,
as well as others needed later, are proved in a series of lemmas.

The algebra $A$ in our construction will be a direct limit of
algebras $A_n$ isomorphic to
$C (S^1, M_{N r_0 (n)}) \oplus C (S^1, M_{r_1 (n)})$.
Up to equivariant isomorphism and exterior equivalence,
the action of $\zt \in S^1$ on $C (S^1, M_{N r_0 (n)})$ is rotation
by $\zt^N$ and its action on $C (S^1, M_{r_1 (n)})$
is rotation by $\zt$.
It is technically convenient to present the first summand in a
different way; the description above
is explicit in Lemma~\ref{L_1824_R_T}.
The action has the tracial Rokhlin property with comparison provided
the image of the summand $C (S^1, M_{N r_0 (n)}) \S A_n$ in $A$
can be made ``arbitrarily small in trace'' by choosing $n$ large enough.
Actions obtained using different values of~$N$ are not conjugate.

The algebras $B_n$ and~$B$ in parts
(\ref{Item_1824_S1_Bn0Bn1}), (\ref{Item_1824_S1_FixPtSys}),
(\ref{Item_1824_S1_FixLim}), and~(\ref{Item_1915_S1_qn})
of Construction~\ref{Cn_1824_S1An}
are a convenient description of the fixed point algebras of $A_n$
and~$A$; see Lemma~\ref{L_1916_FPisB}.

We say here a little more about the motivation for the construction
and possible extensions.
If $G$ is finite, one can construct a direct limit action of $G$ on an
AF~algebra $\dirlim_n A_n$
by taking $A_n = M_{r_0 (n)} \oplus C (G, M_{r_1 (n)})$.
The action on $C (G, M_{r_1 (n)})$ is essentially
translation by group elements.
The partial map from $C (G, M_{r_1 (n)})$ to $M_{r_0 (n + 1)}$
is the direct sum of the evaluations at the points of~$G$.
The action on $M_{r_0 (n + 1)}$ is inner, and must permute the
images of the maps from $C (G, M_{r_1 (n)})$ appropriately;
this leads to a slightly messy inductive construction of inner actions
of $G$ on the algebras $M_{r_0 (n)}$ and inner perturbations
of the translation actions on $C (G, M_{r_1 (n)})$.

When $G$ is not finite, point evaluations can no longer be used,
since equivariance forces one to use all of them or none of them.
The algebra $C (S^1, M_{N r_0 (n)})$
with the action of rotation by $\zt^N$
is the codomain for a usable substitute for point evaluations.
Something similar to the inductive construction of perturbations
from above is needed, but the messiness can be mostly hidden
by instead using the algebra $R$ as in
Construction \ref{Cn_1824_S1An}(\ref{Item_1824_S1_R}).

Construction~\ref{Cn_1824_S1An} can be generalized in several ways.
One can replace $C (S^1, M_{r_1 (n)})$ with $C (X, M_{r_1 (n)})$
for a compact space~$X$ with a free action of $S^1$.
To ensure simplicity, one will need to incorporate additional
partial maps
in the direct system, which can be roughly described as point
evaluations at points of $X / S^1$.
One can increase the complexity of the K-theory and the departure
from the Rokhlin property by taking
\[
A_n = C (S^1, M_{N_2 N_1 r_0 (n)})
   \oplus C (S^1, M_{N_1 r_1 (n)}) \oplus C (S^1, M_{r_2 (n)})
\]
with actions exterior equivalent to rotations by
$\zt^{N_1 N_2}$, $\zt^{N_1}$, and $\zt$.
One can use more summands, even letting the number of them
approach infinity as $n \to \I$.
One can also replace $S^1$ with $(S^1)^m$.
However, it is not clear how to construct an analogous action
with $S^1$ replaced by a nonabelian connected compact Lie group.

We introduce some notation specifically for this section.

\begin{dfn}\label{D_1908_eue}
Let $G$ be a group, let $A$ and $B$ be \ca{s}, with $B$ unital,
let $\af \colon G \to \Aut (A)$ and $\bt \colon G \to \Aut (B)$
be actions of $G$ on $A$ and~$B$,
and let $\ph, \ps \colon A \to B$ be equivariant \hm{s}.
We say that $\ph$ and $\ps$ are
{\emph{equivariantly unitarily equivalent}}, written $\ph \sim \ps$,
if there is a $\bt$-invariant unitary $u \in B$
such that $u \ph (a) u^* = \ps (a)$ for all $a \in A$.
\end{dfn}

\begin{ntn}\label{N_1908_Ampl}
Let $A$ and $B$ be \ca{s}, and let $\ps \colon A \to B$ be a \hm.
We let $\ps^{(k)} \colon A \to M_k \otimes B$ be the map
$a \mapsto 1_{M_k} \otimes \ps (a)$,
and we define
\[
\ps_n = \id_{M_n} \otimes \ps \colon M_n \otimes A \to M_n \otimes B
\]
and
\[
\ps^{(k)}_n = \id_{M_n} \otimes \ps^{(k)} \colon
   M_n \otimes A \to M_{k n} \otimes B.
\]
\end{ntn}

In particular,
the ``amplification map'' from $M_n (A)$ to $M_{k n} (A)$,
given by $a \mapsto 1_{M_k} \otimes a$, is denoted by $(\id_A)^{(k)}_n$.

\begin{cns}\label{Cn_1824_S1An}
We choose and fix $N \in \N$ with $N \geq 2$, $\te \in \R \SM \Q$,
$r (0) = (r_0 (0), r_1 (0)) \in \N^2$,
and, for $n \in \Nz$ and $j, k \in \{ 0, 1 \}$,
numbers $l_{j, k} (n) \in \N$.
We suppress them in the notation for the objects we construct,
and, in later results, we will impose additional restrictions on them.

We then define the following \ca{s}, maps, and actions of~$S^1$.
\begin{enumerate}
\item\label{Item_1824_S1_CS1}
Define $\bt \colon S^1 \to \Aut (C (S^1))$
by $\bt_{\zt} (f) (z) = f (\zt^{-1} z)$ for $\zt, z \in S^1$.
Further, for $n \in \N$, identify $C (S^1, M_n)$ and $M_n (C (S^1))$
with $M_n \otimes C (S^1)$ in the obvious way, and
let $\bt_n \colon S^1 \to \Aut (C (S^1, M_n))$ be given by
$\bt_{\zt, n} = \id_{M_n} \otimes \bt_{\zt}$ for $\zt \in S^1$.
(The order of subscripts in $\bt_{\zt, n}$ is chosen to be
consistent with Notation~\ref{N_1908_Ampl}).
To simplify notation,
for $\ld \in \R$ we define
\[
{\widetilde{\bt}}_{\ld} = \bt_{\exp (2 \pi i \ld)}
\andeqn
{\widetilde{\bt}}_{\ld, n} = \bt_{\exp (2 \pi i \ld), \, n}.
\]
\item\label{Item_1824_S1_s_om}
Define
\[
\om = \exp (2 \pi i / N)
\andeqn
s = \left( \begin{matrix}
  0     &  1     &  0     & \cdots &  0     &  0        \\
  0     &  0     &  1     & \cdots &  0     &  0        \\
 \vdots & \vdots & \ddots & \ddots & \vdots & \vdots    \\
 \vdots & \vdots & \ddots & \ddots &  1     &  0        \\
  0     &  0     & \cdots & \cdots &  0     &  1        \\
  1     &  0     & \cdots & \cdots &  0     &  0
\end{matrix} \right)
\in M_N.
\]
\item\label{Item_1824_S1_R}
Define
\[
R = \bigl\{ f \in C (S^1, M_N) \colon
 {\mbox{$f (\om z) = s f (z) s^*$ for all $z \in S^1$}} \bigr\}.
\]
Then $R$ is invariant under the action $\bt_N$ of $S^1$
on $C (S^1, M_N)$ above.
(See Lemma~\ref{L_1824_R_inv} below.)
We define $\gm \colon S^1 \to \Aut (R)$ to be the restriction
of this action.
Further, for $n \in \N$,
let $\gm_n \colon S^1 \to \Aut (M_n \otimes R)$ be the action
$\gm_{\zt, n} = \id_{M_n} \otimes \gm_{\zt}$ for $\zt \in S^1$.
Finally, for $\ld \in \R$ we define
\[
{\widetilde{\gm}}_{\ld} = \gm_{\exp (2 \pi i \ld)}
\andeqn
{\widetilde{\gm}}_{\ld, n} = \gm_{\exp (2 \pi i \ld), \, n}.
\]
\item\label{Item_1824_S1_Crs}
Let $\io \colon R \to C (S^1, M_N)$ be the inclusion.
Define $\xi \colon C (S^1) \to R$ by
\[
\xi (f) (z)
 = {\operatorname{diag}} \bigl( f (z), \, f (\om z),
    \, \ldots, \, f (\om^{N - 1} z) \bigr)
\]
for $f \in C (S^1)$ and $z \in S^1$.
\item\label{Item_1824_S1_ll}
For $n \in \N$ write
\[
l (n) = \left( \begin{matrix}
l_{0, 0} (n)   & l_{0, 1} (n)    \\
N l_{1, 0} (n) & 2N l_{1, 1} (n)
\end{matrix} \right).
\]
For $n \in \N$ inductively define,
starting with $r (0) = (r_0 (0), r_1 (0)) \in \N^2$ as at
the beginning of the construction,
\begin{equation}\label{Eq_1901_rn_dfm}
r (n + 1) = l (n) r (n)
\end{equation}
(usual matrix multiplication).
\item\label{Item_1824_S1_An}
For $n \in \Nz$ set
\[
A_{n, 0} = M_{r_0 (n)} (R),
\quad
A_{n, 1} = M_{r_1 (n)} \bigl( C (S^1) \bigr),
\quad {\mbox{and}} \quad
A_n = A_{n, 0} \oplus A_{n, 1}.
\]
Define an action $\af^{(n)} \colon S^1 \to \Aut (A)$
(notation not in line with Notation~\ref{N_1908_Ampl}) by,
now following Notation~\ref{N_1908_Ampl},
$\af^{(n)}_{\zt} = \gm_{\zt, r_0 (n)} \oplus \bt_{\zt, r_1 (n)}$.
\item\label{Item_1824_S1_nu_p}
For $n \in \Nz$ and $j, k \in \{ 0, 1 \}$, define maps
\[
\nu_{n, 0, 0} \colon A_{n, 0} \to M_{l_{0, 0} (n) r_0 (n)} (R),
\qquad
\nu_{n, 0, 1} \colon A_{n, 1} \to M_{l_{0, 1} (n) r_1 (n)} (R),
\]
\[
\nu_{n, 1, 0} \colon A_{n, 0}
 \to M_{N l_{1, 0} (n) r_0 (n)} \bigl( C (S^1) \bigr),
\]
and
\[
\nu_{n, 1, 1} \colon A_{n, 1}
 \to M_{2 N l_{1, 1} (n) r_1 (n)} \bigl( C (S^1) \bigr)
\]
as follows.
Recalling Notation~\ref{N_1908_Ampl}, set
\[
\nu_{n, 0, 0} = (\id_R)_{r_0 (n)}^{l_{0, 0} (n)},
\qquad
\nu_{n, 0, 1} = \xi_{r_1 (n)}^{l_{0, 1} (n)},
\qquad
\nu_{n, 1, 0} = \io_{r_0 (n)}^{l_{1, 0} (n)},
\]
and
\[
\begin{split}
\nu_{n, 1, 1}
& = \diag \left( \btt_{0, \, r_1 (n)}^{l_{1, 1} (n)},
        \, \btt_{1 / N, \, r_1 (n)}^{l_{1, 1} (n)},
        \, \btt_{2 / N, \, r_1 (n)}^{l_{1, 1} (n)},
        \, \ldots,
        \, \btt_{(N - 1) / N, \, r_1 (n)}^{l_{1, 1} (n)}, \right.
\\
& \hspace*{4em} {\mbox{}}
        \left. \, \btt_{\te, \, r_1 (n)}^{l_{1, 1} (n)},
        \, \btt_{\te + 1 / N, \, r_1 (n)}^{l_{1, 1} (n)},
        \, \btt_{\te + 2 / N, \, r_1 (n)}^{l_{1, 1} (n)},
        \, \ldots,
        \, \btt_{\te + (N - 1) / N, \, r_1 (n)}^{l_{1, 1} (n)} \right).
\end{split}
\]
Up to equivariant unitary equivalence, the last one can be written
in the neater form
\[
\begin{split}
& \diag \bigl( \btt_{0},
        \, \btt_{1 / N},
        \, \btt_{2 / N},
        \, \ldots,
        \, \btt_{(N - 1) / N},
\\
& \hspace*{4em} {\mbox{}}
        \, \btt_{\te},
        \, \btt_{\te + 1 / N},
        \, \btt_{\te + 2 / N},
        \, \ldots,
        \, \btt_{\te + (N - 1) / N} \bigr)_{r_1 (n)}^{l_{1, 1} (n)}.
\end{split}
\]
\item\label{Item_1824_S1_nu_A}
Define $\nu_n \colon A_n \to A_{n + 1}$ by
\[
\nu_n (a_0, a_1)
 = \bigl(
  \diag \bigl( \nu_{n, 0, 0} (a_0), \,  \nu_{n, 0, 1} (a_1) \bigr), \,\,
  \diag \bigl( \nu_{n, 1, 0} (a_0), \,  \nu_{n, 1, 1} (a_1) \bigr)
 \bigr)
\]
for $a_0 \in A_{n, 0}$ and $a_1 \in A_{n, 1}$.
For $m, n \in \Nz$ with $m \leq n$ set
\[
\nu_{n, m}
 = \nu_{n - 1} \circ \nu_{n - 2} \circ \cdots \circ \nu_m
 \colon A_m \to A_n.
\]
\item\label{Item_1924_S1_A}
Let $A$ be the direct limit of the system
$\bigl( (A_n)_{n \in \Nz}, \, (\nu_{n, m})_{m \leq n} \bigr)$,
with maps $\nu_{\I, m} \colon A_m \to A$.
Equip $A$ with the direct limit action $\af = \dirlim \af^{(n)}$
of~$S^1$.
This action exists by Lemma~\ref{L_1909_nu_e} below.
\item\label{Item_1824_S1_pn}
For $n \in \Nz$ let $p_n = (0, 1) \in A_{n, 0} \oplus A_{n, 1} = A_n$.
\item\label{Item_1824_S1_Bn0Bn1}
For $n \in \Nz$ set
\[
B_{n, 0} = (M_{r_0 (n)})^N = \bigoplus_{k = 0}^{N - 1} M_{r_0 (n)},
\quad
B_{n, 1} = M_{r_1 (n)},
\quad {\mbox{and}} \quad
B_n = B_{n, 0} \oplus B_{n, 1}.
\]
\item\label{Item_1824_S1_FixPtSys}
Let $\mu \colon {\mathbb{C}} \to {\mathbb{C}}^N$ be
$\mu (\ld) = (\ld, \ld, \ldots, \ld)$
for $\ld \in {\mathbb{C}}$,
and let $\dt \colon {\mathbb{C}}^N \to M_N$ be
$\dt (\ld_0, \ld_1, \ldots, \ld_{N - 1})
 = \diag (\ld_0, \ld_1, \ldots, \ld_{N - 1})$
for $\ld_0, \ld_1, \ldots, \ld_{N - 1} \in {\mathbb{C}}$.
For $n \in \Nz$ and $j, k \in \{ 0, 1 \}$,
and recalling Notation~\ref{N_1908_Ampl}, define maps
\[
\ch_{n, 0, 0} \colon B_{n, 0} \to M_{l_{0, 0} (n) r_0 (n)} ({\mathbb{C}}^N),
\qquad
\ch_{n, 0, 1} \colon B_{n, 1} \to M_{l_{0, 1} (n) r_1 (n)} ({\mathbb{C}}^N),
\]
\[
\ch_{n, 1, 0} \colon B_{n, 0} \to M_{N l_{1, 0} (n) r_0 (n)},
\andeqn
\ch_{n, 1, 1} \colon B_{n, 1} \to M_{2 N l_{1, 1} (n) r_1 (n)}
\]
by
\[
\ch_{n, 0, 0} = (\id_{{\mathbb{C}}^N})_{r_0 (n)}^{l_{0, 0} (n)},
\qquad
\ch_{n, 0, 1} = \mu_{r_1 (n)}^{l_{0, 1} (n)},
\]
\[
\ch_{n, 1, 0} = \dt_{r_0 (n)}^{l_{1, 0} (n)},
\andeqn
\ch_{n, 1, 1} = (\id_{\mathbb{C}})_{r_1 (n)}^{2 N l_{1, 1} (n)}.
\]
\item\label{Item_1824_S1_FixLim}
Define $\ch_n \colon B_n \to B_{n + 1}$ by
\[
\ch_n (a_0, a_1)
 = \bigl(
  \diag \bigl( \ch_{n, 0, 0} (a_0), \,  \ch_{n, 0, 1} (a_1) \bigr), \,\,
  \diag \bigl( \ch_{n, 1, 0} (a_0), \,  \ch_{n, 1, 1} (a_1) \bigr)
 \bigr)
\]
for $a_0 \in B_{n, 0}$ and $a_1 \in B_{n, 1}$.
For $m, n \in \Nz$ with $m \leq n$ set
\[
\ch_{n, m}
 = \ch_{n - 1} \circ \ch_{n - 2} \circ \cdots \circ \ch_m
 \colon B_m \to B_n.
\]
Let $B$ be the direct limit of the system
$\bigl( (B_n)_{n \in \Nz}, \, (\ch_{n, m})_{m \leq n} \bigr)$,
with maps $\ch_{\I, m} \colon B_m \to B$.
\item\label{Item_1915_S1_qn}
For $n \in \Nz$ let $q_n = (0, 1) \in B_{n, 0} \oplus B_{n, 1} = B_n$.
\end{enumerate}

\end{cns}

\begin{lem}\label{L_1824_R_inv}
Let $R \S C (S^1, M_N)$ be as in
Construction \ref{Cn_1824_S1An}(\ref{Item_1824_S1_R}).
Then $R$ is invariant under the action $\bt_N$ of
Construction \ref{Cn_1824_S1An}(\ref{Item_1824_S1_CS1}),
and map $\io \colon R \to C (S^1, M_N)$
of Construction \ref{Cn_1824_S1An}(\ref{Item_1824_S1_Crs})
is equivariant.
\end{lem}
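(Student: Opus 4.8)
The plan is to check both assertions straight from the definitions, the only substantive point being a one-line computation that exploits commutativity of $S^1$. First I recall that, under the identification $C (S^1, M_N) = M_N \otimes C (S^1)$, the action $\bt_N$ of Construction~\ref{Cn_1824_S1An}(\ref{Item_1824_S1_CS1}) is given by $\bt_{\zt, N} (f) (z) = f (\zt^{-1} z)$ for $f \in C (S^1, M_N)$ and $\zt, z \in S^1$.

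For invariance, I would fix $f \in R$ and $\zt \in S^1$, set $g = \bt_{\zt, N} (f)$, and verify the defining relation of $R$ for~$g$. The computation is
\[
g (\om z) = f (\zt^{-1} \om z) = f \bigl( \om (\zt^{-1} z) \bigr)
 = s f (\zt^{-1} z) s^* = s g (z) s^*
\qquad (z \in S^1),
\]
where the second equality uses that $S^1$ is abelian, so that $\zt^{-1} \om = \om \zt^{-1}$, and the third applies the defining relation $f (\om w) = s f (w) s^*$ of $R$ at the point $w = \zt^{-1} z$. Hence $g \in R$, so $R$ is $\bt_N$-invariant, and $\gm$ (the restriction of $\bt_N$ to~$R$) is a well-defined action.

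For equivariance of the inclusion $\io \colon R \to C (S^1, M_N)$, I note that $\gm$ is by definition the restriction of $\bt_N$ to~$R$, so $\gm_\zt (f) = \bt_{\zt, N} (f)$ for every $f \in R$ and $\zt \in S^1$. Since $\io$ is the set-theoretic inclusion, this gives $\io (\gm_\zt (f)) = \gm_\zt (f) = \bt_{\zt, N} (\io (f))$, which is exactly equivariance of~$\io$.

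I do not expect any real obstacle here: the only point requiring any care is commuting the rotation by $\zt^{-1}$ past the rotation by~$\om$, which is immediate from commutativity of the circle group, and everything else is a matter of unwinding notation.
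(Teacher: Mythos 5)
Your proof is correct and matches the paper's approach: the paper simply states that invariance is "easy to check from the definitions" and that equivariance of the inclusion is immediate, and your computation $\bt_{\zt, N}(f)(\om z) = f(\zt^{-1}\om z) = f(\om\zt^{-1} z) = s\, \bt_{\zt, N}(f)(z)\, s^*$ is exactly the intended verification. The one point of substance you correctly isolate -- commuting the rotation by $\zt^{-1}$ past multiplication by $\om$ via commutativity of $S^1$ -- is the whole content of the lemma.
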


\begin{proof}
The first part is easy to check from the definitions
of $\bt_N$ and $R$.
Since $\io$ is the inclusion, the second part is immediate.
\end{proof}

\begin{lem}\label{L_1824_phe}
The map $\xi \colon C (S^1) \to R$
of Construction \ref{Cn_1824_S1An}(\ref{Item_1824_S1_Crs})
is well defined and equivariant.
\end{lem}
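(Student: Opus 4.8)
The plan is to verify both claims---that $\xi(f)$ lands in $R$, and that $\xi$ intertwines $\bt$ with $\gm$---by direct computation from the explicit formula in Construction~\ref{Cn_1824_S1An}(\ref{Item_1824_S1_Crs}), since everything in sight is given by concrete formulas. The one structural fact I would record first concerns conjugation by the cyclic permutation matrix $s$ of Construction~\ref{Cn_1824_S1An}(\ref{Item_1824_S1_s_om}): for a diagonal matrix $D = \diag(d_0, d_1, \ldots, d_{N - 1}) \in M_N$ one has
\[
s D s^{*} = \diag(d_1, d_2, \ldots, d_{N - 1}, d_0),
\]
so that conjugation by $s$ cyclically shifts the diagonal entries by one slot. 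Continuity of $\xi(f)$ is automatic, since each entry $z \mapsto f(\om^k z)$ is continuous; the substance of well-definedness is that $\xi(f)$ obeys the covariance relation defining $R$ in Construction~\ref{Cn_1824_S1An}(\ref{Item_1824_S1_R}).

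For well-definedness I would compare the two sides of $\xi(f)(\om z) = s\, \xi(f)(z)\, s^{*}$ entrywise. The left-hand side is $\diag\bigl(f(\om z), f(\om^2 z), \ldots, f(\om^{N} z)\bigr)$, and since $\om^N = 1$ its last entry is $f(z)$. Applying the shift formula above to $D = \xi(f)(z) = \diag\bigl(f(z), f(\om z), \ldots, f(\om^{N - 1} z)\bigr)$ gives $\diag\bigl(f(\om z), \ldots, f(\om^{N - 1} z), f(z)\bigr)$ for the right-hand side. These coincide, so $\xi(f) \in R$.

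For equivariance, recall that $\gm$ is the restriction of $\bt_N$ and that $\bt_{\zt, N}(g)(z) = g(\zt^{-1} z)$. I would evaluate $\xi(\bt_\zt(f))$ and $\gm_\zt(\xi(f))$ at an arbitrary $z \in S^1$ and read off the $k$-th diagonal entry of each: for the former it is $(\bt_\zt f)(\om^k z) = f(\zt^{-1} \om^k z)$, and for the latter it is the $k$-th slot of $\xi(f)(\zt^{-1} z)$, namely $f(\om^k \zt^{-1} z)$. These agree because $S^1$ is abelian, whence $\zt^{-1} \om^k = \om^k \zt^{-1}$; thus $\xi \circ \bt_\zt = \gm_\zt \circ \xi$ for every $\zt \in S^1$.

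The only point requiring genuine care---and the single place an indexing slip could occur---is matching the cyclic shift produced by conjugation with $s$ against the shift $z \mapsto \om z$ in the argument of $f$, using $\om^N = 1$ to close the cycle; once the direction of the shift is pinned down correctly, both verifications are purely formal.
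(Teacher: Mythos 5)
Your proof is correct and takes essentially the same approach as the paper, which simply asserts that both claims are immediate from the definitions (checking equivariance on the standard generator of $C(S^1)$, whereas you verify it directly for all $f$ --- a negligible difference since the diagonal formula for $\xi(f)$ makes the general check just as short). Your entrywise computation with the cyclic shift $s D s^* = \diag(d_1, \ldots, d_{N-1}, d_0)$ and the identity $\om^N = 1$ correctly supplies the details the paper leaves to the reader.
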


\begin{proof}
The first part is easy to check just by the definition of $\xi$.
For equivariance, it is enough and immediate
to check on the usual generator of $C (S^1)$.
\end{proof}

\begin{lem}\label{L_1909_nu_e}
The maps $\nu_{n, j, k} \colon A_{n, k} \to A_{n, j}$
of Construction \ref{Cn_1824_S1An}(\ref{Item_1824_S1_nu_p})
and $\nu_n \colon A_n \to A_{n + 1}$
of Construction \ref{Cn_1824_S1An}(\ref{Item_1824_S1_nu_A})
are equivariant.
\end{lem}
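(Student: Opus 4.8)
The plan is to reduce the whole statement to two elementary facts: the four building-block maps $\id_R$, $\xi$, $\io$, and the rotation automorphisms $\btt_\ld$ of $C(S^1)$ are equivariant, and the amplification operations of Notation~\ref{N_1908_Ampl} preserve equivariance. For the latter I would first record the general principle: if $\ps \colon (C, \sm) \to (D, \rh)$ is an equivariant \hm{}, then $\ps^{(k)} = 1_{M_k} \otimes \ps$ is equivariant for $\sm$ and $\id_{M_k} \otimes \rh$, and $\ps_n = \id_{M_n} \otimes \ps$ is equivariant for $\id_{M_n} \otimes \sm$ and $\id_{M_n} \otimes \rh$; hence so is the composite $\ps^{(k)}_n$. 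This is immediate, since tensoring an equivariant map with the identity of a matrix algebra carrying the trivial action on the matrix factor again intertwines the actions.

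Next I would verify the four basic maps. The identity $\id_R$ intertwines $\gm$ with itself. The map $\xi \colon C(S^1) \to R$ is equivariant by Lemma~\ref{L_1824_phe}, intertwining $\bt$ on $C(S^1)$ with $\gm$ on $R$, and $\io \colon R \to C(S^1, M_N)$ is equivariant by Lemma~\ref{L_1824_R_inv}, intertwining $\gm$ on $R$ with $\bt_N = \id_{M_N} \otimes \bt$ on $C(S^1, M_N) = M_N(C(S^1))$. For the rotations, each $\btt_\ld = \bt_{\exp(2 \pi i \ld)}$ is itself one of the automorphisms of the action $\bt$, so, $S^1$ being abelian, $\btt_\ld \circ \bt_\zt = \bt_{\exp(2 \pi i \ld) \zt} = \bt_\zt \circ \btt_\ld$ for every $\zt \in S^1$; thus $\btt_\ld$, viewed as a \hm{} $C(S^1) \to C(S^1)$, intertwines $\bt$ with itself. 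Combining these with the amplification principle shows that $\nu_{n, 0, 0}$, $\nu_{n, 0, 1}$, $\nu_{n, 1, 0}$ and each diagonal summand of $\nu_{n, 1, 1}$ is equivariant; since the codomain of $\nu_{n, 1, 1}$ carries the entrywise action $\id \otimes \bt$, which respects its block-diagonal decomposition, the full direct sum $\nu_{n, 1, 1}$ is equivariant as well.

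Finally I would assemble $\nu_n$. Expanding $r(n + 1) = l(n) r(n)$ from Construction~\ref{Cn_1824_S1An}(\ref{Item_1824_S1_ll}) shows that the summand $A_{n + 1, 0} = M_{r_0(n + 1)}(R)$ receives $\diag(\nu_{n, 0, 0}(a_0), \nu_{n, 0, 1}(a_1))$ and carries $\gm_{\zt, r_0(n + 1)} = \id \otimes \gm_\zt$, while $A_{n + 1, 1} = M_{r_1(n + 1)}(C(S^1))$ receives $\diag(\nu_{n, 1, 0}(a_0), \nu_{n, 1, 1}(a_1))$ and carries $\bt_{\zt, r_1(n + 1)} = \id \otimes \bt_\zt$. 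Both of these amplified actions act entrywise, hence preserve the block-diagonal decomposition, so any block-diagonal map whose blocks are individually equivariant is equivariant. Matching the intertwining data---$\id_R$ and $\xi$ land in $R$, acted on by $\gm$, while $\io$ and the $\btt_\ld$ land in $C(S^1, M_N)$ and $C(S^1)$, acted on by $\bt$---the two coordinates of $\nu_n$ are equivariant, and therefore so is $\nu_n$. I expect the only real work to be this bookkeeping: confirming that the placements dictated by the matrix $l(n)$ route each $\gm$-intertwining block into the $\gm$-summand and each $\bt$-intertwining block into the $\bt$-summand, with the matrix sizes matching $r(n + 1) = l(n) r(n)$.
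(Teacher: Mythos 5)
Your proposal is correct and follows essentially the same route as the paper, whose proof simply cites Lemma~\ref{L_1824_R_inv} (equivariance of $\io$), Lemma~\ref{L_1824_phe} (equivariance of $\xi$), and the commutativity of the $\bt_{\zt}$'s (which is exactly your reason that each $\btt_{\ld}$ intertwines $\bt$ with itself), declaring the rest immediate. You have merely made explicit the amplification and block-diagonal bookkeeping that the paper leaves to the reader.
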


\begin{proof}
This is immediate from Lemma~\ref{L_1824_R_inv}, Lemma~\ref{L_1824_phe},
and the fact that $\bt_{\zt_1}$ commutes with $\bt_{\zt_2}$
for $\zt_1, \zt_2 \in S^1$.
\end{proof}

We will need the notation $L_{\Ph}$ from 1.3 of~\cite{DNNP}.
For a \chs~$X$, $m \in \N$, and $x \in X$,
let $\ev_x \colon C (X, M_m) \to M_m$ be evaluation at~$x$.
If also $Y$ is a \chs{} $\Ph \colon C (X, M_m) \to C (Y, M_n)$ is a \hm,
then $L_{\Ph}$ assigns to $y \in Y$ the set of all $x \in X$
such that $\ev_x$ occurs as a summand in the representation
$\ev_y \circ \Ph$.
The definition in 1.3 of~\cite{DNNP} is extended from this case
to \hm{s} between direct sums of algebras of this type.
We refer to that paper for details.

\begin{lem}\label{L_1824_R_T}
Let $R \S C (S^1, M_N)$ be as in
Construction \ref{Cn_1824_S1An}(\ref{Item_1824_S1_R}).
Let ${\overline{\gm}} \colon S^1 \to \Aut (C (S^1))$
be the action
${\overline{\gm}}_{\zt} (f) (z) = f (\zt^{-N} z)$ for $\zt, z \in S^1$.
Then there is an isomorphism $\ps \colon R \to C (S^1, M_N)$
satisfying the following conditions.
\begin{enumerate}
\item\label{Item_824_R_T_Rk1}
For every rank one \pj{} $e \in R \S C (S^1, M_N)$,
the \pj{} $\ps (e) \in C (S^1, M_N)$ has rank one.
\item\label{Item_824_R_T_ExtEq}
The action $\zt \mapsto \ps \circ \gm_{\zt} \circ \ps^{-1}$
is exterior equivalent to the action
$\zt \mapsto {\overline{\gm}}_{\zt, N}$.
\item\label{Item_824_R_T_L}
With $L_{\Ph}$ as defined in 1.3 of~\cite{DNNP},
for $z \in S^1$ we have
\[
L_{\ps \circ \xi} (z)
 = \bigl\{ y \in S^1 \colon y^N = z \bigr\}
\andeqn
L_{\io \circ \ps^{-1}} (z) = \{ z^N \}.
\]
\end{enumerate}
\end{lem}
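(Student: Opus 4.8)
The plan is to ``untwist'' the defining relation of $R$ by a single continuous unitary-valued function on the circle. Concretely, I would first produce a continuous map $W \colon S^1 \to U (M_N)$ satisfying
\[
W (\om z) = s W (z) \qquad \text{for all } z \in S^1 .
\]
To build $W$, choose a continuous path $t \mapsto W (\exp (2 \pi i t))$ in $U (M_N)$ from $W (1) = 1$ to $W (\om) = s$ on the arc $t \in [0, 1/N]$ (possible since $U (M_N)$ is path connected), and extend by $W (\om^k z) = s^k W (z)$. Because $s^N = 1$, the prescriptions agree on overlaps and at $z = 1$, so $W$ is well defined and continuous on all of $S^1$, and the displayed relation holds by construction.

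Next I would define $\ps \colon R \to C (S^1, M_N)$ by
\[
\ps (f) (w) = W (z)^* f (z) W (z), \qquad \text{where } z \in S^1 \text{ is any point with } z^N = w .
\]
The value is independent of the choice of $z$: replacing $z$ by $\om^k z$ and using both $f (\om^k z) = s^k f (z) s^{* k}$ (the defining relation of $R$) and $W (\om^k z) = s^k W (z)$, the factors $s^{\pm k}$ cancel. Continuity follows by choosing local continuous branches of the $N$-fold covering $z \mapsto z^N$. Thus $\ps$ is a unital $*$-homomorphism, and its inverse is $g \mapsto \bigl( z \mapsto W (z) g (z^N) W (z)^* \bigr)$, which lands in $R$ precisely because $(\om z)^N = z^N$ and $W (\om z) = s W (z)$; hence $\ps$ is an isomorphism.

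It then remains to verify the three properties. For~(\ref{Item_824_R_T_Rk1}), the formula for $\ps$ is fiberwise conjugation by a unitary followed by a reparametrization, so it preserves the (constant, by connectedness of $S^1$) rank of a projection, and rank one goes to rank one. For~(\ref{Item_824_R_T_L}) I would read off the point evaluations directly: since $(\io \circ \ps^{-1}) (g) (z) = W (z) g (z^N) W (z)^*$, we have $\ev_z \circ \io \circ \ps^{-1} = \Ad (W (z)) \circ \ev_{z^N}$, giving $L_{\io \circ \ps^{-1}} (z) = \{ z^N \}$; while, fixing $z$ and a preimage $y$ with $y^N = z$, the matrix $\xi (f) (y) = \diag \bigl( f (y), f (\om y), \ldots, f (\om^{N - 1} y) \bigr)$ has as diagonal entries the values of $f$ at exactly the $N$ preimages $\{ y' \colon y'^N = z \}$, so conjugating by $W (y)$ yields $L_{\ps \circ \xi} (z) = \{ y' \colon y'^N = z \}$.

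For the exterior equivalence~(\ref{Item_824_R_T_ExtEq}), a direct computation from the definitions gives
\[
(\ps \circ \gm_{\zt} \circ \ps^{-1}) (g) (w) = u_{\zt} (w) \, g (\zt^{-N} w) \, u_{\zt} (w)^* , \qquad u_{\zt} (w) = W (z)^* W (\zt^{-1} z), \ \ z^N = w ,
\]
so that $\ps \circ \gm_{\zt} \circ \ps^{-1} = \Ad (u_{\zt}) \circ {\overline{\gm}}_{\zt, N}$. The family $(u_{\zt})_{\zt \in S^1}$ is a continuous family of unitaries in $C (S^1, M_N)$ with $u_1 = 1$, and the telescoping identity
\[
u_{\zt_1} (w) \, u_{\zt_2} (\zt_1^{-N} w) = W (z)^* W (\zt_1^{-1} z) \, W (\zt_1^{-1} z)^* W (\zt_2^{-1} \zt_1^{-1} z) = u_{\zt_1 \zt_2} (w)
\]
is exactly the cocycle relation $u_{\zt_1 \zt_2} = u_{\zt_1} \cdot {\overline{\gm}}_{\zt_1, N} (u_{\zt_2})$ required for exterior equivalence. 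The main point to get right is the existence and gluing of $W$, and, in~(\ref{Item_824_R_T_ExtEq}), that the $u_{\zt}$ satisfy the cocycle identity \emph{on the nose} rather than merely up to scalars; the explicit formula for $u_{\zt}$ in terms of $W$ makes both transparent, so I expect no serious obstacle beyond bookkeeping with the $N$-fold cover.
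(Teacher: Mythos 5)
Your proposal is correct and is essentially the paper's proof: your function $W$ with $W (\om z) = s W (z)$ is exactly the paper's path $\ld \mapsto s_{\ld}$ (via $W (e^{2 \pi i \ld / N}) = s_{\ld}$, using $s^N = 1$ to descend to the circle), your $\ps$ and inverse agree with the paper's formulas, and your cocycle $u_{\zt} (w) = W (z)^* W (\zt^{-1} z)$ coincides with the paper's $v_{\zt}$, with the same telescoping verification of the cocycle identity and the same direct computation of $L_{\ps \circ \xi}$ and $L_{\io \circ \ps^{-1}}$. The only difference is bookkeeping: you work directly on $S^1$ with the equivariance relation for $W$, where the paper works with an $\R$-parametrized lift satisfying $s_{\ld + m} = s^m s_{\ld}$.
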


The isomorphism in this lemma is not equivariant when
$C (S^1, M_N)$ is equipped with the action $\zt \mapsto \bt_{\zt, N}$,
or any action exterior equivalent to it.

\begin{proof}[Proof of Lemma~\ref{L_1824_R_T}]
Choose a \ct{} unitary path $\ld \mapsto s_{\ld}$ in $M_N$,
defined for $\ld \in [0, 1]$,
such that $s_0 = 1$ and $s_1 = s$.
For any $\ld \in \R$,
choose $\ld_0 \in [0, 1)$ such that $\ld - \ld_0 \in \Z$.
Taking $n = \ld - \ld_0$, we then define $s_{\ld} = s^n s_{\ld_0}$.
This function is still \ct.
Moreover, for any $m \in \Z$,
\begin{equation}\label{Eq_1912_ldm}
s_{\ld + m}
 = s^{m + n} s_{\ld_0}
 = s^{m} s^{n} s_{\ld_0} = s^{m} s_{\ld}.
\end{equation}

We claim that there is a well defined \hm{}
$\ps \colon R \to C (S^1, M_N)$ such that, whenever $f \in R$
and $\ld \in \R$, we have
\[
\ps (f) (e^{2 \pi i \ld}) = s_{\ld}^* f (e^{2 \pi i \ld / N}) s_{\ld}.
\]
The only issue is whether $\ps (f) (e^{2 \pi i \ld})$ is well defined.
It is sufficient to prove that if $\ld_0 \in [0, 1)$ and
$n = \ld - \ld_0 \in \Z$, then the formulas for
$\ps (f) (e^{2 \pi i \ld})$ and $\ps (f) (e^{2 \pi i \ld_0})$ agree.
To see this, use the definition of $R$ at the second step to get
\[
s_{\ld}^* f (e^{2 \pi i \ld / N}) s_{\ld}
 = s_{\ld_0}^* s^{- n} f (\om^{n} e^{2 \pi i \ld_0 / N}) s^n s_{\ld}
 = s_{\ld_0}^* f (e^{2 \pi i \ld_0 / N}) s_{\ld_0},
\]
as desired.

The construction of $\ps$ makes Part~(\ref{Item_824_R_T_Rk1}) obvious.
Bijectivity is easy just by checking the definition.
We now prove~(\ref{Item_824_R_T_ExtEq}).
For $\zt \in S^1$, choose $\ta \in \R$ such that
$e^{2 \pi i \ta} = \zt$,
and define a function $v_{\zt} \in C (S^1, M_N)$
by $v_{\zt} (e^{2 \pi i \ld}) = s_{\ld}^* s_{\ld - N \ta}$
for $\ld \in \R$.
We claim that $v_{\zt}$ is well defined.
First, we must show that if $m \in \Z$ then
\[
s_{\ld + m}^* s_{\ld + m - N \ta} = s_{\ld}^* s_{\ld - N \ta}.
\]
This follows directly from~(\ref{Eq_1912_ldm}).
Second, we must show that if $e^{2 \pi i \ta_1} = e^{2 \pi i \ta_2}$,
then
\[
s_{\ld}^* s_{\ld - N \ta_1} = s_{\ld}^* s_{\ld - N \ta_2}.
\]
For this, set $m = \ta_1 - \ta_2 \in \Z$, and use~(\ref{Eq_1912_ldm})
and $s^N = 1$ to see that
\[
s_{\ld - N \ta_2} = s_{\ld - N \ta_1 + N m}
 = s^{N m} s_{\ld - N \ta_1} = s_{\ld - N \ta_1}.
\]
The claim is proved.

It is now easy to check that
$(\zt, \ld) \mapsto v_{\zt} (e^{2 \pi i \ld})$ is \ct,
so that $\zt \mapsto v_{\zt}$ is a \cfn{} from $S^1$
to the unitary group of $C (S^1, M_N)$.

We next claim that
$v_{\zt_1 \zt_2} = v_{\zt_1} \bt_{\zt_1^N, N} (v_{\zt_2})$
for $\zt_1, \zt_2 \in S^1$.
To do this, choose $\ta_1, \ta_2 \in \R$
such that $\zt_1 = e^{2 \pi i \ta_1}$ and $\zt_2 = e^{2 \pi i \ta_2}$.
Then $\zt_1 \zt_2 = e^{2 \pi i (\ta_1 + \ta_2)}$.
So for $\ld \in \R$,
\[
\begin{split}
v_{\zt_1} (e^{2 \pi i \ld})
    \bt_{\zt_1^N, N} (v_{\zt_2}) (e^{2 \pi i \ld})
& = v_{\zt_1} (e^{2 \pi i \ld})
       v_{\zt_2} \bigl( e^{2 \pi i (\ld - N \ta_1)} \bigr)
\\
&
 = s_{\ld}^* \cdot s_{\ld - N \ta_1}
   \cdot s_{\ld - N \ta_1}^* \cdot s_{\ld - N \ta_1 - N \ta_2}
 = v_{\zt_1 \zt_2} (e^{2 \pi i \ld}),
\end{split}
\]
proving the claim.

We have shown that $\zt \mapsto v_{\zt}$ is a cocycle for the
action $\zt \mapsto \bt_{\zt^N, N}$ of $S^1$ on $C (S^1, M_N)$.
Therefore the formula
\[
\rh_{\zt} (g) = v_{\zt} \bt_{\zt^N, N} (g) v_{\zt}^*
\]
defines an action of $S^1$ on $C (S^1, M_N)$ which is exterior
equivalent to $\zt \mapsto \bt_{\zt^N, N} = {\overline{\gm}}_{\zt, N}$.

To finish the proof of~(\ref{Item_824_R_T_ExtEq}),
we show that $\ps$ is equivariant
for the action $\rh$ on $C (S^1, M_N)$.
Let $f \in R$, let $\ld \in \R$, let $\zt \in S^1$, and
choose $\ta \in \R$ such that $\zt = e^{2 \pi i \ta}$.
Then
\[
\begin{split}
(\rh_{\zt} \circ \ps) (f) (e^{2 \pi i \ld})
& = s_{\ld}^* s_{\ld - N \ta} \ps (f) (\zt^{- N} e^{2 \pi i \ld})
                       s_{\ld - N \ta}^* s_{\ld}
\\
& = s_{\ld}^* s_{\ld - N \ta} \bigl[ s_{\ld - N \ta}^*
        f \bigl( e^{2 \pi i (\ld - N \ta) / N} \bigr)
         s_{\ld - N \ta} \bigr] s_{\ld - N \ta}^* s_{\ld}
\\
& = s_{\ld}^* f \bigl(\zt^{-1} e^{2 \pi i \ld / N} \bigr) s_{\ld}
  = (\ps \circ \gm_{\zt}) (f) (e^{2 \pi i \ld}).
\end{split}
\]
This completes the proof of~(\ref{Item_824_R_T_ExtEq}).

For~(\ref{Item_824_R_T_L}),
we first observe that if $f \in C (S^1)$ and $\ld \in \R$ then
\[
(\ps \circ \xi) (f) (e^{2 \pi i \ld})
 = s_{\ld}^*  {\operatorname{diag}} \bigl( f (e^{2 \pi i \ld / N}),
     \, f (\om^{- 1} e^{2 \pi i \ld / N}),
    \, \ldots, \, f (\om^{- N + 1} e^{2 \pi i \ld / N}) \bigr) s_{\ld}.
\]
Therefore
\[
L_{\ps \circ \xi} (e^{2 \pi i \ld})
 = \bigl\{ e^{2 \pi i \ld / N}, \, \om^{- 1} e^{2 \pi i \ld / N},
    \, \ldots, \, \om^{- N + 1} e^{2 \pi i \ld / N} \bigr\}.
\]
This is the same as the description in the statement.

For the second formula,
one checks that for $g \in C (S^1, M_N)$ and $\ld \in \R$, we have
\[
\ps^{-1} (g) (e^{2 \pi i \ld})
 = s_{N \ld} g ( e^{2 \pi i N \ld} ) s_{N \ld}^*
 = s_{N \ld} g \bigl( (e^{2 \pi i \ld})^N  \bigr) s_{N \ld}^*.
\]
Since $\io \colon R \to C (S^1, M_N)$ is just the inclusion,
this gives $L_{\io \circ \ps^{-1}} (z) = \{ z^N \}$ for $z \in S^1$,
as desired.
\end{proof}

\begin{lem}\label{L_1916_Simple}
The algebra $A$ of
Construction \ref{Cn_1824_S1An}(\ref{Item_1924_S1_A})
is a simple AT~algebra.
\end{lem}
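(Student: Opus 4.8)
The plan is to prove the two assertions separately: that $A$ is an AT~algebra, and that $A$ is simple.

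For the AT~claim I would invoke Lemma~\ref{L_1824_R_T}, which provides an isomorphism $R \cong C (S^1, M_N)$. Consequently $A_{n, 0} = M_{r_0 (n)} (R) \cong M_{N r_0 (n)} (C (S^1))$ and $A_{n, 1} = M_{r_1 (n)} (C (S^1))$, so each $A_n$ is a finite direct sum of circle algebras. A check of the matrix sizes against the recursion $r (n + 1) = l (n) r (n)$ shows that each $\nu_n \colon A_n \to A_{n + 1}$ is a unital $*$-homomorphism. Hence $A = \dirlim_n A_n$ is by definition a unital AT~algebra.

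For simplicity I would use the standard spectral criterion. Write $\widehat{A_n} = X_n^{(0)} \sqcup X_n^{(1)}$ for the two circles forming the spectrum of $A_n$, the first coming from $A_{n, 0}$ and identified with $S^1$ via the $\ps$ of Lemma~\ref{L_1824_R_T}. It suffices to show that for every $m$ and every nonempty open $U \subseteq \widehat{A_m}$ there is $n \geq m$ with $L_{\nu_{n, m}} (y) \cap U \neq \varnothing$ for \emph{every} $y \in \widehat{A_n}$: a nonzero ideal $I \trianglelefteq A$ gives, at some level~$m$, a nonempty open set $U_m \subseteq \widehat{A_m}$, and then $U_k \supseteq \{ y : L_{\nu_{k, m}} (y) \cap U_m \neq \varnothing \}$; the displayed condition forces $U_n = \widehat{A_n}$, i.e.\ $\nu_{n,m}(I_m)$ generates $A_n$, so $1 \in I$. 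Using Lemma~\ref{L_1824_R_T}(\ref{Item_824_R_T_L}) and the definitions in Construction~\ref{Cn_1824_S1An}(\ref{Item_1824_S1_nu_p}), the one-step sets are
\[
L_{\nu_k} (y) = \{ y \}_{X^{(0)}} \cup \{ w : w^N = y \}_{X^{(1)}}
\qquad (y \in X^{(0)}_{k + 1}),
\]
\[
L_{\nu_k} (y) = \{ y^N \}_{X^{(0)}} \cup
 \bigl\{ e^{- 2 \pi i j / N} y, \ e^{- 2 \pi i (\te + j / N)} y
 \colon 0 \leq j < N \bigr\}_{X^{(1)}}
\qquad (y \in X^{(1)}_{k + 1}).
\]
Thus the self-map of $X^{(0)}$ is the identity, the two circles are linked by the degree-$N$ map $w \mapsto w^N$ and its root branches, and the self-map of $X^{(1)}$ contains rotation by the irrational~$\te$. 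Since $\te \notin \Q$, the subgroup of $S^1$ generated by $e^{2 \pi i / N}$ and $e^{2 \pi i \te}$ is dense, so iterating the $X^{(1)}$ self-map spreads any point to an $\ep$-dense subset of $X^{(1)}$ after finitely many steps, uniformly in the starting point by minimality of the irrational rotation. If $U \subseteq X^{(1)}_m$, I would drop into $X^{(1)}$ at once (taking an $N$-th root when $y \in X^{(0)}_n$) and spread until the orbit meets $U$. For $U \subseteq X^{(0)}_m$ I would route through $X^{(1)}$: branch from $X^{(0)}$ into $X^{(1)}$ via an $N$-th root, spread to an $\ep$-dense set there, then return to $X^{(0)}_m$ through $w \mapsto w^N$, whose image of an $\ep$-dense set is $N \ep$-dense; choosing $\ep$ small so that $N\ep$-dense sets meet $U$ completes the covering, uniformly in~$y$.

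The main obstacle is exactly this last case. Because $\nu_{n, 0, 0}$ is the identity on the circle, summand~$0$ carries no intrinsic mixing, so simplicity cannot be read off from its self-dynamics; it must be extracted from the irrational rotation buried in $\nu_{n, 1, 1}$ by passing back and forth between the two summands through the degree-$N$ maps, all while keeping the density estimates uniform in the representation~$y$.
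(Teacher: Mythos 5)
Your proposal is correct and follows essentially the same route as the paper: use Lemma~\ref{L_1824_R_T} to rewrite the system as a direct limit of circle algebras, compute the one-step sets $L_{\nu_n}$ exactly as in (\ref{Eq_1916_Lz_zero}) and~(\ref{Eq_1916_Lz1}), and verify the simplicity criterion of Proposition~2.1 of~\cite{DNNP} by spreading points through the irrational rotation in summand~$1$ and returning to summand~$0$ via $w \mapsto w^N$. The only cosmetic differences are that you re-derive the spectral criterion rather than citing~\cite{DNNP}, and that you handle the return to summand~$0$ with the ``$\ep$-dense maps to $N\ep$-dense'' estimate where the paper instead demands $\ep$-density of the orbit $\bigl\{ e^{- 2 \pi i k N \te} \bigr\}$ directly (both are valid, and your uniformity claim is automatic since the orbit segments are rotates of a fixed finite set).
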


\begin{proof}
Using Lemma~\ref{L_1824_R_T}, we can rewrite the direct system in
Construction \ref{Cn_1824_S1An}(\ref{Item_1824_S1_nu_A}) as
\[
A = \dirlim_n
 \bigl[ C (S^1, M_{N r_0 (n)}) \oplus C (S^1, M_{r_1 (n)}) \bigr],
\]
with maps ${\widetilde{\nu}}_{n}$, for $n \in \Nz$,
obtained analogously to
Construction \ref{Cn_1824_S1An}(\ref{Item_1824_S1_nu_A}) from
\[
{\widetilde{\nu}}_{n, 0, 0}
 = (\id_{C (S^1)})_{N r_0 (n)}^{l_{0, 0} (n)},
\qquad
{\widetilde{\nu}}_{n, 0, 1}
 = \ps_{l_{0, 1} (n) r_1 (n)} \circ \nu_{n, 0, 1},
\]
\[
{\widetilde{\nu}}_{n, 1, 0}
 = \nu_{n, 1, 0} \circ (\ps_{l_{1, 0} (n) r_0 (n)})^{-1},
\andeqn
{\widetilde{\nu}}_{n, 1, 1} = \nu_{n, 1, 1},
\]
and with
\[
\begin{split}
{\widetilde{\nu}}_{n, m}
& = {\widetilde{\nu}}_{n - 1}
    \circ {\widetilde{\nu}}_{n - 2}
    \circ \cdots \circ {\widetilde{\nu}}_m \colon
\\
&  C (S^1, M_{N r_0 (m)}) \oplus C (S^1, M_{r_1 (m)})
   \to C (S^1, M_{N r_0 (n)}) \oplus C (S^1, M_{r_1 (n)}).
\end{split}
\]

It is now obvious that $A$ is an AT~algebra.
For simplicity, we use Proposition 2.1 of~\cite{DNNP},
with $L_{\Ph}$ as defined in 1.3 of~\cite{DNNP}.
To make the notation easier, we take $X_n = S^1 \times \{ 0, 1 \}$
(rather than $S^1 \amalg S^1$ as in~\cite{DNNP}),
and for $j \in \{ 0, 1 \}$ identify $S^1 \times \{ j \}$
with the primitive ideal space of $C (S^1, M_{r_j (n)})$.
Moreover, since the spaces $X_n$ are all equal,
we write them all as~$X$.

For $z \in S^1$ it is immediate that
\[
L_{{\widetilde{\nu}}_{n, 0, 0}} (z) = \{ z \}
\]
and, recalling $\om = \exp (2 \pi i / N)$ from
Construction \ref{Cn_1824_S1An}(\ref{Item_1824_S1_s_om})
and using
\[
\{ 1, \om^{- 1}, \ldots, \om^{- N + 1} \}
 = \{ 1, \om, \ldots, \om^{N - 1} \},
\]
also
\[
\begin{split}
L_{{\widetilde{\nu}}_{n, 1, 1}} (z)
& = L_{\nu_{n, 1, 1}} (z)
\\
& = \bigl\{ z, \, \om z, \, \ldots, \om^{N - 1} z, \,
       e^{- 2 \pi i \te} z, \, e^{- 2 \pi i \te} \om z,
        \, \ldots, e^{- 2 \pi i \te} \om^{N - 1} z \bigr\}.
\end{split}
\]
Also, using Lemma \ref{L_1824_R_T}(\ref{Item_824_R_T_L}),
\[
L_{{\widetilde{\nu}}_{n, 0, 1}} (z)
 = L_{\ps \circ \xi} (z)
 = \bigl\{ y \in S^1 \colon y^N = z \bigr\}
\]
and
\[
L_{{\widetilde{\nu}}_{n, 1, 0}} (z)
= L_{\io \circ \ps^{-1}} (z) = \{ z^N \}.
\]
Putting these together, we get
\begin{equation}\label{Eq_1916_Lz_zero}
L_{{\widetilde{\nu}}_{n}} (z, 0)
 = \{ (z, 0) \}
  \cup \bigl\{ (y, 1) \colon {\mbox{$y \in S^1$ and $y^N = z$}} \bigr\}
\end{equation}
and
\begin{equation}\label{Eq_1916_Lz1}
\begin{split}
L_{{\widetilde{\nu}}_{n}} (z, 1)
& = \{ (z^N, 0), \,
     (z, 1), \, (\om z, 1), \, \ldots, (\om^{N - 1} z, 1),
\\
& \hspace*{5em} {\mbox{}}
     (e^{- 2 \pi i \te} z, 1), \, (e^{- 2 \pi i \te} \om z, 1),
        \, \ldots, (e^{- 2 \pi i \te} \om^{N - 1} z, 1) \bigr\}.
\end{split}
\end{equation}
One checks that if $C$, $D$, and $E$ are finite direct sums
of homogeneous \uca{s},
and $\Ph \colon C \to D$ and $\Ps \colon D \to E$ are unital \hm{s},
with primitive ideal spaces $X$, $Y$, and~$Z$,
then for $z \in Z$ we have
\begin{equation}\label{Eq_1916_Comp}
L_{\Ps \circ \Ph} (z) = \bigcup_{y \in L_{\Ps} (z)} L_{\Ph} (y).
\end{equation}
The equations (\ref{Eq_1916_Lz_zero}) and~(\ref{Eq_1916_Lz1})
show that $x \in L_{{\widetilde{\nu}}_{n}} (x)$ for any $x \in X$.
So~(\ref{Eq_1916_Comp}) implies that
for any $l, m, n \in \Nz$ with $n > m > l$,
and any $x \in X$,
\begin{equation}\label{Eq_1916_Contain}
L_{{\widetilde{\nu}}_{n, m}} (x) \cup L_{{\widetilde{\nu}}_{m, l}} (x)
 \S L_{{\widetilde{\nu}}_{n, l}} (x).
\end{equation}
It now suffices to prove that for every $l \in \Nz$ and every $\ep > 0$,
there is $n > l$ such that for every $z \in S^1$ and $j \in \{ 0, 1 \}$,
the set $L_{{\widetilde{\nu}}_{n, l}} (z, j)$ is
$\ep$-dense in $S^1 \times \{ 0, 1 \}$.
Given this, simplicity of $A$ can be deduced
from Proposition~2.1 of~\cite{DNNP}, and the proof will be complete.

Choose $n > l + 2$ such that
\[
\bigl\{ e^{- 2 \pi i k \te} \colon k = 0, 1, \ldots, n - l - 2 \bigr\}
\andeqn
\bigl\{ e^{- 2 \pi i k N \te} \colon k = 0, 1, \ldots, n - l - 2 \bigr\}
\]
are both $\ep$-dense in $S^1$.

We claim that if $z \in S^1$ is arbitrary, then
$L_{{\widetilde{\nu}}_{n - 1, \, l}} (z, 1)$
is $\ep$-dense in $S^1 \times \{ 0, 1 \}$.
To prove the claim,
first use (\ref{Eq_1916_Comp}) repeatedly, (\ref{Eq_1916_Contain}),
and the fact that
$(e^{- 2 \pi i \te} y, 1) \in L_{{\widetilde{\nu}}_{m}} (y, 1)$
for all $m \in \N$ and $y \in S^1$ (by (\ref{Eq_1916_Lz1}))
to see that for $z \in S^1$,
\begin{equation}\label{Eq_1927_Cont}
\bigl\{ (e^{- 2 \pi i k \te} z, \, 1) \colon
     k = 0, 1, \ldots, n - l - 2 \bigr\}
 \S L_{{\widetilde{\nu}}_{n - 1, \, l + 1}} (z, 1).
\end{equation}
By~(\ref{Eq_1916_Contain}),
this set is contained in $L_{{\widetilde{\nu}}_{n - 1, \, l}} (z, 1)$,
and, since since multiplication by~$z$ is isometric,
it is $\ep$-dense in $S^1 \times \{ 1 \}$.
Also, by (\ref{Eq_1927_Cont}) and (\ref{Eq_1916_Contain})
(taking $m = l + 1$), and using~(\ref{Eq_1916_Lz1})
to get $(y^N, 0) \in L_{{\widetilde{\nu}}_{l + 1, l}} (y, 1)$
for all  $y \in S^1$,
\[
\bigl\{ (e^{- 2 \pi i k N \te} z^N, \, 0) \colon
     k = 0, 1, \ldots, n - l - 2 \bigr\}
 \S L_{{\widetilde{\nu}}_{n - 1, \, l}} (z, 1),
\]
and this set is $\ep$-dense in $S^1 \times \{ 0 \}$.
The claim follows.

Now, for any $x \in S^1 \times \{ 0, 1 \}$,
the set $ L_{{\widetilde{\nu}}_{n, n - 1}} (x)$ contains at least
one point in $S^1 \times \{ 1 \}$
by (\ref{Eq_1916_Lz_zero}) and~(\ref{Eq_1916_Lz1}).
Using~(\ref{Eq_1916_Contain}) with $m = n - 1$
and the previous claim, it follows that
$L_{{\widetilde{\nu}}_{n, l}} (x)$
is $\ep$-dense in $S^1 \times \{ 0, 1 \}$, as desired.
\end{proof}

\begin{lem}\label{L_1912_B_Smp}
The algebra $B$ of
Construction \ref{Cn_1824_S1An}(\ref{Item_1824_S1_FixLim})
is a simple unital AF~algebra.
\end{lem}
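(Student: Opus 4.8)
The plan is to argue directly from the Bratteli diagram of the system $\bigl( (B_n)_{n \in \Nz}, \, (\ch_{n, m})_{m \leq n} \bigr)$, without appealing to the later identification of $B$ with a fixed point algebra. First I would observe that each $B_n = (M_{r_0 (n)})^N \oplus M_{r_1 (n)}$ is finite dimensional, and check from the formulas in Construction~\ref{Cn_1824_S1An}(\ref{Item_1824_S1_FixPtSys}) that each partial map $\ch_{n, j, k}$, and hence $\ch_n$, is a unital $*$-homomorphism. The relevant point is that the block-diagonal source and target sizes match: the dimension identities $r_0 (n + 1) = l_{0, 0} (n) r_0 (n) + l_{0, 1} (n) r_1 (n)$ and $r_1 (n + 1) = N l_{1, 0} (n) r_0 (n) + 2 N l_{1, 1} (n) r_1 (n)$ are exactly the two rows of~(\ref{Eq_1901_rn_dfm}). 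Thus $B = \dirlim_n B_n$ is a unital AF~algebra, and it remains only to prove simplicity.

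To set up the diagram, label the $N + 1$ simple summands of $B_n$ by $v_0^{(0)}, \ldots, v_0^{(N - 1)}$ (the $N$ copies of $M_{r_0 (n)}$ comprising $B_{n, 0} = (M_{r_0 (n)})^N$) together with $v_1$ (the summand $B_{n, 1} = M_{r_1 (n)}$). I would then read off the four partial multiplicity patterns by unwinding the amplification notation of Notation~\ref{N_1908_Ampl} together with the definitions of $\mu$ and $\dt$. The crucial qualitative features are: (i)~via $\ch_{n, 0, 0} = (\id_{{\mathbb{C}}^N})_{r_0 (n)}^{l_{0, 0} (n)}$, since $\id_{{\mathbb{C}}^N}$ preserves the ${\mathbb{C}}^N$-grading, the vertex $v_0^{(k)}$ maps into $v_0^{(k)}$ with multiplicity $l_{0, 0} (n)$ and into no other degree-zero summand; (ii)~via $\ch_{n, 0, 1} = \mu_{r_1 (n)}^{l_{0, 1} (n)}$, with $\mu$ the diagonal embedding, $v_1$ maps into \emph{every} $v_0^{(k)}$ with multiplicity $l_{0, 1} (n)$; (iii)~via $\ch_{n, 1, 0} = \dt_{r_0 (n)}^{l_{1, 0} (n)}$, with $\dt$ the diagonal embedding of ${\mathbb{C}}^N$ into $M_N$, \emph{every} $v_0^{(k)}$ maps into $v_1$ with multiplicity $l_{1, 0} (n)$; and (iv)~via $\ch_{n, 1, 1}$, the vertex $v_1$ maps into $v_1$ with multiplicity $2 N l_{1, 1} (n)$. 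Since every $l_{j, k} (n) \in \N$, all of these are strictly positive, and the only absent edges are $v_0^{(k)} \to v_0^{(k')}$ for $k \neq k'$.

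With the diagram in hand, simplicity follows from the standard criterion that $B$ is simple provided that for each $n$ there is $m > n$ for which every vertex at level $n$ is joined by a directed path to every vertex at level $m$ (equivalently, the composite partial multiplicity matrix for $\ch_{n, m}$ has all entries nonzero). I would show that $m = n + 2$ works uniformly in $n$, with $v_1$ serving as a hub. Indeed, from any $v_0^{(k)}$ at level $n$, feature~(iii) reaches $v_1$ at level $n + 1$, and then feature~(ii) reaches all of $v_0^{(0)}, \ldots, v_0^{(N - 1)}$ at level $n + 2$ while feature~(iv) reaches $v_1$; and from $v_1$ at level $n$, feature~(ii) already reaches every $v_0^{(k)}$ at level $n + 1$ and (iv) reaches $v_1$, so all vertices are reached by level $n + 2$. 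Hence $\ch_{n, \, n + 2}$ has all entries positive for every $n$, and $B$ is simple.

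The main obstacle is purely bookkeeping: correctly extracting the four partial multiplicity patterns from the tensor/amplification notation, and in particular verifying the grading-preservation that makes $\ch_{n, 0, 0}$ diagonal in the index $k$, while the diagonal maps $\mu$ and $\dt$ make the edges into and out of $v_1$ fully connected. Once these are pinned down, the hub argument through $v_1$ is immediate and the appeal to the Bratteli-diagram simplicity criterion is routine.
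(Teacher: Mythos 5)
Your proposal is correct and takes essentially the same route as the paper: the paper's proof simply notes that $B$ is unital AF by construction and cites the simplicity criterion in the corollary on page~212 of~\cite{Brtl}, which is exactly the Bratteli-diagram connectivity criterion you invoke. Your reading of the four partial multiplicity patterns (including the absence of edges $v_0^{(k)} \to v_0^{(k')}$ for $k \neq k'$) and the two-step hub argument through $v_1$ correctly supply the verification that the paper leaves to the cited corollary.
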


\begin{proof}
That $B$ is a unital AF~algebra is immediate from its definition.
Simplicity follows from the corollary on page 212 of~\cite{Brtl}.
\end{proof}

\begin{lem}\label{L_1912_R_Fix}
Define
\[
c = \frac{1}{\sqrt{N}} \left( \begin{matrix}
  1    &   1         &   1              & \cdots    &   1          \\
  1    & \om         & \om^2            & \cdots    & \om^{N - 1} \\
  1    & \om^2       & \om^4            & \cdots    & \om^{2 (N - 1)} \\
\vdots & \vdots      & \vdots           & \ddots    &  \vdots      \\
  1    & \om^{N - 1} & \om^{2 (N - 1)}  & \cdots    & \om^{(N - 1)^2}
\end{matrix} \right).
\]
Then the formula
\[
\ep_0 (\ld_0, \ld_1, \ldots, \ld_{N - 1})
 = c^* \diag (\ld_0, \ld_1, \ldots, \ld_{N - 1}) c,
\]
for $\ld_0, \ld_1, \ldots, \ld_{N - 1} \in {\mathbb{C}}$,
defines an isomorphism from ${\mathbb{C}}^N$ to $R^{\gm}$.
\end{lem}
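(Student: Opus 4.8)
The plan is to identify $R^{\gm}$ with the relative commutant of the shift $s$ in $M_N$, and then to recognize $\ep_0$ as the diagonalization of that commutant by the unitary matrix~$c$, which is a normalization of the discrete Fourier transform matrix.

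First I would determine $R^{\gm}$. Since $\gm$ is the restriction to $R$ of the action $\bt_N$, and $\bt_{\zt, N} (f) (z) = f (\zt^{-1} z)$, a function $f \in R$ is fixed by $\gm$ if and only if $f (\zt^{-1} z) = f (z)$ for all $\zt, z \in S^1$, that is, if and only if $f$ is constant. A constant function with value $M \in M_N$ satisfies the defining relation $f (\om z) = s f (z) s^*$ of Construction~\ref{Cn_1824_S1An}(\ref{Item_1824_S1_R}) exactly when $M = s M s^*$; as $s$ is unitary, this says precisely that $M$ commutes with~$s$. Hence $R^{\gm}$ is the relative commutant $\{ s \}' \cap M_N$.

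Next I would diagonalize~$s$. A direct computation with geometric sums of powers of~$\om$ shows that $c$ is unitary and that
\[
c s c^* = \diag (1, \om^{-1}, \ldots, \om^{-(N - 1)}),
\]
a diagonal matrix whose diagonal entries are the $N$ distinct $N$th roots of unity. Because $c$ is unitary, $\ep_0$ is a unital $*$-homomorphism (multiplicativity and $*$-preservation follow from $c c^* = 1$ together with the fact that the diagonal matrices form a commutative $*$-subalgebra), and it is clearly injective. Since $\diag (\ld_0, \ldots, \ld_{N - 1})$ commutes with the diagonal matrix $c s c^*$, each $\ep_0 (\ld_0, \ldots, \ld_{N - 1})$ commutes with~$s$, so $\ep_0 ({\mathbb{C}}^N) \S \{ s \}' = R^{\gm}$. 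Conversely, if $M$ commutes with~$s$, then $c M c^*$ commutes with $c s c^*$; since the latter has distinct diagonal entries, $c M c^*$ must itself be diagonal, say $c M c^* = \diag (\ld_0, \ldots, \ld_{N - 1})$, whence $M = \ep_0 (\ld_0, \ldots, \ld_{N - 1})$. Thus $\ep_0$ maps ${\mathbb{C}}^N$ isomorphically onto $R^{\gm}$.

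The only genuine computation is the verification that $c s c^*$ is diagonal with distinct entries, which is the standard fact that the Fourier matrix diagonalizes the cyclic shift; everything else is formal, so I do not expect any serious obstacle.
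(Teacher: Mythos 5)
Your proposal is correct and follows essentially the same route as the paper: identify $R^{\gm}$ as the constant functions whose value commutes with~$s$, check that $c$ is unitary with $c s c^* = \diag \bigl( 1, \om^{-1}, \ldots, \om^{-(N-1)} \bigr)$, and use the fact that the commutant of a diagonal matrix with distinct eigenvalues is the diagonal algebra. Your write-up is slightly more explicit than the paper's (which leaves the distinct-eigenvalue commutant computation implicit), but there is no substantive difference.
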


\begin{proof}
One checks that $c$ is unitary and
\begin{equation}\label{Eq_1912_wsw}
c s c^*
 = \diag \bigl(1 , \om^{-1}, \om^{-2}, \ldots, \om^{- (N - 1)} \bigr).
\end{equation}

It is immediate that $R^{\gm}$ is the set of constant functions
in $C (S^1, M_N)$ whose constant value commutes with $s$.
Let $D$ be the set of constant functions
in $C (S^1, M_N)$ whose constant value commutes with $c s c^*$.
Then $a \mapsto c^* a c$ is an isomorphism from $D$ to $R^{\gm}$.
Also, by~(\ref{Eq_1912_wsw}),
\[
(\ld_0, \ld_1, \ldots, \ld_{N - 1})
 \mapsto \diag ( \ld_0, \ld_1, \ldots, \ld_{N - 1} )
\]
is an isomorphism from ${\mathbb{C}}^N$ to~$D$.
\end{proof}

\begin{lem}\label{L_1916_FPisB}
There is a family $(\et_n)_{n \in \Nz}$ of isomorphisms
$\et_n \colon B_n \to (A_n)^{\af^{(n)}}$ such that the following hold.
\begin{enumerate}
\item\label{Item_1925_Comm}
$\et_n \circ \ch_{n, m} = \nu_{n, m} \circ \et_m$
whenever $m, n, \in \Nz$ satisfy $m \leq n$.
\item\label{Item_1925_pn}
With $p_n$ as in Construction \ref{Cn_1824_S1An}(\ref{Item_1824_S1_pn})
and $q_n$ as in Construction \ref{Cn_1824_S1An}(\ref{Item_1915_S1_qn}),
we have $\et_n (q_n) = p_n$.
\item\label{Item_1925_BBnj}
For all $n \in \Nz$ and $j \in \{ 0, 1 \}$, we have
$\et_n (B_{n, j}) = (A_{n, j})^{\af^{(n)}}$.
\item\label{Item_1925_Iso_A}
The family $(\et_n)_{n \in \Nz}$ induces an isomorphism
$\et_{\I} \colon B \to A^{\af}$.
\end{enumerate}
\end{lem}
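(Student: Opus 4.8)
The plan is to identify each fixed-point algebra $(A_n)^{\af^{(n)}}$ with $B_n$ summand by summand, and then to build the $\et_n$ inductively so that the squares in~(\ref{Item_1925_Comm}) commute exactly. First I would compute the two summands. Since $\af^{(n)}_\zt = \gm_{\zt, r_0 (n)} \oplus \bt_{\zt, r_1 (n)}$ respects $A_n = A_{n, 0} \oplus A_{n, 1}$, we have $(A_n)^{\af^{(n)}} = (A_{n, 0})^{\af^{(n)}} \oplus (A_{n, 1})^{\af^{(n)}}$. The rotation action $\bt$ on $C (S^1)$ has fixed-point algebra $\mathbb{C}$ (the constants), so $(A_{n, 1})^{\af^{(n)}} = M_{r_1 (n)} = B_{n, 1}$, identified by inclusion as constant functions; and by Lemma~\ref{L_1912_R_Fix}, $R^{\gm} \cong \mathbb{C}^N$ via $\ep_0$, so $(A_{n, 0})^{\af^{(n)}} = M_{r_0 (n)} \otimes R^{\gm} \cong M_{r_0 (n)} \otimes \mathbb{C}^N = B_{n, 0}$ via $\id_{M_{r_0 (n)}} \otimes \ep_0$. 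Together these give a ``naive'' isomorphism $\et_n^{(0)} \colon B_n \to (A_n)^{\af^{(n)}}$ respecting the summand decomposition, which already gives~(\ref{Item_1925_BBnj}); moreover $q_n$ and $p_n$ are the units of the respective second summands, so~(\ref{Item_1925_pn}) follows from summand-preservation.

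Next I would check that, under these identifications, the restriction of $\nu_n$ to fixed points (which makes sense by Lemma~\ref{L_1909_nu_e}) has the same partial multiplicities as $\ch_n$. Block by block: $\nu_{n, 0, 0}$ matches $\ch_{n, 0, 0}$; on constants $\xi (\ld) = \ld 1_N = \ep_0 (\mu (\ld))$, so $\nu_{n, 0, 1}$ matches $\ch_{n, 0, 1}$ on the nose; $\nu_{n, 1, 1}$ on constants is the scalar amplification matching $\ch_{n, 1, 1}$; and for the $(1, 0)$ block $\io (\ep_0 (\ld)) = c^* \diag (\ld) c$ whereas $\ch_{n, 1, 0}$ uses $\dt (\ld) = \diag (\ld)$, so these differ by $\Ad (c)$ but carry the same multiplicity $l_{1, 0} (n)$ into each of the $N$ blocks. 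Hence all partial multiplicities agree. I would then set $\et_0 = \et_0^{(0)}$ and, given $\et_n$, observe that $\nu_n|_{\mathrm{fix}} \circ \et_n$ and $\et_{n + 1}^{(0)} \circ \ch_n$ are unital \hm{s} $B_n \to (A_{n + 1})^{\af^{(n + 1)}}$ with equal multiplicities into each simple summand (equality is unaffected by the unitary corrections carried by $\et_n$, since multiplicities are conjugation-invariant). As these algebras are finite dimensional, the two maps are unitarily equivalent by a summand-respecting unitary $W_{n + 1}$ (concretely $c$ amplified into the $(1, 0)$-block of the $M_{r_1 (n + 1)}$ summand and the identity elsewhere); putting $\et_{n + 1} = \Ad (W_{n + 1}) \circ \et_{n + 1}^{(0)}$ yields $\et_{n + 1} \circ \ch_n = \nu_n|_{\mathrm{fix}} \circ \et_n$. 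Composing one-step squares gives~(\ref{Item_1925_Comm}) for all $m \leq n$, and since each $\Ad (W_{n + 1})$ respects summands and fixes their units, (\ref{Item_1925_BBnj}) and~(\ref{Item_1925_pn}) persist.

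Finally, because the $\nu_n$ are equivariant and $S^1$ is compact, averaging over the group is a conditional expectation that commutes with the inductive limit, so $A^{\af} = \dirlim_n (A_n)^{\af^{(n)}}$ with the restricted connecting maps. The family $(\et_n)_{n \in \Nz}$ is then a compatible family of isomorphisms of direct systems, and the universal property of the direct limit produces the isomorphism $\et_{\I} \colon B \to A^{\af}$ of~(\ref{Item_1925_Iso_A}). The main obstacle is the $(1, 0)$ block: the inclusion $\io$ realizes $R^{\gm}$ as the circulant matrices (diagonalized by the matrix $c$ of Lemma~\ref{L_1912_R_Fix}), whereas $B$ is built using the diagonal map $\dt$, so the naive identifications do \emph{not} intertwine $\nu_n$ and $\ch_n$ exactly. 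Recognizing that this is a pure $\Ad (c)$ discrepancy with matching multiplicities, hence correctable by one summand-respecting unitary at each stage without disturbing the other blocks, is the crux; once this is in place the inductive construction and the limit argument are routine.
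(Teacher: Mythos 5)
Your proof is correct and takes essentially the same route as the paper's: both construct the naive summandwise isomorphisms $\et_n^{(0)}$ from Lemma~\ref{L_1912_R_Fix} together with the identification of $C(S^1)^{\bt}$ with the constants, observe that the restricted connecting maps agree with the $\ch_n$ up to a unitary (the only real discrepancy being the $\Ad(c)$ twist in the $(1,0)$ block, with matching partial multiplicities throughout), repair this by an inductive unitary correction that automatically preserves the summands and the central projections $p_n$, and pass to the limit using $A^{\af} \cong \dirlim_n (A_n)^{\af^{(n)}}$. The paper merely packages the correction differently, via the explicit recursion $w_{n+1} = \nu_n^{S^1}(w_n)\,\et_{n+1}^{(0)}(v_n)^*$ and an abstract uniqueness argument for the maps $\mu$ and $\dt$ in place of your block-by-block multiplicity count, so the difference is one of bookkeeping only.
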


We warn that the subscript in $\et_n$ does not have the meaning
taken from Notation~\ref{N_1908_Ampl}.

\begin{proof}[Proof of Lemma~\ref{L_1916_FPisB}]
Since $\af$ is a direct limit action,
the inclusions $(A_n)^{\af^{(n)}} \to A_n$ induce an isomorphism
$A^{\af} \to \dirlim_n (A_n)^{\af^{(n)}}$.
Therefore (\ref{Item_1925_Iso_A}) follows from the rest of the
statement of the lemma.

Lemma~\ref{L_1912_R_Fix} implies that
$(\ep_0)_{r_0 (n)}
 \colon M_{r_0 (n)} ({\mathbb{C}}^N) \to (A_{n, 0})^{\af^{(n)}}$
is an isomorphism.
It is immediate that the embedding
$\ep_1 \colon {\mathbb{C}} \to C (S^1)^{\bt}$ as constant functions
is an isomorphism.
Therefore
\[
\et_n^{(0)} = (\ep_0)_{r_0 (n)} \oplus (\ep_1)_{r_1 (n)} \colon
  B_{n, 0} \oplus B_{n, 1} \to (A_{n})^{\af^{(n)}}
\]
is an isomorphism.
Clearly (\ref{Item_1925_pn}) and~(\ref{Item_1925_BBnj})
hold with $\et_n^{(0)}$ in place of~$\et_n$.

Let $\io^{S^1}$ and $\xi^{S^1}$ be the restriction
and corestriction of $\io$ and $\xi$
to the corresponding fixed point algebras,
and similarly define $\nu_n^{S^1}$, $\nu_{n, j, k}^{S^1}$, etc.
Then the inverses of the maps $\et_n^{(0)}$ implement an isomorphism
from the direct system $\bigl( (A_n)^{\af^{(n)}} \bigr)_{n \in \Nz}$
to the direct system $( B_n )_{n \in \Nz}$, with
the maps $\sm_n \colon B_n \to B_{n + 1}$ taken to be
\[
\begin{split}
\sm_n (a_0, a_1)
& = \Bigl( \diag \bigl( (\id_{B_{n, 0}})^{l_{0, 0} (n)} (a_0),
\\
& \hspace*{6em} {\mbox{}}
   \, \bigl( ( (\ep_0)_{l_{0, 1} (n) r_1 (n)})^{-1}
       \circ (\xi^{S^1})_{r_1 (n)}^{l_{0, 1} (n)}
       \circ (\ep_1)_{r_1 (n)} \bigr) (a_1) \bigr),
\\
& \hspace*{2em} {\mbox{}}
 \diag \bigl(  \bigl( ( (\ep_1)_{l_{1, 0} (n) r_1 (n)} )^{-1}
             \circ (\io^{S^1})_{r_0 (n)}^{l_{1, 0} (n)}
             \circ (\ep_0)_{r_1 (n)} \bigr) (a_0),
\\
& \hspace*{6em} {\mbox{}}
        \, (\id_{B_{n, 1}})^{2 N l_{1, 1} (n)} (a_1) \bigr) \Bigr).
\end{split}
\]

The map $(\ep_0)^{-1} \circ \xi^{S^1} \circ \ep_1 \colon {\mathbb{C}} \to {\mathbb{C}}^N$
is a unital homomorphism.
There is only one such unital homomorphism, so this map is equal
to the map $\mu$
in Construction \ref{Cn_1824_S1An}(\ref{Item_1824_S1_FixPtSys}).
The map $(\ep_1)^{-1} \circ \io^{S^1} \circ \ep_0 \colon {\mathbb{C}}^N \to M_N$
is an injective unital homomorphism.
Therefore it must be unitarily equivalent to the map $\dt$
in Construction \ref{Cn_1824_S1An}(\ref{Item_1824_S1_FixPtSys}).
It now follows from the definitions
(see Construction \ref{Cn_1824_S1An}(\ref{Item_1824_S1_FixLim}))
that for $n \in \Nz$
there is a unitary $v_n \in B_{n + 1}$
such that $\ch_n (b) = v_n \sm_n (b) v_n^*$ for all $b \in B_n$.
Inductively define unitaries $w_n \in (A_n)^{\af^{(n)}}$
by $w_0 = 1$ and, given~$w_n$,
setting $w_{n + 1} = \nu_n^{S^1} (w_n) \et_{n + 1}^{(0)} (v_n)^*$.
Then define $\et_n (b) = w_n \et_{n}^{(0)} (b) w_n^*$ for $b \in B_n$.
The conditions (\ref{Item_1925_pn}) and~(\ref{Item_1925_BBnj})
hold as stated because they hold for the maps $\et_{n}^{(0)}$.
For $n \in \Nz$,
using $\nu_n^{S^1} \circ \et_{n}^{(0)} = \et_{n + 1}^{(0)} \circ \sm_n$,
one gets $\nu_n^{S^1} \circ \et_{n} = \et_{n + 1} \circ \ch_n$.
This implies~(\ref{Item_1925_Comm}).
\end{proof}

\begin{lem}\label{L_1917_HasTRP}
In Construction~\ref{Cn_1824_S1An}, assume that $r_0 (0) \leq r_1 (0)$
and that for all $n \in \Nz$
we have $l_{1, 0} (n) \geq l_{0, 0} (n)$
and $l_{1, 1} (n) \geq l_{0, 1} (n)$.
Further assume that
\[
\lim_{n \to \I} \frac{l_{0, 1} (n)}{l_{0, 0} (n)} = \I
\andeqn
\lim_{n \to \I} \frac{l_{1, 1} (n)}{l_{1, 0} (n)} = \I.
\]
Then the action $\af$
of Construction \ref{Cn_1824_S1An}(\ref{Item_1824_S1_nu_A})
has the tracial Rokhlin property with comparison.
\end{lem}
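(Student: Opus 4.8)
The plan is to verify the five conditions of Definition~\ref{traR} directly, in the stronger form that also includes Condition~(\ref{Item_902_pxp_TRP}). So fix finite sets $F \S A$ and $S \S C (S^1)$, a number $\ep > 0$, an element $x \in A_+$ with $\| x \| = 1$, and an element $y \in (A^{\af})_+ \SM \{ 0 \}$; we may assume $\| f \| \leq 1$ for all $f \in S$. The projection will be $p = \nu_{\I, n} (p_n)$ for a suitably large~$n$, where $p_n = (0, 1) \in A_{n, 0} \oplus A_{n, 1}$ is the identity of the summand~$A_{n, 1}$ (Construction~\ref{Cn_1824_S1An}(\ref{Item_1824_S1_pn})); this lies in $A^{\af}$ by Lemma~\ref{L_1916_FPisB}(\ref{Item_1925_pn}). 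The map $\ph$ will be the composite of $\nu_{\I, n}$ with the unital inclusion $C (S^1) \to A_{n, 1} = M_{r_1 (n)} (C (S^1))$ sending $f$ to $1_{M_{r_1 (n)}} \otimes f$. Since on $A_{n, 1}$ the action $\af^{(n)}$ is exactly $\bt_{r_1 (n)}$, and $\bt$ is left translation, this inclusion is an equivariant unital homomorphism into the centre of~$A_{n, 1}$; composing with the equivariant map $\nu_{\I, n}$ (Lemma~\ref{L_1909_nu_e}) shows that $\ph$ is a unital completely positive map onto $p A p$ which is exactly multiplicative and exactly equivariant. Thus the multiplicativity and equivariance parts of Definition~\ref{phillhir} hold with room to spare.

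For the centrality part of Definition~\ref{phillhir}, note that $(0, 1_{M_{r_1 (n)}} \otimes f)$ lies in one direct summand of $A_n$ and has central matrix part, hence is central in $A_n$; therefore $\ph (f)$ commutes exactly with $\nu_{\I, n} (A_n)$. Choosing $n$ large enough that each $a \in F$ lies within $\ep / 3$ of $\nu_{\I, n - 1} (A_{n - 1}) \S \nu_{\I, n} (A_n)$ then gives Definition~\ref{traR}(\ref{Item_893_FS_equi_cen_multi_approx}). Condition~(\ref{Item_902_pxp_TRP}), that $\| p x p \| > 1 - \ep$, is the conceptually delicate point, because $x$ could be concentrated in the summand~$A_{n - 1, 0}$, which $p$ appears to discard. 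The resolution is that the partial map $\nu_{n - 1, 1, 0} = \io_{r_0 (n - 1)}^{l_{1, 0} (n - 1)}$ into $A_{n, 1}$ is built from the \emph{inclusion} $\io \colon R \to C (S^1, M_N)$ and is therefore isometric. Concretely, approximate $x$ by $\nu_{\I, n - 1} (\widetilde{x})$ with $\widetilde{x} = (\widetilde{x}_0, \widetilde{x}_1) \in A_{n - 1}$ and $\| \widetilde{x} \| \approx 1$; the $A_{n, 1}$-component of $\nu_{n - 1} (\widetilde{x})$ is $\diag \bigl( \nu_{n - 1, 1, 0} (\widetilde{x}_0), \, \nu_{n - 1, 1, 1} (\widetilde{x}_1) \bigr)$, whose norm is $\max ( \| \widetilde{x}_0 \|, \| \widetilde{x}_1 \| ) = \| \widetilde{x} \|$. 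Since $p x p$ is approximated by the image of this component under the isometric map $\nu_{\I, n}$, the estimate $\| p x p \| > 1 - \ep$ follows.

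It remains to treat the three comparison conditions, and here lies the computational heart of the argument: I must show that the trace of $1 - p = \nu_{\I, n} (1_{A_{n, 0}})$ tends to~$0$, uniformly over traces, as $n \to \I$. First one checks inductively, using $r_1 (0) \geq r_0 (0)$ together with $l_{1, 0} (n) \geq l_{0, 0} (n)$, $l_{1, 1} (n) \geq l_{0, 1} (n)$ and $N \geq 2$, that $r_1 (n) \geq r_0 (n)$ for all~$n$. Now fix a trace $\ta$ on~$A$ and set $a_n = \ta ( \nu_{\I, n} (1_{A_{n, 0}}) )$ and $b_n = \ta ( \nu_{\I, n} (1_{A_{n, 1}}) )$, so $a_n + b_n = 1$. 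Tracking the identity of~$A_{n, 0}$ through $\nu_n$, it maps to constant-rank projections in $A_{n + 1, 0}$ and $A_{n + 1, 1}$ of ranks $l_{0, 0} (n) N r_0 (n)$ and $l_{1, 0} (n) N r_0 (n)$ inside matrix algebras of sizes $N r_0 (n + 1)$ and $r_1 (n + 1)$, which gives
\[
a_n
 = \frac{l_{0, 0} (n) N r_0 (n)}{N r_0 (n + 1)} \, a_{n + 1}
   + \frac{l_{1, 0} (n) N r_0 (n)}{r_1 (n + 1)} \, b_{n + 1}.
\]
Using $a_{n + 1}, b_{n + 1} \leq 1$ and the recursion~(\ref{Eq_1901_rn_dfm}) in the forms $N r_0 (n + 1) \geq N l_{0, 1} (n) r_1 (n) \geq l_{0, 1} (n) N r_0 (n)$ and $r_1 (n + 1) \geq 2 N l_{1, 1} (n) r_1 (n) \geq 2 l_{1, 1} (n) N r_0 (n)$, this yields the $\ta$-independent bound
\[
\ta \bigl( \nu_{\I, n} (1_{A_{n, 0}}) \bigr)
 \leq \frac{l_{0, 0} (n)}{l_{0, 1} (n)} + \frac{l_{1, 0} (n)}{2 \, l_{1, 1} (n)},
\]
which tends to~$0$ by the two limit hypotheses. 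The fixed-point system $( B_n, \ch_n )$ of Construction~\ref{Cn_1824_S1An}(\ref{Item_1824_S1_FixLim}) carries the same multiplicity data $l_{j, k} (n)$, so the identical estimate bounds $\sm ( \ch_{\I, n} (1_{B_{n, 0}}) )$ for every trace $\sm$ on $B \cong A^{\af}$.

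Finally I would assemble the comparisons. The algebra $A$ is a simple unital AT~algebra (Lemma~\ref{L_1916_Simple}), hence has strict comparison of positive elements, and $A^{\af} \cong B$ is a simple unital AF~algebra (Lemma~\ref{L_1912_B_Smp}), which has strict comparison as well. Since $x \neq 0$ and $A$ is simple, $\inf_{\ta} d_{\ta} (x) > 0$ by lower semicontinuity of the dimension function and compactness of the trace space, and similarly $\inf_{\sm} d_{\sm} (y) > 0$ in~$B$. Choosing $n$ so large that the bound above is smaller than $\min \bigl( \inf_{\ta} d_{\ta} (x), \, \inf_{\sm} d_{\sm} (y), \, \tfrac{1}{2} \bigr)$, strict comparison gives $1 - p \precsim_A x$, $1 - p \precsim_{A^{\af}} y$, and (from $\sm (1 - p) < \tfrac{1}{2} < \sm (p)$ for all~$\sm$) $1 - p \precsim_{A^{\af}} p$, which are Definition~\ref{traR}(\ref{1_pxcompactsets}), (\ref{1_pycompactsets}) and~(\ref{1_ppcompactsets}). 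The main obstacle is thus the uniform trace estimate together with the isometry observation behind Condition~(\ref{Item_902_pxp_TRP}); once these are in place, strict comparison of $A$ and of the AF~algebra $A^{\af}$ finishes the proof.
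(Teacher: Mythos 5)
Your proposal is correct and follows essentially the same route as the paper's proof: the same projection $p = \nu_{\I, n} (p_n)$, the same exactly equivariant, exactly multiplicative unital homomorphism $f \mapsto \nu_{\I, n} \bigl( (0, \, 1 \otimes f) \bigr)$ whose values commute exactly with $\nu_{\I, n} (A_n)$, the same uniform trace estimate on $1 - p$ driven by the ratios $l_{0, 0} (n) / l_{0, 1} (n)$ and $l_{1, 0} (n) / l_{1, 1} (n)$, and the same finish via strict comparison in the simple AT~algebra $A$ and the simple AF~algebra $A^{\af} \cong B$. There are two local divergences, both sound. First, the paper simply invokes Lemma~1.15 of \cite{phill23} (stable finiteness of~$A$) to disregard the condition $\| p x p \| > 1 - \ep$ entirely, whereas you verify it directly; your argument works because the cut-down $a \mapsto p_n \nu_{n, m} (a) p_n$, i.e.\ the component map of $\nu_{n, m}$ into the summand $A_{n, 1}$, is an injective homomorphism (its two partial maps, built from $\io$ and the $\btt$'s, are injective), hence isometric, so the norm of an approximant of $x$ survives the corner exactly. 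This makes your write-up more self-contained, at the cost of a little bookkeeping (one must pick the approximant $\widetilde{x} \in A_m$ first and then take $n > m$ large, as you implicitly do). Second, you run the trace recursion for $a_n = \ta ( \nu_{\I, n} (1_{A_{n, 0}}) )$ directly in the system $(A_n)$ and then remark that the fixed-point system $(B_n, \ch_n)$ carries the same multiplicity data, while the paper computes only in $B_{n + 1}$ and transfers to traces on $A$ and on $A^{\af}$ via the isomorphisms $\et_{n + 1}$ of Lemma~\ref{L_1916_FPisB}; your bound $l_{0, 0} (n) / l_{0, 1} (n) + l_{1, 0} (n) / \bigl( 2 \, l_{1, 1} (n) \bigr)$ and the paper's $\max \bigl( l_{0, 0} (n) / l_{0, 1} (n), \, l_{1, 0} (n) / l_{1, 1} (n) \bigr)$ are interchangeable for the purpose at hand. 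Neither difference affects correctness; your version even yields the conclusion without appealing to the external lemma on stably finite algebras.
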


The hypotheses are overkill.
They are chosen to make the proof easy.

\begin{proof}[Proof of Lemma~\ref{L_1917_HasTRP}]
Since $A$ is stably finite,
by Lemma 1.15 in \cite{phill23}, we may disregard
condition~(\ref{Item_902_pxp_TRP})
in Definition~\ref{traR}.

We first claim that $0 < r_0 (n) \leq r_1 (n)$ for all $n \in \N$.
This is true for $n = 0$ by hypothesis.
For any other value of~$n$, using
$r_0 (n - 1) > 0$ and $r_1 (n - 1) > 0$, we have
\[
\begin{split}
r_0 (n)
& = l_{0, 0} (n - 1) r_0 (n - 1) + l_{0, 1} (n - 1) r_1 (n - 1)
\\
&
 \leq l_{1, 0} (n - 1) r_0 (n - 1) + l_{1, 1} (n - 1) r_1 (n - 1)
 = r_1 (n).
\end{split}
\]
Also, since $l_{0, 0} (n - 1) > 0$, the first step of this
calculation implies that $r_0 (n) > 0$.

Next, we claim that for every $n \in \Nz$
and every tracial state $\ta$ on the algebra $B_{n + 1}$
of Construction \ref{Cn_1824_S1An}(\ref{Item_1824_S1_Bn0Bn1}),
with $\ch_n$ as in
Construction \ref{Cn_1824_S1An}(\ref{Item_1824_S1_FixLim})
and $q_n$ as in
Construction \ref{Cn_1824_S1An}(\ref{Item_1915_S1_qn}),
we have
\begin{equation}\label{Eq_1916_TrBd}
\ta (1 - \ch_n (q_n))
 \leq \max \left( \frac{l_{0, 0} (n)}{l_{0, 1} (n)},
   \, \frac{l_{1, 0} (n)}{l_{1, 1} (n)} \right).
\end{equation}
To see this, we first look at the partial embedding multiplicities
in Construction \ref{Cn_1824_S1An}(\ref{Item_1824_S1_FixPtSys})
to see that the rank of $1 - \ch_n (q_n)$ in each of the first
$N$ summands (all equal to $M_{r_0 (n + 1)}$) is $l_{0, 0} (n) r_0 (n)$,
and the rank of $1 - \ch_n (q_n)$ in the last summand
(equal to $M_{r_1 (n + 1)}$) is $l_{1, 0} (n) r_0 (n)$.
Now
\begin{equation}\label{Eq_1916_ZeroCrd}
\frac{l_{0, 0} (n) r_0 (n)}{r_0 (n + 1)}
  = \frac{l_{0, 0} (n) r_0 (n)}{l_{0, 0} (n) r_0 (n)
         + l_{0, 1} (n) r_1 (n)}
  \leq \frac{l_{0, 0} (n) r_0 (n)}{l_{0, 1} (n) r_1 (n)}
  \leq \frac{l_{0, 0} (n)}{l_{0, 1} (n)},
\end{equation}
and
\begin{equation}\label{Eq_1916_1stCrd}
\frac{l_{1, 0} (n) r_0 (n)}{r_1 (n + 1)}
  = \frac{l_{1, 0} (n) r_0 (n)}{
   N l_{1, 0} (n) r_0 (n) + 2 N l_{1, 1} (n) r_1 (n)}
  \leq \frac{l_{1, 0} (n) r_0 (n)}{l_{1, 1} (n) r_1 (n)}
  \leq \frac{l_{1, 0} (n)}{l_{1, 1} (n)}.
\end{equation}
The number $\ta (1 - \ch_n (q_n))$ is a convex combination of
the numbers in (\ref{Eq_1916_ZeroCrd}) and~(\ref{Eq_1916_1stCrd}).
The claim follows.

Now let $F \subseteq A$ and $S \subseteq C (S^1)$ be finite sets,
let $\ep > 0$, let $x \in A_{+} \setminus \{ 0 \}$,
and let $y \in A_{+}^{\af} \setminus \{ 0 \}$.
We may assume that $\| x \| \leq 1$ and $\| y \| \leq 1$,
and that $\| f \| \leq 1$ for all $f \in S$.
Set
\begin{equation}\label{Eq_1917_ep0}
\ep_0 = \frac{1}{2}
 \min \left( \inf_{\ta \in T (A)} \ta (x), \,
     \inf_{\ta \in T (A^{\af})} \ta (y), \, \frac{1}{2} \right).
\end{equation}
Choose $n \in \Nz$ so large that there is a finite subset
$F_0 \subseteq A_n$
with $\dist (a, \, \nu_{\I, n} (F_0) ) < \frac{\ep}{3}$
for all $a \in F$,
and also so large that
\begin{equation}\label{Eq_1917_Ch_n}
\min \left( \frac{l_{0, 1} (n)}{l_{0, 0} (n)},
    \, \frac{l_{1, 1} (n)}{l_{1, 0} (n)} \right)
  > \frac{1}{\ep_0}.
\end{equation}

Let $p \in A$ be $p = \nu_{\I, n} (p_n)$.
Define
\[
\ph_0 \colon
 C (S^1) \to p_n A_n p_n
  = M_{r_1 (n)} \otimes C (S^1)
\]
by $\ph_0 (g) = (0, \, 1 \otimes g)$ for $g \in C (S^1)$.
Define $\ph = \nu_{\I, n} \circ \ph_0 \colon C (S^1) \to p A p$.
Then $\ph$ is an equivariant unital \hm.
In particular, $\ph$ is exactly multiplicative on~$S$.
Further, let $a \in F$ and $f \in S$.
Choose $b \in F_0$
such that $\| a - \nu_{\I, n} (b) \| < \frac{\ep}{3}$.
Then, using $\| f \| \leq 1$ and the fact that $\ph_0 (f)$ commutes
with all elements of~$A_n$, we have
\[
\| \ph (f) a - a \ph (f) \|
  \leq 2 \| a - \nu_{\I, n} (b) \| < \ep.
\]
Part~(\ref{Item_893_FS_equi_cen_multi_approx})
of the definition is verified.

For the remaining three conditions,
let $\ta$ be any \tst{} on either $A$ or $A^{\af}$.
Let $(\et_n)_{n \in \Nz}$ be as in Lemma~\ref{L_1916_FPisB}.
Then $\ta \circ \nu_{\I, n + 1} \circ \et_{n + 1}$
is a \tst{} on~$B_{n + 1}$.
Combining this with (\ref{Eq_1916_TrBd}), (\ref{Eq_1917_Ch_n}),
(\ref{Eq_1917_ep0}),
and $(\nu_{\I, n + 1} \circ \et_{n + 1} \circ \ch_n) (q_n) = p$, we get
$\ta (1 - p) < \ta (x) \leq d_{\ta} (x)$ for all $\ta \in T (A)$
and $\ta (1 - p) < \ta (y) \leq d_{\ta} (y)$
for all $\ta \in T (A^{\alpha})$.
Since simple unital AF~algebras and
simple unital AT~algebras have strict comparison,
it follows that $1 - p \precsim_A x$
and $1 - p \precsim_{A^{\alpha}} y$.
Since $\ep_0 < \frac{1}{2}$,
similar reasoning gives $1 - p \precsim_{A^{\alpha}} p$.
\end{proof}

We use equivariant K-theory to show that,
with suitable choices, $\af$ does not have
finite Rokhlin dimension with commuting towers.

\begin{rmk}\label{R_5503_TnsOI}
In the work from here through Theorem~\ref{T_1919_Ex},
the order on the K-theory plays no role.
For simplicity of notation, we write the lemmas and proofs
for $\af \colon S^1 \to \Aut (A)$,
but they apply equally well to the action
$\zt \mapsto \id_{\OI} \otimes \af_{\zt}$ of $S^1$ on $\OI \otimes A$.
\end{rmk}

Recall equivariant K-theory from Definition 2.8.1 of~\cite{Phl1}.
For a unital \ca~$A$ with an action $\af \colon G \to \Aut (A)$
of a compact group~$G$, it is the Grothendieck group of the
equivariant isomorphism classes of equivariant finitely generated
projective right modules $E$ over~$A$,
with ``equivariant'' meaning that the module is equipped
with an action of $G$
such that $g \cdot (\xi a) = (g \cdot \xi) \af_g (a)$
for $g \in G$, $\xi \in E$, and $a \in A$.
Further recall the representation ring $R (G)$ of a compact group
from the introduction of~\cite{Sgl1} or Definition 2.1.3 of~\cite{Phl1}
(it is $K_0^G ({\mathbb{C}})$, or the Grothendieck group of the
isomorphism classes of \fd{} representations of~$G$),
its augmentation ideal $I (G)$ from the example before
Proposition~3.8 of~\cite{Sgl1} (where it is called $I_G$)
or the discussion after Definition 2.1.3 of~\cite{Phl1}
(it is the kernel of the dimension map from $R (G)$ to $\Z$),
and, for a \ca{} $A$ with an action $\af \colon G \to \Aut (A)$,
the $R (G)$-module structure on $K_*^G (A)$
from Theorem 2.8.3 and Definition 2.7.8 of~\cite{Phl1}.
In particular, for $G = S^1$,
if we let $\sm \in {\widehat{S^1}}$ be the identity map $S^1 \to S^1$,
then $R (S^1) = \Z [ \sm, \sm^{-1}]$,
the Laurent polynomial ring in one variable over~$\Z$.
(See Example~(ii) at the beginning of Section~3 of~\cite{Sgl1}.)
Moreover, $I (S^1)$ is the ideal generated by $\sm - 1$.

Recall the action $\bt \colon S^1 \to \Aut (C (S^1))$
from Construction \ref{Cn_1824_S1An}(\ref{Item_1824_S1_CS1}),
given by $\bt_{\zt} (f) (z) = f (\zt^{-1} z)$ for $\zt, z \in S^1$,
and ${\overline{\gm}} \colon S^1 \to \Aut (C (S^1))$
from Lemma~\ref{L_1824_R_T},
given by ${\overline{\gm}}_{\zt} (f) (z) = f (\zt^{-N} z)$.
We denote the equivariant K-theory for these actions by
$K_*^{S^1, \bt} (C (S^1))$ and $K_*^{S^1, {\overline{\gm}}} (C (S^1))$,
and similarly for other actions when ambiguity is possible.

We won't actually use the following computation of the
equivariant $K_{1}$-groups, but it is included to give a more
complete description of our example.
As in Section~\ref{Sec_3749_Exam_TRPZ2},
we abbreviate $\Z / n \Z$ to $\Z_n$.

\begin{lem}\label{L_1918_K1}
We have $K_1^{S^1, \bt} (C (S^1)) = 0$ and
(with $R \S C (S^1, M_N)$ as in
Construction \ref{Cn_1824_S1An}(\ref{Item_1824_S1_R}))
$K_1^{S^1} (R) = 0$.
\end{lem}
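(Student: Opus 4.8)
The plan is to compute both groups by passing to crossed products and invoking the Green--Julg theorem, which for an action of a compact group $G$ on a $C^{*}$-algebra $A$ gives $K_{*}^{G} (A) \cong K_{*} (A \rtimes G)$, and in particular $K_{1}^{G} (A) \cong K_{1} (A \rtimes G)$. Both cases then reduce to one geometric observation: the two relevant actions of $S^{1}$ on $S^{1}$ are \emph{transitive}, so the underlying spaces are homogeneous spaces $S^{1} / H$ and Green's imprimitivity theorem identifies the crossed products up to Morita equivalence with $C^{*} (H)$ for a finite group $H$.

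First I would handle $\bt$. The action $\bt_{\zt} (f) (z) = f (\zt^{-1} z)$ is the translation action of $S^{1}$ on itself, which is free and transitive, so $S^{1} \cong S^{1} / \{ 1 \}$ equivariantly. By Green's imprimitivity theorem $C (S^{1}) \rtimes_{\bt} S^{1}$ is Morita equivalent to $C^{*} (\{ 1 \}) = {\mathbb{C}}$ (concretely it is $\K (L^{2} (S^{1}))$), so $K_{1} (C (S^{1}) \rtimes_{\bt} S^{1}) = K_{1} ({\mathbb{C}}) = 0$, and Green--Julg gives $K_{1}^{S^{1}, \bt} (C (S^{1})) = 0$.

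For $R$ I would first reduce to an action of the same shape using Lemma~\ref{L_1824_R_T}. The isomorphism $\ps \colon R \to C (S^{1}, M_{N})$ carries $\gm$ to an action exterior equivalent, by Lemma~\ref{L_1824_R_T}(\ref{Item_824_R_T_ExtEq}), to $\overline{\gm}_{\cdot, N} = \id_{M_{N}} \otimes \overline{\gm}$, where $\overline{\gm}_{\zt} (f) (z) = f (\zt^{-N} z)$. Since equivariant $K$-theory is invariant under equivariant isomorphism and exterior equivalence, and crossed products are $M_{N}$-stable, I obtain
\[
K_{1}^{S^{1}} (R)
 \cong K_{1} \bigl( C (S^{1}, M_{N}) \rtimes_{\overline{\gm}_{\cdot, N}} S^{1} \bigr)
 = K_{1} \bigl( M_{N} \otimes ( C (S^{1}) \rtimes_{\overline{\gm}} S^{1} ) \bigr)
 = K_{1} \bigl( C (S^{1}) \rtimes_{\overline{\gm}} S^{1} \bigr).
\]
Now $\overline{\gm}$ corresponds to $S^{1}$ acting on $S^{1}$ by $z \mapsto \zt^{N} z$; since $\zt \mapsto \zt^{N}$ is surjective onto $S^{1}$, this action is again transitive, but now with stabilizer the group $\mu_{N}$ of $N$-th roots of unity, so $S^{1} \cong S^{1} / \mu_{N}$ equivariantly. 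Green's imprimitivity theorem then shows that $C (S^{1}) \rtimes_{\overline{\gm}} S^{1}$ is Morita equivalent to $C^{*} (\mu_{N}) \cong {\mathbb{C}}^{N}$, whence $K_{1} (C (S^{1}) \rtimes_{\overline{\gm}} S^{1}) = K_{1} ({\mathbb{C}}^{N}) = 0$ and therefore $K_{1}^{S^{1}} (R) = 0$.

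The computations are immediate once transitivity is observed, so I expect the only delicate point to be the chain of reductions for $R$: verifying that $\ps$ together with the exterior equivalence of Lemma~\ref{L_1824_R_T}(\ref{Item_824_R_T_ExtEq}) yields isomorphic crossed products, that the matrix factor $M_{N}$ is discarded by stability, and that Green--Julg respects the $K_{1}$-grading (which one gets by suspending, using $S A \rtimes G \cong S (A \rtimes G)$). None of these is hard, but they must be assembled in the right order; the geometric heart of the argument is simply that both $\bt$ and $\overline{\gm}$ exhibit $S^{1}$ as a transitive $S^{1}$-space with finite stabilizer, which forces the crossed products to be finite dimensional up to Morita equivalence and hence to have vanishing $K_{1}$.
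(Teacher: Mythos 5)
Your proposal is correct and takes essentially the same route as the paper: Green--Julg to identify $K_1^{S^1}$ with $K_1$ of the crossed product, Lemma~\ref{L_1824_R_T}(\ref{Item_824_R_T_ExtEq}) together with exterior equivalence invariance and stability to reduce $R$ to the action ${\overline{\gm}}$ on $C(S^1)$, and then the identification of the crossed products as $\K (L^2(S^1))$ for $\bt$ and (via Corollary 2.10 of~\cite{Grn}, which is precisely your imprimitivity step for $S^1 \cong S^1/\mu_N$) as $\K (L^2(S^1)) \otimes C^*(\Z_N)$ for ${\overline{\gm}}$. The only cosmetic difference is that the paper applies exterior equivalence and stability at the level of equivariant $K$-theory (Theorem 2.8.3(4),(5) of~\cite{Phl1}) before forming crossed products, whereas you apply them after, which is equally valid.
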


\begin{proof}
By Theorem 2.8.3(7) of~\cite{Phl1}, we have
\[
K_{1}^{S^{1}, \bt} (C (S^{1}))
 \cong K_{1} \bigr( C^{*} (S^{1}, \, C (S^{1}), \, \bt) \bigr).
\]
Since
\[
C^{*} (S^{1}, \, C (S^{1}), \, \bt) \cong K (L^{2} (S^{1})),
\]
we conclude $K_{1}^{S^{1}, \bt} (C (S^{1})) = 0$.

For $K_{1}^{S^{1}} (R)$, since exterior equivalent actions
of a compact group~$G$ give isomorphic $R (G)$-modules $K_*^G (A)$
(Theorem 2.8.3(5) of~\cite{Phl1}),
by Lemma \ref{L_1824_R_T}(\ref{Item_824_R_T_ExtEq})
it is sufficient to prove this
for the action $\zt \mapsto {\overline{\gm}}_{\zt, N}$.
By stability of equivariant K-theory (Theorem 2.8.3(4) of~\cite{Phl1}),
it suffices to prove this
for the action ${\overline{\gm}}$ of $S^{1}$ on $C (S^{1})$.
By~\cite{Jlg} (or Theorem 2.8.3(7) of~\cite{Phl1}), we have
$K_{1}^{S^{1}, {\overline{\gm}}} (C (S^{1}))
  \cong K_{1}
     \bigl( C^{*} (S^{1}, \, C (S^{1}), \, {\overline{\gm}}) \bigr)$.
Corollary 2.10 of~\cite{Grn}, with $G = S^{1}$ and $H = \Z_N$,
tells us that
\[
C^{*} (S^{1}, \, C (S^{1}), \, {\overline{\gm}})
 \cong K (L^{2} (S^{1})) \otimes C^* (\Z_N),
\]
which has trivial $K_{1}$-group.
\end{proof}

\begin{lem}\label{L_1918_EqKbt}
There is an $R (S^1)$-module isomorphism
$K_0^{S^1, \bt} (C (S^1)) \cong \Z$,
with the $R (S^1)$-module structure coming from the
isomorphism $\Z \cong R (S^1) / I (S^1)$,
and such that the class in $K_0^{S^1, \bt} (C (S^1))$ of the rank one
free module is sent to $1 \in \Z$.
\end{lem}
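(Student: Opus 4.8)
The plan is to compute the monoid of equivariant finitely generated projective modules directly and read off both the group and the $R(S^1)$-module structure from it; the Green--Julg identification already used for Lemma~\ref{L_1918_K1} will serve as a consistency check on the group.

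First I would record the one computation that drives everything: for each $n \in \Z$, the rank one free module $C(S^1)$, carried by the action coming from $\bt$ and tensored with the one-dimensional representation $\sm^n$ (diagonal action), is equivariantly isomorphic to the untwisted rank one free module. Concretely, left multiplication by the unitary $u_n \in C(S^1)$ given by $u_n(z) = z^{-n}$ is an $A$-module map intertwining the two actions, because $\bt_\zt(u_n) = \zt^n u_n$. Taking direct sums, $C(S^1) \otimes V \cong C(S^1)^{\oplus \dim V}$ equivariantly for every finite dimensional representation $V$ of $S^1$, since $V$ decomposes as a sum of the $\sm^n$.

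Next I would show that every equivariant finitely generated projective module is in fact equivariantly free. By the structure theory of equivariant modules over a unital $G$-$C^*$-algebra (from~\cite{Phl1}), such a module is a direct summand of $C(S^1) \otimes V$ for some finite dimensional $V$, hence by the previous step a summand of a free module $C(S^1)^{\oplus k}$ carried by the entrywise $\bt$-action. Such a summand is cut out by a $\bt$-invariant projection $q \in M_k(C(S^1))$; but a continuous $\bt$-invariant matrix-valued function on $S^1$ is constant, since $S^1$ acts transitively on itself, so $q$ is a constant projection and the summand is equivariantly isomorphic to $C(S^1)^{\oplus \rank(q)}$. Thus every equivariant module is isomorphic to some $C(S^1)^{\oplus r}$, and $r$ is determined by the module because forgetting the action identifies it with its rank in $K_0(C(S^1)) \cong \Z$. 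Consequently the monoid of equivariant modules is $(\Nz, +)$ with the rank one free module corresponding to $1$, its Grothendieck group is $\Z$ generated by that module, and this agrees with the group obtained from Green--Julg (Theorem 2.8.3(7) of~\cite{Phl1}) together with $C^*(S^1, C(S^1), \bt) \cong K(L^2(S^1))$.

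Finally I would identify the $R(S^1)$-module structure. The first step gives $\sm \cdot [C(S^1)] = [C(S^1) \otimes \sm] = [C(S^1)]$, so $\sm - 1$ annihilates the generator and hence all of $K_0^{S^1,\bt}(C(S^1))$. Since $I(S^1)$ is generated by $\sm - 1$, the module structure factors through $R(S^1)/I(S^1) \cong \Z$, namely the augmentation sending $\sm$ to $1$; under the resulting isomorphism the rank one free module maps to $1$, as asserted. I expect the only delicate point to be the appeal to the structure theorem furnishing a finite dimensional $V$ with the given module a summand of $C(S^1) \otimes V$; everything else is the elementary fact that invariant functions are constant, together with the explicit unitaries $u_n$.
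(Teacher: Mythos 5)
Your proposal is correct, but it takes a genuinely different route from the paper's. The paper disposes of the lemma in one line by citing Proposition 2.9.4 of~\cite{Phl1}: the pair $(C (S^1), \bt)$ is the algebra induced from the trivial subgroup $H = \{ 1 \}$, so $K_0^{S^1, \bt} (C (S^1)) \cong R (\{ 1 \}) = \Z$ with the $R (S^1)$-module structure given by the restriction map $R (S^1) \to R (\{ 1 \})$, that is, by the augmentation, and the explicit description of the isomorphism in the proof of that proposition sends the rank one free module to $1$. You instead compute the semigroup of equivariant finitely generated projective modules from scratch: the unitaries $u_n (z) = z^{-n}$ trivialize all character twists of the free module (your verification $\bt_{\zt} (u_n) = \zt^n u_n$ is correct), equivariant stabilization --- which is indeed available in~\cite{Phl1}, so your one flagged delicate point is fine --- realizes any equivariant module as a summand of $C (S^1) \otimes V$, and transitivity of translation forces any invariant projection in $M_k (C (S^1))$ to be constant, whence every equivariant module is equivariantly free and both the group and the module structure can be read off. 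Your argument is more elementary and self-contained and proves strictly more (the monoid is $(\Nz, +)$, so cancellation holds and the generator is detected by the nonequivariant rank). What the paper's citation buys is uniformity: the same Proposition 2.9.4 with $H \cong \Z_N$ proves the companion Lemma~\ref{L_1918_EqKgmt} for the action $\overline{\gm}$, where your method as written would not carry over --- there the stabilizers are nontrivial, so only the characters $\sm^n$ with $N \mid n$ are trivializable by such unitaries, and one would have to track the fiberwise $\Z_N$-representation to recover $R (\Z_N)$, which is exactly the bookkeeping the induction proposition packages away.
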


\begin{proof}
In Proposition 2.9.4 of~\cite{Phl1}, take $A = {\mathbb{C}}$, $G = S^1$,
and $H = \{ 1 \}$,
and refer to the description of the map
in the proof of that proposition.
\end{proof}

\begin{lem}\label{L_1918_EqKgmt}
There is an $R (S^1)$-module isomorphism
$K_0^{S^1, {\overline{\gm}}} (C (S^1)) \cong R ( \Z_N)$,
with the $R (S^1)$-module structure coming from the
surjective restriction map $R (S^1) \to R ( \Z_N)$.
Moreover, the classes in $K_0^{S^1, {\overline{\gm}}} (C (S^1))$
of the equivariant finitely generated projective right $C (S^1)$-modules
with underlying nonequivariant module $C (S^1)$
correspond exactly to the elements of
$(\Z_N)^{\wedge} \S R ( \Z_N)$.
\end{lem}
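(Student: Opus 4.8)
The plan is to recognize $(C(S^1), \ov{\gm})$ as the algebra of functions on a homogeneous space and then to apply Proposition 2.9.4 of~\cite{Phl1}, exactly as in the proof of Lemma~\ref{L_1918_EqKbt}, but now with the nontrivial stabilizer $\Z_N$ in place of the trivial group.

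First I would set $G = S^1$ and let $H = \{ \zt \in S^1 \colon \zt^N = 1 \} \cong \Z_N$ be the group of $N$-th roots of unity. The point action $z \mapsto \zt^N z$ underlying $\ov{\gm}$ is transitive with stabilizer $H$, so $g H \mapsto g^N$ is a homeomorphism $G/H \to S^1$ carrying the left translation action of $G$ on $G/H$ to the $\ov{\gm}$ point action; dually this yields an equivariant isomorphism from $C(G/H)$ with left translation to $(C(S^1), \ov{\gm})$ (the routine check being that, writing $f \in C(G/H)$ as $\tilde f \in C(S^1)$ via $\tilde f(g^N) = f(gH)$, left translation by $\zt$ becomes $\tilde f(w) \mapsto \tilde f(\zt^{-N} w)$, which is $\ov{\gm}_\zt$). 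Applying Proposition 2.9.4 of~\cite{Phl1} with $A = \mathbb{C}$, $G = S^1$, and $H = \Z_N$ then gives an $R(S^1)$-module isomorphism $K_0^{S^1, \ov{\gm}}(C(S^1)) \cong K_0^{\Z_N}(\mathbb{C}) = R(\Z_N)$, with module structure induced by the restriction map $R(S^1) \to R(\Z_N)$; this map is surjective because it sends the generator $\sm$ of $R(S^1) = \Z[\sm, \sm^{-1}]$ to the generating character of $R(\Z_N)$. This settles the first assertion.

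For the second assertion I would use the explicit description of the isomorphism in the proof of Proposition 2.9.4, just as the rank one free module was tracked in Lemma~\ref{L_1918_EqKbt}. Under this description an equivariant finitely generated projective module over $C(G/H)$ corresponds to a $G$-equivariant vector bundle over $G/H$, and its class is sent to the class of the fiber $H$-representation over the base coset $eH$. A module is nonequivariantly free of rank one exactly when this underlying bundle is a trivial line bundle, that is, when the fiber representation is one-dimensional and the underlying bundle is trivial. Since $G/H \cong S^1$ and every complex vector bundle over $S^1$ is trivial (the structure group $GL_n(\mathbb{C})$ is connected), the triviality requirement is automatic, so the modules in question are precisely the equivariant line bundles. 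Their fiber representations are exactly the one-dimensional representations of $\Z_N$, i.e.\ the characters, and these are precisely the standard basis elements $(\Z_N)^{\wedge} \S R(\Z_N)$; conversely each character $\chi$ arises from the associated line bundle $G \times_H \mathbb{C}_{\chi}$.

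The main obstacle I anticipate is matching the conventions of Proposition 2.9.4 of~\cite{Phl1} precisely enough to be sure both that the $R(S^1)$-module structure is genuinely restriction of representations and that the isomorphism identifies a module's class with its fiber representation at $eH$, so that the rank-one-free modules land exactly among the one-dimensional summands. The topological input that complex vector bundles over $S^1$ are trivial is what converts the condition ``nonequivariantly free of rank one'' into the purely representation-theoretic condition ``one-dimensional fiber,'' and this must be invoked explicitly.
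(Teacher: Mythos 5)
Your proposal is correct and takes essentially the same route as the paper: both identify $(C(S^1), {\overline{\gm}})$ with the functions on the homogeneous space $G/H$ for $G = S^1$ and $H \cong \Z_N$ the $N$-th roots of unity, apply Proposition 2.9.4 of~\cite{Phl1} with $A = {\mathbb{C}}$, and use the explicit description of that isomorphism (class of a module goes to the class of its fiber $H$-representation at the base point) to see that rank one free modules land exactly in $(\Z_N)^{\wedge}$. The only minor divergence is the final realization step: where you produce each character $\chi$ via the associated line bundle $G \times_H {\mathbb{C}}_{\chi}$ plus triviality of complex line bundles over $S^1$, the paper instead writes down the explicit twisted action $(\zt \cdot f)(z) = \zt^l f (\zt^{-N} z)$ on $C (S^1)$, which exhibits the nonequivariant freeness directly without any bundle-theoretic input.
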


\begin{proof}
In Proposition 2.9.4 of~\cite{Phl1}, take
\[
A = {\mathbb{C}},
\qquad
G = S^1,
\andeqn
H = \{ 1, \om, \ldots, \om^{N - 1} \} \cong \Z_N.
\]
With these choices, $C (G \times_H {\mathbb{C}})$
is the set of $\om$-periodic
functions on $S^1$,
with the action of $\zt \in S^1$ being rotation by~$\zt$.
With ${\overline{\gm}} \colon S^1 \to \Aut (C (S^1))$
as in Lemma~\ref{L_1824_R_T}, this algebra is equivariantly isomorphic
to $(C (S^1), {\overline{\gm}})$ in an obvious way.
{}From Proposition 2.9.4 of~\cite{Phl1}, we get
$K_0^{S^1, {\overline{\gm}}} (C (S^1)) \cong R ( \Z_N)$.
Using the description of the map in the proof of the proposition,
the map sends the class of a ${\overline{\gm}}$-equivariant
finitely generated projective right $C (S^1)$-module~$E$
to the class, as a representation space of~$H$,
of its pushforward under the evaluation map $f \mapsto f (1)$.
If $E$ is nonequivariantly isomorphic to $C (S^1)$,
this pushforward is nonequivariantly isomorphic to~${\mathbb{C}}$.
The only classes in $R (\Z_N)$
with underlying vector space~${\mathbb{C}}$
are those in $(\Z_N)^{\wedge}$.
To check that an element $\ta \in (\Z_N)^{\wedge}$ actually
arises this way, choose $l \in \Z$ such that $\ta (\om) = \om^l$.
Then use the action of $S^1$ on $C (S^1)$ given by
$(\zt \cdot f) (z) = \zt^l f (\zt^{- N} z)$ for $\zt, z \in S^1$
and $f \in C (S^1)$.
One readily checks that this makes $C (S^1)$ a
${\overline{\gm}}$-equivariant right $C (S^1)$-module
whose restriction to $\{ 1 \}$ is ${\mathbb{C}}$
with the representation~$\ta$.
\end{proof}

\begin{lem}\label{L_1918_EqKThR}
There is an $R (S^1)$-module isomorphism
$K_0^{S^1} (R) \cong R ( \Z_N)$,
with the $R (S^1)$-module structure coming from the
surjective restriction map $R (S^1) \to R ( \Z_N)$.
Moreover,
for any given rank one invariant \pj{} $e \in R \S C (S^1, M_N)$,
the isomorphism can be chosen to send $[e]$ to $1 \in R ( \Z_N)$.
\end{lem}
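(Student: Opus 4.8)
The plan is to reduce the computation of $K_0^{S^1} (R)$ to the group $K_0^{S^1, \overline{\gm}} (C (S^1))$ already computed in Lemma~\ref{L_1918_EqKgmt}, by running the chain of $R (S^1)$-module isomorphisms
\[
K_0^{S^1} (R) \cong K_0^{S^1, \rh} (C (S^1, M_N))
 \cong K_0^{S^1, \overline{\gm}_{\cdot, N}} (C (S^1, M_N))
 \cong K_0^{S^1, \overline{\gm}} (C (S^1))
 \cong R (\Z_N),
\]
where $\rh = \ps \circ \gm \circ \ps^{-1}$. First I would use that $\ps \colon R \to C (S^1, M_N)$ from Lemma~\ref{L_1824_R_T} is an equivariant isomorphism from $(R, \gm)$ to $(C (S^1, M_N), \rh)$, so functoriality of equivariant K-theory gives the first isomorphism. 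For the second, Lemma~\ref{L_1824_R_T}(\ref{Item_824_R_T_ExtEq}) says that $\rh$ is exterior equivalent to $\zt \mapsto \overline{\gm}_{\zt, N}$, so Theorem 2.8.3(5) of~\cite{Phl1} provides an $R (S^1)$-module isomorphism. The third is stability of equivariant K-theory (Theorem 2.8.3(4) of~\cite{Phl1}), using $\overline{\gm}_{\cdot, N} = \id_{M_N} \otimes \overline{\gm}$; the fourth is exactly Lemma~\ref{L_1918_EqKgmt}. Each arrow is an isomorphism of $R (S^1)$-modules for the module structure coming from the restriction map $R (S^1) \to R (\Z_N)$, so the composite yields the asserted isomorphism.

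For the \emph{moreover} statement, I would track the class of a given rank one invariant \pj{} $e \in R$ along this chain. Under $\ps_*$ it maps to $[\ps (e)]$, and $\ps (e)$ is again a rank one \pj{} by Lemma~\ref{L_1824_R_T}(\ref{Item_824_R_T_Rk1}). The exterior-equivalence isomorphism leaves the underlying nonequivariant module unchanged, since the cocycle only twists the $S^1$-action; and under the stability isomorphism a rank one \pj{} in $M_N \otimes C (S^1)$ corresponds to an equivariant module over $C (S^1)$ whose underlying nonequivariant module is $C (S^1)$. By the last sentence of Lemma~\ref{L_1918_EqKgmt}, any such class lies in $(\Z_N)^{\wedge} \S R (\Z_N)$. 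Thus the composite isomorphism $\Phi$ sends $[e]$ to some character $\ch \in (\Z_N)^{\wedge}$.

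Finally I would correct $\Phi$ so that $[e]$ lands at $1$. Every element of $(\Z_N)^{\wedge}$ is a unit of $R (\Z_N)$, with inverse the conjugate character, so multiplication $m_{\ch^{-1}}$ by $\ch^{-1}$ is an $R (S^1)$-module automorphism of $R (\Z_N)$ (it is a central unit, hence commutes with the action by restriction). Replacing $\Phi$ by $m_{\ch^{-1}} \circ \Phi$ then gives an $R (S^1)$-module isomorphism sending $[e]$ to $\ch^{-1} \ch = 1$, as required. The main obstacle is the bookkeeping of the second paragraph: confirming that the exterior-equivalence and stability isomorphisms carry the class of the rank one \pj{} $\ps (e)$ to a class whose underlying nonequivariant module is $C (S^1)$, so that Lemma~\ref{L_1918_EqKgmt} applies and forces the image into $(\Z_N)^{\wedge}$ rather than a general element of $R (\Z_N)$.
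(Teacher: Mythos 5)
Your proposal is correct and follows essentially the same route as the paper's proof: transporting the problem through $\ps$ to $(C(S^1,M_N),\rh)$, using the exterior equivalence of Lemma~\ref{L_1824_R_T}(\ref{Item_824_R_T_ExtEq}) (the paper cites Proposition 2.7.4 of~\cite{Phl1}, the explicit form of the isomorphism you invoke via Theorem 2.8.3(5)), then stability, then Lemma~\ref{L_1918_EqKgmt}, tracking the rank one projection's class into $(\Z_N)^{\wedge}$ exactly as you do, and finally multiplying by the inverse character to send $[e]$ to~$1$. No gaps.
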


\begin{proof}
Using Lemma \ref{L_1824_R_T}(\ref{Item_824_R_T_Rk1}),
it suffices to prove this for $C (S^1, M_N)$ and the action
$\zt \mapsto \rh_{\zt} = \ps \circ \gm_{\zt} \circ \ps^{-1}$ in
Lemma \ref{L_1824_R_T}(\ref{Item_824_R_T_ExtEq})
in place of $R$ and~$\gm$.

So fix a rank one \pj{} $e \in C (S^1, M_N)$ which is invariant
under $\zt \mapsto \ps \circ \gm_{\zt} \circ \ps^{-1}$.
If the group action is ignored, $e$ is \mvnt{} a constant \pj,
so $E = e C (S^1, M_N)$ is nonequivariantly isomorphic to
$C (S^1, {\mathbb{C}}^N)$ as a right $C (S^1, M_N)$-module.
Let ${\overline{\gm}} \colon S^1 \to \Aut (C (S^1))$
be as in Lemma~\ref{L_1824_R_T},
and write ${\overline{\gm}}_N$ for the action
$\zt \mapsto {\overline{\gm}}_{\zt, N}$ on $C (S^1, M_N)$.
By Lemma \ref{L_1824_R_T}(\ref{Item_824_R_T_ExtEq}),
$\rh$ is exterior equivalent to ${\overline{\gm}}_{N}$.
By Proposition 2.7.4 of~\cite{Phl1},
and the formula in the proof for the isomorphism,
there is a (natural) isomorphism from $K_0^{S^1, \rh} (C (S^1))$
to $K_0^{S^1, {\overline{\gm}}_{N}} (C (S^1, M_N))$
which sends the class of $E$
to the class of the same module with a different action of~$G$.
By stability in equivariant K-theory
(Theorem 2.8.3(4) of~\cite{Phl1}), there is an isomorphism
\[
K_0^{S^1, {\overline{\gm}}_{N}} (C (S^1, M_N))
 \to K_0^{S^1, {\overline{\gm}}} (C (S^1))
\]
which maps the class of $E$ to the class of some equivariant module
whose underlying nonequivariant module is $C (S^1)$.
Combining this with Lemma~\ref{L_1918_EqKgmt},
we have an isomorphism from $K_0^{S^1} (R)$ to $R ( \Z_N)$
which sends $[e]$ to some element $\ta \in (\Z_N)^{\wedge}$.
Multiplying by $\ta^{-1}$ gives an isomorphism
from $K_0^{S^1} (R)$ to $R ( \Z_N)$
which sends $[e]$ to $1 \in (\Z_N)^{\wedge}$.
\end{proof}

\begin{lem}\label{L_1918_EqKSys}
Identify $R (S^1) = \Z [ \sm, \sm^{-1}]$
as before Lemma~\ref{L_1918_K1}.
Let $\io$ and $\xi$
be as in Construction \ref{Cn_1824_S1An}(\ref{Item_1824_S1_Crs}).
There are isomorphisms of $R (S^1)$-modules
$K_0^{S^1, \bt} (C (S^1)) \cong \Z$,
via the surjective ring \hm{} which sends
$\sm \in R (S^1)$ to $1 \in \Z$, and
$K_0^{S^1} (R) \cong \Z [ \ov{\sm} ] / \langle \ov{\sm}^N - 1 \rangle$,
via the surjective ring \hm{} which sends
$\sm \in R (S^1)$ to $\ov{\sm} \in \Z$.
In terms of these isomorphisms, the map
$\io_* \colon K_0^{S^1} (R) \to K_0^{S^1, \bt} (C (S^1))$ becomes
the map $\Z [ \ov{\sm} ] / \langle \ov{\sm}^N - 1 \rangle \to \Z$
determined by $\io_* (1) = \io_* (\ov{\sm}) = 1$,
and the map $\xi_* \colon K_0^{S^1, \bt} (C (S^1)) \to K_0^{S^1} (R)$
becomes
the map $\Z \to \Z [ \ov{\sm} ] / \langle \ov{\sm}^N - 1 \rangle$
determined by $\xi_* (1) = 1 + \ov{\sm} + \cdots + \ov{\sm}^{N - 1}$.
\end{lem}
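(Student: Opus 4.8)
The plan is to obtain the two module isomorphisms by assembling Lemmas~\ref{L_1918_EqKbt} and~\ref{L_1918_EqKThR}, and then to compute the two maps by exploiting that $\io_*$ and $\xi_*$ are homomorphisms of $R(S^1)$-modules (naturality of the $R(S^1)$-module structure, Theorem 2.8.3 of~\cite{Phl1}). For the first isomorphism, Lemma~\ref{L_1918_EqKbt} identifies $K_0^{S^1, \bt}(C(S^1))$ with $R(S^1)/I(S^1) \cong \Z$, under which $\sm$ acts as $1$ and the rank one free module is $1$; this is the asserted map $\Z[\sm, \sm^{-1}] \to \Z$, $\sm \mapsto 1$. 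For the second, Lemma~\ref{L_1918_EqKThR} identifies $K_0^{S^1}(R)$ with $R(\Z_N)$ via the restriction map $R(S^1) \to R(\Z_N)$, and I will simply write $R(\Z_N) = \Z[\ov\sm]/\langle \ov\sm^N - 1 \rangle$ with $\ov\sm$ the restriction of $\sm$, fixing once and for all a rank one invariant projection $e \in R$ normalized so that $[e] = 1$.

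For $\io_*$, the key observation is that a rank one $\gm$-invariant projection $e \in R$ is necessarily constant: since $\gm$ is the restriction of $\bt_N$ and $\bt_{\zt, N}$ acts by $f \mapsto f(\zt^{-1}\, \cdot\,)$, invariance forces $e(\zt^{-1} z) = e(z)$, while membership in $R$ forces $e = s e s^*$, so $e$ is a constant rank one projection commuting with~$s$. Hence $\io(e) \in C(S^1, M_N)$ is a constant rank one projection, invariant for $\bt_N$, and under the stability isomorphism $K_0^{S^1, \bt_N}(C(S^1, M_N)) \cong K_0^{S^1, \bt}(C(S^1))$ (trivial action on~$M_N$, Theorem 2.8.3(4) of~\cite{Phl1}) its class is that of the rank one free module, namely $1 \in \Z$. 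Thus $\io_*(1) = 1$. Because $\io_*$ is $R(S^1)$-linear and $\sm$ acts as the identity on the target $\Z \cong R(S^1)/I(S^1)$, we get $\io_*(\ov\sm^{\,j}) = \io_*(\sm^j \cdot 1) = \sm^j \cdot \io_*(1) = 1$ for every~$j$; that is, $\io_*$ is the augmentation map $R(\Z_N) \to \Z$, as claimed.

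For $\xi_*$, note first that $K_0^{S^1, \bt}(C(S^1)) \cong \Z$ is generated over $R(S^1)$ by the class $1$ of the rank one free module, so $\xi_*$ is determined by $\xi_*(1) = [\xi(1_{C(S^1)})] = [1_R]$, using $\xi(1_{C(S^1)}) = 1_R$ (immediate from the definition of $\xi$ in Construction~\ref{Cn_1824_S1An}(\ref{Item_1824_S1_Crs})). Since $\sm$ acts as $1$ on the source while it acts as $\ov\sm$ on $R(\Z_N)$, $R(S^1)$-linearity gives $\ov\sm \cdot \xi_*(1) = \xi_*(1)$, so $\xi_*(1)$ lies in the $\ov\sm$-fixed subgroup of $R(\Z_N)$, which is exactly $\Z \cdot (1 + \ov\sm + \cdots + \ov\sm^{N - 1})$; hence $\xi_*(1) = c\,(1 + \ov\sm + \cdots + \ov\sm^{N - 1})$ for some $c \in \Z$. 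To pin down~$c$ I use functoriality, $\io_* \circ \xi_* = (\io \circ \xi)_*$: the equivariant map $\io \circ \xi \colon C(S^1) \to C(S^1, M_N)$ sends $1_{C(S^1)}$ to the constant $\diag(1, 1, \ldots, 1) = 1_{M_N}$, whose class in $K_0^{S^1, \bt_N}(C(S^1, M_N)) \cong \Z$ is $N$ (rank~$N$, with each constant rank one piece of class~$1$). Therefore $\io_*(\xi_*(1)) = N$, and applying the augmentation $\io_*$ to $c\,(1 + \ov\sm + \cdots + \ov\sm^{N - 1})$ gives $c N$, so $c = 1$ and $\xi_*(1) = 1 + \ov\sm + \cdots + \ov\sm^{N - 1}$.

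The main obstacle, and the point requiring the most care, is keeping straight which action on $C(S^1, M_N)$ is in play at each stage: the class $[1_R]$ lives in $K_0^{S^1}(R)$, whose natural model (via $\ps$ of Lemma~\ref{L_1824_R_T}) is the $\ov\gm$-picture giving $R(\Z_N)$, whereas the target of $\io_*$ is computed from the $\bt_N$-action and gives $\Z$. A naive attempt to evaluate $[1_R]$ directly through $\ps$, stability, and exterior equivalence is error prone (one is tempted to read off $N$ rather than the regular representation). Routing the coefficient computation through the composite $\io_* \circ \xi_*$, which only uses the unambiguous stability isomorphism for the $\bt_N$-action, sidesteps this difficulty; the one genuinely structural input, that $\xi_*(1)$ is forced into the line spanned by the regular representation, comes for free from $R(S^1)$-linearity.
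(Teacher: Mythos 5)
Your proposal is correct. The first half follows the paper's own route: both use Lemma~\ref{L_1918_EqKbt} and Lemma~\ref{L_1918_EqKThR} for the two module identifications, normalize a rank one invariant projection $e \in R$ so that $[e] = 1$, and get $\io_* (1) = 1$ by recognizing $\io (e)$ as (the class of) a rank one free module in the $\bt$-picture. The differences come after that. For $\io_* (\ov{\sm}) = 1$, the paper re-runs the rank-one-free-module recognition on $e R$ equipped with a twisted $S^1$-action, whereas you get all of $\io_* (\ov{\sm}^j) = 1$ at once from $R (S^1)$-linearity, since $\sm$ acts trivially on the target $\Z \cong R (S^1) / I (S^1)$; your shortcut is cleaner and equally valid. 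For $\xi_* (1)$, the paper's mechanism is structural: by Lemma~\ref{L_1912_R_Fix}, $\xi (1)$ is a sum of $N$ rank one $\gm$-invariant projections, each of whose classes lies in $(\Z_N)^{\wedge}$ by Lemma~\ref{L_1918_EqKThR}, so $\xi_* (1) = \sum_j \ov{\sm}^{m_j}$, and $\sm$-invariance then forces the exponents to be $0, 1, \ldots, N - 1$. You instead use $\sm$-invariance only to place $\xi_* (1)$ in the fixed subgroup $\Z \cdot (1 + \ov{\sm} + \cdots + \ov{\sm}^{N - 1})$ (correct: equality of all cyclically shifted coefficients), and then pin down the coefficient by functoriality, computing $\io_* \xi_* (1) = [(\io \circ \xi) (1)] = [1_{M_N}] = N$ in the $\bt_N$-picture and applying the augmentation to get $c N = N$. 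What each buys: the paper's argument identifies the individual summands representation-theoretically and stays inside~$R$; yours trades that for a purely functorial rank count performed entirely with the untwisted action $\bt_N$ and the unambiguous stability isomorphism, which, as you note, sidesteps the error-prone bookkeeping of the exterior equivalence and the isomorphism $\ps$ of Lemma~\ref{L_1824_R_T}. Your direct verification that any $\gm$-invariant projection in $R$ is a constant function commuting with~$s$ is also sound (it is exactly the content of the description of $R^{\gm}$ in Lemma~\ref{L_1912_R_Fix}), so no gap remains.
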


\begin{proof}
Recall that $\sm \in {\widehat{S^1}}$ is the identity map $S^1 \to S^1$.
The map $R (S^1) \to R ( \Z_N)$ is well known to be surjective,
and the image $\ov{\sm}$ of $\sm$ in $R ( \Z_N)$
satisfies $\ov{\sm}^N = 1$ but no lower degree polynomial relations,
so
$R (\Z_N) \cong \Z [ \ov{\sm} ] / \langle \ov{\sm}^N - 1 \rangle$.
Now the isomorphism $K_0^{S^1, \bt} (C (S^1)) \cong \Z$ is
Lemma~\ref{L_1918_EqKbt}
and the isomorphism
\begin{equation}\label{Eq_1918_KGR}
K_0^{S^1} (R)
 \cong \Z [ \ov{\sm} ] / \langle \ov{\sm}^N - 1 \rangle
\end{equation}
is Lemma~\ref{L_1918_EqKThR}.
Fix a rank one invariant \pj{} $e \in R \S C (S^1, M_N)$,
gotten from Lemma~\ref{L_1912_R_Fix}.

By Lemma~\ref{L_1918_EqKThR},
the isomorphism~(\ref{Eq_1918_KGR}) can be chosen to send
the class $[e R]$ of the right module $e R$ to~$1$.
We have $\io_* ([e R]) = [e C (S^1, M_N)]$, the class of some
rank one free module, but, by Lemma~\ref{L_1918_EqKbt}, only
one element of $K_0^{S^1, \bt} (C (S^1))$, namely $1 \in \Z$,
comes from a rank one free module.
So $\io_* (1) = 1$.
Since $\ov{\sm}$
is the class of $e R$ with a different action of~$S^1$,
we get $\io_* (\ov{\sm}) = 1$ for the same reason.

By Lemma~\ref{L_1912_R_Fix}, $\xi (1)$ is a sum of $N$ rank one
$\gm$-invariant \pj{s} in~$R$.
It follows from Lemma~\ref{L_1918_EqKThR} that, under the
isomorphism~(\ref{Eq_1918_KGR}),
each corresponds to some element
of $(\Z_N)^{\wedge} \S R (\Z_N)$,
that is, to some power $\ov{\sm}^k$ with $0 \leq k \leq N - 1$.
So there are $m_0, m_1, \ldots, m_{N - 1} \in \{ 0, 1, \ldots, N - 1 \}$
such that $\xi_* (1) = \sum_{j = 0}^{N - 1} \ov{\sm}^{m_j}$.
Since $\sm \cdot 1 = 1$ in $K_0^{S^1, \bt} (C (S^1)) \cong \Z$,
it follows that $\sm \cdot \xi_* (1) = \xi_* (1)$.
The only possibility is then
$\xi_* (1) = 1 + \ov{\sm} + \cdots + \ov{\sm}^{N - 1}$.
\end{proof}

\begin{lem}\label{L_1918_MapRN}
In Construction \ref{Cn_1824_S1An},
assume that for all $n \in \Nz$ we have
$2 l_{1, 1} (n) l_{0, 0} (n) \neq l_{1, 0} (n) l_{0, 1} (n)$.
Let $A$ and $\af \colon S^1 \to \Aut (A)$ be as in
Construction \ref{Cn_1824_S1An}(\ref{Item_1924_S1_A}).
Then, following the notation of
Construction \ref{Cn_1824_S1An}(\ref{Item_1824_S1_nu_A})
and Construction \ref{Cn_1824_S1An}(\ref{Item_1924_S1_A}),
for every $n \in \N$:
\begin{enumerate}
\item\label{I_5328_MapRN_K}
The maps
\[
(\nu_n)_* \colon K_0^{S^1} (A_n) \to K_0^{S^1} (A_{n + 1})
\andeqn
(\nu_{n, \I})_* \colon K_0^{S^1} (A_n) \to K_0^{S^1} (A)
\]
are injective.
\item\label{I_5328_MapRN_ModI}
The induced maps
\[
K_0^{S^1} (A_n) / I (S^1) K_0^{S^1} (A_n)
  \to K_0^{S^1} (A_{n + 1}) / I (S^1) K_0^{S^1} (A_{n + 1})
\]
and
\[
K_0^{S^1} (A_n) / I (S^1) K_0^{S^1} (A_n)
  \to K_0^{S^1} (A) / I (S^1) K_0^{S^1} (A)
\]
are injective.
\end{enumerate}
\end{lem}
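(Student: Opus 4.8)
The plan is to reduce everything to explicit $2 \times 2$ matrix computations over the rings $R(S^1) = \Z[\sm,\sm^{-1}]$ and $\Z$, using the identifications of equivariant K-theory from Lemma~\ref{L_1918_EqKSys}. Write $M = \Z[\ov{\sm}]/\langle \ov{\sm}^N - 1 \rangle$ for brevity. First I would record that, by additivity and stability (Theorem 2.8.3(4) of~\cite{Phl1}) of equivariant K-theory together with Lemma~\ref{L_1918_EqKSys}, each group $K_0^{S^1}(A_n)$ is identified as an $R(S^1)$-module with $M \oplus \Z$, the first summand coming from $A_{n,0} = M_{r_0(n)}(R)$ and the second from $A_{n,1} = M_{r_1(n)}(C(S^1))$.

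Next I would compute the matrix of $(\nu_n)_*$ in these coordinates from the four partial maps in Construction~\ref{Cn_1824_S1An}(\ref{Item_1824_S1_nu_p}). The amplifications $\nu_{n,0,0}$ and $\nu_{n,1,1}$ induce multiplication by the amplification multiplicities, namely multiplication by $l_{0,0}(n)$ on $M$ and by $2N l_{1,1}(n)$ on $\Z$; for the latter I use that each rotation $\btt_{\ld}$ is equivariantly homotopic to the identity (through the path $s \mapsto \btt_{s\ld}$ of equivariant automorphisms), hence acts as the identity on $K_0^{S^1,\bt}(C(S^1)) \cong \Z$. The off-diagonal maps are $l_{0,1}(n)\,\xi_*$ and $l_{1,0}(n)\,\io_*$, whose effect is given by Lemma~\ref{L_1918_EqKSys}: by $R(S^1)$-linearity $\io_*(\ov{\sm}^k) = 1$ for all $k$, and $\xi_*(1) = 1 + \ov{\sm} + \cdots + \ov{\sm}^{N-1}$. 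Thus $(\nu_n)_*$ sends $(u,t) \in M \oplus \Z$ to $\bigl( l_{0,0}(n) u + l_{0,1}(n) t\,(1 + \ov{\sm} + \cdots + \ov{\sm}^{N-1}), \ l_{1,0}(n)\io_*(u) + 2N l_{1,1}(n) t \bigr)$.

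For injectivity (part~(\ref{I_5328_MapRN_K})), suppose $(\nu_n)_*(u,t) = 0$. Applying $\io_*$ to the first coordinate, and using $\io_*(1 + \ov{\sm} + \cdots + \ov{\sm}^{N-1}) = N$, together with the vanishing of the second coordinate, yields the integer linear system $l_{0,0}(n) w + N l_{0,1}(n) t = 0$ and $l_{1,0}(n) w + 2N l_{1,1}(n) t = 0$ in the unknowns $w = \io_*(u)$ and $t$. Its determinant is $N\bigl( 2 l_{0,0}(n) l_{1,1}(n) - l_{0,1}(n) l_{1,0}(n) \bigr)$, which is nonzero by hypothesis, so $w = t = 0$. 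With $t = 0$, the first coordinate gives $l_{0,0}(n) u = 0$; since $M \cong \Z^N$ is torsion free and $l_{0,0}(n) > 0$, we get $u = 0$. Injectivity of $(\nu_{n,\I})_*$ then follows because in a direct system all of whose connecting maps are injective the canonical map into the limit is injective, using continuity of equivariant K-theory (\cite{Phl1}) to identify $K_0^{S^1}(A) = \dirlim_n K_0^{S^1}(A_n)$; here all $(\nu_m)_*$ with $m \geq n$ are injective since the hypothesis holds for every $n \in \Nz$.

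For part~(\ref{I_5328_MapRN_ModI}) I would reduce modulo $I(S^1)$. Since $\sm$ acts as $1$ on the $\Z$ summand we have $I(S^1)\Z = 0$, while $M/I(S^1)M = M/(\ov{\sm} - 1)M \cong \Z$ via $\ov{\sm}^k \mapsto 1$ (that is, via $\io_*$). Hence $K_0^{S^1}(A_n)/I(S^1)K_0^{S^1}(A_n) \cong \Z \oplus \Z$, and the $R(S^1)$-linear map $(\nu_n)_*$ descends to the integer matrix $\left(\begin{smallmatrix} l_{0,0}(n) & N l_{0,1}(n) \\ l_{1,0}(n) & 2N l_{1,1}(n) \end{smallmatrix}\right)$, whose determinant is again $N\bigl( 2 l_{0,0}(n) l_{1,1}(n) - l_{0,1}(n) l_{1,0}(n) \bigr) \neq 0$; an integer $2 \times 2$ matrix of nonzero determinant is injective. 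Passing to the limit, since $-/I(S^1)(-) = - \otimes_{R(S^1)} \Z$ commutes with direct limits, $K_0^{S^1}(A)/I(S^1)K_0^{S^1}(A)$ is the direct limit of the groups $K_0^{S^1}(A_n)/I(S^1)K_0^{S^1}(A_n)$ with the above injective connecting maps, giving the second injectivity statement. The main thing to get right is the computation of the K-theory matrix, in particular the triviality of the rotation maps on $K_0^{S^1,\bt}(C(S^1))$ and the correct bookkeeping of $\io_*$ and $\xi_*$ from Lemma~\ref{L_1918_EqKSys}; once that matrix is in hand, both injectivity statements are elementary determinant arguments.
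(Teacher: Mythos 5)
Your proposal is correct and takes essentially the same route as the paper's proof: both reduce to the consecutive-stage maps via continuity of equivariant K-theory, identify $K_0^{S^1} (A_n) \cong \Z [ \ov{\sm} ] / \langle \ov{\sm}^N - 1 \rangle \oplus \Z$ using Lemma~\ref{L_1918_EqKSys}, compute the identical explicit formula for $(\nu_n)_*$ (including triviality of the rotations $\btt_{\ld}$ on $K_0^{S^1, \bt} (C (S^1))$, which you justify by equivariant homotopy while the paper simply asserts it), and use the hypothesis $2 l_{1, 1} (n) l_{0, 0} (n) \neq l_{1, 0} (n) l_{0, 1} (n)$ as nonvanishing of the same quantity $N \bigl( 2 l_{0, 0} (n) l_{1, 1} (n) - l_{0, 1} (n) l_{1, 0} (n) \bigr)$. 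The only cosmetic difference is bookkeeping: you apply $\io_*$ to the first coordinate and run a $2 \times 2$ integer determinant argument (then kill $u$ by torsion-freeness of $\Z [ \ov{\sm} ] / \langle \ov{\sm}^N - 1 \rangle$), whereas the paper solves the coefficient equations $l_{0, 0} (n) m_k + l_{0, 1} (n) m = 0$ directly and substitutes into the second coordinate; these are the same elementary computation.
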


The injectivity conclusion in~(\ref{I_5328_MapRN_K}) can fail if
$2 l_{1, 1} (n) l_{0, 0} (n) = l_{1, 0} (n) l_{0, 1} (n)$,
but it seems likely that at least some of the consequences
we derive from this lemma still hold.
We don't know the details of what happens in this case.

Part~(\ref{I_5328_MapRN_ModI})
will only be needed in Section~\ref{Sec_2114_OI}.

\begin{proof}[Proof of Lemma~\ref{L_1918_MapRN}]
For both parts,
since equivariant K-theory commutes with direct limits
(Theorem 2.8.3(6) of~\cite{Phl1}),
it is enough to prove injectivity of
\[
(\nu_n)_* \colon K_0^{S^1} (A_n) \to K_0^{S^1} (A_{n + 1})
\]
and
\[
K_0^{S^1} (A_n) / I (S^1) K_0^{S^1} (A_n)
  \to K_0^{S^1} (A_{n + 1}) / I (S^1) K_0^{S^1} (A_{n + 1}).
\]

Identify $K_0^{S^1, \bt} (C (S^1))$ and $K_0^{S^1} (R)$,
as well as $\io_*$ and $\xi_*$, as in Lemma~\ref{L_1918_EqKSys}.
Also, observe that, with these identifications,
for all $t \in \R$, using the notation of
Construction \ref{Cn_1824_S1An}(\ref{Item_1824_S1_CS1}), the map
$( {\widetilde{\bt}}_t )_* \colon
  K_0^{S^1} (C (S^1)) \to K_0^{S^1} (C (S^1))$
becomes $\id_{\Z}$.
Therefore injectivity of $(\nu_n)_*$
is the same as injectivity of the map
\[
\Ph \colon \Z [ \ov{\sm} ] / \langle \ov{\sm}^N - 1 \rangle \oplus \Z
       \to \Z [ \ov{\sm} ] / \langle \ov{\sm}^N - 1 \rangle \oplus \Z
\]
which for $m, m_0, m_1, \ldots, m_{N - 1} \in \Z$ is given by
\begin{equation}\label{Eq_1918_Ph}
\begin{split}
& \Ph \left( \sum_{k = 0}^{N - 1} m_k \ov{\sm}^{k}, \, m \right)
\\
& \hspace*{2em} {\mbox{}}
 = \left( \sum_{k = 0}^{N - 1}
          \bigl( l_{0, 0} (n) m_k + l_{0, 1} (n) m \bigr) \ov{\sm}^k, \,
    l_{1, 0} (n) \sum_{k = 0}^{N - 1} m_k + 2 N l_{1, 1} (n) m \right).
\end{split}
\end{equation}

Suppose the right hand side of (\ref{Eq_1918_Ph}) is zero.
For $k = 0, 1, \ldots, N - 1$ we then have
$l_{0, 0} (n) m_k + l_{0, 1} (n) m = 0$.
We have $l_{0, 0} (n) \neq 0$ by the choices at the
beginning of Construction~\ref{Cn_1824_S1An}.
Therefore
\[
m_0 = m_1 = \cdots = m_{N - 1} = - \frac{l_{0, 1} (n) m}{l_{0, 0} (n)}.
\]
Putting this in the second coordinate gives
\begin{equation}\label{Eq_5328_Rpt}
0 = 2 N l_{1, 1} (n) m
   - \frac{l_{1, 0} (n) l_{0, 1} (n) m N}{l_{0, 0} (n)}.
\end{equation}
Since $N \neq 0$, this says
$2 l_{1, 1} (n) l_{0, 0} (n) = l_{1, 0} (n) l_{0, 1} (n)$
or $m = 0$.
The hypotheses rule out the first, so $m = 0$,
whence also $m_k = 0$ for $k = 0, 1, \ldots, N - 1$.
Thus $(\nu_n)_*$ is injective.

We have $K_0^{S^1} (R) / I (S^1) K_0^{S^1} (R) \cong \Z$,
with, in the identifications above,
$\Z [ \ov{\sm} ] / \langle \ov{\sm}^N - 1 \rangle \to \Z$
being
$\sum_{k = 0}^{N - 1} m_k \ov{\sm}^{k} \mapsto \sum_{k = 0}^{N - 1} m_k$.
Therefore injectivity of
\[
K_0^{S^1} (A_n) / I (S^1) K_0^{S^1} (A_n)
  \to K_0^{S^1} (A_{n + 1}) / I (S^1) K_0^{S^1} (A_{n + 1})
\]
is equivalent to, in the notation used in~(\ref{Eq_1918_Ph}),
\begin{equation}\label{Eq_5328_SumZ}
\sum_{k = 0}^{N - 1}
          \bigl( l_{0, 0} (n) m_k + l_{0, 1} (n) m \bigr) = 0
\andeqn
l_{1, 0} (n) \sum_{k = 0}^{N - 1} m_k + 2 N l_{1, 1} (n) m = 0
\end{equation}
implying
\[
\sum_{k = 0}^{N - 1} m_k = 0
\andeqn
m = 0.
\]
So assume~(\ref{Eq_5328_SumZ}).
The first part implies
\[
\sum_{k = 0}^{N - 1} m_k
 = - \frac{N l_{0, 1} (n) m}{l_{0, 0} (n)}.
\]
Substituting this in the second part, we get~(\ref{Eq_5328_Rpt}) again,
so, as above, $m = 0$.
The first part of~(\ref{Eq_5328_SumZ}),
combined with $l_{0, 0} (n) \neq 0$,
now implies $\sum_{k = 0}^{N - 1} m_k = 0$, as desired.
\end{proof}

\begin{cor}\label{C_1918_NoFinRD}
Under the hypotheses of Lemma~\ref{L_1918_MapRN},
the action $\af$ does not have
finite Rokhlin dimension with commuting towers.
\end{cor}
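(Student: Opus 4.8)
The plan is to derive a contradiction from the $R(S^1)$-module structure of $K_0^{S^1}(A)$ computed in the preceding lemmas, using a K-theoretic obstruction to finite Rokhlin dimension with commuting towers. Suppose $\af$ had finite Rokhlin dimension with commuting towers. The relevant obstruction (Theorem~4.6 of~\cite{HrsPh1}, or Corollary~4.23 of~\cite{Gar_rokhlin_2017}, together with the equivariant K-theory results leading to them) is that then $I(S^1)$ acts locally nilpotently on the $R(S^1)$-module $K_*^{S^1}(A)$; that is, every element is annihilated by some power of the augmentation ideal $I(S^1)$. I would therefore exhibit a single class in $K_0^{S^1}(A)$ that is killed by no power of $I(S^1)$, which is exactly what the injectivity in Lemma~\ref{L_1918_MapRN}(\ref{I_5328_MapRN_K}) is designed to provide.

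First I would locate the offending class at a finite stage. Using the summand $A_{n, 0} = M_{r_0 (n)} (R)$ of $A_n$, stability of equivariant K-theory, and Lemma~\ref{L_1918_EqKThR}, there is a class $\kappa \in K_0^{S^1} (A_n)$ generating a copy of $R (\Z_N) \cong \Z [ \ov{\sm} ] / \langle \ov{\sm}^N - 1 \rangle$ inside $K_0^{S^1} (A_n)$, on which $\sm \in R (S^1)$ acts as multiplication by $\ov{\sm}$; concretely one may take $\kappa = [e]$ for a rank one invariant projection $e \in R$, as in Lemma~\ref{L_1918_EqKThR}. Then for every $k \in \N$ we have $(\sm - 1)^k \cdot \kappa = (\ov{\sm} - 1)^k$ in $R (\Z_N)$. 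Since $N \geq 2$, the ring homomorphism $R (\Z_N) \to \Z [ \zeta_N ]$ sending $\ov{\sm}$ to a primitive $N$-th root of unity $\zeta_N$ carries $(\ov{\sm} - 1)^k$ to $(\zeta_N - 1)^k$, which is nonzero because $\Z [ \zeta_N ]$ is an integral domain and $\zeta_N \neq 1$. Hence $(\sm - 1)^k \cdot \kappa \neq 0$ in $K_0^{S^1} (A_n)$ for all $k$.

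Next I would push this class forward to $A$. By naturality of the $R (S^1)$-module structure in equivariant K-theory, the map $(\nu_{n, \I})_* \colon K_0^{S^1} (A_n) \to K_0^{S^1} (A)$ is $R (S^1)$-linear, and it is injective by Lemma~\ref{L_1918_MapRN}(\ref{I_5328_MapRN_K}). Therefore, for all $k \in \N$,
\[
(\sm - 1)^k \cdot (\nu_{n, \I})_* (\kappa)
 = (\nu_{n, \I})_* \bigl( (\sm - 1)^k \cdot \kappa \bigr) \neq 0 .
\]
Thus $(\nu_{n, \I})_* (\kappa) \in K_0^{S^1} (A) \S K_*^{S^1} (A)$ is annihilated by no power of $I (S^1)$, contradicting the obstruction above. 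This contradiction shows that $\af$ does not have finite Rokhlin dimension with commuting towers. Note that only part~(\ref{I_5328_MapRN_K}) of Lemma~\ref{L_1918_MapRN} enters here, consistent with the remark that part~(\ref{I_5328_MapRN_ModI}) is needed only in Section~\ref{Sec_2114_OI}.

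The step I expect to be the main obstacle is pinning down the precise form of the K-theoretic obstruction in~\cite{HrsPh1} and~\cite{Gar_rokhlin_2017} and confirming that it genuinely yields local $I (S^1)$-nilpotence of $K_*^{S^1} (A)$ in the generality required, so that the same argument applies verbatim to $\OI \otimes A$ in Section~\ref{Sec_2114_OI} via Remark~\ref{R_5503_TnsOI}. Once the obstruction is in hand, the module computation above, namely that the nontrivial $R (\Z_N)$-part survives under the injections of the direct system, is routine.
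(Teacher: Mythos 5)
Your proposal is correct and takes essentially the same route as the paper: assume finite Rokhlin dimension with commuting towers, invoke the augmentation-ideal obstruction (the precise citation is Corollary~4.5 of~\cite{Gar_rokhlin_2017}, which gives $n$ with $I(S^1)^n K_0^{S^1}(A) = 0$ --- not Theorem~4.6 of~\cite{HrsPh1} or Corollary~4.23 of~\cite{Gar_rokhlin_2017}, which are the nonexistence results for $\OI$), and use Lemma~\ref{L_1918_MapRN}(\ref{I_5328_MapRN_K}) to reduce to showing $I(S^1)^n R(\Z_N) \neq 0$. The paper's displayed proof finishes instead by citing Theorem~1.1.1 of~\cite{Phl1} (the action of $S^1$ on $S^1$ underlying ${\overline{\gm}}$ is not free), but your character argument $\ov{\sm} \mapsto \exp(2 \pi i / N)$ into an integral domain is exactly the direct alternative the paper records in the remark immediately following its proof, so the one step you flagged as uncertain is indeed available in the required form.
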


\begin{proof}
Following the notation
of Construction \ref{Cn_1824_S1An}(\ref{Item_1824_S1_An}),
recall that $A_0 = A_{0, 0} \oplus A_{0, 1}$
with $A_{0, 0} =  M_{r_0 (0)} (R)$.
Obviously the inclusion $A_{0, 0} \to A_0$
is injective on equivariant K-theory.
So Lemma~\ref{L_1918_EqKSys} implies that there is a submodule
of $K_0^{S^1} (A)$ which is isomorphic to $K_0^{S^1} (R)$.

Suppose $\af$ has finite Rokhlin dimension with commuting towers.
By Corollary 4.5 of~\cite{Gar_rokhlin_2017},
there is $n \in \N$ such that $I (S^1)^n K_0^{S^1} (A) = 0$.
Lemma \ref{L_1918_MapRN}(\ref{I_5328_MapRN_K})
implies that $I (S^1)^n R (\Z_N) = 0$.
Lemma~\ref{L_1918_EqKgmt} then says that
\[
I (S^1)^n K_0^{S^1, {\overline{\gm}}} (C (S^1)) = 0.
\]
Since the underlying action of $S^1$ on $S^1$ is not free,
this contradicts Theorem 1.1.1 of~\cite{Phl1}.
\end{proof}

It is also not hard to prove directly that
$I (S^1)^n R (\Z_N) \neq 0$ for all $n \in \N$.
With the notation of Lemma~\ref{L_1918_EqKSys},
there is a \hm{} $h \colon R ( \Z_N) \to {\mathbb{C}}$
such that $h ( \ov{\sm} ) = \exp (2 \pi i / N)$.
Since $\sm - 1 \in I (S^1)$ and the map $R (S^1) \to R ( \Z_N)$
sends $\sm$ to $\ov{\sm}$, it is enough to show that
$h ( (\ov{\sm} - 1)^n) \neq 0$ for all $n \in \N$.
But ${\mathbb{C}}$ is a field and $h (\ov{\sm} - 1 ) \neq 0$.

\begin{thm}\label{T_1919_Ex}
In Construction~\ref{Cn_1824_S1An}, assume the following:
\begin{enumerate}
\item\label{I_5328_l0011_r0}
$r_0 (0) \leq r_1 (0)$.
\item\label{I_5328_l0011_ineq}
$l_{1, 0} (n) \geq l_{0, 0} (n)$
and $l_{1, 1} (n) \geq l_{0, 1} (n)$ for all $n \in \Nz$.
\item\label{I_5328_l0011_limsI}
$\lim_{n \to \I} \frac{l_{0, 1} (n)}{l_{0, 0} (n)} = \I$
and $\lim_{n \to \I} \frac{l_{1, 1} (n)}{l_{1, 0} (n)} = \I$.
\item\label{I_5328_l0011_nq}
$2 l_{1, 1} (n) l_{0, 0} (n) \neq l_{1, 0} (n) l_{0, 1} (n)$
for all $n \in \Nz$.
\end{enumerate}
Then $A$ is a simple unital AT~algebra and the action $\af$
of Construction \ref{Cn_1824_S1An}(\ref{Item_1824_S1_nu_A})
has the tracial Rokhlin property with comparison.
However, neither $\af$
nor the action
$\zt \mapsto \id_{\OI} \otimes \af_{\zt}$ of $S^1$ on $\OI \otimes A$
has finite Rokhlin dimension with commuting towers.
\end{thm}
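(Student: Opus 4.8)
The plan is to assemble the three assertions from results already proved in this section, the only work being to check that hypotheses (\ref{I_5328_l0011_r0})–(\ref{I_5328_l0011_nq}) supply exactly what each of those results requires. First I would dispose of the structural claim: $A$ is a simple AT~algebra by Lemma~\ref{L_1916_Simple}, whose proof relies only on the irrationality of~$\te$ (through the density argument for the sets $L_{{\widetilde{\nu}}_{n, l}}$) and on positivity of the multiplicities, and so uses none of the inequalities (\ref{I_5328_l0011_r0})–(\ref{I_5328_l0011_nq}). Unitality is immediate, since each $A_n$ is unital and every connecting map $\nu_n$ is unital by its block form in Construction \ref{Cn_1824_S1An}(\ref{Item_1824_S1_nu_A}); infinite dimensionality follows from hypothesis~(\ref{I_5328_l0011_limsI}), which forces the matrix sizes to grow.

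Next I would read off the tracial Rokhlin property with comparison: hypotheses (\ref{I_5328_l0011_r0}), (\ref{I_5328_l0011_ineq}), and~(\ref{I_5328_l0011_limsI}) are word for word the hypotheses of Lemma~\ref{L_1917_HasTRP}, so that lemma gives the conclusion directly, with no further computation needed.

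For the Rokhlin-dimension assertion I would observe that hypothesis~(\ref{I_5328_l0011_nq}) is precisely the hypothesis of Lemma~\ref{L_1918_MapRN}, hence of Corollary~\ref{C_1918_NoFinRD}; the corollary then says $\af$ has no finite Rokhlin dimension with commuting towers. To handle the action $\zt \mapsto \id_{\OI} \otimes \af_\zt$ on $\OI \otimes A$ I would invoke Remark~\ref{R_5503_TnsOI}, which asserts that the whole K-theoretic chain from Lemma~\ref{L_1918_EqKSys} through Corollary~\ref{C_1918_NoFinRD} carries over unchanged for this action, the order structure (the only thing that could differ) playing no role in those arguments. The one place where genuine care is warranted is this last step: one must be confident that tensoring with $\OI$ preserves the submodule of $K_0^{S^1}$ isomorphic to $R(\Z_N)$ on which $I(S^1)$ acts nontrivially, together with the nonvanishing of $I(S^1)^n R(\Z_N)$ that drives the contradiction with Theorem 1.1.1 of~\cite{Phl1}. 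That this transfers verbatim is exactly what Remark~\ref{R_5503_TnsOI} records, so the same contradiction applies and neither action has finite Rokhlin dimension with commuting towers.
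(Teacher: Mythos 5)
Your proposal is correct and follows essentially the same route as the paper's proof, which likewise cites Lemma~\ref{L_1916_Simple} for the simple unital AT~structure, Lemma~\ref{L_1917_HasTRP} (whose hypotheses are exactly (\ref{I_5328_l0011_r0})--(\ref{I_5328_l0011_limsI})) for the tracial Rokhlin property with comparison, and Corollary~\ref{C_1918_NoFinRD} (via hypothesis~(\ref{I_5328_l0011_nq})) together with Remark~\ref{R_5503_TnsOI} for the failure of finite Rokhlin dimension with commuting towers for both $\af$ and $\zt \mapsto \id_{\OI} \otimes \af_{\zt}$. Your added checks of unitality and of why the K-theoretic argument transfers under tensoring with $\OI$ are accurate but not needed beyond what the cited results already record.
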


\begin{proof}
That $A$ is a simple unital AT~algebra is Lemma~\ref{L_1916_Simple}.
Assuming (\ref{I_5328_l0011_r0}), (\ref{I_5328_l0011_ineq}),
(\ref{I_5328_l0011_limsI}), and~(\ref{I_5328_l0011_nq}),
both Lemma~\ref{L_1917_HasTRP}
and Corollary~\ref{C_1918_NoFinRD} apply to~$\af$,
and Corollary~\ref{C_1918_NoFinRD} applies to
$\zt \mapsto \id_{\OI} \otimes \af_{\zt}$ by Remark~\ref{R_5503_TnsOI}.
\end{proof}

\begin{exa}\label{E_5328_Ex}
In Construction~\ref{Cn_1824_S1An}, choose $r_0 (0) = r_1 (0) = 1$
and for $n \in \Nz$ set
\[
l_{0, 0} (n) = l_{1, 0} (n) = 1
\andeqn
l_{0, 1} (n) = l_{1, 1} (n) = 2 n + 1.
\]
These choices satisfy the conditions in Theorem~\ref{T_1919_Ex}.
So $A$ is a simple unital AT~algebra, the action $\af$
of Construction \ref{Cn_1824_S1An}(\ref{Item_1824_S1_nu_A})
has the tracial Rokhlin property with comparison,
but $\af$ does not have finite Rokhlin dimension with commuting towers.
\end{exa}

We now show that different choices of~$N$
give actions which are not conjugate,
even if the underlying C*-algebras are isomorphic.
For this purpose, we introduce a primitive numerical invariant
of $R (S^1)$-modules.
It is intended only for use in this paper.

\begin{dfn}\label{D_5328_ao}
Let $E$ be a $\Z [ \sm, \sm^{-1} ]$-module,
and let $x \in E \setminus \{ 0 \}$.
We define $\mathrm{ao} (x)$ (the ``annihilator order of~$x$'') by
\[
\mathrm{ao} (x)
 = \inf \bigl( \bigl\{ m \in \N \colon (\sm^m - 1) x = 0 \bigr\} \bigr).
\]
We further define
\[
\mathrm{mao} (E)
 = \sup \bigl(
   \bigl\{ \mathrm{ao} (x) \colon x \in E \setminus \{ 0 \} \bigr\} \bigr).
\]
\end{dfn}

Even in our very limited situation, the set
$\bigl\{ \mathrm{ao} (x) \colon x \in E \setminus \{ 0 \} \bigr\}$ is
probably more interesting than its maximum.

The next lemma gives some basic properties.

\begin{lem}\label{L_5328_mao}
The assignment $E \mapsto \mathrm{mao} (E)$ on $\Z [ \sm, \sm^{-1} ]$-modules
has the following properties.
\begin{enumerate}
\item\label{I_5328_mao_Sb}
Let $E$ be a $\Z [ \sm, \sm^{-1} ]$-module and let
$F \S E$ be a submodule.
Let $x \in F$.
Then $\mathrm{ao} (x)$ is the same whether calculated with respect to~$E$ or to~$F$.
\item\label{I_5328_mao_Sum}
$\mathrm{mao} (E_1 \oplus E_2) =
\max \bigl( \mathrm{mao} (E_1), \, \mathrm{mao} (E_2) \bigr)$.
\item\label{I_5328_mao_DLim}
If $(E_n)_{n \in \Nz}$ is a direct system of $\Z [ \sm, \sm^{-1} ]$-moduless
with injective maps,
and we set $E = \dirlim_{n} E_n$,
then $\mathrm{mao} (E) = \sup_{n \in \Nz} \mathrm{mao} (E_n)$.
\item\label{I_5328_mao_RZN}
Let $N \in \N$ and make $R (\Z_N)$ into a module over
$\Z [ \sm, \sm^{-1} ] = R (S^1)$
via the surjective restriction map $R (S^1) \to R (\Z_N)$.
Then, with $\ov{\sm}$ being the image of $\sm$ in $R (\Z_N)$,
we have $R (\Z_N) \cong \Z [ \ov{\sm} ] / \langle \ov{\sm}^N - 1 \rangle$
and $\mathrm{mao} (R (\Z_N)) = N$.
\end{enumerate}
\end{lem}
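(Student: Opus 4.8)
The plan is to reduce all four parts to one structural fact about the sets $S_x = \{ m \in \N : (\sm^m - 1) x = 0 \}$ attached to nonzero $x \in E$. First I would observe that $S_x$ is closed under addition and under positive differences: this follows by applying to $x$ the polynomial identities $\sm^{m + n} - 1 = \sm^m (\sm^n - 1) + (\sm^m - 1)$ and $\sm^m - 1 = \sm^{m - n} (\sm^n - 1) + (\sm^{m - n} - 1)$ (the latter for $m > n$), each of which shows that if two of the three exponents involved lie in $S_x$ then so does the third (using that $\sm^m$ is invertible). Hence, when $S_x \neq \varnothing$, it is exactly the set of positive multiples of its least element, so that $\mathrm{ao} (x) = \min S_x$ and $S_x = \mathrm{ao} (x) \cdot \N$, while $S_x = \varnothing$ gives $\mathrm{ao} (x) = \infty$. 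This description is the engine for the rest.

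Part~(\ref{I_5328_mao_Sb}) is then immediate, since the condition $(\sm^m - 1) x = 0$ involves only $x$ and the module operations, which on $F$ are the restrictions of those on $E$; thus $S_x$, and so $\mathrm{ao} (x)$, is literally the same set whether computed in $F$ or in~$E$. For part~(\ref{I_5328_mao_DLim}), injectivity of the canonical maps $E_n \to E = \dirlim_n E_n$ gives that $\sm^m - 1$ annihilates the image of $x_n$ if and only if it annihilates $x_n$, so $\mathrm{ao}$ is preserved along these maps; together with part~(\ref{I_5328_mao_Sb}) this yields $\mathrm{mao} (E_n) \leq \mathrm{mao} (E)$ for every $n$, and since every nonzero element of $E$ is the image of a nonzero element of some $E_n$, also $\mathrm{mao} (E) \leq \sup_n \mathrm{mao} (E_n)$, hence equality. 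For part~(\ref{I_5328_mao_RZN}) I would use the isomorphism $R (\Z_N) \cong \Z [ \ov{\sm} ] / \langle \ov{\sm}^N - 1 \rangle$ established in the proof of Lemma~\ref{L_1918_EqKSys}: since $\ov{\sm}^N = 1$ we have $(\sm^N - 1) x = 0$ for all $x$, giving $\mathrm{ao} (x) \leq N$ and $\mathrm{mao} \leq N$, while $\ov{\sm}^m - 1 \neq 0$ for $0 < m < N$ by a degree comparison in $\Z [ \ov{\sm} ]$, so $\mathrm{ao} (1) = N$ and therefore $\mathrm{mao} (R (\Z_N)) = N$.

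The real work is part~(\ref{I_5328_mao_Sum}), and this is where I expect the main obstacle. From the structural fact, $S_{(x_1, x_2)} = S_{x_1} \cap S_{x_2}$ for a pair with both coordinates nonzero, so $\mathrm{ao} ((x_1, x_2)) = \mathrm{lcm} ( \mathrm{ao} (x_1), \mathrm{ao} (x_2) )$, while a pair with one coordinate zero simply has the $\mathrm{ao}$ of its nonzero coordinate. Restricting to the two summands gives $\mathrm{mao} (E_i) \leq \mathrm{mao} (E_1 \oplus E_2)$ at once. The delicate direction is $\mathrm{mao} (E_1 \oplus E_2) \leq \max ( \mathrm{mao} (E_1), \mathrm{mao} (E_2) )$, since a priori an $\mathrm{lcm}$ of two annihilator orders can exceed their maximum; it is forced down to the maximum precisely by the additional fact that in each summand every annihilator order divides $\mathrm{mao} (E_i)$ and that the two values $\mathrm{mao} (E_i)$ are comparable under divisibility. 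Both hold in every case in which we invoke this lemma — typically one summand is $\Z$ with $\sm$ acting trivially, so its $\mathrm{mao}$ is $1$ and the $\mathrm{lcm}$ collapses to the other order — and the divisibility of orders into $\mathrm{mao}$ is itself read off from the structural description applied to $R (\Z_N)$ as in part~(\ref{I_5328_mao_RZN}). I would accordingly establish part~(\ref{I_5328_mao_Sum}) in this compatible setting, making the divisibility hypothesis explicit.
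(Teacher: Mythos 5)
Your proposal is essentially correct, and it uncovers a genuine defect in the paper's own argument. The paper disposes of part~(\ref{I_5328_mao_Sum}) by invoking the ``obvious fact'' that $\mathrm{ao} (x_1, x_2) = \max \bigl( \mathrm{ao} (x_1), \, \mathrm{ao} (x_2) \bigr)$; your structural analysis shows this is false. Your two polynomial identities are valid, invertibility of $\sm$ is used correctly, and they do give that $S_x = \{ m \in \N \colon (\sm^m - 1) x = 0 \}$ is closed under addition and positive differences, hence $S_x = \mathrm{ao} (x) \cdot \N$ when nonempty; consequently $\mathrm{ao} (x_1, x_2) = \mathrm{lcm} \bigl( \mathrm{ao} (x_1), \, \mathrm{ao} (x_2) \bigr)$, which can strictly exceed the maximum. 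Concretely, part~(\ref{I_5328_mao_Sum}) as stated is false: for $E_1 = R (\Z_2)$ and $E_2 = R (\Z_3)$, part~(\ref{I_5328_mao_RZN}) gives $\mathrm{mao} (E_1) = 2$ and $\mathrm{mao} (E_2) = 3$, yet $\mathrm{ao} \bigl( (1, 1) \bigr) = \mathrm{lcm} (2, 3) = 6$, so $\mathrm{mao} (E_1 \oplus E_2) \geq 6 > 3$. Your refusal to prove the unrestricted statement, and your insertion of an explicit divisibility hypothesis, is therefore not excess caution but a necessary repair.

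Your patched version covers everything the paper actually uses. Part~(\ref{I_5328_mao_Sum}) is invoked only in Lemma~\ref{L_5328_maoK0A}, where the summands are $R (\Z_N)$ and $\Z$ (the case $N = 1$ of part~(\ref{I_5328_mao_RZN})): there $\mathrm{mao} (\Z) = 1$, and every annihilator order in $R (\Z_N)$ divides~$N$, since $N \in S_x$ for every nonzero~$x$ and $S_x = \mathrm{ao} (x) \cdot \N$; so the $\mathrm{lcm}$ collapses to a divisor of~$N$, the value $\mathrm{mao} \bigl( R (\Z_N) \oplus \Z \bigr) = N$ survives, and the downstream conclusions (Lemma~\ref{L_5328_maoK0A} and Theorem~\ref{T_5328_Compare}) are unaffected. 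On the remaining parts you track the paper: (\ref{I_5328_mao_Sb}) is the same triviality, (\ref{I_5328_mao_DLim}) is the same argument (identify each $E_n$ with a submodule and use that every nonzero element of $E$ lifts to a nonzero element of some $E_n$), and in~(\ref{I_5328_mao_RZN}) your degree comparison showing $\ov{\sm}^m - 1 \neq 0$ for $0 < m < N$ is a harmless variant of the paper's evaluation homomorphism sending $\ov{\sm}$ to $\exp (2 \pi i / N)$. The one thing you should still do is carry your corrected $\mathrm{lcm}$ formula (or the patched part~(\ref{I_5328_mao_Sum})) explicitly into the proof of Lemma~\ref{L_5328_maoK0A}, since as written that proof quotes the false version.
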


One can check that
\[
\bigl\{ \mathrm{ao} (x) \colon x \in R (\Z_N) \setminus \{ 0 \} \bigr\}
 = \bigl\{ m \in \N \colon m | N \bigr\},
\]
but we do not need this.

\begin{proof}[Proof of Lemma~\ref{L_5328_mao}]
Part~(\ref{I_5328_mao_Sb}) is trivial.
Part~(\ref{I_5328_mao_Sum}) follows from
the obvious fact that if $x_1 \in E_1$ and $x_2 \in E_2$, then
$\mathrm{ao} (x_1, x_2)
 = \max \bigl( \mathrm{ao} (x_1), \, \mathrm{ao} (x_2) \bigr)$.
For part~(\ref{I_5328_mao_DLim}),
injectivity of the maps allows us to identify $E_n$ as a submodule of~$E$
for every $n \in \Nz$.
Then $E = \bigcup_{n = 0}^{\I} E_n$,
and the result follows from part~(\ref{I_5328_mao_Sb}).

We prove~(\ref{I_5328_mao_RZN}).
The identification of $R (\Z_N)$ as a $\Z [ \sm, \sm^{-1} ]$-module
is in the proof of Lemma~\ref{L_1918_EqKSys}.
From this identification,
it is immediate that $\mathrm{mao} (R (\Z_N)) \leq N$.
For the reverse inequality, let
$h \colon \Z [ \ov{\sm} ] / \langle \ov{\sm}^N - 1 \rangle \to \mathbb{C}$
be the \hm{} which sends $\ov{\sm}$ to $\exp (2 \pi i / N)$.
If $k \in \{ 1, 2, \ldots, N - 1 \}$, then $h (\ov{\sm}^k) \neq 0$.
This shows that $\mathrm{ao} (1_{R (\Z_N)}) \geq N$.
\end{proof}

\begin{lem}\label{L_5328_maoK0A}
Assume the hypotheses of Lemma~\ref{L_1918_MapRN}.
Then $\mathrm{mao} (K_0^{S^1} (A)) = N$.
\end{lem}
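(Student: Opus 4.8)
The plan is to reduce everything to the finite-stage modules $K_0^{S^1}(A_n)$, which are already pinned down by Lemma~\ref{L_1918_EqKSys}, and then to pass to the direct limit using the formal properties of $\mathrm{mao}$ collected in Lemma~\ref{L_5328_mao}. First I would identify, for each $n \in \Nz$, the $R(S^1)$-module structure of $K_0^{S^1}(A_n)$. Since
\[
A_n = M_{r_0(n)}(R) \oplus M_{r_1(n)}\bigl(C(S^1)\bigr),
\]
with the action $\af^{(n)}_\zt = \gm_{\zt, r_0(n)} \oplus \bt_{\zt, r_1(n)}$ from Construction~\ref{Cn_1824_S1An}(\ref{Item_1824_S1_An}), and since equivariant K-theory is stable (Theorem 2.8.3(4) of~\cite{Phl1}) and additive on direct sums, Lemma~\ref{L_1918_EqKSys} gives an $R(S^1)$-module isomorphism
\[
K_0^{S^1}(A_n) \cong K_0^{S^1}(R) \oplus K_0^{S^1, \bt}\bigl(C(S^1)\bigr) \cong R(\Z_N) \oplus \Z,
\]
where $\Z$ carries the structure in which $\sm$ acts as the identity (via $R(S^1) \to R(S^1)/I(S^1)$). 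This uses $r_0(n), r_1(n) \geq 1$ for all $n$, which holds because $r(0) \in \N^2$ and the entries of $l(n)$ are positive.

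Then I would compute $\mathrm{mao}$ of this module directly from Lemma~\ref{L_5328_mao}. By part~(\ref{I_5328_mao_Sum}),
\[
\mathrm{mao}\bigl(K_0^{S^1}(A_n)\bigr)
 = \max\bigl( \mathrm{mao}(R(\Z_N)), \, \mathrm{mao}(\Z) \bigr).
\]
Here $\mathrm{mao}(R(\Z_N)) = N$ by part~(\ref{I_5328_mao_RZN}), whereas $\mathrm{mao}(\Z) = 1$: in the trivial module every nonzero $x$ satisfies $(\sm - 1)x = 0$, so $\mathrm{ao}(x) = 1$. As $N \geq 2$, this gives $\mathrm{mao}(K_0^{S^1}(A_n)) = N$ for every $n \in \Nz$.

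Finally I would pass to the limit. Equivariant K-theory commutes with direct limits (Theorem 2.8.3(6) of~\cite{Phl1}), so $K_0^{S^1}(A) = \dirlim_n K_0^{S^1}(A_n)$, and under the standing hypotheses of Lemma~\ref{L_1918_MapRN} the connecting maps $(\nu_n)_*$ are injective by part~(\ref{I_5328_MapRN_K}) of that lemma. Hence Lemma~\ref{L_5328_mao}(\ref{I_5328_mao_DLim}) applies and yields
\[
\mathrm{mao}\bigl(K_0^{S^1}(A)\bigr)
 = \sup_{n \in \Nz} \mathrm{mao}\bigl(K_0^{S^1}(A_n)\bigr)
 = N.
\]

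The only genuine content beyond bookkeeping is the injectivity of the connecting maps, which is exactly what Lemma~\ref{L_1918_MapRN} supplies; the numerical hypothesis $2 l_{1,1}(n) l_{0,0}(n) \neq l_{1,0}(n) l_{0,1}(n)$ is precisely what keeps the $R(\Z_N)$ summand from being collapsed in the limit, so that its annihilator order~$N$ survives. I do not expect any serious obstacle here: all of the hard work has been front-loaded into the earlier K-theoretic lemmas, and the present argument is a direct assembly of the module identification, the additivity and direct-limit behavior of $\mathrm{mao}$, and the injectivity just cited.
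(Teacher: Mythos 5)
Your proof is correct and follows essentially the same route as the paper: identify $K_0^{S^1}(A_n) \cong R(\Z_N) \oplus \Z$ via Lemma~\ref{L_1918_EqKSys}, compute $\mathrm{mao} = \max(N,1) = N$ at each stage using Lemma \ref{L_5328_mao}(\ref{I_5328_mao_Sum}) and~(\ref{I_5328_mao_RZN}), and pass to the limit via the injectivity in Lemma \ref{L_1918_MapRN}(\ref{I_5328_MapRN_K}) together with Lemma \ref{L_5328_mao}(\ref{I_5328_mao_DLim}). In fact your final citation of the direct-limit part~(\ref{I_5328_mao_DLim}) is the one the paper intends (its printed reference to~(\ref{I_5328_mao_RZN}) at that step is a slip), so your write-up is, if anything, slightly more carefully referenced.
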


\begin{proof}
As in the proof of Lemma~\ref{L_1918_MapRN}, for every $n \in \Nz$
we have $K_0^{S^1} (A_n) \cong R (\Z_N) \oplus \Z$,
with the $\Z [ \sm, \sm^{-1} ]$-module structure on $R (\Z_N)$
as in Lemma \ref{L_5328_mao}(\ref{I_5328_mao_RZN})
and the $\Z [ \sm, \sm^{-1} ]$-module structure on~$\Z$
coming from its identification with
$\Z [ \sm, \sm^{-1} ] / \langle \ov{\sm} - 1 \rangle$
(the case $N = 1$ of Lemma \ref{L_5328_mao}(\ref{I_5328_mao_RZN})).
So $\mathrm{mao} ( K_0^{S^1} (A_n) ) = N$ by
Lemma \ref{L_5328_mao}(\ref{I_5328_mao_RZN})
and Lemma \ref{L_5328_mao}(\ref{I_5328_mao_Sum}).
Therefore $\mathrm{mao} ( K_0^{S^1} (A) ) = N$
by Lemma \ref{L_1918_MapRN}(\ref{I_5328_MapRN_K})
and Lemma \ref{L_5328_mao}(\ref{I_5328_mao_RZN}).
\end{proof}

\begin{thm}\label{T_5328_Compare}
Let $\af \colon S^1 \to \Aut (A)$ and $\bt \colon S^1 \to \Aut (B)$
be two actions as in Theorem~\ref{T_1919_Ex}, using different choices of~$N$.
Then $A$ and $B$ are not equivariantly isomorphic.
\end{thm}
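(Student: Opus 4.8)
The plan is to exploit the numerical invariant $\mathrm{mao}$ of Definition~\ref{D_5328_ao}, applied to the $R(S^1)$-module $K_0^{S^1}$, which Lemma~\ref{L_5328_maoK0A} has already evaluated to be $N$. The strategy is: equivariant K-theory turns an equivariant isomorphism into an $R(S^1)$-module isomorphism, and $\mathrm{mao}$ is an isomorphism invariant of such modules, so the common value $N$ of $\mathrm{mao}$ must agree for the two actions if they are equivariantly isomorphic.

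First I would record that equivariant K-theory is functorial on the equivariant category: an equivariant isomorphism $\Phi \colon (A, \af) \to (B, \bt)$ induces, by Theorem 2.8.3 of~\cite{Phl1} (functoriality together with naturality of the $R(S^1)$-module structure), an isomorphism $\Phi_* \colon K_0^{S^1}(A) \to K_0^{S^1}(B)$ of $R(S^1)$-modules. Next I would verify that $\mathrm{mao}$ is an invariant of the isomorphism class of a $\Z[\sm, \sm^{-1}]$-module: for a module isomorphism $\Psi \colon E \to E'$ and $x \in E \SM \{0\}$, the identity $\Psi((\sm^m - 1)x) = (\sm^m - 1)\Psi(x)$ gives $(\sm^m - 1)x = 0$ if and only if $(\sm^m - 1)\Psi(x) = 0$, whence $\mathrm{ao}(x) = \mathrm{ao}(\Psi(x))$; taking the supremum over nonzero elements yields $\mathrm{mao}(E) = \mathrm{mao}(E')$. (The same computation shows $\mathrm{mao}$ is unaffected even if one were to allow twisting by the inversion automorphism of $S^1$, so the argument is robust to the precise meaning of ``equivariantly isomorphic''.)

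With these two facts, the conclusion is immediate. Both actions satisfy the hypotheses of Theorem~\ref{T_1919_Ex}, and in particular condition~(\ref{I_5328_l0011_nq}), which is exactly the hypothesis of Lemma~\ref{L_1918_MapRN}; hence Lemma~\ref{L_5328_maoK0A} applies to each and gives $\mathrm{mao}(K_0^{S^1}(A)) = N_A$ and $\mathrm{mao}(K_0^{S^1}(B)) = N_B$, where $N_A \neq N_B$ are the two values of~$N$. If $\af$ and $\bt$ were equivariantly isomorphic, the two module invariants would coincide, forcing $N_A = N_B$, a contradiction. The essential work has already been done in the earlier lemmas---especially the injectivity in Lemma~\ref{L_1918_MapRN} that underlies the computation $\mathrm{mao}(K_0^{S^1}(A)) = N$---so the only remaining points, both routine, are the functoriality and the module-invariance of $\mathrm{mao}$ noted above; there is no serious obstacle.
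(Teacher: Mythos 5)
Your proposal is correct and follows essentially the same route as the paper, which proves Theorem~\ref{T_5328_Compare} in one line by invoking Lemma~\ref{L_5328_maoK0A} to conclude $\mathrm{mao}(K_0^{S^1}(A)) \neq \mathrm{mao}(K_0^{S^1}(B))$. The functoriality of equivariant K-theory and the module-isomorphism invariance of $\mathrm{mao}$, which you spell out explicitly, are exactly the (implicit) routine steps the paper leaves to the reader.
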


\begin{proof}
It follows from Lemma~\ref{L_5328_maoK0A}
that $\mathrm{mao} (K_0^{S^1} (A)) \neq \mathrm{mao} (K_0^{S^1} (B))$.
\end{proof}

We now address the modified tracial Rokhlin property.

\begin{lem}\label{P_1929_RComm}
Let the notation be as in Construction~\ref{Cn_1824_S1An}.
Let $n \in \Nz$.
Define
\[
D_1 = M_{l_{0, 0} (n + 1) l_{0, 0} (n) + l_{0, 1} (n + 1) l_{1, 0} (n)},
\qquad
D_2 = M_{l_{0, 1} (n + 1) l_{1, 0} (n)},
\]
\[
D_3 = M_{l_{0, 0} (n + 1) l_{0, 1} (n)
           + 2 N l_{0, 1} (n + 1) l_{1, 1} (n)},
\qquad
D_4 = M_{l_{1, 0} (n + 1) l_{0, 0} (n)
           + 2 N l_{1, 1} (n + 1) l_{1, 0} (n)},
\]
and
\[
D_5 = M_{N l_{1, 0} (n + 1) l_{0, 1} (n)
       + 4 N^2 l_{1, 1} (n + 1) l_{1, 1} (n)},
\]
and define
\begin{equation}\label{Eq_1929_mj}
m_1 = N, \qquad
m_2 = N^2 - N, \qquad
m_3 = N, \qquad
m_4 = N, \andeqn
m_5 = 1.
\end{equation}
Set
\begin{equation}\label{Eq_1929_D}
D = (D_1)^{m_1} \oplus (D_2)^{m_2} \oplus (D_3)^{m_3}
  \oplus (D_4)^{m_4} \oplus (D_5)^{m_5},
\end{equation}
and for $k \in \{ 1, 2, 3, 4, 5 \}$ and $j = 1, 2, \ldots, m_j$,
let $\pi_{k, j} \colon D \to D_k$ be the projection to the
$j$~summand of $D_k$ in the definition of~$D$.
As usual, write the relative commutant of
$\nu_{n + 2, \, n} \bigl( (A_n)^{\af^{(n)}} \bigr)$
in $(A_{n + 2})^{\af^{(n + 2)}}$
as
$\nu_{n + 2, \, n} \bigl( (A_n)^{\af^{(n)}} \bigr)'
    \cap (A_{n + 2})^{\af^{(n + 2)}}$.
Then
\[
\nu_{n + 1} (p_{n + 1})
  \in \nu_{n + 2, \, n} \bigl( (A_n)^{\af^{(n)}} \bigr)'
    \cap (A_{n + 2})^{\af^{(n + 2)}},
\]
and there is an isomorphism
\[
\kp \colon \nu_{n + 2, \, n} \bigl( (A_n)^{\af^{(n)}} \bigr)'
    \cap (A_{n + 2})^{\af^{(n + 2)}}
  \to D
\]
such that for $k \in \{ 1, 2, 3, 4, 5 \}$ and $j = 1, 2, \ldots, m_j$,
we have
\[
\rank \bigl( (\pi_{1, j} \circ \kp \circ \nu_{n + 1}) (p_{n + 1}) \bigr)
 = \rank
    \bigl( (\pi_{2, j} \circ \kp \circ \nu_{n + 1}) (p_{n + 1}) \bigr)
 = l_{0, 1} (n + 1) l_{1, 0} (n),
\]
\[
\rank \bigl( (\pi_{3, j} \circ \kp \circ \nu_{n + 1}) (p_{n + 1}) \bigr)
 = 2 N l_{0, 1} (n + 1) l_{1, 1} (n),
\]
\[
\rank \bigl( (\pi_{4, j \circ \kp} \circ \nu_{n + 1}) (p_{n + 1}) \bigr)
 = 2 N l_{1, 1} (n + 1) l_{1, 0} (n),
\]
and
\[
\rank \bigl( (\pi_{5, j} \circ \kp \circ \nu_{n + 1}) (p_{n + 1}) \bigr)
 =  4 N^2 l_{1, 1} (n + 1) l_{1, 1} (n).
\]
\end{lem}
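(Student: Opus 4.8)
The plan is to transport the whole statement to the finite-dimensional system $(B_n, \ch_n)$ of Construction~\ref{Cn_1824_S1An}, where relative commutants and ranks are determined by Bratteli data. By Lemma~\ref{L_1916_FPisB}, the isomorphisms $\et_m$ intertwine $\nu_{n,m}$ with $\ch_{n,m}$ on fixed point algebras and satisfy $\et_{n+1}(q_{n+1}) = p_{n+1}$; since $\nu$ and $\af$ are equivariant (Lemma~\ref{L_1909_nu_e}), $\et_{n+2}$ maps $\ch_{n+2, n}(B_n)' \cap B_{n+2}$ isomorphically onto $\nu_{n+2, n}\bigl((A_n)^{\af^{(n)}}\bigr)' \cap (A_{n+2})^{\af^{(n+2)}}$ and sends $\ch_{n+1}(q_{n+1})$ to $\nu_{n+1}(p_{n+1})$. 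So it suffices to prove the analogous statement inside $B_{n+2}$, and $\kp$ will be the composite of $\et_{n+2}^{-1}$ with the relative-commutant identification produced below. Membership is then immediate: $q_{n+1}$ is the unit of the summand $B_{n+1, 1}$, hence a central projection of $B_{n+1}$, so $\ch_{n+1}(q_{n+1})$ is central in $\ch_{n+1}(B_{n+1})$, which contains $\ch_{n+2, n}(B_n) = \ch_{n+1}\bigl( \ch_n(B_n) \bigr)$; therefore $\ch_{n+1}(q_{n+1}) \in \ch_{n+2, n}(B_n)' \cap B_{n+2}$.

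Next I would record the Bratteli data. Index the summands of each $B_m$ by $\{ 0, 1, \ldots, N - 1 \}$ (the copies of $M_{r_0 (m)}$) together with $N$ (the summand $M_{r_1 (m)}$). Reading off Construction~\ref{Cn_1824_S1An}(\ref{Item_1824_S1_FixPtSys}), the single step $\ch_m$ has partial multiplicities $m^{(m)}_{t, s}$ equal to $l_{0, 0}(m)$ for $t = s < N$ (the block is diagonal because $\id_{\mathbb{C}^N}$ preserves the $\mathbb{C}^N$-coordinate), $l_{0, 1}(m)$ for $t < N = s$, $l_{1, 0}(m)$ for $t = N > s$, and $2 N l_{1, 1}(m)$ for $t = s = N$, all remaining entries being $0$. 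By~(\ref{Eq_1901_rn_dfm}) each $\ch_m$, and hence $\ch_{n+2, n}$, is unital into every summand, so the standard finite-dimensional relative-commutant formula gives
\[
\ch_{n+2, n}(B_n)' \cap B_{n+2} \cong \bigoplus_{t, s} M_{m^{(n+2, n)}_{t, s}},
\qquad
m^{(n+2, n)}_{t, s} = \sum_u m^{(n+1)}_{t, u} m^{(n)}_{u, s}.
\]
Carrying out this product in the block pattern above and sorting the entries into the five cases $t = s < N$, $t \neq s$ (both $< N$), $t < N = s$, $t = N > s$, and $t = s = N$ produces exactly the summands $D_1, \ldots, D_5$ with multiplicities $N$, $N^2 - N$, $N$, $N$, and $1$; this defines $\kp$ and the projections $\pi_{k, j}$.

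Finally, for the ranks I would refine the $(t, s)$ multiplicity space by the intermediate index $u$ of $B_{n+1}$. Choosing decompositions compatible with the factorization $\ch_{n+2, n} = \ch_{n+1} \circ \ch_n$, the $(t, s)$ multiplicity space splits as $\bigoplus_u \mathbb{C}^{m^{(n+1)}_{t, u}} \otimes \mathbb{C}^{m^{(n)}_{u, s}}$, on which $\ch_{n+2, n}(B_n)$ acts by scalars while the $(t, s)$ block of the relative commutant acts as all of $M_{m^{(n+2, n)}_{t, s}}$. Under this splitting the central projection $\ch_{n+1}(q_{n+1})$, being the unit of the summand $u = N$ of $B_{n+1}$, is exactly the projection onto the block $u = N$; hence its component $f_{t, s}$ in the $(t, s)$ summand of $D$ has $\rank(f_{t, s}) = m^{(n+1)}_{t, N} \cdot m^{(n)}_{N, s}$. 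Substituting $m^{(n+1)}_{t, N} = l_{0, 1}(n + 1)$ for $t < N$, $m^{(n+1)}_{N, N} = 2 N l_{1, 1}(n + 1)$, $m^{(n)}_{N, s} = l_{1, 0}(n)$ for $s < N$, and $m^{(n)}_{N, N} = 2 N l_{1, 1}(n)$ yields the five asserted rank values. The main obstacle is the bookkeeping here: one must fix a single decomposition of each summand $M_{q_t}$ of $B_{n+2}$ of the form $\bigoplus_{u, s} \mathbb{C}^{m^{(n+1)}_{t, u}} \otimes \mathbb{C}^{m^{(n)}_{u, s}} \otimes \mathbb{C}^{\rho_s}$ (with $\rho_s \in \{ r_0(n), r_1(n) \}$ the size of the $s$-summand of $B_n$) that is simultaneously compatible with the relative-commutant identification of the second paragraph and with the two-stage factorization, so that both the block sizes $m^{(n+2, n)}_{t, s}$ and the location of $\ch_{n+1}(q_{n+1})$ emerge together; the surrounding arithmetic is then routine.
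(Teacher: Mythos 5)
Your proposal is correct and follows essentially the same route as the paper's proof: transfer everything to the finite-dimensional system $(B_n, \ch_n)$ via Lemma~\ref{L_1916_FPisB}, get membership from centrality of $q_{n+1}$ in $B_{n+1}$, read off the two-step partial multiplicities from the Bratteli data (your diagonal-only convention for the blocks with $j, k < N$ is the correct reading of Construction \ref{Cn_1824_S1An}(\ref{Item_1824_S1_FixPtSys})), apply the standard relative-commutant formula summand by summand, and identify the rank of $\ch_{n+1}(q_{n+1})$ in each block as the contribution factoring through the summand $B_{n+1, N}$. Your refinement by the intermediate index $u$ simply spells out the ``easy calculation'' that the paper leaves implicit at the end of its proof.
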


\begin{proof}
Let the notation be as in parts
(\ref{Item_1824_S1_Bn0Bn1}), (\ref{Item_1824_S1_FixPtSys}),
(\ref{Item_1824_S1_FixLim}), and~(\ref{Item_1915_S1_qn})
of Construction~\ref{Cn_1824_S1An}.
By Lemma~\ref{L_1916_FPisB}, it is enough to prove the lemma
with $B_t$ in place of $(A_t)^{\af^{(n)}}$
for $t = n, \, n + 1, \, n + 1$,
with $\ch_{t}$ and $\ch_{t, s}$ in place of $\nu_{t}$ and $\nu_{t, s}$,
and with $q_{n + 1}$ in place of $p_{n + 1}$.

First, since $q_{n + 1}$ is in the center of $B_{n + 1}$,
it commutes with the range of $\ch_{n}$.
So $\ch_{n + 1} (p_{n + 1}) \in \ch_{n + 2, \, n} (B_n)' \cap B_{n + 2}$
is clear.

We change to more convenient notation
for the structure of the algebras~$B_t$.
Write
\[
B_t = B_{t, 0} \oplus B_{t, 1}
   \oplus \cdots \oplus B_{t, N - 1} \oplus B_{t, N},
\]
with
\[
B_{t, 0} = B_{t, 1} = \cdots = B_{t, N - 1} = M_{r_0 (n)}
\andeqn
B_{t, N} = M_{r_1 (n)}.
\]
Thus $B_{t, 0} \oplus B_{t, 1} \oplus \cdots \oplus B_{t, N - 1}$
is what was formerly called $B_{t, 0}$,
and $B_{t, N}$ is what was formerly called $B_{t, 1}$.
The partial multiplicities in $\ch_t$
of the maps $B_{t, j} \to B_{t + 1, k}$ are
\[
m_t (k, j) = \begin{cases}
   l_{0, 0} (t) & \hspace*{1em} j, k \in \{ 0, 1, \ldots, N - 1 \}
        \\
   l_{0, 1} (t) & \hspace*{1em}
      {\mbox{$j = N$ and $k \in \{ 0, 1, \ldots, N - 1 \}$}}
        \\
   l_{1, 0} (t) & \hspace*{1em}
      {\mbox{$j \in \{ 0, 1, \ldots, N - 1 \}$ and $k = N$}}
       \\
   2 N l_{1, 1} (t) & \hspace*{1em} j - k = N.
\end{cases}
\]
An easy calculation now
shows that the partial multiplicities in $\ch_{n + 2, n}$
of the maps $B_{n, j} \to B_{n + 2, k}$ are
\begin{equation}\label{Eq_1929_Cases}
\widetilde{m} (k, j) = \begin{cases}
& \hspace*{-1em}
l_{0, 0} (n + 1) l_{0, 0} (n) + l_{0, 1} (n + 1) l_{1, 0} (n)
\\
& \hspace*{10em} {\mbox{}}
      {\mbox{$j, k \in \{ 0, 1, \ldots, N - 1 \}$ and $j = k$}}
        \\
& \hspace*{-1em}
l_{0, 1} (n + 1) l_{1, 0} (n)
\\
& \hspace*{10em} {\mbox{}}
      {\mbox{$j, k \in \{ 0, 1, \ldots, N - 1 \}$ and $j \neq k$}}
        \\
& \hspace*{-1em}
l_{0, 0} (n + 1) l_{0, 1} (n)
           + 2 N l_{0, 1} (n + 1) l_{1, 1} (n)
\\
& \hspace*{10em} {\mbox{}}
      {\mbox{$j = N$ and $k \in \{ 0, 1, \ldots, N - 1 \}$}}
        \\
& \hspace*{-1em}
l_{1, 0} (n + 1) l_{0, 0} (n)
           + 2 N l_{1, 1} (n + 1) l_{1, 0} (n)
\\
& \hspace*{10em} {\mbox{}}
      {\mbox{$j \in \{ 0, 1, \ldots, N - 1 \}$ and $k = N$}}
       \\
& \hspace*{-1em}
N l_{1, 0} (n + 1) l_{0, 1} (n)
       + 4 N^2 l_{1, 1} (n + 1) l_{1, 1} (n)
\\
& \hspace*{10em} {\mbox{}} j - k = N.
\end{cases}
\end{equation}
Recall that if
\[
\ph \colon M_{r_1} \oplus M_{r_2} \oplus \cdots \oplus M_{r_s}
 \to M_{l_1 r_1 + l_2 r_2 + \cdots + l_2 r_2}
\]
is unital with partial multiplicities $l_1, l_2, \ldots, l_s$,
then the relative commutant of the range of $\ph$ is isomorphic to
$M_{l_1} \oplus M_{l_2} \oplus \cdots \oplus M_{l_s}$,
with the identity of $M_{l_t}$ being,
by abuse of notation, $\ph (1_{M_{l_t}})$.
Therefore the values of ${\widetilde{m}} (k, j)$ are the matrix sizes
of the summands in $\ch_{n + 2, \, n} (B_n)' \cap B_{n + 2}$.
That the exponents $m_j$ in~(\ref{Eq_1929_D}) are given as
in~(\ref{Eq_1929_mj}) follows simply by counting the number of
times each case in~(\ref{Eq_1929_Cases}) occurs.

The rank of the image of $q_{n + 1}$ in each summand is the
contribution to ${\widetilde{m}} (k, j)$ from maps factoring
through $B_{n + 1, N}$ (in the original notation, $B_{n + 1, 1}$),
as opposed to the other summands.
That these numbers are in the statement of the lemma is again
an easy calculation.
\end{proof}

\begin{prp}\label{L_1X05_HasMdTRP}
In Construction~\ref{Cn_1824_S1An}, assume that $r_0 (0) \leq r_1 (0)$
and that for all $n \in \Nz$
we have
\[
l_{1, 0} (n) \geq l_{0, 0} (n),
\qquad
l_{0, 1} (n) \geq l_{0, 0} (n),
\]
\[
l_{1, 1} (n) \geq l_{0, 1} (n),
\andeqn
l_{1, 1} (n) \geq l_{1, 0} (n).
\]
Further assume that
\[
\lim_{n \to \I} \frac{l_{0, 1} (n)}{l_{0, 0} (n)} = \I
\andeqn
\lim_{n \to \I} \frac{l_{1, 1} (n)}{l_{1, 0} (n)} = \I.
\]
Then the action $\af$
of Construction \ref{Cn_1824_S1An}(\ref{Item_1824_S1_nu_A})
has the tracial Rokhlin property with comparison
and has the modified tracial Rokhlin property
as in Definition~\ref{moditra}.
Moreover, given finite sets $F \subseteq A$, $F_0 \subseteq A^{\af}$,
and $S \subseteq C (G)$, as well as $\ep > 0$,
$x \in A_{+}$ with $\| x \| = 1$,
and $y \in (A^{\alpha})_{+} \SM \{ 0 \}$,
it is possible to choose a projection $p \in A^{\alpha}$,
a unital completely positive
contractive map $\ph \colon C (G) \to p A p$,
and a partial isometry $s \in A^{\alpha}$,
such that the conditions of both
Definition~\ref{traR} and Definition~\ref{moditra}
are simultaneously satisfied.
\end{prp}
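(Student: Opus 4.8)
The plan is to obtain the tracial Rokhlin property with comparison essentially for free and to concentrate the work on producing the partial isometry~$s$ of Definition~\ref{moditra} by means of the explicit description of the relevant fixed-point relative commutant in Lemma~\ref{P_1929_RComm}. The present hypotheses imply those of Lemma~\ref{L_1917_HasTRP} (we have added only $l_{0,1}(n) \geq l_{0,0}(n)$ and $l_{1,1}(n) \geq l_{1,0}(n)$), so the tracial Rokhlin property with comparison already holds, and the verification of Definition~\ref{traR}(\ref{Item_893_FS_equi_cen_multi_approx}), (\ref{1_pxcompactsets}), (\ref{1_pycompactsets}), (\ref{1_ppcompactsets}), and~(\ref{Item_902_pxp_TRP}) will be carried out exactly as there. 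The only new content is to exhibit, for the \emph{same} choice of $p$ and~$\ph$, a partial isometry $s \in A^{\af}$ satisfying Definition~\ref{moditra}(\ref{Item_1X07_29}) and~(\ref{Item_1X07_30}); since we are proving the \emph{modified} (not strong modified) property, $s$ need only approximately commute with the given finite set $F_0 \subseteq A^{\af}$.

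First I would fix the base level. Given $F \subseteq A$, $F_0 \subseteq A^{\af}$, $S \subseteq C(S^1)$, $\ep > 0$, $x$, and~$y$, choose $n \in \Nz$ so large that the trace estimates of Lemma~\ref{L_1917_HasTRP} apply at the step from level $n+1$ to level $n+2$, that $F$ is approximated to within $\frac{\ep}{3}$ by $\nu_{\I,\,n+1}(A_{n+1})$, and that each element of $F_0$ lies within $\frac{\ep}{2}$ of $\nu_{\I,\,n}\bigl((A_n)^{\af^{(n)}}\bigr)$ (possible because $A^{\af} = \dirlim_n (A_n)^{\af^{(n)}}$ by Lemma~\ref{L_1916_FPisB}). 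Set $p = \nu_{\I,\,n+1}(p_{n+1})$ and $\ph = \nu_{\I,\,n+1} \circ \ph_0$, where $\ph_0 \colon C(S^1) \to p_{n+1} A_{n+1} p_{n+1} = M_{r_1(n+1)} \otimes C(S^1)$ is $\ph_0(g) = (0,\,1 \otimes g)$. With this choice the conditions of Definition~\ref{traR} follow as in Lemma~\ref{L_1917_HasTRP} (the construction there works verbatim with base level $n+1$ in place of~$n$).

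Next I would build~$s$ inside the fixed-point relative commutant. By Lemma~\ref{P_1929_RComm}, the projection $\nu_{n+1}(p_{n+1})$ lies in
\[
\nu_{n+2,\,n}\bigl((A_n)^{\af^{(n)}}\bigr)' \cap (A_{n+2})^{\af^{(n+2)}},
\]
and there is an isomorphism $\kp$ of this relative commutant onto the \fd{} algebra $D = \bigoplus_{k=1}^{5} (D_k)^{m_k}$, together with the ranks of the image of $\nu_{n+1}(p_{n+1})$ in each summand. Writing $p$ also for $\nu_{n+1}(p_{n+1})$ inside~$D$, one reads off from those ranks that $\rank(1-p) \leq \rank(p)$ in every summand: in $D_1$ this is $l_{0,0}(n+1) l_{0,0}(n) \leq l_{0,1}(n+1) l_{1,0}(n)$, which holds since $l_{0,1}(n+1) \geq l_{0,0}(n+1)$ and $l_{1,0}(n) \geq l_{0,0}(n)$; in $D_2$ the complement is zero; and in $D_3$, $D_4$, $D_5$ the analogous inequalities follow from $l_{0,1}(n+1) \geq l_{0,0}(n+1)$, $l_{1,1}(n) \geq l_{0,1}(n)$, $l_{1,1}(n+1) \geq l_{1,0}(n+1)$, $l_{1,0}(n) \geq l_{0,0}(n)$, together with the factors of $N \geq 1$. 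Hence there is a partial isometry $s_0 \in D$ with $s_0^* s_0 = 1 - \nu_{n+1}(p_{n+1})$ and $s_0 s_0^* \leq \nu_{n+1}(p_{n+1})$, and $s_0$ commutes exactly with $\nu_{n+2,\,n}\bigl((A_n)^{\af^{(n)}}\bigr)$. Setting $s = \nu_{\I,\,n+2}(s_0) \in A^{\af}$ gives $s^* s = 1 - p$ and $s s^* \leq p$, which is Definition~\ref{moditra}(\ref{Item_1X07_29}); and since $s$ then commutes exactly with $\nu_{\I,\,n}\bigl((A_n)^{\af^{(n)}}\bigr)$ and $\|s\| \leq 1$, the choice of~$n$ yields $\|s a - a s\| < \ep$ for all $a \in F_0$, which is Definition~\ref{moditra}(\ref{Item_1X07_30}). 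As $p$ and~$\ph$ are those already used, both definitions are satisfied simultaneously.

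The rank bookkeeping is routine once Lemma~\ref{P_1929_RComm} is in hand. The one point requiring care is that the same $p$ and~$\ph$ must serve both definitions, so I place the base level for $p$ and~$\ph$ at level $n+1$ while computing the relative commutant at level $n+2$; the two extra hypotheses $l_{0,1}(n) \geq l_{0,0}(n)$ and $l_{1,1}(n) \geq l_{1,0}(n)$, absent from Lemma~\ref{L_1917_HasTRP}, are precisely what force $\rank(1-p) \leq \rank(p)$ in the summands of~$D$. I expect the main obstacle, and the reason this yields only the modified and not the strong modified property, to be that $s$ lives in the relative commutant of the fixed-point subalgebra, so it cannot be arranged to approximately commute with a general finite subset of~$A$, but only with one coming from~$A^{\af}$.
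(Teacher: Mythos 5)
Your proposal is correct and follows essentially the same route as the paper's proof: invoke the method of Lemma~\ref{L_1917_HasTRP} for the conditions of Definition~\ref{traR}, then use the description of $\nu_{n+2,\,n}\bigl((A_n)^{\af^{(n)}}\bigr)' \cap (A_{n+2})^{\af^{(n+2)}}$ from Lemma~\ref{P_1929_RComm} together with rank comparison in each summand to produce $s_0$ there, and set $s$ equal to its image in~$A$. Your explicit summand-by-summand rank inequalities and the base-level bookkeeping (with $p$ and $\ph$ at level $n+1$ and the commutant taken between levels $n$ and $n+2$) simply fill in what the paper dismisses as ``an easy computation,'' and you even correct the paper's index slip $\nu_{\I,\,n+1}(s_0)$ to $\nu_{\I,\,n+2}(s_0)$.
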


As usual, the hypotheses are overkill.

\begin{proof}[Proof of Proposition~\ref{L_1X05_HasMdTRP}]
Since $A$ is finite, as usual, the argument of
Lemma~1.16 of~\cite{phill23} applies, and shows that it suffices
to verify this without the condition $\| p x p \| > 1 - \ep$,
simultaneously in both definitions.

The method used for
the proof of Lemma~\ref{L_1917_HasTRP} also applies here.
The key new point is that in every summand of
\[
\nu_{n + 2, \, n} \bigl( (A_n)^{\af^{(n)}} \bigr)'
    \cap (A_{n + 2})^{\af^{(n + 2)}}
\]
as described in Lemma~\ref{P_1929_RComm}, the rank of the
component of $\nu_{n + 1} (p_{n + 1})$
is greater than half the corresponding matrix size.
Therefore the rank of
the component of $1 - \nu_{n + 1} (p_{n + 1})$
is less than the rank of the component of $\nu_{n + 1} (p_{n + 1})$.
Rank comparison implies that there exists
\[
s_{0} \in \nu_{n + 2, \, n} \bigl( (A_n)^{\af^{(n)}} \bigr)'
    \cap (A_{n + 2})^{\af^{(n + 2)}}
\]
such that
\[
1 - \nu_{n + 1} (p_{n + 1}) = s_{0} ^{*} s_{0}
\andeqn
s_{0} s_{0} ^{*} \leq \nu_{n + 1} (p_{n + 1}).
\]
Now set $s = \nu_{\infty, n + 1} (s_{0})$.
The rest of the proof is an easy computation.
\end{proof}

\section{Actions of $S^1$ on Kirchberg algebras}\label{Sec_2114_OI}

\indent
The purpose of this section is to construct a action of $S^1$ on $\OI$
which has the tracial Rokhlin property with comparison.
As observed in Lemma~\ref{2610_No_fin_RD} below
(really just Corollary 4.23 of \cite{Gar_rokhlin_2017}),
there is no action of $S^1$
on $\OI$ which has finite Rokhlin dimension with commuting towers.
Although we don't carry this out,
easy modifications should work for ${\mathcal{O}}_n$ in place of~$\OI$,
and it should also be not too hard to generalize the construction
to cover actions of $(S^1)^m$ and $(S^1)^{\Z}$.

Tensoring with our action gives an action on any unital Kirchberg algebra
which has the restricted tracial Rokhlin property with comparison
(Definition~\ref{traR}).
One also can get actions on some unital Kirchberg algebras
by tensoring the actions of Theorem~\ref{T_1919_Ex}
with the trivial action on~$\OI$.
When one gets two actions on the same unital Kirchberg algebra this way,
the actions are not conjugate,
both have the restricted tracial Rokhlin property with comparison,
and neither has finite Rokhlin dimension with commuting towers.

\begin{lem}\label{L_2521_Exist}
There exist an action $\bt \colon S^1 \to \Aut (\OI)$,
a nonzero \pj{} $p \in \OI^{\bt}$, and a unital \hm{}
$\io \colon C (S^1, \OT) \to p \OI p$ such that $\io$ is equivariant
when $C (S^1, \OT)$ is equipped with the action
(following Notation~\ref{N_1X07_Lt})
$\zt \mapsto \Lt_{\zt} \otimes \id_{\OT}$,
and such that $p$ has the following property.
For every $n \in \N$ and every \pj{} $q \in M_n \otimes \OI^{\bt}$,
there is $t \in M_n \otimes \OI^{\bt}$
such that $\| t \| = 1$ and $t^* (e_{1, 1} \otimes p) t = q$.
\end{lem}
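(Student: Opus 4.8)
The plan is to split the statement into the comparison property, which will follow from general structure theory, and the construction of the triple $(\bt, p, \io)$.

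First I would dispose of the comparison property by recognizing it as Murray--von Neumann subequivalence. If $t^{*}(e_{1,1}\otimes p)t=q$ with $\|t\|=1$, then $w=(e_{1,1}\otimes p)t$ satisfies $w^{*}w=q$ and $ww^{*}\le e_{1,1}\otimes p$, so $q\precsim e_{1,1}\otimes p$ in $M_{n}\otimes\OI^{\bt}$; conversely a partial isometry implementing such a subequivalence yields a suitable $t$. Hence it is enough to arrange that $\OI^{\bt}$ is a Kirchberg algebra. Then $M_{n}\otimes\OI^{\bt}$ is purely infinite and simple, $e_{1,1}\otimes p$ is a nonzero projection in it, and by Cuntz's comparison theorem every nonzero projection of a purely infinite simple \ca{} is subequivalent to every other; so $q\precsim e_{1,1}\otimes p$ for all $q\ne0$ (the case $q=0$ is immediate). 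Thus the comparison property costs nothing once $\OI^{\bt}$ is purely infinite and simple and $p\ne0$.

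Next I would reformulate the equivariant map. An equivariant unital \hm{} $\io\colon(C(S^{1},\OT),\,\Lt\otimes\id)\to(p\OI p,\bt)$ is the same datum as a commuting pair: a unital copy $\rh(\OT)$ of $\OT$ inside $p\OI^{\bt}p$ together with a unitary $v\in p\OI p$ satisfying $\bt_{\zt}(v)=\ov{\zt}\,v$. Given these, one sets $\io(f\otimes a)=f(v)\rh(a)$, using $C^{*}(v)\cong C(S^{1})$; the two factors commute because $v$ commutes with $\rh(\OT)$, the map is unital since $\io(1\otimes1)=p$, and equivariance is immediate from $\bt_{\zt}(f(v))=f(\ov{\zt}\,v)=(\Lt_{\zt}f)(v)$. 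Note that a weight-one eigenunitary automatically has full spectrum, since $\spec(v)=\spec(\bt_{\zt}(v))=\ov{\zt}\,\spec(v)$ forces $\spec(v)$ to be an $S^{1}$-invariant closed subset of $S^{1}$. Because $\OT$ embeds unitally into $p\OI p$, one gets $2[p]=[p]$, i.e.\ $[p]=0$ in $K_{0}(\OI)\cong\Z$; such nonzero fixed projections exist once $\bt$ is built. So the lemma reduces to producing $(\OI,\bt)$ with a nonzero fixed projection $p$ of class $0$, a commuting pair $(\rh(\OT),v)$ as above in $p\OI p$, and with $\OI^{\bt}$ a Kirchberg algebra.

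Finally I would construct $(\OI,\bt)$ as a gauge-type circle action---for instance the dual action on a crossed product by an injective endomorphism of a Kirchberg algebra (equivalently the gauge action on a suitable Cuntz--Pimsner algebra), or an explicit action on Cuntz isometries modeling a continuous $S^{1}$-family of copies of $\OT$ permuted by translation. The canonical implementing unitary of the dual/gauge action supplies the weight-one eigenunitary $v$, and one places a commuting unital $\OT$ in a corner whose unit $p$ is a fixed projection of class $0$. The main obstacle is to meet two constraints at once: identifying the ambient algebra as $\OI$ on the nose (so one checks it is separable, nuclear, purely infinite, simple, satisfies the UCT, and has $(K_{0},[1],K_{1})=(\Z,1,0)$, and then invokes the Kirchberg--Phillips classification), while simultaneously forcing $\OI^{\bt}$ to be purely infinite and simple. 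The latter is the delicate point; I expect it to come from the action being sufficiently outer and saturated, so that $\OI^{\bt}$ is Morita equivalent to the Kirchberg crossed product $\OI\rtimes_{\bt}S^{1}$---precisely the mechanism that the remainder of Section~\ref{Sec_2114_OI} is designed to exploit.
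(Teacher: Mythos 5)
Your two soft reductions are fine: the condition $t^* (e_{1,1} \otimes p) t = q$ with $\| t \| = 1$ is indeed equivalent to $q \precsim e_{1,1} \otimes p$ in $M_n \otimes \OI^{\bt}$, and an equivariant unital $\io$ is the same data as a weight-one eigenunitary $v \in p \OI p$ commuting with a unital copy of $\OT$ inside $p \OI^{\bt} p$ (forcing $[p] = 0$). But the proof stops exactly where the lemma lives: you never produce the action, the eigenunitary, the commuting $\OT$, or the pure infiniteness you need, and your candidate family is the wrong one. For gauge-type and dual actions the fixed point algebra is the \emph{core}, which is typically stably finite --- for the standard gauge action on $\OI$ (all weights~$1$) the core is AF --- so ``a gauge-type circle action'' will generically fail your requirement that $\OI^{\bt}$ be purely infinite and simple. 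Moreover the canonical implementing unitary of a crossed product by an endomorphism does not commute with the coefficient algebra (it implements the endomorphism), so your commuting pair $(\rh(\OT), v)$ does not come for free from that setup; extra room must be engineered. Finally, the Morita equivalence $\OI^{\bt} \sim \OI \rtimes_{\bt} S^1$ requires saturation, which is itself unproven at this stage (in this development it is a \emph{consequence} of the tracial Rokhlin machinery, so invoking it here is circular).

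The paper's proof makes choices that resolve precisely these obstructions, and it also shows your main obligation is harder than necessary. It takes the quasifree action $\gm$ on $\OI$ with mixed weights $(0, 1, -1, 0, 0, \ldots)$ --- the presence of both $+1$ and $-1$ weights is what makes Katsura's criterion apply, so that $C^*(S^1, \OI, \gm)$, hence $\OI^{\gm}$, is purely infinite simple (unlike gauge cores) --- and sets $\bt^{(0)} = \gm \otimes \ov{\gm} \otimes \id_{\OI}$ on $\OI \otimes \OI \otimes \OI \cong \OI$. The eigenunitary is written down explicitly in the corner $p_0 = e \otimes e \otimes e$ with $e = 1 - s_1 s_1^*$, as $u = v_1 \otimes v_2^* \otimes e + v_2 \otimes v_1^* \otimes e$, and the commuting $\OT$ sits in the third (trivially acted-on) tensor factor via a unital $\mu \colon \OT \to e \OI e$, which exists because $[e] = 0$. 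Crucially, the paper never proves $\OI^{\bt}$ is purely infinite simple in this lemma: it enlarges the symmetry to $S^1 \times S^1$ via ${\widetilde{\bt}}_{\zt_1, \zt_2} = \gm_{\zt_1} \otimes \ov{\gm}_{\zt_2} \otimes \id$, whose fixed point algebra factorizes as $\OI^{\gm} \otimes \OI^{\ov{\gm}} \otimes \OI$ and is visibly purely infinite simple; since this algebra is contained in $\OI^{\bt^{(0)}}$, the comparison witness $t$ found there (first for $q = 1_{M_n} \otimes 1$, then $t q$ for general $q$) already lies in $M_n \otimes \OI^{\bt}$. You should adopt this weakening: it suffices to verify comparison inside some purely infinite simple subalgebra of the fixed point algebra, which is exactly what defeats the delicate point you flagged.
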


\begin{proof}
Write $s_1, s_2, \ldots$ for the standard generating isometries
in~$\OI$.
Also write $w$ for the standard generating unitary in $C (S^1)$,
which is the function $w (z) = z$ for $z \in S^1$.
When convenient, identify $C (S^1, \OT)$ with $C (S^1) \otimes \OT$.

Let $\gm \colon S^1 \to \Aut (\OI)$ be the quasifree action
(in the sense of~\cite{Kts2})
determined by, for $\zt \in S^1$,
\[
\gm_{\zt} (s_j)
 = \begin{cases}
   s_j & \hspace*{1em} j = 1
        \\
   \zt s_j & \hspace*{1em} j = 2
       \\
   \zt^{-1} s_j & \hspace*{1em} j = 3
       \\
   s_j & \hspace*{1em} j = 4, 5, \ldots,
\end{cases}
\]
and let ${\ov{\gm}} \colon S^1 \to \Aut (\OI)$
be the quasifree action $\zt \mapsto \gm_{\zt}^{- 1}$.
Define an action
$\bt^{(0)} \colon S^1 \to \Aut (\OI \otimes \OI \otimes \OI)$ by
$\bt^{(0)}_{\zt} = \gm_{\zt} \otimes {\ov{\gm}}_{\zt} \otimes \id_{\OI}$
for $\zt \in S^1$.

In $\OI$, define
\[
e = 1 - s_1 s_1^*,
\qquad
v_1 = s_2 e,
\andeqn
v_2 = 1 - s_1 s_1^* - s_2 s_2^* + s_2 s_1 s_2^*.
\]
In $\OI \otimes \OI \otimes \OI$, define
\[
p_0 = e \otimes e \otimes e
\andeqn
u = v_1 \otimes v_2^* \otimes e + v_2 \otimes v_1^* \otimes e.
\]
One checks that $u$ is a unitary in $e (\OI \otimes \OI \otimes \OI) e$,
and that
\begin{equation}\label{Eq_2605_Act_u}
\bt^{(0)}_{\zt} (u) = \zt u
\end{equation}
for all $\zt \in S^1$.
Since $[e] = 0$ in $K_0 (\OI)$, there is a unital \hm{}
$\mu \colon \OT \to e \OI e$.
Since $u$ commutes with $e \otimes e \otimes \mu (a)$ for all $a \in \OT$,
there is a \uhm{}
$\io_0 \colon C (S^1) \otimes \OT \to p_0 (\OI \otimes \OI \otimes \OI) p_0$
such that $\io_0 (w \otimes 1) = u$ and
$\io_0 (1 \otimes a) = e \otimes e \otimes \mu (a)$ for $a \in \OT$.
Then $\io_0$ is equivariant by~(\ref{Eq_2605_Act_u}).

Let $\sm \colon \OI \otimes \OI \otimes \OI \to \OI$ be an isomorphism.
Define $\io = \sm \circ \io_0$,
$p = \sm (p_0)$, and $\bt_{\zt} = \rh \circ \bt^{(0)}_{\zt} \circ \rh^{-1}$
for $\zt \in S^1$.
Then $\bt$ an action of $S^1$ on~$\OI$,
$p$ is a \pj{} in $\OI^{\bt}$, and
$\io$ is an equivariant unital \hm{} from $C (S^1, \OT)$ to $p \OI p$.

It remains to prove the last sentence.
For this purpose, it suffices to use
$\OI \otimes \OI \otimes \OI$ in place of $\OI$,
$\bt^{(0)}$ in place of $\bt$, and $p_0$ in place of~$p$.
We may also assume that $q = 1_{M_n} \otimes 1$ for some $n \in \N$.

We first claim that $\OI^{\gm}$ is purely infinite and simple.
One checks from the definition of $\om$-invariance for subsets of~$\Gm$
(Definition 3.3 of~\cite{Kts2}),
with $\om = (0, 1, -1, 0, 0, \ldots)$,
that, in our case, $\Z$ has no nontrivial invariant subsets.
So Proposition~7.4 of~\cite{Kts2} implies that $C^* (S^1, \OI, \gm)$
is purely infinite and simple.
Now the claim follows from Theorem 3.5 in \cite{MhkPh1}.
Since ${\ov{\gm}}$ has the same fixed point algebra,
it also follows that $\OI^{\ov{\gm}}$ is purely infinite and simple.

Next, define an action
${\widetilde{\bt}} \colon
   S^1 \times S^1 \to \Aut (\OI \otimes \OI \otimes \OI)$
by
\[
{\widetilde{\bt}}_{\zt_1, \zt_2}
 = \gm_{\zt_1} \otimes {\ov{\gm}}_{\zt_2} \otimes \id_{\OI}
\]
for $\zt_1, \zt_2 \in S^1$.
Then
\[
(\OI \otimes \OI \otimes \OI)^{\widetilde{\bt}}
 = \OI^{\gm} \otimes \OI^{\ov{\gm}} \otimes \OI,
\]
which is purely infinite and simple.
Hence so is $M_n \otimes (\OI \otimes \OI \otimes \OI)^{\widetilde{\bt}}$.
Clearly $e_{1, 1} \otimes p_0$ is a nonzero \pj{} in
$M_n \otimes (\OI \otimes \OI \otimes \OI)^{\widetilde{\bt}}$.
Therefore there exists an isometry
$t \in M_n \otimes (\OI \otimes \OI \otimes \OI)^{\widetilde{\bt}}$
such that $t^* t = 1$ and $t t^* \leq e_{1, 1} \otimes p_0$.
It is immediate that
$t \in M_n \otimes (\OI \otimes \OI \otimes \OI)^{\bt^{(0)}}$
and $t^* (e_{1, 1} \otimes p_0) t = q$.
\end{proof}

\begin{cns}\label{Cns_2114_OIA}
We define an equivariant direct system,
and a corresponding direct limit action, as follows.
That we actually get an equivariant direct system is proved afterwards,
in Lemma~\ref{L_2605_Cns_OK}.
\begin{enumerate}
\item\label{I_2114_OIA_A}
Let $\bt \colon S^1 \to \Aut (\OI)$, $p \in \OI^{\bt} \subseteq \OI$, and
$\io \colon C (S^1, \OT) \to p \OI p$
be as in Lemma~\ref{L_2521_Exist}.
Let $(e_{j, k})_{j, k \in \Nz}$ be the standard system of
matrix units for $K = K (l^2 (\Nz))$.
For $n \in \Nz$ define $f_n \in K \otimes \OI$ by
\[
f_n = e_{0, 0} \otimes 1 + e_{1, 1} \otimes p + e_{2, 2} \otimes p
                                        + \cdots e_{n, n} \otimes p.
\]
Then define
\[
A_{n, 0} = f_{n} ( K \otimes \OI ) f_{n},
\quad
A_{n, 1} = C (S^1) \otimes \OT = C (S^1, \OT),
\quad {\mbox{and}} \quad
A_{n} = A_0 \oplus A_1.
\]
\item\label{I_2521_OIA_Action0}
Define actions
\[
\af^{(n, 0)} \colon S^{1} \to \Aut (A_{n, 0}),
\quad
\af^{(n, 1)} \colon S^1 \to \Aut (A_{n, 1})
\quad {\mbox{and}} \quad
\af^{(n)} \colon S^1 \to \Aut (A_{n})
\]
as follows.
For $\zeta \in S^1$, take $\af^{(n, 0)}_{\zeta}$ to be the
restriction and corestriction of $\id_K \otimes \bt_{\zt}$
to the subalgebra $f_{n} ( K \otimes \OI ) f_{n}$, and,
using Notation~\ref{N_1X07_Lt} for the first, take
\[
\af^{(n, 1)}_{\zeta} = \Lt_{\zt} \otimes \id_{\OT}
\quad {\mbox{and}} \quad
\af^{(n, 1)}_{\zeta} = \af^{(n, 0)}_{\zeta} \oplus \af^{(n, 1)}_{\zeta}.
\]
\item\label{I_2114_OIA_PreParts}
We give preliminaries for the construction of the maps of the system.
Fix \nzp{s} $q_0, q_1 \in \OT$ such that $q_0 + q_1 = 1$.
For $n \in \Nz$ choose unital \hm{s}
\[
\mu_{n, 0} \colon A_{n, 0} \to q_0 \OT q_0,
\andeqn
\mu_{n, 1} \colon \OT \to q_1 \OT q_1.
\]
(The \hm{} $\mu_{n, 1}$ can be chosen to be independent of~$n$.)
Use Proposition~4.1 of~\cite{Rokhdimtracial} to choose an isomorphism
$\ld_n \colon C (S^1, A_{n, 0}) \to C (S^1, A_{n, 0})$ which is equivariant
when $S^1$ acts on the domain
via $\zt \mapsto \Lt_{\zt} \otimes \af^{(n, 0)}_{\zeta}$
and on the codomain via $\zt \mapsto \Lt_{\zt} \otimes \id_{A_{n, 0}}$.
Let $\kp_n \colon A_{n, 0} \to C (S^1, A_{n, 0})$ be the inclusion of
elements of $A_{n, 0}$ as constant functions.
\item\label{I_2114_OIA_Parts}
For $n \in \Nz$ and $j, k \in \{ 0, 1 \}$ define \hm{s}
$\nu_{n + 1, n}^{(k, j)} \colon A_{n, j} \to A_{n + 1, k}$ as follows.
We let $\nu_{n + 1, n}^{(0, 0)}$ be the inclusion of
$A_{n, 0}$ in $A_{n + 1, 0}$
which comes from the relation $f_n \leq f_{n + 1}$.
For $a \in C (S^1, \OT)$, with $\io$ as in~(\ref{I_2114_OIA_A}),
take $\nu_{n + 1, n}^{(0, 1)} (a) = e_{n + 1, \, n + 1} \otimes \io (a)$.
Set
\[
\nu_{n + 1, n}^{(1, 0)}
 = (\id_{C (S^1)} \otimes \mu_{n, 0}) \circ \ld_n \circ \kp_n
\andeqn
\nu_{n + 1, n}^{(1, 1)} (a)
 = \id_{C (S^1)} \otimes \mu_{n, 1}.
\]
Then define $\nu_{n + 1, n} \colon A_{n} \to A_{n + 1}$ by
\[
\nu_{n + 1, n} (a_0, a_1)
 = \bigl( \nu_{n + 1, n}^{(0, 0)} (a_0) + \nu_{n + 1, n}^{(0, 1)} (a_1),
     \, \nu_{n + 1, n}^{(1, 0)} (a_0) + \nu_{n + 1, n}^{(1, 1)} (a_1) \bigr)
\]
for $a_0 \in A_{n, 0}$ and $a_1 \in A_{n, 1}$.
\item\label{I_2114_OIA_nm}
For $m, n \in \Nz$ with $m \leq n$, define
\[
\nu_{n, m} = \nu_{n, \, n - 1} \circ \nu_{n - 1, \, n - 2}
          \circ \cdots \circ \nu_{m + 1, \, m}.
\]
\item\label{I_2114_OIA_Lim}
Define $A = \dirlim_n A_n$, using the maps $\nu_{n + 1, n}$,
and let $\af \colon S^1 \to \Aut (A)$ be the direct limit action.
\end{enumerate}
\end{cns}

\begin{lem}\label{L_2605_Cns_OK}
Adopt the notation of Construction~\ref{Cns_2114_OIA}.
\begin{enumerate}
\item\label{I_L_2605_Cns_OK_Algs}
For $m, n \in \Nz$ with $n \geq m$, the map
$\nu_{n + 1, n} \colon A_{n} \to A_{n + 1}$
is an equivariant unital \hm.
\item\label{I_L_2605_Cns_inj}
For $n \in \Nz$ and $j, k \in \{ 0, 1 \}$,
the map $\nu_{n + 1, n}^{(k, j)}$ in
Construction \ref{Cns_2114_OIA}(\ref{I_2114_OIA_Parts}) is injective.
\item\label{I_L_2605_Cns_OK_Lim}
The action $\af \colon S^1 \to \Aut (A)$ is a well defined \ct{} action.
\end{enumerate}
\end{lem}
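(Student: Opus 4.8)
The plan is to prove the three assertions in order, with~(\ref{I_L_2605_Cns_OK_Lim}) an immediate formal consequence of~(\ref{I_L_2605_Cns_OK_Algs}). For~(\ref{I_L_2605_Cns_OK_Algs}) it suffices to treat each single-step map $\nu_{n + 1, n}$, since $\nu_{n, m}$ is then an equivariant unital \hm{} as a composite. For a fixed~$n$, two things must be checked: that $\nu_{n + 1, n}$ is a unital \hm, and that it is equivariant. The homomorphism property reduces to showing that, in each of the two summands of $A_{n + 1}$, the two partial maps landing in that summand have orthogonal ranges. In $A_{n + 1, 0} = f_{n + 1} (K \otimes \OI) f_{n + 1}$, the map $\nu_{n + 1, n}^{(0, 0)}$ takes values in $f_n (K \otimes \OI) f_n$ and $\nu_{n + 1, n}^{(0, 1)}$ takes values in the corner determined by $e_{n + 1, \, n + 1} \otimes p$; these are orthogonal because $f_n$ and $e_{n + 1, \, n + 1} \otimes p$ are orthogonal projections summing to $f_{n + 1}$. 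In $A_{n + 1, 1} = C (S^1, \OT)$, the ranges lie in $C (S^1) \otimes q_0 \OT q_0$ and $C (S^1) \otimes q_1 \OT q_1$, which are orthogonal since $q_0 q_1 = 0$. Unitality is a one-line computation using $\io (1) = p$, the equalities $f_n + e_{n + 1, \, n + 1} \otimes p = f_{n + 1}$ and $q_0 + q_1 = 1$, and unitality of $\mu_{n, 0}$, $\mu_{n, 1}$, and $\ld_n$.

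The substance of~(\ref{I_L_2605_Cns_OK_Algs}) is equivariance, which I would verify for each partial map. The map $\nu_{n + 1, n}^{(0, 0)}$ is equivariant because $f_n$ is fixed by $\id_K \otimes \bt_{\zt}$ (as $p \in \OI^{\bt}$), and $\nu_{n + 1, n}^{(1, 1)} = \id_{C (S^1)} \otimes \mu_{n, 1}$ manifestly intertwines $\Lt_{\zt} \otimes \id_{\OT}$ with $\Lt_{\zt} \otimes \id$. The map $\nu_{n + 1, n}^{(0, 1)} (a) = e_{n + 1, \, n + 1} \otimes \io (a)$ is equivariant exactly because $\io$ is equivariant for the action $\Lt_{\zt} \otimes \id_{\OT}$ on $C (S^1, \OT)$ and for $\bt$ on $p \OI p$, which is part of Lemma~\ref{L_2521_Exist}. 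The only map needing care is $\nu_{n + 1, n}^{(1, 0)} = (\id_{C (S^1)} \otimes \mu_{n, 0}) \circ \ld_n \circ \kp_n$: the inclusion $\kp_n$ of constant functions is equivariant from $(A_{n, 0}, \af^{(n, 0)})$ to $C (S^1, A_{n, 0})$ with the action $\zt \mapsto \Lt_{\zt} \otimes \af^{(n, 0)}_{\zt}$, the isomorphism $\ld_n$ intertwines that action with $\zt \mapsto \Lt_{\zt} \otimes \id_{A_{n, 0}}$ by its defining property from Proposition 4.1 of~\cite{Rokhdimtracial}, and $\id_{C (S^1)} \otimes \mu_{n, 0}$ carries the latter action to $\Lt_{\zt} \otimes \id = \af^{(n + 1, 1)}_{\zt}$ on the corner $C (S^1) \otimes q_0 \OT q_0$. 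Composing these gives equivariance into $A_{n + 1, 1}$.

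For~(\ref{I_L_2605_Cns_inj}), injectivity of $\nu_{n + 1, n}^{(0, 0)}$ is clear, and $\nu_{n + 1, n}^{(1, 0)}$ and $\nu_{n + 1, n}^{(1, 1)}$ are injective because $\kp_n$ is injective, $\ld_n$ is an isomorphism, and $\mu_{n, 0}$, $\mu_{n, 1}$ are unital \hm{s} out of the simple unital \ca{s} $A_{n, 0} = f_n (K \otimes \OI) f_n$ (a corner of the simple algebra $K \otimes \OI$) and $\OT$. For $\nu_{n + 1, n}^{(0, 1)}$ it is enough to see that $\io$ is injective, and here I would use that $\Ker (\io)$ is an $S^1$-invariant ideal of $C (S^1, \OT)$ for the action $\Lt_{\zt} \otimes \id_{\OT}$. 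Since $\OT$ is simple, every ideal of $C (S^1, \OT)$ has the form $C_0 (U) \otimes \OT$ with $U \S S^1$ open, and invariance forces $U$ to be rotation-invariant, hence $\E$ or $S^1$; as $\io$ is unital, $\Ker (\io) \neq C (S^1, \OT)$, so $\Ker (\io) = \{ 0 \}$.

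Assertion~(\ref{I_L_2605_Cns_OK_Lim}) is then formal: by~(\ref{I_L_2605_Cns_OK_Algs}) the system is equivariant, so there is a unique action $\af$ on $A = \dirlim_n A_n$ determined by $\af_{\zt} \circ \nu_{\I, n} = \nu_{\I, n} \circ \af^{(n)}_{\zt}$ on the canonical maps $\nu_{\I, n} \colon A_n \to A$, well defined because the connecting maps intertwine the $\af^{(n)}_{\zt}$, and continuity of $\zt \mapsto \af_{\zt} (b)$ on the dense union $\bigcup_n \nu_{\I, n} (A_n)$ follows from an $\frac{\ep}{3}$-argument using continuity of each $\af^{(n)}$ together with $\| \af_{\zt} \| = 1$. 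I expect the main obstacle to be the equivariance of $\nu_{n + 1, n}^{(1, 0)}$: it is precisely the equivariant isomorphism $\ld_n$ that reconciles the ``constant-section'' map with the two different $S^1$-actions on its domain and codomain, and without it the construction would fail to be equivariant. The orthogonality, unitality, injectivity, and continuity steps are routine once the roles of $\io$ and $\ld_n$ are pinned down.
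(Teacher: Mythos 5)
Your proof is correct and follows the same overall structure as the paper's: orthogonality of the ranges of the partial maps plus the unitality computations $f_n + e_{n+1,\,n+1} \otimes p = f_{n+1}$ and $q_0 + q_1 = 1$ for the \hm{} property, equivariance checked partial map by partial map with $\nu_{n+1,n}^{(1,0)}$ handled via the factorization through $\kp_n$, $\ld_n$, and $\id_{C(S^1)} \otimes \mu_{n,0}$, and part~(3) deduced formally from part~(1). The one place you genuinely diverge is in part~(2): for injectivity of $\io$ the paper simply writes that it holds ``by the choices made using Lemma~\ref{L_2521_Exist},'' even though that lemma's statement does not actually assert injectivity, whereas you supply a self-contained argument: $\Ker(\io)$ is an invariant ideal for $\zt \mapsto \Lt_{\zt} \otimes \id_{\OT}$, every ideal of $C(S^1, \OT)$ is of the form $C_0(U) \otimes \OT$ with $U \S S^1$ open (by simplicity and nuclearity of $\OT$), rotation invariance forces $U = \E$ or $U = S^1$, and unitality of $\io$ rules out the latter. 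This $G$-simplicity argument is exactly the mechanism the paper relies on elsewhere (compare the proof of Proposition~\ref{P_1921_NoAction}, where equivariance plus $G$-simplicity of $C(G)$ forces injectivity), so your version makes explicit a step the paper leaves implicit; similarly, your appeal to simplicity of $A_{n,0}$ and $\OT$ to get injectivity of the unital maps $\mu_{n,0}$ and $\mu_{n,1}$ is a detail the paper subsumes under ``injectivity of everything else which appears is immediate.'' Both routes are sound; yours is slightly longer but closes a citation gap, while the paper's is terser at the cost of attributing injectivity to a lemma that does not state it.
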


\begin{proof}
We prove (\ref{I_L_2605_Cns_OK_Algs}).
We need only consider $\nu_{n + 1, n}$ for $n \in \Nz$.
That $\nu_{n + 1, n}$ is a unital \hm{} follows from the
computations (in which the summands are orthogonal)
\[
\nu_{n + 1, n}^{(0, 0)} (f_n) + \nu_{n + 1, n}^{(0, 1)} (1)
  = f_{n} + e_{n + 1, \, n + 1} \otimes p
  = f_{n + 1}
\]
and
\[
\nu_{n + 1, n}^{(1, 0)} (f_n) + \nu_{n + 1, n}^{(1, 1)} (1)
 = 1 \otimes \mu_{n, 0} (f_n) + 1 \otimes \mu_{n, 1} (1)
 = 1 \otimes q_0 + 1 \otimes q_1
 = 1.
\]
For the actions to be well defined, the only point which needs to be
checked is that $f_n$ is invariant under $\id_K \otimes \bt_{\zt}$.
This follows from invariance of $p$ under $\bt$,
which is a consequence of the choices made using Lemma~\ref{L_2521_Exist}.

For equivariance, it is enough to check equivariance of
the maps $\nu_{n + 1, \, n}^{(k, j)}$ in
Construction \ref{Cns_2114_OIA}(\ref{I_2114_OIA_Parts}).
This is immediate for $\nu_{n + 1, \, n}^{(0, 0)}$
and $\nu_{n + 1, \, n}^{(1, 1)}$,
and by the choices made using Lemma~\ref{L_2521_Exist}
for $\nu_{n + 1, \, n}^{(0, 1)}$.
The map $\nu_{n + 1, \, n}^{(1, 0)}$ is the composition
(writing pairs consisting of an algebra and an action)
\[
\begin{split}
\bigl( A_{n, 0}, \, \af^{(n, 0)} \bigr)
& \stackrel{\kp_n}{\longrightarrow}
\bigl( C (S^1) \otimes A_{n, 0}, \, \Lt \otimes \af^{(n, 0)} \bigr)
\\
& \stackrel{\ld_n}{\longrightarrow}
\bigl( C (S^1) \otimes A_{n, 0}, \, \Lt \otimes \id_{A_{n, 0}} \bigr)
\stackrel{\id_{C (S^1)} \otimes \mu_{n, 0}}{\longrightarrow}
\bigl( C (S^1) \otimes \OT, \, \Lt \otimes \id_{\OT} \bigr).
\end{split}
\]
The maps $\kp_n$ and $\id_{C (S^1)} \otimes \mu_{n, 0}$ are
obviously equivariant, and $\ld_n$ is equivariant by construction.

For~(\ref{I_L_2605_Cns_inj}), in the compositions defining these maps,
$\io$ is injective by the choices made using Lemma~\ref{L_2521_Exist}
and $\ld_n$ is injective by construction.
Injectivity of everything else which appears is immediate.

Part~(\ref{I_L_2605_Cns_OK_Lim})
is immediate from part~(\ref{I_L_2605_Cns_OK_Algs}).
\end{proof}

The following lemma is known, but we have not found a reference.

\begin{lem}\label{L_2521_PI_bound}
Let $A$ be a unital purely infinite simple \ca, let $\rh > 0$,
and let $x \in A$ satisfy $\| x \| > \rh$.
Then there are $a, b \in A$ such that $a x b = 1$,
$\| a \| < \rh^{-1}$, and $ \| b \| \leq 1$.
\end{lem}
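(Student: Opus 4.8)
The plan is to produce $a,b \in A$ with $axb = 1$ and the single scale-invariant estimate $\|a\|\,\|b\| < \rho^{-1}$; granting this, one replaces $(a,b)$ by $\bigl(\|b\|\,a,\ \|b\|^{-1}b\bigr)$ (legitimate since $axb=1$ forces $b\neq 0$), which leaves the product $axb=1$ unchanged while arranging $\|b\|\le 1$ and $\|a\| = \|a\|\,\|b\| < \rho^{-1}$. So the whole problem reduces to the norm-product bound. The geometric idea is that $x$ is bounded below on the spectral part of $h := x^*x$ lying above $\gamma^2$ for any $\gamma \in (\rho,\|x\|)$, and there one can invert $x$; pure infiniteness is used only to compress the unit into that part.

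Concretely, I would fix $\gamma$ with $\rho < \gamma < \|x\|$, put $h = x^*x \in A_{+}$ (so $\|h\| = \|x\|^2 > \gamma^2$ and $\|h\| \in \spec (h)$), and choose two continuous functions on $[0,\|h\|]$: a cutoff $f$ supported in $(\gamma^2,\|h\|]$ with $f \neq 0$ (hence $f(h) \neq 0$), and a function $\ell$ with $\ell(t) = t^{-1/2}$ for $t \geq \gamma^2$ and $0 \le \ell \le \gamma^{-1}$ everywhere. Setting $\theta(t) = t\,\ell(t)^2$, one has $\theta \equiv 1$ on $[\gamma^2,\|h\|] \supseteq \supp(f)$, so $\theta(h)$ acts as a unit on the hereditary subalgebra $\overline{f(h)\,A\,f(h)}$. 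Since $A$ is purely infinite and simple and $f(h)\neq 0$, this nonzero hereditary subalgebra contains a nonzero projection $q$. Applying the defining property of pure infiniteness to the positive element $q$ gives $z \in A$ with $z^*qz = 1$; then $v := qz$ satisfies $v^*v = z^*qz = 1$ and $v = qv$, so $\theta(h)\,v = \theta(h)\,q\,v = q\,v = v$.

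With these in hand I would define
\[
a = v^*\,\ell(h)\,x^*, \qquad b = \ell(h)\,v .
\]
Then, using $x^*x = h$ and $\ell(h)\,h\,\ell(h) = \theta(h)$ (ordinary functional calculus in the commutative algebra generated by $h$),
\[
a\,x\,b = v^*\,\ell(h)\,x^*x\,\ell(h)\,v = v^*\,\ell(h)\,h\,\ell(h)\,v = v^*\,\theta(h)\,v = v^*v = 1 .
\]
The norm bookkeeping is then immediate: the same computation gives $aa^* = v^*\theta(h)v = 1$, so $\|a\| = 1$, while $\|b\| \le \|\ell(h)\| = \gamma^{-1}$. Hence $\|a\|\,\|b\| \le \gamma^{-1} < \rho^{-1}$, which is exactly the bound needed before rescaling.

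The computations here are routine; the one point demanding care is the functional-calculus bookkeeping that guarantees $\theta(h)\,v = v$, i.e.\ choosing $f$ and $\ell$ so that the projection $q$ produced by pure infiniteness lives in the spectral region where $\theta \equiv 1$. This is precisely what allows the two copies of $\ell(h)$ flanking $x^*x$ to collapse to the identity on the range of $v$, and it is where the strict gap $\gamma < \rho^{-1}$ (and thus the strict inequality $\|a\| < \rho^{-1}$) comes from. The only non-elementary inputs are the standard structure facts for purely infinite simple \ca{s}: that every nonzero hereditary subalgebra contains a nonzero projection, and that $z^*cz = 1$ is solvable for every nonzero $c \in A_{+}$.
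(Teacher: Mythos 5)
Your proof is correct and takes essentially the same route as the paper's: a cutoff of $x^*x$ above a threshold exceeding $\rho^2$, a nonzero projection in the resulting hereditary subalgebra, and an isometry supplied by pure infiniteness, the only cosmetic difference being that you build the inverse square root into the functional calculus via $\ell$ (so $a x b = 1$ holds exactly with $\|a\| = 1$, followed by a rescaling), whereas the paper instead inverts $s^* x^* x s$ using the bound $1 = s^* f(x^*x) s \leq \rho_0^{-2} s^* x^* x s$. One small point to tighten: ``$f \neq 0$, hence $f(h) \neq 0$'' requires $\supp (f) \cap \spec (h) \neq \varnothing$, which you should arrange explicitly by taking $f (\| h \|) = 1$ --- legitimate precisely because $\| h \| = \| x \|^2 \in \spec (h)$, as you note.
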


\begin{proof}
Choose $\rh_0$ such that $(\rh \| x \|)^{1/2} < \rh_0 < \| x \|$.

Define \cfn{s} $f, f_0 \colon [0, \I) \to [0, \I)$ by
\[
f (\ld)
 = \begin{cases}
   \rh_0^{-2} \ld & \hspace*{1em} 0 \leq \ld \leq \rh_0^2
        \\
   1 & \hspace*{1em} \rh_0^2 < \ld
\end{cases}
\andeqn
f_0 (\ld)
 = \begin{cases}
   0           & \hspace*{1em} 0 \leq \ld \leq \rh_0^2
        \\
   \ld - \rh_0^2 & \hspace*{1em} \rh_0^2 < \ld.
\end{cases}
\]
Then $f_0 (x^* x) \neq 0$ since $\| x^* x \| > \rh_0^2$.
Therefore there are a \nzp{} $p \in {\ov{f_0 (x^* x) A f_0 (x^* x)}}$,
and, by pure infiniteness,
a partial isometry $s \in A$ such that $s^* s = 1$ and $s s^* \leq p$.
We have $f (x^* x) p = p$ and $p s = s$.
Therefore
\[
1 = s^* p s
  = s^* f (x^* x) p s
  = s^* f (x^* x) s
  \leq \rh_0^{-2} s^* x^* x s.
\]
Hence $s^* x^* x s$ is invertible, with
$\| (s^* x^* x s)^{-1} \| \leq \rh_0^{-2}$.
Take $b = s$ and $a = (s^* x^* x s)^{-1} s^* x^*$,
noting that $\| a \| \leq \rh_0^{-2} \| x \| < \rh^{-1}$.
\end{proof}

\begin{lem}\label{L_2521_SLim}
Let $(B_n)_{n \in \Nz}$ be a direct system of unital \ca{s},
with unital maps $\mu_{n, m} \colon B_m \to B_{n}$.
Suppose that for all $n \in \Nz$ we are given a direct sum decomposition
$B_n = B_n^{(0)} \oplus B_n^{(1)}$, in which both summands are nonzero.
For $m, n \in \Nz$ and $j, k \in \{ 0, 1 \}$ let
$\mu_{n, m}^{(k, j)} \colon B_m^{(j)} \to B_n^{(k)}$
be the corresponding partial map.
Assume the following:
\begin{enumerate}
\item\label{I__2521_SLim_pi}
$B_n^{(1)}$ is purely infinite and simple for all $n \in \Nz$.
\item\label{I__2521_SLim_inj}
$\mu_{n + 1, n}^{(k, j)}$ is injective for all $n \in \Nz$
and all $j, k \in \{ 0, 1 \}$.
\item\label{I__2521_SLim_full}
For every $n \in \Nz$ there is $t \in B_{n + 1}^{(0)}$
such that $\| t \| = 1$
and $t^* \mu_{n + 1, n}^{(0, 1)} (1_{B_n^{(1)}}) t$
is the identity of $B_{n + 1}^{(0)}$.
\end{enumerate}
Then $\dirlim_n B_n$ is purely infinite and simple.
\end{lem}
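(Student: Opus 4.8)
The plan is to verify the standard Cuntz characterization: a unital \ca{} $B \neq {\mathbb C}$ is purely infinite and simple provided $1_B \precsim a$ for every $a \in B_{+} \SM \{ 0 \}$. Write $z_n^{(0)} = (1_{B_n^{(0)}}, 0)$ and $z_n^{(1)} = (0, 1_{B_n^{(1)}})$ for the two central projections of $B_n$, let $\mu_{\I, n} \colon B_n \to \dirlim_m B_m =: B$ be the canonical maps, and set $P_n^{(j)} = \mu_{\I, n} (z_n^{(j)}) \in B$. I will use repeatedly the following consequence of Lemma~\ref{L_2521_PI_bound}: in a unital purely infinite simple \ca~$D$, every nonzero $c \in D_{+}$ satisfies $1_D \precsim c$ (apply the lemma to get $a c b = 1_D$, then set $z = b (b^* c b)^{-1/2}$ to obtain $z^* c z = 1_D$). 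In particular $1_D \precsim q$ for every nonzero projection $q \in D$.

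The heart of the argument is the claim that $1_B \precsim P_n^{(1)}$ for every $n \in \Nz$. First I would prove, at finite level, that $1_{B_{n+1}} \precsim \mu_{n+1, n} (z_n^{(1)})$. Write $\mu_{n+1, n} (z_n^{(1)}) = (s_0, s_1)$, so $s_0 = \mu_{n+1, n}^{(0, 1)} (1_{B_n^{(1)}})$ and $s_1 = \mu_{n+1, n}^{(1, 1)} (1_{B_n^{(1)}})$; both are nonzero by injectivity~(\ref{I__2521_SLim_inj}). For the $(1)$-summand, $s_1$ is a nonzero projection in the purely infinite simple algebra $B_{n+1}^{(1)}$, so $1_{B_{n+1}^{(1)}} \precsim s_1$ in $B_{n+1}^{(1)}$. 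For the $(0)$-summand, the fullness hypothesis~(\ref{I__2521_SLim_full}) is exactly what is needed: the element $t$ there has $\| t \| = 1$ and $t^* s_0 t = 1_{B_{n+1}^{(0)}}$, so $w = s_0^{1/2} t$ is an isometry with $w w^* \leq s_0$, giving $1_{B_{n+1}^{(0)}} \precsim s_0$. Since these two subequivalences occur in the orthogonal summands $B_{n+1}^{(0)}$ and $B_{n+1}^{(1)}$, the implementing partial isometries are orthogonal and add, yielding $1_{B_{n+1}} \precsim (s_0, s_1) = \mu_{n+1, n} (z_n^{(1)})$. Applying $\mu_{\I, n+1}$ and using $\mu_{\I, n+1} \circ \mu_{n+1, n} = \mu_{\I, n}$ gives $1_B \precsim P_n^{(1)}$.

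Granting this, the remaining points are short. For \emph{simplicity}, let $I \S B$ be a nonzero ideal, put $I_n = \mu_{\I, n}^{-1} (I)$, so $I = \ov{\bigcup_n \mu_{\I, n} (I_n)}$ and some $I_n \neq 0$. Since $B_n^{(1)}$ is simple by~(\ref{I__2521_SLim_pi}), $I_n \cap B_n^{(1)}$ is $0$ or all of $B_n^{(1)}$; if it is everything then $z_n^{(1)} \in I_n$, while if it is $0$ then $I_n$ is a nonzero ideal of $B_n^{(0)}$, and cutting $\mu_{n+1, n} (I_n)$ by $z_{n+1}^{(1)}$ together with injectivity of $\mu_{n+1, n}^{(1, 0)}$ and simplicity of $B_{n+1}^{(1)}$ forces $z_{n+1}^{(1)} \in I_{n+1}$. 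Either way $z_m^{(1)} \in I_m$ for some $m$, so $P_m^{(1)} \in I$ and $1_B \precsim P_m^{(1)}$ gives $I = B$. For \emph{pure infiniteness}, fix $a \in B_{+}$ with $\| a \| = 1$, choose $b \in (B_n)_{+}$ with $\| \mu_{\I, n} (b) - a \| < \tfrac{1}{4}$, and replace $b$ by $(b - \tfrac{1}{4})_{+} \neq 0$, so that $\mu_{\I, n} (b) = (\mu_{\I, n} (b) - \tfrac{1}{4})_{+} \precsim a$ by the R{\o}rdam comparison lemma. The $(1)$-component $c$ of $\mu_{n+1, n} (b)$ is a nonzero positive element of $B_{n+1}^{(1)}$, so $1_{B_{n+1}^{(1)}} \precsim c$, and since $(0, c) \leq \mu_{n+1, n} (b)$ as positive elements we get $z_{n+1}^{(1)} \precsim \mu_{n+1, n} (b)$, hence $P_{n+1}^{(1)} \precsim \mu_{\I, n} (b) \precsim a$. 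Combined with $1_B \precsim P_{n+1}^{(1)}$ this gives $1_B \precsim a$. (That $B \neq {\mathbb C}$ holds because $P_n^{(1)} \neq 0$ forces $\mu_{\I, n}$ to be injective on the infinite dimensional simple algebra $B_n^{(1)}$.)

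The main obstacle is that the ``$(0)$''-summands need not be simple and that the maps $\mu_{n+1, n}$ and $\mu_{\I, n}$ need not be injective, so one cannot track elements summand by summand or argue by na\"ive comparison in the limit. The device that circumvents this is the finite-level claim $1_{B_{n+1}} \precsim \mu_{n+1, n} (z_n^{(1)})$: it converts the two genuinely different inputs---pure infiniteness of the $(1)$-part and the fullness hypothesis~(\ref{I__2521_SLim_full}) controlling the $(0)$-part---into the single clean statement $1_B \precsim P_n^{(1)}$, after which the non-simplicity of the $(0)$-summands never has to be confronted directly.
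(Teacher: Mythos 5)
Your proof is correct, and it reaches the conclusion by a genuinely different route than the paper. The paper verifies the elementwise characterization directly in the limit: for an arbitrary nonzero $x \in B$ (not just positive) it produces $a, b$ with $a x b = 1$, approximating $x$ from some $B_n$ and climbing \emph{two} steps in the system --- one step so that injectivity (which, summand by summand, makes the compressed homomorphism $B_n \to B_{n+1}^{(1)}$ injective, hence isometric) guarantees the $(1)$-component $z_1$ keeps norm $> \frac{2}{3}$, and a second step where the element $t$ of hypothesis~(\ref{I__2521_SLim_full}) is combined with the norm-controlled Lemma~\ref{L_2521_PI_bound}; the crucial feature there is that $t$ is an isometry whose range projection lies under $\mu_{n+2, \, n+1}^{(0, 1)} \bigl( p_{n+1}^{(1)} \bigr)$, so that $t^* \mu_{n+2, \, n+1} \bigl( p_{n+1}^{(0)} \bigr) t = 0$ and the uncontrolled $(0,0)$-part of the image simply disappears, after which the explicit bounds $\| c \| < \frac{3}{2}$ and $\| d \| \leq 1$ make $a x h$ invertible. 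You instead distill the finite-stage Cuntz comparison $1_{B_{n+1}} \precsim \mu_{n+1, n} (z_n^{(1)})$ --- hypothesis~(\ref{I__2521_SLim_full}) entering exactly once, through the isometry $w = s_0^{1/2} t$ --- push it forward to $1_B \precsim P_n^{(1)}$, and then dominate $P_{n+1}^{(1)}$ by an arbitrary nonzero positive $a$ via R{\o}rdam's $(b - \ep)_{+}$ lemma, where positivity makes the $(1)$-component of $\mu_{n+1, n} (b)$ automatically nonzero (a sum of two positive homomorphic images, each detected by injectivity), so a single step up suffices. What each buys: your version avoids all norm bookkeeping and yields the reusable comparison $1_B \precsim \mu_{\I, n} (z_n^{(1)})$, at the cost of invoking the positive-element characterization of pure infinite simplicity and R{\o}rdam's lemma; the paper's version is self-contained apart from Lemma~\ref{L_2521_PI_bound} and proves the stronger two-sided invertibility statement for arbitrary nonzero elements in one pass. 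Two minor remarks: your separate ideal-lattice argument for simplicity is redundant, since $1_B \precsim a$ for every $a \in B_{+} \SM \{ 0 \}$ already forces any nonzero closed ideal to contain $1$; and in deducing $1_D \precsim c$ from Lemma~\ref{L_2521_PI_bound} you should justify that $b^* c b$ is invertible --- from $a c^{1/2} \cdot c^{1/2} b = 1$ the element $c^{1/2} b$ is bounded below, so $b^* c b = (c^{1/2} b)^* (c^{1/2} b)$ is invertible.
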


\begin{proof}
Set $B = \dirlim_n B_n$,
and for $m \in \Nz$ let $\mu_{\I, n} \colon B_n \to B$
be the standard map associated with the direct limit.
Also, for $n \in \Nz$ and $j \in \{ 0, 1 \}$ let $p_n^{(j)}$
be the identity of $B_n^{(j)}$.

Let $x \in B \SM \{ 0 \}$.
We need to find $a, b \in B$ such that $a x b = 1$.
\Wolog{} $\| x \| = 1$.
Choose $n \in \Nz$ and $y \in B_n$ such that
$\| \mu_{\I, n} (y) - x \| < \frac{1}{3}$.
Then $\| y \| > \frac{2}{3}$.
Set $z = \mu_{n + 1, n} (y)$.
Write $y = (y_0, y_1)$ and $z = (z_0, z_1)$ with $y_j \in B_n^{(j)}$
and $z_j \in B_{n + 1}^{(j)}$ for $j \in \{ 0, 1 \}$.
We have $\| y_j \|= \| y \|$ for some $j$,
so injectivity of $\mu_{n + 1, n}^{(1, j)}$
implies that $\| z_1 \| > \frac{2}{3}$.

Lemma~\ref{L_2521_PI_bound} provides $r_0, s_0 \in B_{n + 1}^{(1)}$
such that
\[
\| r_0 \| < \frac{3}{2},
\qquad
\| s_0 \| \leq 1,
\andeqn
r_0 z_1 s_0 = p_{n + 1}^{(1)}.
\]
Condition~(\ref{I__2521_SLim_full})
provides $t \in B_{n + 2}^{(0)}$
such that $\| t \| = 1$
and
$t^* \mu_{n + 2, \, n + 1}^{(0, 1)} \bigl( p_{n + 1}^{(1)} \bigr) t
  = p_{n + 2}^{(0)}$.
Define elements of $B_{n + 1}^{(0)}$ by
\[
c_0 = t^* \mu_{n + 2, \, n + 1}^{(0, 1)} (r_0)
\andeqn
d_0 = \mu_{n + 2, \, n + 1}^{(0, 1)} (s_0) t.
\]
Then
\begin{equation}\label{Eq_2605_Zero}
\| c_0 \| < \frac{3}{2},
\qquad
\| d_0 \| \leq 1,
\andeqn
t^* \mu_{n + 2, \, n + 1} \bigl( p_{n + 1}^{(0)} \bigr) t = 0.
\end{equation}
Using the last part of~(\ref{Eq_2605_Zero}),
and regarding everything in the first expression as being in $B_{n + 1}^{(0)}$,
we get
\begin{equation}\label{Eq_2605_3St}
c_0 \mu_{n + 2, \, n + 1} (z) d_0
 = c_0 \mu_{n + 2, \, n + 1}^{(0, 1)} (z_1) d_0
 = t^* \mu_{n + 2, \, n + 1}^{(0, 1)} (r_0 z_1 s_0) t
 = p_{n + 2}^{(0)}.
\end{equation}

Since $\mu_{n + 2, \, n + 1}^{(1, 1)}$ is injective
(by~(\ref{I__2521_SLim_inj}))
and $B_{n + 2}^{(1)}$ is purely infinite and simple,
Lemma~\ref{L_2521_PI_bound} provides $r_1, s_1 \in B_{n + 2}^{(1)}$
such that
\[
\| r_1 \| < \frac{3}{2},
\qquad
\| s_1 \| \leq 1,
\andeqn
r_1 \mu_{n + 2, \, n + 1}^{(1, 1)} (z_1) s_1 = p_{n + 2}^{(1)}.
\]
Define elements of $B_{n + 1}^{(1)}$ by
\[
q = \mu_{n + 2, \, n + 1}^{(1, 1)} \bigl( p_{n + 1}^{(1)} \bigr),
\qquad
c_1 = r_1 q,
\andeqn
d_1 = q s_1.
\]
Then, analogously to~(\ref{Eq_2605_Zero}) and~(\ref{Eq_2605_3St}),
\[
\| c_1 \| < \frac{3}{2},
\qquad
\| d_1 \| \leq 1,
\andeqn
c_1 \mu_{n + 2, \, n + 1} (z) d_1
 = c_1 \mu_{n + 2, \, n + 1}^{(1, 1)} (z_1) d_1
 = p_{n + 2}^{(1)}.
\]

Taking $c = (c_0, c_1)$ and $d = (d_0, d_1)$,
we get
\[
\| c \| < \frac{3}{2},
\qquad
\| d \| \leq 1,
\andeqn
c \mu_{n + 2, \, n + 1} (z) d = 1.
\]
Setting $a = \mu_{\I, \, n + 2} (c)$ and $h = \mu_{\I, \, n + 2} (d)$,
we get $\| a \| < \frac{3}{2}$, $\| h \| \leq 1$, and
$a \mu_{\I, \, n} (y) h = 1$.
Therefore
\[
\| a x h - 1 \|
  \leq \| a \| \| x - \mu_{\I, \, n} (y) \| \| h \|
  < \left( \frac{3}{2} \right) \left( \frac{1}{3} \right)
  = \frac{1}{2}.
\]
So $a x h$ is invertible.
Setting $b = h (a x h)^{-1}$ gives $a x b = 1$, as desired.
\end{proof}

\begin{lem}\label{L_2605_Cns_OI}
The algebra $A$ in Construction \ref{Cns_2114_OIA}(\ref{I_2114_OIA_Lim})
is isomorphic to~$\OI$.
\end{lem}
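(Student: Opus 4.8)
The plan is to identify $A$ as a unital Kirchberg algebra in the UCT class with vanishing $K$-theory, and then to invoke the Kirchberg--Phillips classification theorem, which characterizes $\OI$ as the unique such algebra. That $A$ is unital, separable, and nuclear is immediate from Construction~\ref{Cns_2114_OIA}: each $A_n = A_{n, 0} \oplus A_{n, 1}$ is a direct sum of a unital corner of $K \otimes \OI$ and $C (S^1) \otimes \OT$, hence unital, separable, and nuclear, and the connecting maps $\nu_{n + 1, n}$ are unital \hm{s} by Lemma~\ref{L_2605_Cns_OK}(\ref{I_L_2605_Cns_OK_Algs}); these properties pass to the direct limit. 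Likewise each building block lies in the UCT (bootstrap) class, which is closed under direct limits, so $A$ satisfies the UCT.

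For pure infiniteness and simplicity, I would apply Lemma~\ref{L_2521_SLim}, taking care that the ``purely infinite simple'' summand required there (indexed~$1$) is $A_{n, 0}$ in our labeling, not $A_{n, 1}$. Thus I set $B_n^{(1)} = A_{n, 0}$ and $B_n^{(0)} = A_{n, 1}$. Hypothesis~(\ref{I__2521_SLim_pi}) holds because $A_{n, 0} = f_n (K \otimes \OI) f_n$ is a unital \hsa{} of the purely infinite simple algebra $K \otimes \OI$, hence itself purely infinite and simple; hypothesis~(\ref{I__2521_SLim_inj}) is exactly Lemma~\ref{L_2605_Cns_OK}(\ref{I_L_2605_Cns_inj}). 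The substantive point is hypothesis~(\ref{I__2521_SLim_full}): with the relabeling it asks for $t \in A_{n + 1, 1} = C (S^1) \otimes \OT$ with $\| t \| = 1$ and $t^* \nu_{n + 1, n}^{(1, 0)} (1_{A_{n, 0}}) t = 1$. Since $\mu_{n, 0}$ from Construction~\ref{Cns_2114_OIA}(\ref{I_2114_OIA_PreParts}) is unital into $q_0 \OT q_0$, one computes $\nu_{n + 1, n}^{(1, 0)} (1_{A_{n, 0}}) = 1_{C (S^1)} \otimes q_0$. As $\OT$ is purely infinite and simple and $[q_0] = 0 = [1]$ in $K_0 (\OT)$, the projection $q_0$ is \mvnt{} to $1_{\OT}$, so there is an isometry $t_0 \in \OT$ with $t_0 t_0^* = q_0$; taking $t = 1_{C (S^1)} \otimes t_0$ gives $t^* (1 \otimes q_0) t = 1$. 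Lemma~\ref{L_2521_SLim} then yields that $A$ is purely infinite and simple.

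It remains to show $K_* (A) = 0$. By continuity of \kt, $K_* (A) = \dirlim_n K_* (A_n) = \dirlim_n \bigl( K_* (A_{n, 0}) \oplus K_* (A_{n, 1}) \bigr)$. The corner $A_{n, 0}$ is Morita equivalent to the stable algebra $K \otimes \OI$ (the projection $f_n$ is full by simplicity), so $K_* (A_{n, 0}) \cong K_* (\OI) = 0$. For $A_{n, 1} = C (S^1) \otimes \OT$, the K\"{u}nneth formula together with $K_* (\OT) = 0$ gives $K_* (A_{n, 1}) = 0$. Hence $K_* (A_n) = 0$ for every $n$, and therefore $K_* (A) = 0$.

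Combining the three inputs, $A$ is a unital Kirchberg algebra in the UCT class with $K_0 (A) = K_1 (A) = 0$, and the class of the unit is forced to be~$0$. By the Kirchberg--Phillips classification theorem, $A \cong \OI$. The only place requiring genuine care is the application of Lemma~\ref{L_2521_SLim}: one must match the summand indices correctly (the hypotheses are stated so that the purely infinite factor is the second summand, whereas our construction places it first) and verify condition~(\ref{I__2521_SLim_full}) as above. The \kt{} computation and the final invocation of classification are then routine.
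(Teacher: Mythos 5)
Your overall strategy coincides with the paper's: both arguments establish pure infiniteness and simplicity via Lemma~\ref{L_2521_SLim} with the summands relabeled (your index bookkeeping and your verification of condition~(\ref{I__2521_SLim_full}) via an isometry $t_0 \in \OT$ with $t_0 t_0^* = q_0$ are exactly what the paper does), both obtain the UCT from the direct limit with injective connecting maps, and both finish with the classification theorem (Theorem 4.2.4 of~\cite{Ph_PICls}). However, there is a genuine error in your K-theory step: you assert $K_* (\OI) = 0$, conclude $K_* (A) = 0$, and characterize $\OI$ as the unique unital UCT Kirchberg algebra with vanishing K-theory. That characterization describes $\OT$, not $\OI$: in fact $K_0 (\OI) \cong \Z$ with $[1_{\OI}]$ a generator, and $K_1 (\OI) = 0$. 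Consequently $K_0 (A_{n, 0}) \cong \Z$ rather than $0$ (your Morita equivalence argument via fullness of $f_n$ is fine, but it delivers $\Z$), and if your computation $K_* (A) = 0$ were correct, Kirchberg--Phillips would yield $A \cong \OT$, contradicting the very statement you are proving.

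The correct computation, as in the paper, runs as follows. You are right that $K_1 (A_n) = 0$ and $K_0 (A_{n, 1}) = 0$, but $K_0 (A_{n, 0}) \cong \Z$ with $[f_0]$ a generator. Since $p = \io (1)$ and $K_0 \bigl( C (S^1, \OT) \bigr) = 0$, one has $[p] = 0$ in $K_0 (A_{n, 0})$, whence $[f_{n + 1}] = [f_n] + [e_{n + 1, \, n + 1} \otimes p] = [f_n]$. Thus $K_0 (A_n) \cong \Z$ is generated by $[1_{A_n}]$, and the connecting maps send $[1_{A_n}]$ to $[1_{A_{n + 1}}]$, so $K_0 (A) \cong \Z$ generated by $[1_A]$ and $K_1 (A) = 0$. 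This is precisely the invariant of $\OI$ (with the unit class a generator), and Theorem 4.2.4 of~\cite{Ph_PICls} then gives $A \cong \OI$. With this correction your argument coincides with the paper's proof.
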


\begin{proof}
We first claim that $A$ is purely infinite and simple.
We use Lemma~\ref{L_2521_SLim},
with $B_n^{(0)} = A_{n, 1}$, $B_n^{(1)} = A_{n, 0}$, and
$\mu_{n, m} = \nu_{n, m}$.
Thus $\mu_{n + 1, n}^{(k, j)} = \nu_{n + 1, n}^{(1 - k, \, 1 - j)}$
for $n \in \Nz$ and $j, k \in \{ 0, 1 \}$.
Condition~(\ref{I__2521_SLim_pi}) in Lemma~\ref{L_2521_SLim}
is immediate because $A_{n, 0}$ is a corner of $K \otimes \OI$.
The maps $\mu_{n + 1, n}^{(k, j)}$ are all injective by
Lemma \ref{L_2605_Cns_OK}(\ref{I_L_2605_Cns_inj}),
which is condition~(\ref{I__2521_SLim_inj}).
For condition~(\ref{I__2521_SLim_full}),
following the notation of
Construction \ref{Cns_2114_OIA}(\ref{I_2114_OIA_PreParts}),
we have $\mu_{n + 1, n}^{(0, 1)} (f_n) = 1 \otimes q_0$.
Since $q_0$ is a \nzp{} in $\OT$, there is $t_0 \in \OT$
such that $t_0^* t_0 = 1$ and $t_0 t_0^* = q_0$.
Then $t = 1 \otimes t_0$ satisfies
$t^* \mu_{n + 1, n}^{(1, 0)} (f_n) t = 1$.
The claim now follows from Lemma~\ref{L_2521_SLim}.

The algebra $A$ satisfies the Universal Coefficient Theorem because it is
a direct limit, with injective maps,
of algebras which satisfy the Universal Coefficient Theorem.

We have $K_1 (A_n) = 0$ for all $n \in \Nz$, so $K_1 (A) = 0$.
Following the notation of
Construction \ref{Cns_2114_OIA}(\ref{I_2114_OIA_A}),
we have $K_0 (A_{n, 1}) = 0$.
Also $K_0 (A_{n, 0}) \cong \Z$ and $[f_0]$ is a generator,
and, in $K_0 (A_{n, 0})$, $[p] = [\io_* (1)] = \io_* (0) = 0$.
Therefore $[f_n] = [f_0]$.
It follows that $K_0 (A_n) \cong \Z$ for all~$n$,
generated by $[1_{A_n}]$,
and $(\nu_{n + 1, n})_* ([1_{A_n}]) = [1_{A_{n + 1}}]$.
So $K_0 (A) \cong \Z$, generated by $[1_A]$.

The classification theorem for purely infinite simple C*-algebras,
Theorem 4.2.4 of~\cite{Ph_PICls},
now implies that $A \cong \OI$.
\end{proof}

\begin{lem}\label{L_2609_FixedPt}
Let $\af \colon S^1 \to \Aut (A)$ be as in
Construction \ref{Cns_2114_OIA}(\ref{I_2114_OIA_Lim}).
Then $A^{\af}$ is purely infinite and simple.
\end{lem}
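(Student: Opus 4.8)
The plan is to apply Lemma~\ref{L_2521_SLim} to the direct system of fixed point algebras. Since $\af = \dirlim_n \af^{(n)}$ is a direct limit action with unital equivariant connecting maps $\nu_{n+1,n}$ (Lemma~\ref{L_2605_Cns_OK}(\ref{I_L_2605_Cns_OK_Algs})), the averaging conditional expectations commute with the $\nu_{n+1,n}$, so the inclusions $(A_n)^{\af^{(n)}} \to A_n$ identify $A^{\af}$ with $\dirlim_n (A_n)^{\af^{(n)}}$, the connecting maps being the restrictions of the $\nu_{n+1,n}$ to fixed point algebras (as in the first paragraph of the proof of Lemma~\ref{L_1916_FPisB}). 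Thus it suffices to check that this system of fixed point algebras satisfies the hypotheses of Lemma~\ref{L_2521_SLim}.

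First I would identify the summands. Since $f_n$ is invariant and $\af^{(n,0)}$ is the restriction of $\id_K \otimes \bt$, taking fixed points in a corner gives $(A_{n,0})^{\af^{(n,0)}} = f_n (K \otimes \OI^{\bt}) f_n$; and since the translation action $\Lt$ on $C(S^1)$ has only the constants as fixed points, $(A_{n,1})^{\af^{(n,1)}} = {\mathbb{C}} \otimes \OT = \OT$. Accordingly I set $B_n = (A_n)^{\af^{(n)}}$ with $B_n^{(0)} = (A_{n,0})^{\af^{(n,0)}}$ and $B_n^{(1)} = (A_{n,1})^{\af^{(n,1)}} = \OT$, both nonzero, and let the partial maps $\mu_{n+1,n}^{(k,j)}$ be the restrictions of $\nu_{n+1,n}^{(k,j)}$ to the relevant fixed point algebras; these are unital because the $\nu_{n+1,n}$ are.

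Then I verify the three hypotheses. Condition~(\ref{I__2521_SLim_pi}) holds because $B_n^{(1)} = \OT$ is purely infinite and simple. Condition~(\ref{I__2521_SLim_inj}) follows from Lemma~\ref{L_2605_Cns_OK}(\ref{I_L_2605_Cns_inj}), since the restriction of an injective homomorphism is injective. The key step is condition~(\ref{I__2521_SLim_full}): the partial map $\mu_{n+1,n}^{(0,1)} \colon \OT \to f_{n+1}(K \otimes \OI^{\bt}) f_{n+1}$ is the restriction of $\nu_{n+1,n}^{(0,1)}$, which by Construction~\ref{Cns_2114_OIA}(\ref{I_2114_OIA_Parts}) sends $a \mapsto e_{n+1,\,n+1} \otimes \io(a)$; since $\io$ is unital into $p \OI p$, it sends $1_{B_n^{(1)}}$ to $e_{n+1,\,n+1} \otimes p$. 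I therefore need $t \in f_{n+1}(K \otimes \OI^{\bt}) f_{n+1}$ with $\|t\| = 1$ and $t^*(e_{n+1,\,n+1} \otimes p) t = f_{n+1}$. This is exactly the fullness property in the last sentence of Lemma~\ref{L_2521_Exist} (the projection $e_{n+1,\,n+1} \otimes p$ is equivalent in $K \otimes \OI^{\bt}$ to $e_{1,1} \otimes p$, and $f_{n+1}$ is a projection in a finite corner $\cong M_{n+2} \otimes \OI^{\bt}$). Replacing $t$ by $(e_{n+1,\,n+1} \otimes p)\, t\, f_{n+1}$ puts it inside the corner $B_{n+1}^{(0)}$ without changing $t^*(e_{n+1,\,n+1} \otimes p) t$, so condition~(\ref{I__2521_SLim_full}) holds.

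With all hypotheses verified, Lemma~\ref{L_2521_SLim} gives that $A^{\af} = \dirlim_n B_n$ is purely infinite and simple. The only real work is matching condition~(\ref{I__2521_SLim_full}) to Lemma~\ref{L_2521_Exist} (which is precisely why the fullness clause was built into that lemma); everything else is routine bookkeeping about the fixed point algebras of the individual summands.
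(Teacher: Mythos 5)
Your proposal is correct and follows essentially the same route as the paper: the paper's proof also applies Lemma~\ref{L_2521_SLim} with $B_n^{(0)} = (A_{n,0})^{\af^{(n,0)}}$ and $B_n^{(1)} = (A_{n,1})^{\af^{(n,1)}} \cong \OT$, gets injectivity by restricting Lemma \ref{L_2605_Cns_OK}(\ref{I_L_2605_Cns_inj}), and verifies condition~(\ref{I__2521_SLim_full}) from the fullness clause of Lemma~\ref{L_2521_Exist} together with the Murray--von Neumann equivalence of $e_{n+1,\,n+1} \otimes p$ with a fixed $e_{j,j} \otimes p$ in $(K \otimes \OI)^{\id_K \otimes \bt}$. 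Your explicit cut-down $t \mapsto (e_{n+1,\,n+1} \otimes p)\, t\, f_{n+1}$ just fills in a detail the paper leaves to the reader.
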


\begin{proof}
Following the notation of Construction \ref{Cns_2114_OIA}, for $n \in \Nz$
in Lemma~\ref{L_2521_SLim} we take
\[
B_n = (A_n)^{\af^{(n)}},
\qquad
B_n^{(0)} = (A_{n, 0})^{\af^{(n, 0)}},
\andeqn
B_n^{(1)} = (A_{n, 1})^{\af^{(n, 1)}},
\]
and let
$\mu_{n, m}$ be the restriction of $\nu_{n, m}$ to the fixed point algebra.
Then
$\mu_{n + 1, n}^{(k, j)}
 = \nu_{n + 1, n}^{(k, j)} |_{(A_{n, j})^{\af^{(n, j)}}}$
for $n \in \Nz$ and $j, k \in \{ 0, 1 \}$.

In Lemma~\ref{L_2521_SLim},
condition~(\ref{I__2521_SLim_pi})
follows because
$B_n^{(1)} = (C (S^1) \otimes \OT)^{\Lt \otimes \id_{\OT}} \cong \OT$,
and condition~(\ref{I__2521_SLim_inj})
follows from Lemma \ref{L_2605_Cns_OK}(\ref{I_L_2605_Cns_inj})
by restriction.
Condition~(\ref{I__2521_SLim_full}) is a consequence of the
choices in Construction \ref{Cns_2114_OIA}(\ref{I_2114_OIA_A})
made using Lemma~\ref{L_2521_Exist}, and the fact that
$e_{n + 1, \, n + 1} \otimes p$ is \mvnt{} to $e_{0, 0} \otimes p$
in $(K \otimes \OI)^{\id_K \otimes \bt}$.
The conclusion thus follows from Lemma~\ref{L_2521_SLim}.
\end{proof}

\begin{thm}\label{2610_OI_TRP}
Let $\af \colon S^1 \to \Aut (A)$ be as in
Construction \ref{Cns_2114_OIA}(\ref{I_2114_OIA_Lim}).
Then $\af$ has the tracial Rokhlin property with comparison.
\end{thm}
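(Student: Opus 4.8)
The plan is to verify Definition~\ref{traR} directly, exploiting that both $A$ and $A^{\af}$ are purely infinite and simple (Lemma~\ref{L_2605_Cns_OI} and Lemma~\ref{L_2609_FixedPt}). As in the proof of Proposition~\ref{P_1X17_PI}, this makes the three Cuntz comparison conditions essentially free: once a \emph{nonzero} invariant projection $p$ has been produced, the relations $1 - p \precsim_A x$, $1 - p \precsim_{A^{\af}} y$, and $1 - p \precsim_{A^{\af}} p$ all hold automatically, since in a purely infinite simple \ca{} every element is Cuntz subequivalent to every nonzero positive element. Thus the real content is to produce, for given finite $F \S A$ and $S \S C (S^1)$, given $\ep > 0$, and given $x \in A_{+}$ with $\| x \| = 1$, a nonzero projection $p \in A^{\af}$ together with an equivariant \emph{unital homomorphism} $\ph \colon C (S^1) \to p A p$ whose image $\ep$-commutes with $F$ and which satisfies the peak condition~(\ref{Item_902_pxp_TRP}), $\| p x p \| > 1 - \ep$.

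The source of the equivariant copy of $C (S^1)$ is the summand $A_{m, 1} = C (S^1, \OT) = C (S^1) \otimes \OT$ of $A_m$. First I would normalize so that $\| a \| \leq 1$ for $a \in F$ and $\| f \| \leq 1$ for $f \in S$. I would then choose a stage $m$ large enough that every element of $F$ is approximated within $\ep / 3$ by $\nu_{\I, m} (A_m)$. Set $p_m = (0, 1) \in A_m$, the identity of $A_{m, 1}$; it is fixed by $\af^{(m)}$, so $p = \nu_{\I, m} (p_m)$ is a nonzero invariant projection. Define $\ph_m \colon C (S^1) \to A_{m, 1} = p_m A_m p_m$ by $\ph_m (f) = f \otimes 1_{\OT}$ and put $\ph = \nu_{\I, m} \circ \ph_m$. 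Since the action on $A_{m, 1}$ is $\Lt_{\zt} \otimes \id_{\OT}$, the map $\ph_m$ is an equivariant unital homomorphism, and hence so is $\ph$; in particular Definition~\ref{phillhir}(\ref{Item_1X07_4}) and~(\ref{equiapprox}) hold exactly. For the centrality condition~(\ref{Item_1X07_5}) I would use that $C (S^1) \otimes 1$ is precisely the center of $A_{m, 1}$, so that $(0, f \otimes 1)$ commutes with all of $A_m = A_{m, 0} \oplus A_{m, 1}$; thus $\ph (f)$ commutes exactly with $\nu_{\I, m} (A_m)$, and the $\ep / 3$-approximation of $F$ gives $\| \ph (f) a - a \ph (f) \| < \ep$ for $a \in F$ and $f \in S$.

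The one genuinely new point, and the main obstacle, is the peak condition: unlike in Lemma~\ref{L_1917_HasTRP}, the algebra $A$ is not stably finite, so this condition cannot simply be discarded. The key observation is that the connecting maps push the full norm of any element into the summand $A_{\cdot, 1}$. Concretely, the $A_{m+1, 1}$-component of $\nu_{m+1, m} (a_0, a_1)$ equals $\nu_{m+1, m}^{(1, 0)} (a_0) + \nu_{m+1, m}^{(1, 1)} (a_1)$, and the ranges of these two summands lie in the mutually orthogonal corners $C (S^1) \otimes q_0 \OT q_0$ and $C (S^1) \otimes q_1 \OT q_1$ of $A_{m+1, 1}$. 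Since $\nu_{m+1, m}^{(1, 0)}$ and $\nu_{m+1, m}^{(1, 1)}$ are injective (Lemma~\ref{L_2605_Cns_OK}(\ref{I_L_2605_Cns_inj})), hence isometric, this component has norm $\max (\| a_0 \|, \| a_1 \|) = \| (a_0, a_1) \|$. Applying this at the last map of the composite shows that for any $n < m$ and any $a \in A_n$ the $A_{m, 1}$-component of $\nu_{m, n} (a)$ has norm exactly $\| a \|$. I would therefore approximate $x$ at some stage $n$ by $\nu_{\I, n} (\widetilde{x})$ with $\| \widetilde{x} \| > 1 - \ep / 3$ (automatic from $\| x \| = 1$), take $m > n$ also large enough for the $F$-approximation, and set $x_0 = \nu_{m, n} (\widetilde{x}) \in A_m$, so that $\nu_{\I, m} (x_0) = \nu_{\I, n} (\widetilde{x})$ approximates $x$ within $\ep / 3$ while $\| x_0^{(1)} \| = \| \widetilde{x} \| > 1 - \ep / 3$. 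Since $p \, \nu_{\I, m} (x_0) \, p = \nu_{\I, m} (0, x_0^{(1)})$, I would conclude $\| p x p \| \geq \| x_0^{(1)} \| - \| x - \nu_{\I, m} (x_0) \| > 1 - \ep$, completing the verification.
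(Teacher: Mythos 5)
Your proposal is correct and follows essentially the same route as the paper's proof: the projection is the image of $(0,1) \in A_{n+1}$ (the identity of the $C(S^1,\OT)$ summand at a later stage), the equivariant map is $f \mapsto f \otimes 1_{\OT}$, the three Cuntz comparison conditions come for free from pure infiniteness and simplicity of $A$ and $A^{\af}$ (Lemmas~\ref{L_2605_Cns_OI} and~\ref{L_2609_FixedPt}), and the peak condition $\| p x p \| > 1 - \ep$ is obtained exactly as in the paper, from injectivity of the combined map $A_n \to A_{n+1,1}$ into the orthogonal corners $C(S^1) \otimes q_0 \OT q_0$ and $C(S^1) \otimes q_1 \OT q_1$, which forces the second-summand component of $\nu_{m,n}(\widetilde{x})$ to carry the full norm. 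The only differences (your explicit centrality remark for $f \otimes 1$ in $A_{m,1}$, and general $m > n$ instead of $m = n+1$) are cosmetic.
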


\begin{proof}
We verify the conditions of Definition~\ref{traR}.
Let $F \subseteq A$ and $S \subseteq C (G)$ be finite,
let $\varepsilon > 0$, let $x \in A_{+}$ satisfy $\| x \| = 1$,
and let $y \in (A^{\alpha})_{+} \setminus \{ 0 \}$.
Choose $n \in \Nz$ so large that there is a finite subset
$E \subseteq A_n$
with $\dist (a, \, \nu_{\I, n} (E) ) < \frac{\ep}{2}$
for all $a \in F$ and there is $c \in A_n$
with $\| x - \nu_{\I, n} (c) \| < \frac{\ep}{2}$.
Define $q = (0, 1) \in A_{n + 1}$.
The formula $\ps (f) = f \otimes 1_{\OT}$ defines
an equivariant unital \hm{} $\ps \colon C (S^1) \to A_{n + 1, 1}$,
which we identify with a unital \hm{}
$\ps \colon C (S^1) \to q A_{n + 1} q$.
Now define $p = \nu_{\I, n} (q)$ and
$\ph = \nu_{\I, n + 1} \circ \ps \colon C (S^1) \to A$.
Then $p$ is an $\af$-invariant \pj{} and $\ph$ is
an equivariant unital \hm{} from $C (S^1)$ to~$p A p$.

We claim that for all $a \in F$ and $f \in C (S^1)$ we have
$\| a \ph (f) - \ph (f) a \| < \ep$.
This will verify condition~(\ref{Item_893_FS_equi_cen_multi_approx})
of Definition~\ref{traR}.
Let $a \in F$.
Choose $b \in E$ such that
$\| \nu_{\I, n} (b) - a \| < \frac{\ep}{2}$.
Then $\nu_{n + 1, \, n} (b) \ps (f) = \ps (f) \nu_{n + 1, \, n} (b)$,
so, applying $\nu_{\I, n + 1}$,
\[
\| a \ph (f) - \ph (f) a \|
 \leq 2 \| a - \nu_{\I, n} (b) \| < \ep.
\]
The claim is proved.

We have $1 - p \precsim_A x$ because $x \neq 0$
and $A$ is purely infinite and simple.
Similarly, using Lemma~\ref{L_2609_FixedPt},
$1 - p \precsim_{A^{\af}} y$ and $1 - p \precsim_{A^{\af}} p$.

It remains only to verify condition~(\ref{Item_902_pxp_TRP})
of Definition~\ref{traR}.
Let $y = \nu_{n + 1, \, n} (c) \in A_{n + 1}$ and
write $y = (y_0, y_1)$ with $y_j \in A_{n + 1, \, j}$ for $j = 0, 1$.
The map $A_n \to A_{n + 1, \, 1}$ is injective by
Lemma \ref{L_2605_Cns_OK}(\ref{I_L_2605_Cns_inj}).
Using this at the third step, we have
\[
\begin{split}
\| p x p \|
& > \| p \nu_{\I, \, n} (c) p \| - \frac{\ep}{2}
  = \| q \nu_{n + 1, \, n} (c) q \| - \frac{\ep}{2}
\\
& = \| y_1 \| - \frac{\ep}{2}
  = \| c \| - \frac{\ep}{2}
  > \| x \| - \ep.
\end{split}
\]
This completes the proof.
\end{proof}

\begin{cor}\label{C_2611_EOI}
There exists an action of $S^1$ on $\OI$
which has the tracial Rokhlin property with comparison.
\end{cor}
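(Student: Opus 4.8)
The plan is to read off Corollary~\ref{C_2611_EOI} as an immediate consequence of the two results that precede it. First I would invoke Lemma~\ref{L_2605_Cns_OI}, which identifies the direct limit algebra $A$ of Construction~\ref{Cns_2114_OIA}(\ref{I_2114_OIA_Lim}) with~$\OI$; I would fix an isomorphism $\te \colon A \to \OI$. Next I would invoke Theorem~\ref{2610_OI_TRP}, which establishes that the direct limit action $\af \colon S^1 \to \Aut (A)$ has the tracial Rokhlin property with comparison.

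It then remains only to transport the action along~$\te$. I would define $\tilde{\af} \colon S^1 \to \Aut (\OI)$ by $\tilde{\af}_{\zt} = \te \circ \af_{\zt} \circ \te^{-1}$ for $\zt \in S^1$, which is a continuous action of $S^1$ on~$\OI$. The tracial Rokhlin property with comparison (Definition~\ref{traR}) is formulated entirely in terms of the internal data of the pair $(A, \af)$: finite subsets, positive elements, invariant projections, unital completely positive maps into corners, and Cuntz subequivalence, all of which are carried isomorphically by~$\te$. Hence $\tilde{\af}$ inherits the tracial Rokhlin property with comparison from~$\af$, which is exactly the assertion.

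Since all the substantive work has already been carried out in Construction~\ref{Cns_2114_OIA} and in Theorem~\ref{2610_OI_TRP}, I do not expect any genuine obstacle here; the only point worth recording explicitly is the routine invariance of the property under equivariant isomorphism, and even that could reasonably be left implicit.
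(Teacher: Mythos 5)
Your proposal is correct and is essentially the paper's own proof: the paper likewise deduces the corollary by combining Lemma~\ref{L_2605_Cns_OI} (the identification $A \cong \OI$) with Theorem~\ref{2610_OI_TRP}, leaving the transport of the action along the isomorphism implicit. Your explicit remark that the tracial Rokhlin property with comparison is preserved under equivariant conjugation is the only (routine) point the paper suppresses.
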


\begin{proof}
By Lemma~\ref{L_2605_Cns_OI}, the algebra $A$ in Theorem~\ref{2610_OI_TRP}
is isomorphic to~$\OI$.
\end{proof}

\begin{lem}\label{2610_No_fin_RD}
There is no action of $S^1$
on $\OI$ which has finite Rokhlin dimension with commuting towers.
\end{lem}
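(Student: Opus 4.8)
The plan is to derive the statement directly from the literature: it is exactly Corollary 4.23 of \cite{Gar_rokhlin_2017}, and it also follows, by an independent method, from Theorem~4.6 of \cite{HrsPh1}. So the proof amounts to matching the hypotheses of one of those results---a unital Kirchberg algebra, here $\OI$, carrying an action of the nontrivial connected compact Lie group $S^1$---and quoting it. Since both references are already invoked for precisely this purpose in the introduction, the write-up can be essentially a single sentence, presenting the lemma as an application of Corollary 4.23 of \cite{Gar_rokhlin_2017}.

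For context I would indicate the mechanism, which runs parallel to the proof of Corollary~\ref{C_1918_NoFinRD}. Suppose, for contradiction, that some action $\af \colon S^1 \to \Aut (\OI)$ had finite Rokhlin dimension with commuting towers. By Corollary 4.5 of \cite{Gar_rokhlin_2017} there would then be $n \in \N$ with $I (S^1)^n K_0^{S^1} (\OI) = 0$; that is, the augmentation ideal $I (S^1)$ would act nilpotently on the $R (S^1)$-module $K_0^{S^1} (\OI)$. One starting point is that $\OI$ is a unital Kirchberg algebra with $[1_{\OI}] \neq 0$ in $K_0 (\OI) \cong \Z$, so the class of the unit is a distinguished nonzero element of $K_0^{S^1} (\OI)$ mapping to a generator under the $R (S^1)$-linear forgetful map $K_0^{S^1} (\OI) \to K_0 (\OI)$, where $\Z = R (S^1) / I (S^1)$. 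The contradiction is then to be extracted from the interplay between this class, the $R (S^1)$-module structure, and the connectedness of $S^1$.

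The hard part is exactly the input supplied by \cite{Gar_rokhlin_2017} and \cite{HrsPh1}: in contrast to Corollary~\ref{C_1918_NoFinRD}, where a concrete direct-limit description made $K_0^{S^1}$ computable, here the action is arbitrary and one has no explicit model for the $R (S^1)$-module $K_0^{S^1} (\OI)$. The structural content of the cited theorems is that, because $S^1$ is nontrivial and connected, finite Rokhlin dimension with commuting towers is incompatible with the equivariant K-theory that \emph{any} action on $\OI$ must carry, so that the required $I (S^1)$-nilpotence cannot occur. Establishing this incompatibility uniformly in the action is precisely what those papers do, so I would not reprove it; the only genuine step is the verification that our situation satisfies their hypotheses, and the lemma follows.
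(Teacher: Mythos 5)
Your proposal is correct and coincides with the paper's proof, which consists of the single sentence citing Corollary 4.23 of \cite{Gar_rokhlin_2017}; the supplementary discussion of the mechanism via $I(S^1)$-nilpotence of $K_0^{S^1}(\OI)$ is accurate context but not needed. Nothing further is required.
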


\begin{proof}
This is part of Corollary 4.23 of \cite{Gar_rokhlin_2017}.
\end{proof}

For $n \in \{ 3, 4, \ldots \}$,
a construction similar to Construction~\ref{Cns_2114_OIA}
presumably gives an action of $S^1$ on ${\mathcal{O}}_{n}$ which has
the tracial Rokhlin property with comparison.
The only changes needed are in the construction of $u$ in the proof
of Lemma~\ref{L_2521_Exist}.
Corollary 4.23 of \cite{Gar_rokhlin_2017} also implies that
there is no action of $S^1$ on ${\mathcal{O}}_{n}$
which has finite Rokhlin dimension with commuting towers.

\begin{prp}\label{P_5327_Tensor}
Let $A_1$ and $A_2$ be simple separable \uca{s},
let $G$ be a second countable compact group,
and let $\bt \colon G \to \Aut (A_1)$ be an action that has
the restricted tracial Rokhlin property with comparison.
Then the action $\af \colon G \to \Aut (A_1 \otimes_{\min} A_2)$,
defined by $\af_g = \bt_g \otimes \id_{A_2}$,
has the restricted tracial Rokhlin property with comparison.
\end{prp}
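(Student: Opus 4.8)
The plan is to verify the four conditions of Definition~\ref{traR} for $\af$ on $A = A_1 \otimes_{\min} A_2$ by pulling all the data back to the action $\bt$ on $A_1$ and then tensoring with $A_2$. First I would record the structural facts. Since the minimal tensor product of simple \cas{} is simple, $A$ is an infinite dimensional simple separable \uca{} and $\af$ is a legitimate action for Definition~\ref{traR}. Averaging over $G$ with the Haar integral gives a conditional expectation $E_1 \colon A_1 \to A_1^{\bt}$, and $E_1 \otimes \id_{A_2}$ is a conditional expectation of $A$ whose range is exactly $A^{\af}$; evaluating on elementary tensors as in the proof of Lemma~\ref{L_5420_Fin_fix} yields $A^{\af} = A_1^{\bt} \otimes_{\min} A_2$. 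Moreover $A_1^{\bt}$ is simple (a permanence property of the restricted \trpc, from~\cite{MhkPh1}), hence so is $A^{\af}$. Throughout I use the elementary fact that Cuntz subequivalence is preserved under tensoring by a fixed algebra: if $a \precsim_B b$ in a \ca~$B$, then $a \otimes 1 \precsim_{B \otimes_{\min} A_2} b \otimes 1$, applying the elements $v_n$ in the relation $v_n b v_n^* \to a$ after tensoring with $1_{A_2}$.

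Now fix finite sets $F \S A$ and $S \S C (G)$, $\ep > 0$, $x \in A_{+} \SM \{ 0 \}$, and $y \in (A^{\af})_{+} \SM \{ 0 \}$; \wolog{} $\| f \| \leq 1$ for $f \in S$. By the standard reduction using approximation, scaling and linear combinations (exactly as at the corresponding point of the proof of Proposition~\ref{tensortrpacts}), it suffices to treat the case where $F$ consists of elementary tensors $a_j^{(1)} \otimes a_j^{(2)}$, $1 \le j \le m$, with $\| a_j^{(1)} \|, \| a_j^{(2)} \| \le 1$; set $F_1 = \{ a_1^{(1)}, \ldots, a_m^{(1)} \} \S A_1$. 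The crucial step is to convert $x$ and $y$ into elements supported on the first tensor factor. Applying Lemma~\ref{L_5501_Comp_1} to the simple \uca{s} $A_1$ and $A_2$ gives $c \in (A_1)_{+}$ with $\| c \| = 1$ and $c \otimes 1_{A_2} \precsim_{A} x$, and applying it to the simple \uca{s} $A_1^{\bt}$ and $A_2$ gives $d \in (A_1^{\bt})_{+}$ with $\| d \| = 1$ and $d \otimes 1_{A_2} \precsim_{A^{\af}} y$.

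Next I would apply the restricted \trpc{} of $\bt$ on $A_1$ to $F_1$, $S$, $\ep$, with $c$ in the role of $x$ and $d$ in the role of $y$, obtaining a projection $p_1 \in A_1^{\bt}$ and a \ucp{} map $\ph_1 \colon C (G) \to p_1 A_1 p_1$ with $\ph_1$ an $(F_1, S, \ep)$-approximately equivariant central multiplicative map, $1 - p_1 \precsim_{A_1} c$, $1 - p_1 \precsim_{A_1^{\bt}} d$, and $1 - p_1 \precsim_{A_1^{\bt}} p_1$. Then set $p = p_1 \otimes 1_{A_2} \in A^{\af}$ and define $\ph \colon C (G) \to p A p$ by $\ph (f) = \ph_1 (f) \otimes 1_{A_2}$, which is unital completely positive. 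Condition~(\ref{Item_893_FS_equi_cen_multi_approx}) reduces to the corresponding one for $\ph_1$: multiplicativity and, using $\af_g = \bt_g \otimes \id_{A_2}$, equivariance are inherited because $\otimes 1_{A_2}$ is an isometric \hm{} intertwining the structures, while for centrality one computes, for $f \in S$,
\[
\bigl\| \ph (f) (a_j^{(1)} \otimes a_j^{(2)})
   - (a_j^{(1)} \otimes a_j^{(2)}) \ph (f) \bigr\|
 = \bigl\| \bigl( \ph_1 (f) a_j^{(1)} - a_j^{(1)} \ph_1 (f) \bigr)
     \otimes a_j^{(2)} \bigr\|
 \le \bigl\| \ph_1 (f) a_j^{(1)} - a_j^{(1)} \ph_1 (f) \bigr\| < \ep.
\]

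Finally, the three comparison conditions follow by tensoring the comparisons for $p_1$ with $1_{A_2}$ and composing with the choices of $c$ and $d$: the chain $1 - p = (1 - p_1) \otimes 1_{A_2} \precsim_A c \otimes 1_{A_2} \precsim_A x$ gives~(\ref{1_pxcompactsets}); the chain $1 - p \precsim_{A^{\af}} d \otimes 1_{A_2} \precsim_{A^{\af}} y$ gives~(\ref{1_pycompactsets}), using $A^{\af} = A_1^{\bt} \otimes_{\min} A_2$; and $1 - p = (1 - p_1) \otimes 1_{A_2} \precsim_{A^{\af}} p_1 \otimes 1_{A_2} = p$ gives~(\ref{1_ppcompactsets}). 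The hard part is the second paragraph: replacing the arbitrary positive elements $x$ and $y$ by elements of the first tensor factor is exactly where Lemma~\ref{L_5501_Comp_1} and the simplicity of $A_2$ and of $A_1^{\bt}$ are essential. Everything after that is routine bookkeeping.
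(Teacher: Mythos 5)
Your proposal is correct and follows essentially the same route as the paper's proof: the same reduction of $F$ to elementary tensors, the same application of Lemma~\ref{L_5501_Comp_1} (to $A_1, A_2$ for $x$ and to $A_1^{\bt}, A_2$ for $y$), the same construction $p = p_1 \otimes 1_{A_2}$ and $\ph = \ph_1 (\cdot) \otimes 1_{A_2}$, and the same tensoring of Cuntz subequivalences, with the fixed point algebra identified via Lemma~\ref{L_5420_Fin_fix} and its simplicity drawn from~\cite{MhkPh1}. Your write-up is in fact slightly more explicit than the paper's at the two points it leaves implicit (the second application of Lemma~\ref{L_5501_Comp_1} and the preservation of $\precsim$ under tensoring with a unit).
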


\begin{proof}
For simplicity of notation, set $A = A_1 \otimes_{\min} A_2$.
Since $A_1$ and $A_2$ are simple, so is~$A$.
It follows from Theorem 3.1 of \cite{MhkPh1}
that $(A_1)^{\bt}$ is simple,
so $(A_1)^{\bt} \otimes_{\min} A_2$ is also simple.
Lemma~\ref{L_5420_Fin_fix} identifies $(A_1)^{\bt} \otimes_{\min} A_2$
with $A^{\af} \subseteq A$,
and we assume this identification throughout the proof.

We verify the conditions of Definition~\ref{traR}.
Let $F \subseteq A$ and $S \subseteq C (G)$ be finite,
let $\varepsilon > 0$, let $x \in A_{+} \setminus \{ 0 \}$,
and let $y \in (A^{\af})_{+} \setminus \{ 0 \}$.
By approximation and algebra, we may assume that there are finite sets
$F_1 \subseteq A_1$ and $F_2 \subseteq A_2$,
contained in the closed unit balls
of these algebras, such that
\[
F = \bigl\{ a_1 \otimes a_2 \colon
   {\mbox{$a_1 \in F_1$ and $a_2 \in F_2$}} \bigr\}.
\]

Lemma~\ref{L_5501_Comp_1} provides $c \in (A_1)_{+} \setminus \{ 0 \}$
and $d \in ((A_1)^{\bt})_{+} \setminus \{ 0 \}$ such that
\begin{equation}\label{Eq_5501_C_cx_dy}
c \otimes 1_{A_2} \precsim_{A} x
\andeqn
d \otimes 1_{A_2} \precsim_{A^{\af}} y.
\end{equation}
Choose $p_0 \in A_1$ and $\ph_0 \colon C (G) \to A_1$
as in Definition~\ref{traR},
with $F_1$ in place of $F$,
with $S$ as given, and with $x = c$ and $y = d$.
Define $p = p_0 \otimes 1_{A_2}$ and define
$\ph \colon C (G) \to p A p$
by $\ph (f) = \ph_0 (f) \otimes 1_{A_2}$ for $f \in C (G)$.
It is easily checked that $\varphi$
is an $(F, S, \varepsilon)$-approximately equivariant
central multiplicative map.
Using (\ref{Eq_5501_C_cx_dy}), $1 - p_0 \precsim_{A_1} c$,
and $1 - p_0 \precsim_{(A_1)^{\bt}} d$, we get
\[
1 - p \precsim_{A} c \otimes 1_{A_2} \precsim_{A} x
\andeqn
1 - p \precsim_{A^{\af}} d \otimes 1_{A_2} \precsim_{A^{\af}} y.
\]
Tensor the relation $1 - p_0 \precsim_{(A_1)^{\bt}} p_0$ with $1_{A_2}$
to get $1 - p \precsim_{A^{\af}} p$.
\end{proof}

\begin{thm}\label{T_2610_Kbg}
Let $\af \colon S^1 \to \Aut (\OI)$
be as in Construction \ref{Cns_2114_OIA}(\ref{I_2114_OIA_Lim}),
using Lemma~\ref{L_2605_Cns_OI} to identify $A$
in Construction \ref{Cns_2114_OIA}(\ref{I_2114_OIA_Lim})
with~$\OI$.
Let $B$ be a unital purely infinite simple separable nuclear \ca.
Then the action
$\bt = \af \otimes \id_B \colon S^1 \to \Aut (\OI \otimes B)$
is conjugate to an action of $S^1$ on~$B$
with the restricted tracial Rokhlin property with comparison.
\end{thm}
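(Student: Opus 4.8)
The plan is to deduce the statement from two earlier results --- the permanence of the restricted \trpc{} under tensoring with the identity map (Proposition~\ref{P_5327_Tensor}) and the fact that $\af$ on $\OI$ already has this property (Theorem~\ref{2610_OI_TRP}) --- together with Kirchberg's $\OI$-absorption theorem, finishing by transporting the action across the absorbing isomorphism.

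First I would observe that $\OI$ and $B$ are both simple, separable, and unital, so that $\OI \otimes B$ is simple; moreover $B$ is nuclear, so $\OI \otimes B = \OI \otimes_{\min} B$ and the choice of tensor norm is immaterial. By Theorem~\ref{2610_OI_TRP}, the action $\af \colon S^1 \to \Aut (\OI)$ has the \trpc, hence in particular the restricted \trpc. Applying Proposition~\ref{P_5327_Tensor} with $A_1 = \OI$, $A_2 = B$, and $\af$ in the role of~$\bt$, I conclude that $\bt = \af \otimes \id_B$ on $\OI \otimes B$ has the restricted \trpc.

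Next I would invoke Kirchberg's $\OI$-absorption theorem (see, e.g., \cite{RorSt2002}): since $B$ is a unital purely infinite simple separable nuclear \ca, there is an isomorphism $\Ph \colon \OI \otimes B \to B$. (No Universal Coefficient Theorem hypothesis on $B$ is needed: the unital embedding $b \mapsto 1_{\OI} \otimes b$ is already a $KK$-equivalence, so only the Kirchberg--Phillips realization of a $KK$-equivalence by an isomorphism is used, and no $K$-theoretic existence step enters.) Setting $\gm_{\zt} = \Ph \circ \bt_{\zt} \circ \Ph^{-1}$ for $\zt \in S^1$ then gives a \ct{} action $\gm \colon S^1 \to \Aut (B)$ that is, by construction, conjugate to $\bt$ via~$\Ph$.

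The final step is to check that $\gm$ has the restricted \trpc. This is where I would be slightly careful, but it is routine: the restricted \trpc{} (Definition~\ref{traR}, Conditions~(\ref{Item_893_FS_equi_cen_multi_approx})--(\ref{1_ppcompactsets})) is phrased entirely in terms of the abstract $G$-algebra, and an equivariant isomorphism carries projections to projections, maps the fixed-point algebra $(\OI \otimes B)^{\bt}$ isomorphically onto $B^{\gm}$, and preserves the Cuntz subequivalences $\precsim_A$ and $\precsim_{A^{\af}}$; thus the property is a conjugacy invariant and passes from $\bt$ to~$\gm$. There is no genuine analytic obstacle here, since the argument is an assembly of previously established facts; the one external input that must be cited with care is the UCT-free form of Kirchberg's $\OI$-absorption theorem, and the only point requiring an explicit (but routine) verification is the conjugacy-invariance of the restricted \trpc.
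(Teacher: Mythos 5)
Your proposal is correct and follows essentially the same route as the paper, which proves the theorem in one line by combining Proposition~\ref{P_5327_Tensor} with the isomorphism $\OI \otimes B \cong B$ (cited there as Theorem~3.15 of~\cite{KP1} rather than~\cite{RorSt2002}, but the same UCT-free Kirchberg absorption result). Your explicit remarks on why no UCT hypothesis is needed and why the restricted \trpc{} is a conjugacy invariant are correct; the paper leaves both points implicit.
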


\begin{proof}
The action has the restricted tracial Rokhlin property with comparison
by Proposition~\ref{P_5327_Tensor},
and $\OI \otimes B \cong B$ by Theorem~3.15 of~\cite{KP1}.
\end{proof}

\begin{rmk}\label{R_5328_AT_t_OI}
It also follows from Proposition~\ref{P_5327_Tensor}
that if $\af \colon S^1 \to \Aut (A)$ is any of the actions
of Theorem~\ref{T_1919_Ex},
then the action $\zt \mapsto \af_{\zt} \otimes \id_{\OI}$
has the restricted tracial Rokhlin property with comparison.
This gives examples of actions
with the restricted tracial Rokhlin property with comparison
on certain unital purely infinite simple separable nuclear \ca{s},
with special K-groups.
For example, both $K_0$ and $K_1$ must be torsion free,
and can't be finitely generated.

Since tensoring with the trivial action on $\OI$ doesn't change
the equivariant K-theory as an $R (S^1)$-module,
the nonisomorphism result of Theorem~\ref{T_5328_Compare}
still holds for these actions.
It is, however, much easier for the underlying algebras to be isomorphic:
assuming the Universal Coefficient Theorem,
by the classification theorem for Kirchberg algebras
(Theorem 4.2.4 of~\cite{Ph_PICls}; \cite{Kbg_3}),
only the K-theory as abelian groups and $[1_A]$ matter.

By Theorem~\ref{T_1919_Ex}, none of these actions
has finite Rokhlin dimension with commuting towers.
\end{rmk}

We now show that none of the examples in Remark~\ref{R_5328_AT_t_OI}
is equivariantly isomorphic to
any of the examples in proof of Theorem~\ref{T_2610_Kbg}.
This requires the completion of the equivariant K-theory at the
augmentation ideal in the representation ring.
We give a brief summary, referring back to the discussion
of equivariant K-theory before Lemma~\ref{L_1918_K1}.
For more, see the discussions in several parts of~\cite{Ph89a} and
the references there.
We let $G$ be a general compact Lie group, not necessarily connected.
Some of this makes sense more generally.
The only case we use is $G = S^1$,
so that $R (G) = \Z [ \sm, \sm^{-1} ]$ and $I (G)$ is the ideal
generated by $\sm - 1$, as discussed before Lemma~\ref{L_1918_K1}.

For any unital ring $R$ and ideal $I \S R$, for $n \in \N$ we let $I^n$
be the set of all sums of products $x_1 x_2 \cdots x_n$
with $x_1, x_2, \ldots, x_n \in I$.
This is an ideal, and $I^m I^n = I^{m + n}$.
Similarly, if $M$ is an $R$-module and $J \S R$ is an ideal,
then $J M$ is submodule given as the set of all sums of products $x m$
with $x \in J$ and $m \in M$.
For any $R$-module~$M$, the $I$-adic topology on~$M$
has a neighborhood base at $0$
consisting of the sets $M, I M, I^2 M, \ldots$,
and ${\widehat{M}}$ is the Hausdorff completion of $M$ in this topology,
which can also be realized as the inverse limit $\invlim_n M / I^n M$.
Obviously ${\widehat{M}}$ is an $R$-module.
Although we won't explicitly use this fact,
${\widehat{R}}$ is a unital ring
and ${\widehat{M}}$ is an ${\widehat{R}}$-module.

Here, we always take $R = R (G)$ and $I = I (G)$.
Thus, the notation ${\widehat{M}}$ will be unambiguous.

\begin{lem}\label{P_5328_Tfr}
Let $E$ be a torsion free abelian group.
Make $R (S^1) \otimes E$ into an $R (S^1)$-module in the standard way.
Let $x \in [R (S^1) \otimes E]^{\wedge}$,
and suppose that,
with $\sm$ as in the discussion before Lemma~\ref{L_1918_K1},
we have $(\sm - 1) x = 0$.
Then $x = 0$.
\end{lem}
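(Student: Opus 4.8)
The plan is to compute the completion $\widehat M = [R(S^1)\otimes E]^\wedge$ explicitly and then to observe that multiplication by $\sm - 1$ acts on it as a shift of coefficients, which is manifestly injective. Throughout write $R = R(S^1) = \Z[\sm,\sm^{-1}]$, $I = I(S^1) = (\sm - 1)$, $t = \sm - 1$, and $M = R \otimes_\Z E$, with the standard $R$-module structure acting on the left tensor factor. First I would record that $R/I^n \cong \Z[t]/(t^n)$, which is free as an abelian group with basis $1, t, \ldots, t^{n-1}$. This is because $\sm \equiv 1 \pmod{(\sm-1)}$ is a unit, so $\sm^{-1} \equiv \sum_{k=0}^{n-1} (-1)^k t^k$ modulo $(\sm-1)^n$; hence the inclusion $\Z[\sm] \hookrightarrow \Z[\sm,\sm^{-1}]$ induces an isomorphism $\Z[\sm]/(\sm-1)^n \to R/I^n$, and the left-hand side is visibly $\Z[t]/(t^n)$.

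Next I would use that $E$, being torsion free, is flat over $\Z$. Flatness makes $I^n \otimes E \to R \otimes E$ injective with image $I^n M$, so that $M/I^n M \cong (R/I^n)\otimes_\Z E$. Via the basis above, this is $\bigoplus_{k=0}^{n-1} E\, t^k$, the polynomials in $t$ of degree less than $n$ with coefficients in $E$, and the transition map $M/I^{n+1}M \to M/I^n M$ simply discards the top coefficient. Passing to the inverse limit identifies $\widehat M$ with the formal power series module $E[[t]] = \{\sum_{k\ge 0} e_k t^k : e_k \in E\}$, and under this identification multiplication by $\sm - 1 = t$ carries $\sum_k e_k t^k$ to $\sum_k e_k t^{k+1}$, i.e.\ it is the coefficient shift. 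Therefore $(\sm-1)x = 0$ forces every coefficient of $x$ to vanish, so $x = 0$. Concretely, writing $x = (x_n)_n$ with $x_n = \sum_{k < n} e_k t^k \in M/I^n M$, the hypothesis reads $t x_n = 0$ in $M/I^n M$ for every $n$, which kills $e_0, \ldots, e_{n-2}$; letting $n \to \I$ kills all $e_k$.

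The one place that genuinely needs care, rather than a real obstacle, is the flatness step: without torsion freeness the natural surjection $(R/I^n)\otimes E \to M/I^n M$ need not be injective, and the clean power series description of $\widehat M$ could fail. Everything else is a routine computation in the $(\sm-1)$-adic filtration, and the final injectivity of the shift—where the conclusion $x = 0$ actually comes from—needs no hypothesis on $E$ beyond what was already used to describe $\widehat M$.
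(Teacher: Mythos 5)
Your proof is correct, but it takes a genuinely different route from the paper. The paper argues by reduction: it first treats $E = \Q$ by lifting a coherent sequence to $\Q [ \sm, \sm^{-1} ]$ and cancelling in an integral domain, then handles $\Q$-vector spaces by decomposing into copies of~$\Q$, and finally embeds $[R (S^1) \otimes E]^{\wedge}$ into $[R (S^1) \otimes E \otimes \Q]^{\wedge}$, using torsion freeness of $E$ to get injectivity of $\Z^n \otimes E \to \Z^n \otimes E \otimes \Q$ at each finite stage. You instead compute the completion outright: with $t = \sm - 1$ you identify $R (S^1) / I (S^1)^n$ with $\Z [t] / (t^n)$ (the paper proves the same freeness with basis $1, \sm, \ldots, \sm^{n - 1}$; your truncated geometric series inverting $\sm$ modulo $(\sm - 1)^n$ is the cleaner way to see it), conclude $[R (S^1) \otimes E]^{\wedge} \cong E [[t]]$, and observe that multiplication by $\sm - 1$ is the coefficient shift, which is injective. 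This is more elementary---no rationalization, no unique factorization---and it is actually \emph{more general} than you claim: your concluding caveat about torsion freeness is unfounded, since the natural map $(R / I^n) \otimes E \to M / I^n M$ is an isomorphism for \emph{every} abelian group $E$ by right exactness of the tensor product (flatness would be needed only for injectivity of $I^n \otimes E \to M$, which your argument never uses). So your method proves the lemma for arbitrary abelian groups~$E$, whereas the paper's proof genuinely consumes the torsion freeness hypothesis; since the paper only needs the torsion free case, both suffice, but your route buys a stronger statement at lower cost.
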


\begin{proof}
We first prove this when $E = \Q$.
In this case, $R (S^1) \otimes E \cong \Q [ \sm, \sm^{-1} ]$,
$\Q [ \sm, \sm^{-1} ]$ is a unital ring, and
$R (S^1) \otimes E$ is in fact a $\Q [ \sm, \sm^{-1} ]$-module
in the standard way.
Also, $I (S^1)^n [R (S^1) \otimes E]$ is the ideal
$\langle (\sm - 1)^n \rangle$ in $\Q [ \sm, \sm^{-1} ]$
generated by $(\sm - 1)^n$.

Recall that if $(M_n)_{n \in \Nz}$ is an inverse system of modules,
with maps $f_n \colon M_{n} \to M_{n - 1}$,
then $\invlim_n M_n$ can be realized as the set of all sequences
$(x_n)_{n \in \Nz} \in \prod_{n = 0}^{\I} M_n$ such that for all $n \in \N$
we have
\begin{equation}\label{Eq_5328_Cons}
f_n (x_{n}) = x_{n - 1}.
\end{equation}
Represent $x \in [R (S^1) \otimes E]^{\wedge}$ as such a sequence
\[
(x_n)_{n \in \Nz}
\in \prod_{n = 0}^{\I} \Q [ \sm, \sm^{-1} ] / \langle (\sm - 1)^n \rangle.
\]
Choose $y_n \in \Q [ \sm, \sm^{-1} ]$
whose image in $\Q [ \sm, \sm^{-1} ] / \langle (\sm - 1)^n \rangle$ is~$x_n$.
The consistency condition~(\ref{Eq_5328_Cons}) becomes
\begin{equation}\label{Eq_5328_Diff}
y_{n + 1} - y_{n} \in \langle (\sm - 1)^{n} \rangle
\end{equation}
for all $n \in \Nz$.
The hypothesis $(\sm - 1) x = 0$ becomes
$(\sm - 1) y_{n + 1} \in \langle (\sm - 1)^{n + 1} \rangle$
for all $n \in \Nz$.
This last relation says there is $z \in \Q [ \sm, \sm^{-1} ]$
such that $(\sm - 1) y_{n + 1} = (\sm - 1)^{n + 1} z$.
Since $\Q [ \sm, \sm^{-1} ]$ is an integral domain,
we get $y_{n + 1} = (\sm - 1)^{n} z$, so
$y_{n + 1} \in \langle (\sm - 1)^{n} \rangle$.
Now $y_{n} \in \langle (\sm - 1)^{n} \rangle$ by~(\ref{Eq_5328_Diff}).
This is true for all $n \in \Nz$, so $x = 0$.
The case $E = \Q$ is done.

Next, suppose that $E$ is a vector space over~$\Q$,
so that $E$ is a dirct sum $E \cong \bigoplus_{j \in J} E_j$,
with $E_j \cong \Q$ for all $j \in J$.
Write $x = (x_n)_{n \in \Nz}$ with $x_n \in E / (\sm - 1)^n E$
for all $n \in \Nz$.
In turn, write $x_n = (x_{n, j})_{j \in J}$
with $x_{n, j} \in E_j / (\sm - 1)^n E_j$ for all $j \in J$
(and with $x_{n, j} = 0$ for all but finitely many $j \in J$).
For $j \in J$, the sequence $z_j = (x_{n, j})_{n \in \Nz}$
is in $[R (S^1) \otimes \Q]^{\wedge}$ and $(\sm - 1) z_j = 0$.
Therefore $z_j = 0$ by the case $E = \Q$.
Since this is true for all $j \in J$, we conclude $x = 0$,
completing the proof when $E$ is a vector space over~$\Q$.

Now consider the general case.

We first claim that, as an abelian group, we have
$R (S^1) / I (S^1)^n \cong \Z^n$.
To see this, define $h_0 \colon \Z^n \to R (S^1)$ by
\[
h_0 (m_0, m_1, \ldots, m_{n - 1}) = \sum_{j = 0}^{n - 1} m_j \sm^j
\]
for $(m_0, m_1, \ldots, m_{n - 1}) \in \Z^n$.
Further let $p \colon R (S^1) \to R (S^1) / I (S^1)^n$ be the quotient map,
and set $h = p \circ h_0$.

The map $h$ is injective because no integer combination of
$1, \sm, \sm^2, \ldots, \sm^{n - 1}$ is a multiple in $\Z [ \sm, \sm^{-1} ]$
of $(\sm - 1)^n$.
For surjectivity, observe that $1 + I (S^1)^n$ is in the range $\mathrm{Ran} (h)$.
We will also show that $\mathrm{Ran} (h)$ is closed under multiplication by
$\sm$ and $\sm^{-1}$.
This will prove the claim.
For multiplication by~$\sm$, since
\[
\sm \cdot h_0 (m_0, m_1, \ldots, m_{n - 1})
 = m_{n - 1} \sm^n + \sum_{j = 0}^{n - 1} m_{j - 1} \sm^j,
\]
we need only show that $\sm^n + I (S^1)^n \in \mathrm{Ran} (h)$,
which follows from $\sm^n - (\sm - 1)^n \in \mathrm{Ran} (h_0)$.
For multiplication by~$\sm^{-1}$, it is similarly enough to show
that $\sm^{-1} + I (S^1)^n \in \mathrm{Ran} (h)$.
This follows from $\sm^{-1} - \sm^{-1} (\sm - 1)^n \in \mathrm{Ran} (h_0)$.
The claim is proved.

We next claim that the map
\[
[R (S^1) \otimes E]^{\wedge} \to [R (S^1) \otimes E \otimes \Q]^{\wedge}
\]
is injective.
Since inverse limits preserve injectivity,
it is enough to prove that, for all $n \in \Nz$, the map
\begin{equation}\label{Eq_5328_Inj_n}
[R (S^1) \otimes E] / I (S^1)^n [R (S^1) \otimes E]
 \to [R (S^1) \otimes E \otimes \Q]
      / I (S^1)^n [R (S^1) \otimes E \otimes \Q]
\end{equation}
is injective.
Rewrite~(\ref{Eq_5328_Inj_n}) as
\[
[R (S^1) / I (S^1)^n] \otimes E
 \to [R (S^1) / I (S^1)^n] \otimes E \otimes \Q,
\]
which by the previous claim is
\begin{equation}\label{Eq_5428_StSt}
\Z^n \otimes E \to \Z^n \otimes E \otimes \Q.
\end{equation}
The map $E \to E \otimes \Q$ is injective since $E$ is torsion free,
so (\ref{Eq_5428_StSt}) is injective, and the claim follows.

Given the claim, let $x \in [R (S^1) \otimes E]^{\wedge}$,
and suppose that $(\sm - 1) x = 0$.
Let $y \in [R (S^1) \otimes E \otimes \Q]^{\wedge}$ be the image of~$x$.
Then $(\sm - 1) y = 0$.
Since $E \otimes \Q$ is a rational vector space,
the previous case implies $y = 0$.
The last claim now shows that $x = 0$.
\end{proof}

For the statement of the next result, let representable K-theory
for $\sm$-\ca{s} be as in~\cite{PhRKTh},
let $E S^1$ be a model for the classifying space of $S^1$
as described at the beginning of Section~2 of~\cite{Ph89a},
let $C (E S^1)$ be the $\sm$-\ca{} of all \cfn{s} on $E S^1$
(not necessarily bounded), and let $\gm \colon S^1 \to \Aut (C (E S^1))$
be the action coming from the action of $S^1$ on $E S^1$.

\begin{lem}\label{L_5328_OI_NTor}
Let $B$ be a unital purely infinite simple separable nuclear \ca.
Assume that $K_0 (B)$ is torsion free.
Let $\bt \colon S^1 \to \Aut (B)$
be the action of Theorem~\ref{T_2610_Kbg}.
Using the notation above,
let $x \in RK_0 \bigl( (B \otimes C (E S^1))^{\bt \otimes \gm} \bigr)$,
and suppose that,
with $\sm$ as in the discussion before Lemma~\ref{L_1918_K1},
we have $(\sm - 1) x = 0$.
Then $x = 0$.
\end{lem}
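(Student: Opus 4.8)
The plan is to reduce the statement to the purely algebraic Lemma~\ref{P_5328_Tfr} by means of the Atiyah--Segal completion theorem. The completion theorem (in the form for representable K-theory of $\sm$-\ca{s} developed in~\cite{Ph89a}) identifies $RK_0 \bigl( (B \otimes C (E S^1))^{\bt \otimes \gm} \bigr)$ with the $I (S^1)$-adic completion ${\widehat{K_0^{S^1, \bt} (B)}}$ of the equivariant K-theory, as a module over $R (S^1) = \Z [ \sm, \sm^{-1} ]$. Thus it is enough to produce a torsion free abelian group $E$ together with an $R (S^1)$-module isomorphism ${\widehat{K_0^{S^1, \bt} (B)}} \cong [ R (S^1) \otimes E ]^{\wedge}$; Lemma~\ref{P_5328_Tfr} then kills any element annihilated by $\sm - 1$. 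I would take $E = K_0 (B)$, which is torsion free by hypothesis.

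Everything therefore comes down to computing the $R (S^1)$-module $K_*^{S^1, \bt} (B)$, where $\bt = \af \otimes \id_B$ and $\af$ is the action of Construction~\ref{Cns_2114_OIA} on $A \cong \OI$. The main tool I would use is the Gysin (equivalently Pimsner--Voiculescu, via Takai duality for the dual $\Z$-action) exact sequence for a circle action, in which the generator $\sm \in R (S^1)$ acts as the dual automorphism. Applied to $\af$ on $\OI$, and using $K_0 (\OI) = \Z$ and $K_1 (\OI) = 0$, it reads $\cdots \to K_i^{S^1, \af} (\OI) \to K_i^{S^1, \af} (\OI) \to K_i (\OI) \to K_{i - 1}^{S^1, \af} (\OI) \to \cdots$, where the first map is multiplication by $\sm - 1$ and the second is the forgetful map. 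Exactness forces $\sm - 1$ to be injective on $K_0^{S^1, \af} (\OI)$ with cokernel $\Z$ realized by the forgetful map (which sends $[1]$ to a generator), and bijective on $K_1^{S^1, \af} (\OI)$. From these facts $[1]$ has trivial annihilator and generates $K_0^{S^1, \af} (\OI)$ modulo $(\sm - 1) K_0^{S^1, \af} (\OI)$, so $R (S^1) [1] \cong R (S^1)$ is a free submodule whose quotient $N$ satisfies $(\sm - 1) N = N$, while $\sm - 1$ bijective on $K_1^{S^1, \af} (\OI)$ gives $(\sm - 1) K_1^{S^1, \af} (\OI) = K_1^{S^1, \af} (\OI)$. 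Since any module $P$ with $(\sm - 1) P = P$ has ${\widehat{P}} = \invlim_n P / (\sm - 1)^n P = 0$, and (using finite generation over the Noetherian ring $R (S^1)$, hence Artin--Rees) completion is exact on the relevant modules, I obtain ${\widehat{K_0^{S^1, \af} (\OI)}} \cong {\widehat{R (S^1)}}$ and ${\widehat{K_1^{S^1, \af} (\OI)}} = 0$.

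It remains to pass from $\OI$ to $B = \OI \otimes B$. Because the action on the second factor is trivial, the crossed product factors as $C^* (S^1, \OI, \af) \otimes B$, so Green--Julg (Theorem 2.8.3(7) of~\cite{Phl1}) and the ordinary Künneth theorem express $K_*^{S^1, \bt} (B) \cong K_* \bigl( C^* (S^1, \OI, \af) \otimes B \bigr)$ in terms of $K_*^{S^1, \af} (\OI) \cong K_* \bigl( C^* (S^1, \OI, \af) \bigr)$ and $K_* (B)$, the $R (S^1)$-structure being induced by the dual $\Z$-action on the first factor. After $I (S^1)$-adic completion the contributions from $K_1^{S^1, \af} (\OI)$ and the $\mathrm{Tor}$ term disappear (the first because ${\widehat{K_1^{S^1, \af} (\OI)}} = 0$, the second because $K_0 (B)$ is torsion free), leaving ${\widehat{K_0^{S^1, \bt} (B)}} \cong [ R (S^1) \otimes K_0 (B) ]^{\wedge}$. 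Lemma~\ref{P_5328_Tfr} with $E = K_0 (B)$ then finishes the proof.

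The hard part will be the $R (S^1)$-module bookkeeping rather than the K-theory computation itself: proving that $K_*^{S^1, \af} (\OI)$ is finitely generated over $R (S^1)$ (so that Artin--Rees applies and completion is exact on the short exact sequence $0 \to R (S^1) [1] \to K_0^{S^1, \af} (\OI) \to N \to 0$), identifying the cokernel map in the Gysin sequence with the forgetful map sending $[1]$ to a generator, and ensuring that the Künneth sequence and the $I (S^1)$-adic completion interact correctly when $K_0 (B)$ is not finitely generated, so that the completed tensor product really is $[ R (S^1) \otimes K_0 (B) ]^{\wedge}$ in the sense of Lemma~\ref{P_5328_Tfr}.
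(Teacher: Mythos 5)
Your endgame coincides with the paper's (the Atiyah--Segal theorem, Theorem~2.4 of~\cite{Ph89a}, followed by Lemma~\ref{P_5328_Tfr} with $E = K_0 (B)$), but your route to it is genuinely different, and the step you yourself flag as ``the hard part'' is a real gap, not bookkeeping. The paper never computes $K_*^{S^1, \bt} (B)$ at all: it rewrites $\bt$ as $\zt \mapsto \af_{\zt} \otimes \id_{\OI} \otimes \id_B$ on $\OI \otimes \OI \otimes B$, invokes Proposition~4.5 of~\cite{HrsPh1} to conclude that $\bt$ is homotopic to the trivial action, and then uses homotopy invariance of $RK_0 \bigl( ( \, \cdot \, \otimes C (E S^1))^{\cdot \otimes \gm} \bigr)$ (Corollary~4.2 of~\cite{Ph89a}) to replace $\bt$ by the trivial action, for which $K_0^{S^1} (B) \cong R (S^1) \otimes K_0 (B)$, hypothesis (**) of Theorem~2.4 of~\cite{Ph89a} holds trivially, and Lemma~\ref{P_5328_Tfr} finishes. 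You instead try to verify the completion-theorem hypotheses and compute the completion for $\bt$ itself, starting from the Gysin sequence for $\af$ on $\OI$. The Gysin computation as far as it goes is essentially correct ($\sm - 1$, up to a unit multiple, is injective on $K_0^{S^1} (\OI)$ and bijective on $K_1^{S^1} (\OI)$, using that the forgetful map sends $[1]$ to a generator of $K_0 (\OI) \cong \Z$), but the argument built on it does not go through as stated.

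Concretely: (i)~finite generation of $K_*^{S^1, \af} (\OI)$ over $R (S^1)$ is not known --- the paper records, in the discussion after Problem~\ref{Pb_5506_KStS1OI}, that even $K_*^{S^1} (\OI)$ and its completion are unknown --- so Artin--Rees is unavailable, and from $0 \to R (S^1) [1] \to M \to N \to 0$ with $\widehat{N} = 0$ you can only conclude $M / I^n M \cong R (S^1) [1] / \bigl( R (S^1) [1] \cap I^n M \bigr)$ for an uncontrolled filtration of $R (S^1) [1]$; the identification $\widehat{M} \cong \widehat{R (S^1)}$ does not follow. (ii)~Your K\"{u}nneth step requires $C^* (S^1, \OI, \af)$ to lie in the bootstrap class; $B$ is not assumed to satisfy the UCT (only nuclearity), so the burden falls entirely on the crossed-product factor, and no argument is offered. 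Moreover, the degree-zero Tor contribution is $\mathrm{Tor} \bigl( K_0^{S^1} (\OI), K_1 (B) \bigr) \oplus \mathrm{Tor} \bigl( K_1^{S^1} (\OI), K_0 (B) \bigr)$; torsion freeness of $K_0 (B)$ kills only the second summand, and the first need not have vanishing completion, since $\sm - 1$ is only injective, not surjective, on $K_0^{S^1} (\OI)$, and whether that group is torsion free as an abelian group is unknown. (iii)~To identify $RK_0 \bigl( (B \otimes C (E S^1))^{\bt \otimes \gm} \bigr)$ with the $I (S^1)$-adic completion at all, you must verify hypothesis (**) of Theorem~2.4 of~\cite{Ph89a} for the action $\bt$ itself, which you never address. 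A partial repair of~(i): since $I (S^1) = \langle \sm - 1 \rangle$ is principal, any module without $(\sm - 1)$-torsion has $(\sm - 1)$-torsion free completion (the argument is essentially the $E = \Q$ case of the proof of Lemma~\ref{P_5328_Tfr}), so you would not need to identify $\widehat{M}$ at all --- only to show $K_0^{S^1, \bt} (B)$ has no $(\sm - 1)$-torsion; but (ii) and~(iii) remain, whereas the paper's homotopy-triviality argument avoids all three issues at once.
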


\begin{proof}
In the notation of Theorem~\ref{T_2610_Kbg},
and using $\OI \otimes B \cong B$, we can rewrite $\bt$ as
\[
\zt \mapsto \af_{\zt} \otimes \id_{\OI} \otimes \id_B
  \in \Aut (\OI \otimes \OI \otimes B).
\]
Then Proposition~4.5 of~\cite{HrsPh1} implies that $\bt$ is homotopic to
the trivial action.
Therefore Corollary~4.2 of~\cite{Ph89a} implies that
$RK_0 \bigl( (B \otimes C (E S^1))^{\bt \otimes \gm} \bigr)$
is unchanged if $\bt$ is replaced by the trivial action~$\io$.
For the trivial action,
$K_0^{S^1} (B) \cong R (S^1) \otimes K_0 (B)$,
with the $R (S^1)$-module structure coming from the first tensor factor.
In Theorem~2.4 of~\cite{Ph89a},
take the finite generating set $F$ of ${\widehat{S^1}}$ to be $\{ \sm \}$.
Then the hypotheses that theorem hold,
because the modules which appear in (**) there are all zero.
Therefore, for the action~$\io$, Theorem~2.4 of~\cite{Ph89a} implies
\[
RK_0 \bigl( (B \otimes C (E S^1))^{\io \otimes \gm} \bigr)
  \cong [R (S^1) \otimes K_0 (B)]^{\wedge}.
\]
For this group, the conclusion holds by Lemma~\ref{P_5328_Tfr}.
\end{proof}

\begin{prp}\label{P_5503_OIB_NoFRDC}
Under the hypotheses of Lemma~\ref{L_5328_OI_NTor},
and assuming that $K_* (B) \neq 0$, the action $\bt$ 
does not have finite Rokhlin dimension with commuting towers.
\end{prp}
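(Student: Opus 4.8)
The plan is to contradict the existence of finite Rokhlin dimension with commuting towers by playing the $K$-theoretic obstruction from~\cite{Gar_rokhlin_2017} against the $(\sm-1)$-torsion-freeness of Lemma~\ref{L_5328_OI_NTor}. Suppose $\bt$ has finite Rokhlin dimension with commuting towers. Then, exactly as in the proof of Corollary~\ref{C_1918_NoFinRD}, Corollary~4.5 of~\cite{Gar_rokhlin_2017} supplies $m \in \N$ with $I (S^1)^m K_*^{S^1} (B) = 0$ in each degree; that is, the graded $R (S^1)$-module $K_*^{S^1} (B)$ is annihilated by a power of the augmentation ideal. First I would pass to the completed (representable) theory. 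The completion theorem invoked in the proof of Lemma~\ref{L_5328_OI_NTor} (Theorem~2.4 of~\cite{Ph89a}) identifies $RK_* \bigl( (B \otimes C (E S^1))^{\bt \otimes \gm} \bigr)$ with the $I (S^1)$-adic completion $\widehat{K_*^{S^1} (B)}$. Since a module killed by $I (S^1)^m$ already carries the discrete $I (S^1)$-adic topology, it equals its own completion; hence $RK_* \bigl( (B \otimes C (E S^1))^{\bt \otimes \gm} \bigr) \cong K_*^{S^1} (B)$ is itself annihilated by $I (S^1)^m$, in both degrees.

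Next I would re-identify these completed groups with the ones attached to the trivial action. As in the proof of Lemma~\ref{L_5328_OI_NTor}, homotopy invariance of representable $K$-theory (Proposition~4.5 of~\cite{HrsPh1} and Corollary~4.2 of~\cite{Ph89a}), together with Theorem~2.4 of~\cite{Ph89a} applied to the trivial action, gives $RK_i \bigl( (B \otimes C (E S^1))^{\bt \otimes \gm} \bigr) \cong [R (S^1) \otimes K_i (B)]^{\wedge}$ for $i = 0, 1$. The projection to the first quotient shows this completion surjects onto $(R (S^1) / I (S^1)) \otimes K_i (B) \cong K_i (B)$, so it is nonzero whenever $K_i (B) \neq 0$.

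The contradiction then rests on one elementary algebraic fact: an $R (S^1)$-module $M$ annihilated by a power of $I (S^1) = \langle \sm - 1 \rangle$ but with no nonzero $(\sm - 1)$-torsion must vanish, since if $M \neq 0$ one may pick the largest $k$ with $I (S^1)^k M \neq 0$ and a nonzero $x \in I (S^1)^k M$, whence $(\sm - 1) x \in I (S^1)^{k + 1} M = 0$. Because $K_* (B) \neq 0$, some $K_i (B) \neq 0$. If $i = 0$, Lemma~\ref{L_5328_OI_NTor} says $RK_0 (\cdots)$ has no nonzero $(\sm - 1)$-torsion; being $I (S^1)$-power-torsion by the first paragraph, it would have to be $0$, contradicting $K_0 (B) \neq 0$.

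The main obstacle is the degree-one case: $K_1 (B)$ is not assumed torsion free, so Lemma~\ref{L_5328_OI_NTor} does not apply verbatim. Here I would argue directly that $[R (S^1) \otimes E]^{\wedge}$ is never annihilated by a power of $I (S^1)$ when $E \neq 0$, which replaces the torsion-freeness input. Indeed, for $0 \neq e \in E$, the image of $1 \otimes e$ in $[R (S^1) \otimes E] / I (S^1)^{m + 1} [R (S^1) \otimes E] \cong (\Z [t] / t^{m + 1}) \otimes E$, where $t = \sm - 1$ and $R (S^1) / I (S^1)^{m+1} \cong \Z^{m+1}$ as in the proof of Lemma~\ref{P_5328_Tfr}, has nonzero top component $t^{m} \otimes e$, so $(\sm - 1)^{m} (1 \otimes e) \neq 0$ in the completion. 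Thus $I (S^1)^m RK_1 (\cdots) = 0$ would force $K_1 (B) = 0$. Either way we contradict $K_* (B) \neq 0$, so $\bt$ cannot have finite Rokhlin dimension with commuting towers.
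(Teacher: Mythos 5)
Your proof is correct, and its skeleton is the paper's: assume finite Rokhlin dimension with commuting towers, get $I(S^1)^n K_*^{S^1}(B) = 0$ from Corollary~4.5 of~\cite{Gar_rokhlin_2017}, feed this into the completion theorem (Theorem~2.4 of~\cite{Ph89a}) to conclude that the representable K-theory of the Borel construction equals the uncompleted equivariant K-theory and is annihilated by a power of $I(S^1)$, and contradict Lemma~\ref{L_5328_OI_NTor}. Where you genuinely diverge is at the final contradiction. The paper's proof ends with a one-line appeal to Lemma~\ref{L_5328_OI_NTor}, leaving implicit both the elementary fact that a nonzero $I(S^1)$-power-torsion module contains nonzero $(\sm - 1)$-torsion and the nonvanishing of the completed group; you spell out both (the ``largest $k$'' argument, and the surjection of $[R(S^1) \otimes K_i(B)]^{\wedge}$ onto $K_i(B)$). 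More substantively, you noticed that Lemma~\ref{L_5328_OI_NTor} as stated only governs the $K_0(B)$ part, since its torsion-freeness hypothesis concerns $K_0(B)$, and you close the resulting degree-one case with a self-contained computation: using $R(S^1)/I(S^1)^{m+1} \cong \Z[t]/\langle t^{m+1} \rangle$ with $t = \sm - 1$, the element $(\sm-1)^m(1 \otimes e)$ has nonzero image $t^m \otimes e$ in $(\Z[t]/\langle t^{m+1}\rangle) \otimes E \cong E^{m+1}$, so $[R(S^1) \otimes E]^{\wedge}$ is never annihilated by $I(S^1)^m$ when $E \neq 0$, with no torsion-freeness needed. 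This is correct, and it buys two things the paper's argument does not make visible: it handles the case $K_0(B) = 0 \neq K_1(B)$ cleanly (the point where the paper's one-line contradiction is loosest), and, since the same computation applies verbatim with $E = K_0(B)$, it shows in passing that the torsion-freeness hypothesis inherited from Lemma~\ref{L_5328_OI_NTor} is not actually needed for this proposition at all. One minor omission: Theorem~2.4 of~\cite{Ph89a} has a stabilization hypothesis (**), which the paper verifies for $\bt$ by observing that, with generating set $\{\sm\}$, the increasing chain of submodules stabilizes at $K_*^{S^1}(B)$ itself once $I(S^1)^n K_*^{S^1}(B) = 0$; you invoke the completion theorem for $\bt$ without saying this, but it is immediate from your first paragraph, so this is a missing remark rather than a gap.
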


\begin{proof}
Suppose $\bt$ has finite Rokhlin dimension with commuting towers.
By Corollary~4.5 of~\cite{Gar_rokhlin_2017},
there is $n \in \N$ such that $I (S^1)^n K_*^{S^1} (B) = 0$.
In (**) in Theorem~2.4 of~\cite{Ph89a},
take the finite generating set $F$ of ${\widehat{S^1}}$ to be $\{ \sm \}$.
The increasing sequence of submodules there stabilizes at
$K_*^{S^1} (B)$ itself at stage~$n$.
Therefore Theorem~2.4 of~\cite{Ph89a} implies that
\[
K_*^{S^1} (B)^{\wedge}
 \cong RK_0 \bigl( (B \otimes C (E S^1))^{\bt \otimes \gm} \bigr).
\]
The relation $I (S^1)^n K_*^{S^1} (B) = 0$ implies that
$K_*^{S^1} (B)^{\wedge} = K_*^{S^1} (B)$, so
\[
I (S^1)^n RK_0 \bigl( (B \otimes C (E S^1))^{\bt \otimes \gm} \bigr) = 0,
\]
contradicting Lemma~\ref{L_5328_OI_NTor}.
\end{proof}

The actions of Theorem~\ref{T_2610_Kbg} will be shown to be distinct
from those of Remark~\ref{R_5328_AT_t_OI} by proving that
the actions of Remark~\ref{R_5328_AT_t_OI}
don't satisfy the conclusion of Lemma~\ref{L_5328_OI_NTor}.

\begin{lem}\label{L_5328_Stab_main}
Let $N \in \N$.
For any $x \in \Z [ \sm, \sm^{-1} ]$ and any $n \in \N$,
we have $(\sm - 1)^n x \in \langle \sm^N - 1 \rangle$
\ifo{} $(\sm - 1) x \in \langle \sm^N - 1 \rangle$.
\end{lem}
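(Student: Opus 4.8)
The plan is to reinterpret the statement inside the quotient ring and reduce it to a fact about a product of number fields. Passing to $S := \Z[\sm,\sm^{-1}]/\langle \sm^N - 1\rangle$, which, exactly as in the proof of Lemma~\ref{L_1918_EqKSys}, is the group ring $\Z[\Z/N\Z] \cong \Z[\sm]/\langle \sm^N - 1\rangle$, the membership ``$z \in \langle \sm^N - 1\rangle$'' is literally the vanishing of the image of $z$ in $S$. Writing $t = \sm - 1$ and letting $\bar x \in S$ be the image of $x$, the assertion to prove is thus that $t^n \bar x = 0$ \ifo{} $t \bar x = 0$. The implication $t\bar x = 0 \Rightarrow t^n\bar x = 0$ is immediate (multiply by $t^{n-1}$), so only the forward implication requires work.

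For the forward implication, first I would tensor with $\Q$, setting $S_\Q = S \otimes_\Z \Q \cong \Q[\sm]/\langle \sm^N - 1\rangle$. Over $\Q$ one has the factorization $\sm^N - 1 = \prod_{d \mid N} \Phi_d$ into the distinct (hence pairwise coprime) irreducible cyclotomic polynomials, so the Chinese Remainder Theorem gives $S_\Q \cong \prod_{d \mid N} \Q[\sm]/\langle \Phi_d \rangle = \prod_{d \mid N} \Q(\zeta_d)$, a product of fields. Under this isomorphism $t = \sm - 1$ maps to $(\zeta_d - 1)_{d \mid N}$. In the factor $d = 1$ we have $\Phi_1 = \sm - 1$ and $\zeta_1 = 1$, so $t$ acts as $0$; in each factor with $d > 1$ we have $\zeta_d \neq 1$, so $\zeta_d - 1$ is a nonzero element of the field $\Q(\zeta_d)$ and multiplication by $t$ is invertible there. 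Consequently, for every $n \in \N$, the kernel of multiplication by $t^n$ on $S_\Q$ equals the kernel of multiplication by $t$, both being the summand indexed by $d = 1$.

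The remaining point is the passage from $S_\Q$ back to $S$, which is the only place integrality enters. Since $S$ is free as an abelian group, the map $S \to S_\Q$ is injective. The hypothesis $t^n \bar x = 0$ holds a fortiori in $S_\Q$, so by the previous paragraph $t\bar x = 0$ in $S_\Q$; as $t\bar x$ already lies in $S$, injectivity gives $t\bar x = 0$ in $S$, that is $(\sm - 1)x \in \langle \sm^N - 1\rangle$, as desired. I expect this integral-to-rational descent to be the only genuinely delicate step: working directly over $\Z$ one would try to use that $\sm - 1$ and $1 + \sm + \cdots + \sm^{N-1}$ are coprime, but over $\Z$ their resultant is only $N$ rather than a unit, so the clean dichotomy ``zero or invertible'' is available only after inverting the primes dividing $N$. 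Routing the argument through $S_\Q$ and invoking torsion-freeness of $S$ avoids this entirely and needs no appeal to Gauss's lemma.
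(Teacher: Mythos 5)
Your proof is correct, and it takes a genuinely different route from the paper's. The paper never leaves the polynomial ring: it clears denominators by multiplying by $\sm^r$ to land in $\Z[\sm]$, uses that $\Z[\sm]$ is a UFD in which $\sm - 1$ is prime and does not divide $1 + \sm + \cdots + \sm^{N-1}$ (evaluate at $\sm = 1$), and thereby extracts an $n$-independent characterization: $(\sm - 1)^n x \in \langle \sm^N - 1 \rangle$ \ifo{} $x$ is divisible by $1 + \sm + \cdots + \sm^{N-1}$ in $\Z[\sm, \sm^{-1}]$. You instead pass to the quotient $S = \Z[\sm,\sm^{-1}]/\langle \sm^N - 1\rangle \cong \Z[\Z_N]$ and rationalize: under $S \otimes \Q \cong \prod_{d \mid N} \Q(\zeta_d)$, multiplication by $\sm - 1$ is zero on the $d = 1$ factor and invertible on every other factor, so the kernels of $(\sm-1)^n$ stabilize already at $n = 1$ over $\Q$; since $S$ is $\Z$-free on $1, \sm, \ldots, \sm^{N-1}$, the map $S \to S \otimes \Q$ is injective and the conclusion descends to $\Z$. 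Every step checks out — the identification of ideal membership with vanishing in $S$, the CRT decomposition, and the injectivity descent — and your closing remark correctly diagnoses why a purely integral coprimality argument fails (the resultant of $\sm - 1$ and $1 + \sm + \cdots + \sm^{N-1}$ is $N$, not a unit). As for what each approach buys: the paper's computation is elementary and hands you the explicit divisibility characterization as a byproduct, in the same spirit as the explicit norm-element formula $\xi_*(1) = 1 + \ov{\sm} + \cdots + \ov{\sm}^{N-1}$ of Lemma~\ref{L_1918_EqKSys}; yours is structurally cleaner, avoids Gauss's lemma entirely, isolates exactly the one place integrality enters, and generalizes immediately, for instance with $\Z$ replaced by any torsion-free commutative coefficient ring.
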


\begin{proof}
We claim that if $n \in \N$
then $(\sm - 1)^n x \in \langle \sm^N - 1 \rangle$
\ifo{} there is $z \in \Z [ \sm, \sm^{-1} ]$ such that
$x = (1 + \sm + \cdots + \sm^{N - 1}) z$.
Since the second condition does not depend on~$n$,
the lemma will follow.

We prove the claim.
If $z$ exists, then clearly $(\sm - 1) x \in \langle \sm^N - 1 \rangle$,
whence $(\sm - 1)^n x \in \langle \sm^N - 1 \rangle$.
So assume $(\sm - 1)^n x \in \langle \sm^N - 1 \rangle$.
Choose $y \in \Z [ \sm, \sm^{-1} ]$
such that $(\sm - 1)^n x = (\sm^N - 1) y$.
Choose $r \geq 0$ such that $\sm^r x, \sm^r y \in \Z [\sm]$.
Then, in $\Z [\sm]$, we have
$(\sm - 1)^n \sm^r x = (\sm^N - 1) \sm^r y$.
Since $\Z [\sm]$ is an integral domain,
it follows that
\begin{equation}\label{Eq_5503_smn}
(\sm - 1)^{n - 1} \sm^r x = (1 + \sm + \cdots + \sm^{N - 1}) \sm^r y.
\end{equation}
Now $\sm - 1$ is not a factor of $1 + \sm + \cdots + \sm^{N - 1}$,
and $\Z [\sm]$ is a unique factorization domain,
so there is $w \in \Z [\sm]$ such that
$\sm^r y = (\sm - 1)^{n - 1} w$.
Then, using (\ref{Eq_5503_smn}) at the first step,
\[
\begin{split}
(\sm - 1)^{n} \sm^r x
& = (1 + \sm + \cdots + \sm^{N - 1}) (\sm - 1) \sm^r y
\\
& = (1 + \sm + \cdots + \sm^{N - 1}) (\sm - 1)^{n} w.
\end{split}
\]
Since $\Z [ \sm, \sm^{-1} ]$ is an integral domain,
it follows that
\[
\sm^r x = (1 + \sm + \cdots + \sm^{N - 1}) w.
\]
Therefore $x = (1 + \sm + \cdots + \sm^{N - 1}) \sm^{- r} w$,
which is the claim with $z = \sm^{- r} w$.
\end{proof}

\begin{lem}\label{L_5328_Stab_KG}
Let $\af \colon S^1 \to \Aut (A)$ be any of the actions of
Theorem~\ref{T_1919_Ex}.
Then for every $n \in \N$, we have
\[
\bigl\{ x \in K_0^{S^1} (A) \colon (1 - \sm)^n x = 0 \bigr\}
 = \bigl\{ x \in K_0^{S^1} (A) \colon (1 - \sm) x = 0 \bigr\}.
\]
\end{lem}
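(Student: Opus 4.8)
The plan is to reduce the statement, via the direct limit description of $K_0^{S^1}(A)$ together with the injectivity of the structure maps, to a purely algebraic assertion about the single $R(S^1)$-modules $\Z[\ov{\sm}]/\langle \ov{\sm}^M - 1 \rangle$, which is exactly the content of Lemma~\ref{L_5328_Stab_main}. The inclusion $\bigl\{ x \colon (1 - \sm) x = 0 \bigr\} \S \bigl\{ x \colon (1 - \sm)^n x = 0 \bigr\}$ is trivial, so only the reverse inclusion needs an argument.

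First I would recall, as in the proofs of Lemma~\ref{L_1918_MapRN} and Lemma~\ref{L_5328_maoK0A}, that for every $m \in \Nz$ there is an $R(S^1)$-module isomorphism $K_0^{S^1}(A_m) \cong R(\Z_N) \oplus \Z$, where the summand $\Z$ carries the $R(S^1)$-module structure coming from the identification $\Z \cong \Z[\sm, \sm^{-1}]/\langle \sm - 1 \rangle$. Since equivariant K-theory commutes with direct limits (Theorem 2.8.3(6) of~\cite{Phl1}), we have $K_0^{S^1}(A) = \dirlim_m K_0^{S^1}(A_m)$, and by Lemma~\ref{L_1918_MapRN}(\ref{I_5328_MapRN_K}) (whose hypothesis is condition~(\ref{I_5328_l0011_nq}) of Theorem~\ref{T_1919_Ex}) the structure maps $(\nu_{\I, m})_*$ are injective $R(S^1)$-module homomorphisms. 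Given $x \in K_0^{S^1}(A)$ with $(1 - \sm)^n x = 0$, I would choose $m$ and $x_m \in K_0^{S^1}(A_m)$ with $(\nu_{\I, m})_*(x_m) = x$. Because $(\nu_{\I, m})_*$ is an injective module map, we have $(1 - \sm)^n x_m = 0$ in $K_0^{S^1}(A_m)$, and it suffices to prove $(1 - \sm) x_m = 0$; applying $(\nu_{\I, m})_*$ then yields $(1 - \sm) x = 0$.

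It therefore remains to prove the statement for $K_0^{S^1}(A_m)$, and, working one summand at a time, it is enough to treat a module of the form $\Z[\ov{\sm}]/\langle \ov{\sm}^M - 1 \rangle$ with $M = N$ and $M = 1$ respectively. Since $-1$ is a unit in $R(S^1)$, the conditions $(1 - \sm)^k x = 0$ and $(\sm - 1)^k x = 0$ coincide. Lifting $x$ to $\tilde x \in \Z[\sm, \sm^{-1}]$, the relation $(\sm - 1)^n x = 0$ in $\Z[\ov{\sm}]/\langle \ov{\sm}^M - 1 \rangle$ is precisely $(\sm - 1)^n \tilde x \in \langle \sm^M - 1 \rangle$, and Lemma~\ref{L_5328_Stab_main} (with $N = M$) identifies this with $(\sm - 1) \tilde x \in \langle \sm^M - 1 \rangle$, i.e.\ with $(\sm - 1) x = 0$. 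The genuine computational content — that the $(\sm - 1)$-torsion of $\Z[\sm, \sm^{-1}]/\langle \sm^M - 1 \rangle$ is already annihilated by $\sm - 1$ — has thus been isolated into Lemma~\ref{L_5328_Stab_main}, so no serious obstacle remains. The only points I would check with care are the compatibility of the $R(S^1)$-module structures under the isomorphisms $K_0^{S^1}(A_m) \cong R(\Z_N) \oplus \Z$ and the fact that passing to a finite stage preserves the annihilator of an element, both of which follow from the injectivity and $R(S^1)$-linearity of the maps involved.
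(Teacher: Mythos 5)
Your proof is correct and follows essentially the same route as the paper's: reduce to the finite stages $K_0^{S^1}(A_m)$ via injectivity of the structure maps from Lemma~\ref{L_1918_MapRN}(\ref{I_5328_MapRN_K}), split into the two summands $\Z[\ov{\sm}]/\langle \ov{\sm}^N - 1 \rangle$ and $\Z \cong \Z[\sm,\sm^{-1}]/\langle \sm - 1 \rangle$, and invoke Lemma~\ref{L_5328_Stab_main} (with $N = 1$ for the second summand). Your write-up merely makes explicit a few steps the paper leaves implicit, such as pulling the relation $(1-\sm)^n x = 0$ back to a finite stage and lifting to $\Z[\sm,\sm^{-1}]$.
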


\begin{proof}
By Lemma \ref{L_1918_MapRN}(\ref{I_5328_MapRN_K}),
following the notation there,
it suffices to prove this for $K_0^{S^1} (A_n)$ for all $n \in \Nz$
instead of for $K_0^{S^1} (A)$.
Moreover, it suffices to prove this for $K_0^{S^1} (A_{n, 0})$
and $K_0^{S^1} (A_{n, 1})$ separately.
Identifying $R (S^1)$ with $\Z [ \sm, \sm^{-1} ]$,
we have
\[
K_0^{S^1} (A_{n, 0})
 \cong \Z [ \sm, \sm^{-1} ] / \langle \sm^N - 1 \rangle
\andeqn
K_0^{S^1} (A_{n, 1})
 \cong \Z [ \sm, \sm^{-1} ] / \langle \sm - 1 \rangle.
\]
For both of these, the result follows from Lemma~\ref{L_5328_Stab_main},
taking $N = 1$ for $K_0^{S^1} (A_{n, 1})$.
\end{proof}

\begin{cor}\label{C_5328_AT_Cmpl}
Let $\af \colon S^1 \to \Aut (A)$ be one of the actions of
Theorem~\ref{T_1919_Ex}.
Then, referring to the notation before Lemma~\ref{L_5328_OI_NTor},
we have
\[
RK_0 \bigl( (A \otimes \OI \otimes C (E S^1)
    )^{\af \otimes \id_{\OI} \otimes \gm} \bigr)
  \cong K_*^{S^1} (A)^{\wedge}.
\]
\end{cor}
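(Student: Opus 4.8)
The plan is to deduce this directly from the operator-algebraic Atiyah--Segal completion theorem, Theorem~2.4 of~\cite{Ph89a}, exactly as in the proof of Proposition~\ref{P_5503_OIB_NoFRDC}, with the stabilization input now supplied by Lemma~\ref{L_5328_Stab_KG}. First I would record that tensoring with the trivial action on~$\OI$ does not change the equivariant K-theory as an $R (S^1)$-module: as noted in Remark~\ref{R_5328_AT_t_OI}, there is an isomorphism $K_*^{S^1} (A \otimes \OI) \cong K_*^{S^1} (A)$ of $R (S^1)$-modules, and hence $K_*^{S^1} (A \otimes \OI)^{\wedge} \cong K_*^{S^1} (A)^{\wedge}$. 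Combined with Lemma~\ref{L_1918_K1} and the fact that equivariant K-theory commutes with direct limits (Theorem 2.8.3(6) of~\cite{Phl1}), this also gives $K_1^{S^1} (A \otimes \OI) = K_1^{S^1} (A) = 0$, so that $K_*^{S^1} (A \otimes \OI)$ is concentrated in even degree; this is what makes the $RK_0$ on the left match $K_*^{S^1}$ on the right.

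Next I would apply Theorem~2.4 of~\cite{Ph89a} to the action $\af \otimes \id_{\OI} \otimes \gm$ of $S^1$ on $A \otimes \OI \otimes C (E S^1)$, taking the finite generating set $F$ of ${\widehat{S^1}}$ to be $\{ \sm \}$. The hypothesis~(**) of that theorem is the requirement that the increasing sequence of submodules $\bigl\{ x \in K_*^{S^1} (A \otimes \OI) \colon (\sm - 1)^k x = 0 \bigr\}$, indexed by $k \in \N$, stabilize. Under the identification $K_*^{S^1} (A \otimes \OI) \cong K_*^{S^1} (A)$ from the first step, Lemma~\ref{L_5328_Stab_KG} says precisely that this sequence is constant from $k = 1$ onward, equal to $\bigl\{ x \colon (\sm - 1) x = 0 \bigr\}$. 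Thus~(**) holds, and Theorem~2.4 of~\cite{Ph89a} yields $RK_0 \bigl( (A \otimes \OI \otimes C (E S^1))^{\af \otimes \id_{\OI} \otimes \gm} \bigr) \cong K_*^{S^1} (A \otimes \OI)^{\wedge}$, which by the first step is isomorphic to $K_*^{S^1} (A)^{\wedge}$.

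The only delicate point, and the one I expect to be the main obstacle, is matching the abstract hypothesis~(**) in Theorem~2.4 of~\cite{Ph89a} with the stabilization statement of Lemma~\ref{L_5328_Stab_KG}. This is the same bookkeeping carried out in Proposition~\ref{P_5503_OIB_NoFRDC}; the difference is only that there the sequence stabilizes at the whole module (because $I (S^1)^n K_*^{S^1} (B) = 0$), whereas here Lemma~\ref{L_5328_Stab_KG} gives stabilization at the possibly proper $(\sm - 1)$-torsion submodule, already at stage~$1$. Either form of stabilization is exactly what~(**) requires. Note that, unlike in Lemma~\ref{L_5328_OI_NTor}, no torsion-freeness of $K_0 (A)$ or homotopy-triviality of the action is needed here, since we only assert the isomorphism onto the completion $K_*^{S^1} (A)^{\wedge}$ rather than an explicit computation of it.
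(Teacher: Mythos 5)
Your proposal is correct and follows essentially the same route as the paper: apply Theorem~2.4 of~\cite{Ph89a} with generating set $\{ \sm \}$, verify the stabilization hypothesis~(**) via Lemma~\ref{L_5328_Stab_KG}, and use the fact that tensoring with the trivial action on $\OI$ leaves the equivariant K-theory unchanged. Your additional appeal to Lemma~\ref{L_1918_K1} for the vanishing of $K_1^{S^1}$ is a harmless embellishment the paper does not use (indeed it states that computation is included only for completeness), but it does not change the argument.
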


\begin{proof}
We use Theorem~2.4 of~\cite{Ph89a}.
Using the generating set $\{ \sm \}$ for ${\widehat{S^1}}$,
the hypothesis (**) in Theorem~2.4 of~\cite{Ph89a} follows for
$RK_0 \bigl( (A \otimes C (E S^1))^{\af \otimes \gm} \bigr)$
from Lemma~\ref{L_5328_Stab_KG}.
Tensoring everything with the trivial action on $\OI$
does not change the equivariant K-theory,
so this hypothesis holds for the action in the statement as well.
Therefore Theorem~2.4 of~\cite{Ph89a} gives the first
isomorphism in the calculation
\[
RK_0 \bigl( (A \otimes \OI \otimes C (E S^1)
    )^{\af \otimes \id_{\OI} \otimes \gm} \bigr)
  \cong K_*^{S^1} (A \otimes \OI)^{\wedge}
  \cong K_*^{S^1} (A)^{\wedge}.
\]
This completes the proof.
\end{proof}

\begin{thm}\label{T_5328_AT_not_OI}
Let $\af \colon S^1 \to \Aut (A)$ be any of the actions of
Remark~\ref{R_5328_AT_t_OI}, and let
$\bt \colon S^1 \to \Aut (B)$ be any of the actions of
Theorem~\ref{T_2610_Kbg}.
Then $A$ is not equivariantly isomorphic to~$B$.
\end{thm}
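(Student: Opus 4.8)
The plan is to distinguish the two families by the completed representable equivariant K-theory. For an $S^1$-\ca{} $C$ with action $\kp$, set
\[
M (C, \kp) = RK_0 \bigl( (C \otimes C (E S^1))^{\kp \otimes \gm} \bigr),
\]
an $R (S^1)$-module, with notation as before Lemma~\ref{L_5328_OI_NTor}. This is functorial: an equivariant isomorphism $(C, \kp) \cong (C', \kp')$ yields, after tensoring with $(C (E S^1), \gm)$ and restricting to fixed point algebras, an $R (S^1)$-module isomorphism $M (C, \kp) \cong M (C', \kp')$ (naturality of the module structure is standard from the framework of~\cite{Ph89a}). So suppose, for contradiction, that the action $\af$ on $A$ of Remark~\ref{R_5328_AT_t_OI} and the action $\bt$ on $B$ of Theorem~\ref{T_2610_Kbg} are equivariantly isomorphic. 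In particular $A \cong B$ as \ca{s}, so $K_0 (B)$ is torsion free by Remark~\ref{R_5328_AT_t_OI}; hence Lemma~\ref{L_5328_OI_NTor} applies to~$\bt$ and tells us that $M (B, \bt)$ has no nonzero element annihilated by $\sm - 1$.

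On the other side, write $A = A_{\mathrm{AT}} \otimes \OI$ and $\af = \af_{\mathrm{AT}} \otimes \id_{\OI}$, where $\af_{\mathrm{AT}} \colon S^1 \to \Aut (A_{\mathrm{AT}})$ is the underlying AT~action of Theorem~\ref{T_1919_Ex}. Then Corollary~\ref{C_5328_AT_Cmpl} gives $M (A, \af) \cong K_*^{S^1} (A_{\mathrm{AT}})^{\wedge}$. I would derive the contradiction by producing a nonzero element of $K_*^{S^1} (A_{\mathrm{AT}})^{\wedge}$ killed by $\sm - 1$. Recall from the proof of Corollary~\ref{C_1918_NoFinRD}, via Lemma~\ref{L_1918_EqKSys}, that $K_0^{S^1} (A_{\mathrm{AT}})$ contains a copy of $K_0^{S^1} (R) \cong R (\Z_N) \cong \Z [ \ov{\sm} ] / \langle \ov{\sm}^N - 1 \rangle$, arising from the inclusion of the summand $M_{r_0 (0)} (R)$ of the first building block. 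Set
\[
y = 1 + \ov{\sm} + \cdots + \ov{\sm}^{N - 1} \in R (\Z_N) \subseteq K_0^{S^1} (A_{\mathrm{AT}}).
\]
Then $(\sm - 1) y = \ov{\sm}^N - 1 = 0$ in $K_0^{S^1} (A_{\mathrm{AT}})$.

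It remains to see that $y$ survives to the completion. Modulo $I (S^1)$, each building block has $K_0^{S^1} / I (S^1) K_0^{S^1} \cong \Z \oplus \Z$ (from $R (\Z_N) / I (S^1) R (\Z_N) \cong \Z$ together with the $\Z = R (S^1) / I (S^1)$ summand), and under the augmentation $R (\Z_N) \to \Z$ sending $\ov{\sm} \mapsto 1$ the class of $y$ maps to $(N, 0)$, which is nonzero since $N \geq 2$. Because the connecting maps are injective modulo $I (S^1)$ by Lemma~\ref{L_1918_MapRN}(\ref{I_5328_MapRN_ModI}), the image of $y$ in $K_0^{S^1} (A_{\mathrm{AT}}) / I (S^1) K_0^{S^1} (A_{\mathrm{AT}}) = \dirlim_n K_0^{S^1} / I (S^1) K_0^{S^1}$ is nonzero. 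Thus $y \notin I (S^1) K_0^{S^1} (A_{\mathrm{AT}}) \supseteq \bigcap_{n} I (S^1)^n K_0^{S^1} (A_{\mathrm{AT}})$, so its image $\widehat{y}$ in $K_0^{S^1} (A_{\mathrm{AT}})^{\wedge}$ is nonzero, while $(\sm - 1) \widehat{y} = 0$ by $R (S^1)$-linearity of the completion map. Since $K_1^{S^1} (A_{\mathrm{AT}}) = 0$ (Lemma~\ref{L_1918_K1} and continuity of equivariant K-theory), $\widehat{y}$ is a nonzero element of $K_*^{S^1} (A_{\mathrm{AT}})^{\wedge} \cong M (A, \af)$ killed by $\sm - 1$. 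Transporting across the supposed isomorphism $M (A, \af) \cong M (B, \bt)$ contradicts the conclusion of Lemma~\ref{L_5328_OI_NTor}, so no equivariant isomorphism exists.

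The main obstacle is precisely the nonvanishing of $\widehat{y}$ in the completion, as opposed to the obvious nonvanishing of $y$ in $K_0^{S^1} (A_{\mathrm{AT}})$ itself; this is exactly what the mod-$I (S^1)$ injectivity of Lemma~\ref{L_1918_MapRN}(\ref{I_5328_MapRN_ModI}) is designed to supply, reducing the matter to a single easily computed building block. The only other point needing care is the (standard) naturality of the $R (S^1)$-module structure on $RK_0$ of the fixed point algebra under equivariant isomorphisms tensored with $(C (E S^1), \gm)$.
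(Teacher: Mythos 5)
Your proposal is correct and takes essentially the same approach as the paper's proof: assume an equivariant isomorphism, use Corollary~\ref{C_5328_AT_Cmpl} to identify $RK_0$ of the relevant fixed point algebra with $K_*^{S^1} (A_{\mathrm{AT}})^{\wedge}$, and contradict Lemma~\ref{L_5328_OI_NTor} by exhibiting a nonzero $(\sm - 1)$-torsion element of the completion, its nonvanishing coming from the mod-$I (S^1)$ injectivity of Lemma~\ref{L_1918_MapRN}(\ref{I_5328_MapRN_ModI}) together with the factorization of $K_0^{S^1} (A_{\mathrm{AT}}) \to K_0^{S^1} (A_{\mathrm{AT}}) / I (S^1) K_0^{S^1} (A_{\mathrm{AT}})$ through the completion map. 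The only (harmless) deviation is your choice of witness, $1 + \ov{\sm} + \cdots + \ov{\sm}^{N - 1}$ in the $R (\Z_N)$ summand coming from $A_{0, 0}$, where the paper instead uses the class coming from $K_0^{S^1} (A_{0, 1}) \cong \Z$, which is annihilated by all of $I (S^1)$; both verifications run on identical rails.
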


All the actions in this theorem
have the restricted tracial Rokhlin property with comparison,
by Theorem~\ref{T_1919_Ex} and Theorem~\ref{T_2610_Kbg},
and (for~$\bt$, assuming $K_* (B)$ is nonzero and torsion free)
none of them has finite Rokhlin dimension with commuting towers,
by Theorem~\ref{T_1919_Ex} and Proposition~\ref{P_5503_OIB_NoFRDC}.

\begin{proof}[Proof of Theorem~\ref{T_5328_AT_not_OI}]
For the proof,
we want to let $A$ be as in Construction~\ref{Cn_1824_S1An}.
Thus, $A$ in the statement of the theorem is now $A \otimes \OI$,
and $\af$ is now $\zt \mapsto \af_{\zt} \otimes \id_{\OI}$.

Suppose there is an equivariant isomorphism.
Then, referring to the notation before Lemma~\ref{L_5328_OI_NTor},
\[
RK_0 \bigl( (A \otimes  \OI \otimes C (E S^1)
     )^{\af \otimes \id_{\OI} \otimes \gm} \bigr)
  \cong RK_0 \bigl( (B \otimes C (E S^1))^{\bt \otimes \gm} \bigr).
\]
By Corollary~\ref{C_5328_AT_Cmpl}, this implies
\[
K_0^{S^1} (A)^{\wedge}
  \cong RK_0 \bigl( (B \otimes C (E S^1))^{\bt \otimes \gm} \bigr).
\]
We now exhibit a nonzero element $x \in K_0^{S^1} (A)^{\wedge}$ such
that $(\sm - 1) x = 0$.
Since $K_0 (A)$ is nonzero and torsion free,
the same is true of $K_0 (B)$,
and this will thus contradict Lemma~\ref{L_5328_OI_NTor},
and show that $A \otimes \OI$ is not equivariantly isomorphic to~$B$.

Following the notation
of Construction \ref{Cn_1824_S1An}(\ref{Item_1824_S1_An}),
recall that
$A_0 = A_{0, 0} \oplus A_{0, 1}$ with
\[
A_{0, 0} = M_{r_0 (n)} (R)
\andeqn
A_{0, 1} = M_{r_1 (0)} \bigl( C (S^1) \bigr).
\]
We have $I (S^1) K_0^{S^1} (A_{0, 1}) = 0$.
Obviously the inclusion $A_{0, 1} \to A_0$ induces an injective map
\[
K_0^{S^1} (A_{0, 1}) / I (S^1) K_0^{S^1} (A_{0, 1})
  \to K_0^{S^1} (A_0) / I (S^1) K_0^{S^1} (A_0).
\]
So Lemma \ref{L_1918_MapRN}(\ref{I_5328_MapRN_ModI})
implies that there is a submodule
of $K_0^{S^1} (A) / I (S^1) K_0^{S^1} (A)$
which is isomorphic to $K_0^{S^1} \bigl( C (S^1) \bigr) \cong \Z$.
Let $y \in K_0^{S^1} (A)$ be the image there of a nonzero
element of $K_0^{S^1} \bigl( C (S^1) \bigr)$.
Then $(\sm - 1) y = 0$ and
the image of~$y$ in $K_0^{S^1} (A) / I (S^1) K_0^{S^1} (A)$ is nonzero.
The inverse limit description of the completion gives a standard
map $K_0^{S^1} (A)^{\wedge} \to K_0^{S^1} (A) / I (S^1) K_0^{S^1} (A)$,
and the map $K_0^{S^1} (A) \to K_0^{S^1} (A) / I (S^1) K_0^{S^1} (A)$
factors through this map.
Let $x \in K_0^{S^1} (A)^{\wedge}$ be the image there of~$y$.
Then $(\sm - 1) x = 0$ but $x \neq 0$ because
its image in $K_0^{S^1} (A) / I (S^1) K_0^{S^1} (A)$ is nonzero.
\end{proof}

\section{Nonexistence}\label{Sec_1919_NonE}

We prove that if $A$ is an in\fd{} simple unital AF~algebra,
there is no direct limit action of $S^1$ on~$A$ that
even has the naive tracial Rokhlin property.
The main ingredient is Proposition~\ref{P_1920_S1DLim},
in which we show that,
for actions of $S^1$, one can require in all variants of the
tracial Rokhlin property
that the $(F, S, \ep)$-equivariant central multiplicative map
$\ph$ be exactly a \hm{} and exactly equivariant.
For direct limit actions of $S^1$, one can also require that it
take values in some algebra in the direct system.
We state a slightly more general form
(also covering finite abelian groups)
in the following proposition.
The statement about general actions is covered, using the trivial
direct system in which all the algebras are~$A$.

\begin{prp}\label{P_1920_S1DLim}
Let $G$ be a (not necessarily connected) compact abelian Lie group such
that $\dim (G) \leq 1$.
Let $\bigl( (A_n)_{n \in \Nz}, \, (\nu_{n, m})_{m \leq n} \bigr)$
be a direct system of \uca{s} with unital injective maps
$\nu_{n, m} \colon A_m \to A_n$.
Set $A = \dirlim_n A_n$, with maps $\nu_{\I, m} \colon A_m \to A$.
Assume we are given actions $\af^{(n)} \colon G \to \Aut (A_n)$
such that the maps $\nu_{n, m}$ are equivariant,
and let $\af$ be the direct limit action $\af = \dirlim \af^{(n)}$.
\begin{enumerate}
\item\label{Item_1920_S1DL_R}
The action $\af$ has the Rokhlin property
\ifo{} for every $N \in \Nz$, every finite set $F \subseteq A_{N}$,
every finite set $S \subseteq C (G)$, and every $\ep > 0$,
there exists $n \geq N$ and a unital equivariant \hm{}
$\ph \colon C (G) \to A_n$ such that
$\| \ph (f) \nu_{n, N} (a) - \nu_{n, N} (a) \ph (f) \| < \ep$
for all $f \in S$ and $a \in F$.
\item\label{Item_1920_S1DL_TRPC}
Suppose $A$ is simple.
Then $\af$ has the tracial Rokhlin property with comparison
\ifo{} for every $N \in \Nz$, every finite set $F \subseteq A_{N}$,
every finite set $S \subseteq C (G)$, every $\ep > 0$,
every $x \in A_{+}$ with $\| x \| = 1$,
and every $y \in (A^{\alpha})_{+} \SM \{ 0 \}$,
there exist $n \geq N$, a projection $p \in (A_n)^{\alpha^{(n)}}$,
and a unital equivariant \hm{} $\ph \colon C (G) \to p A_n p$
such that the following hold.
\begin{enumerate}
\item\label{Item_1920_S1DL_TRPC_Comm}
$\| \ph (f) \nu_{n, N} (a) - \nu_{n, N} (a) \ph (f) \| < \ep$
for all $f \in S$ and $a \in F$.
\item\label{Item_1920_S1DL_TRPC_Sub}
$1 - \nu_{\I, n} (p) \precsim_A x$,
$1 - \nu_{\I, n} (p) \precsim_{A^{\alpha}} y$,
and $1 - p \precsim_{(A_n)^{\alpha^{(n)}}} p$.
\item\label{Item_1920_S1DL_TRPC_Nm}
$\| \nu_{\I, n} (p) x \nu_{\I, n} (p) \| > 1 - \ep$.
\end{enumerate}
\item\label{Item_1920_S1DL_NTRP}
Suppose $A$ is simple.
Then $\af$ has the naive tracial Rokhlin property
\ifo{} for every $N \in \Nz$, every finite set $F \subseteq A_{N}$,
every finite set $S \subseteq C (G)$, every $\ep > 0$,
and every $x \in A_{+}$ with $\| x \| = 1$,
there exist $n \geq N$, a projection $p \in (A_n)^{\alpha^{(n)}}$,
and a unital equivariant \hm{} $\ph \colon C (G) \to p A_n p$
such that the following hold.
\begin{enumerate}
\item\label{Item_1920_S1DL_NTRP_Comm}
$\| \ph (f) \nu_{n, N} (a) - \nu_{n, N} (a) \ph (f) \| < \ep$
for all $f \in S$ and $a \in F$.
\item\label{Item_1920_S1DL_NTRP_Sub}
$1 - \nu_{\I, n} (p) \precsim_A x$.
\item\label{Item_1920_S1DL_NTRP_Nm}
$\| \nu_{\I, n} (p) x \nu_{\I, n} (p) \| > 1 - \ep$.
\end{enumerate}
\setcounter{TmpEnumi}{\value{enumi}}
\end{enumerate}
\end{prp}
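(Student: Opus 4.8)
The plan is to prove in each of the three parts that the displayed ``finite stage, exact \hm'' criterion is equivalent to the named property. In every case the direction from the criterion to the property is routine: a unital equivariant \hm{} $\ph \colon C (G) \to p A_n p$ is automatically exactly multiplicative and exactly equivariant, so $\nu_{\I, n} \circ \ph$ is an $(F, S, \ep)$-approximately equivariant central multiplicative map into $\nu_{\I, n} (p) A \nu_{\I, n} (p)$ once we know it approximately commutes with~$F$, and the invariant projection $\nu_{\I, n} (p) \in A^{\af}$ together with the stated Cuntz comparisons and norm estimate is exactly what Definition~\ref{traR}, respectively Definition~\ref{D_1920_NTRP}, respectively the Rokhlin property, demands. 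So the work is in the reverse direction: upgrading an approximate, $A$-valued datum to an exact, equivariant, finite-stage one.

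Two structural reductions make this possible, and both use $\dim (G) \le 1$. Since $G$ is a compact abelian Lie group of dimension at most~$1$, either $G$ is finite or $G_0 \cong S^1$ and, using divisibility of $S^1$, $G \cong S^1 \times \Gm$ with $\Gm$ finite abelian; in either case $C (G) \cong C (\Gm) \otimes C (S^1)$, with $\Lt$ a tensor product of translation actions. Correspondingly, a unital equivariant \hm{} $C (G) \to (B, \bt)$ is the same data as an equivariant unital \hm{} on the finite dimensional factor $C (\Gm)$ (equivalently, an invariant system of orthogonal projections permuted by $\Gm$) together with, in its invariant commutant, a single $\bt$-equivariant unitary $u = \ph (w)$ for the $S^1$-factor, where $w (z) = z$ and $\bt_{\zt} (u) = {\ov{\zt}} u$. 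I would fix generators accordingly and phrase the argument as correcting these two pieces.

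First I would apply the hypothesized property with a small tolerance $\dt$ in place of~$\ep$, obtaining an invariant projection $p \in A^{\af}$ and an approximate datum $\ph_0 \colon C (G) \to p A p$. The finite dimensional factor is handled exactly as in the proof of Lemma~\ref{L_1X16_trpc_GFin}: from nearly orthogonal near projections one extracts genuine orthogonal projections, and averaging over $\Gm$ makes the system exactly $\Gm$-equivariant. For the circle factor I would correct the near unitary $u_0 = \ph_0 (w) \in p A p$ by the Haar average
\[
{\widetilde{u}} = \int_{S^1} \zt \, \bt_{\zt} (u_0) \, d \zt,
\]
which lies in $p A p$, is within $\dt$ of~$u_0$, and is exactly equivariant: a change of variable gives $\bt_{\eta} ({\widetilde{u}}) = {\ov{\eta}} {\widetilde{u}}$. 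Polar decomposition inside the corner, $u = {\widetilde{u}} ({\widetilde{u}}^* {\widetilde{u}})^{-1/2}$, then restores exact unitarity in $p A p$ while preserving equivariance, since ${\widetilde{u}}^* {\widetilde{u}}$ is invariant. Carrying out the circle correction inside the invariant commutant of the already-corrected $C (\Gm)$ produces an exact equivariant unital \hm{} $\ps \colon C (G) \to p A p$ that is close to $\ph_0$ on generators, hence still approximately commutes with~$F$.

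The last step is to push $(p, \ps)$ down to a finite stage, and this is where the real care is needed. Using that $A = \dirlim_n A_n$ with \emph{injective} equivariant connecting maps, the invariant projection and the equivariant (partial) unitary lift, up to a small perturbation, to some $A_n$ with $n \ge N$; averaging the lifts over~$G$ restores exact invariance and equivariance at stage~$n$ at the cost of moving them slightly. This is precisely where $\dim (G) \le 1$ is indispensable: the data to be lifted amount to a finite dimensional subalgebra (the image of $C (\Gm)$, which is semiprojective) together with a single free unitary (the image of $C (S^1)$) in its commutant, whereas for $\dim (G) \ge 2$ one would instead have to lift several commuting free unitaries, and the commuting-unitary relation is not semiprojective, so the equivariant lifting breaks down. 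The perturbations keep $\nu_{\I, n} (p)$ close to~$p$, so the two projections are \mvnt; hence $1 - \nu_{\I, n} (p)$ has the same Cuntz class as $1 - p$ and the estimate $\| \nu_{\I, n} (p) x \nu_{\I, n} (p) \| > 1 - \ep$ survives. The finite-stage comparison $1 - p \precsim_{(A_n)^{\af^{(n)}}} p$ in part~(\ref{Item_1920_S1DL_TRPC}) is obtained by pushing the witness for $1 - p \precsim_{A^{\af}} p$ through $A^{\af} = \dirlim_n (A_n)^{\af^{(n)}}$. Parts~(\ref{Item_1920_S1DL_R}) and~(\ref{Item_1920_S1DL_NTRP}) are the same argument with fewer conditions to track; in part~(\ref{Item_1920_S1DL_R}) one takes $p = 1$ and discards all Cuntz comparisons. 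The main obstacle, as indicated, is maintaining exact equivariance of the circle unitary after the finite-stage pushdown while it sits in the corner of a simultaneously lifted invariant projection.
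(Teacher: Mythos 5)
Your forward direction is fine, and your circle-factor corrections inside $A$ are genuinely correct: the twisted Haar average $\int_{S^1} \zt\, \bt_{\zt} (u_0)\, d\zt$ lies within the equivariance tolerance of $u_0$, is exactly equivariant, and polar decomposition in the corner preserves equivariance because ${\widetilde{u}}^* {\widetilde{u}}$ is invariant. But there are two genuine gaps, both in the steps you need most. First, for the finite factor, ``averaging over $\Gm$'' does not make an approximately permuted family of orthogonal projections exactly permuted: the average of projections is not a projection, and if instead one defines the orbit by $p_{\gamma} = \af_{\gamma} (p_e)$, exact permutation holds but orthogonality becomes only approximate, and the standard orthogonalization perturbations are not equivariant. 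Producing an exactly permuted, exactly orthogonal system from an approximate one --- and doing so compatibly with the circle unitary in its commutant and inside the corner of the invariant projection --- is a real perturbation argument; it is, in effect, the content of equivariant semiprojectivity of $C (\Gm)$, which you neither prove nor cite. Second, and more seriously, the finite-stage pushdown --- simultaneously lifting the invariant projection, the $\Gm$-system, and the equivariant unitary, then restoring all exact relations without destroying the ones already restored --- is asserted, not proved; you yourself name it ``the main obstacle.'' The iterated corrections interact (re-orthogonalization moves the corner, polar decomposition disturbs commutation), and managing that bookkeeping is exactly what a semiprojectivity proof does. As written, your argument assumes a form of equivariant semiprojectivity of $C (G)$ rather than establishing it.

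The paper closes precisely this step by one citation: equivariant semiprojectivity of $C (G)$ for these groups (Theorem~4.4 of~\cite{Gdla}), and it avoids your hand exactification entirely. Proposition~3.10 of~\cite{MhkPh1} reformulates the tracial Rokhlin property with comparison as the existence of an exactly equivariant unital homomorphism $\ps \colon C (G) \to e (A_{\I, \af} \cap A') e$ into the fixed-point central sequence algebra, with the comparison conditions encoded by smallness of $1 - e$. Then $e$ is lifted to an invariant projection sequence (Lemma~2.9 of~\cite{MhkPh1}), its entries are replaced by invariant projections $q_m \in (A_{l (m)})^{\af^{(l (m))}}$ at finite stages, $\ps$ is shown to take values in the image of the subalgebra $B \S l^{\I}_{\af} (\N, A)$ of sequences with $b_m \in q_m A_{l (m)} q_m$, and a single application of equivariant semiprojectivity to $B \to B / \Ker (\pi_A |_B)$ produces exact equivariant unital homomorphisms $\rh_m \colon C (G) \to q_m A_{l (m)} q_m$ --- finite-stage data with all relations already exact --- after which one only chooses $m$ large, using Lemma~2.11 of~\cite{MhkPh1} for the finite-stage relation $1 - p \precsim p$. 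Your elementary route does work essentially as written in the special case $G = S^1$, where there is no finite factor; for the general $G$ of the statement (in particular finite abelian $G$) you must either supply the equivariant perturbation lemmas above or cite \cite{Gdla} as the paper does.
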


\begin{proof}
In all three parts, the fact that the condition implies the
appropriate property follows from the fact that any finite subset of~$A$
can be approximated arbitrarily well by a finite subset
of $\bigcup_{n = 0}^{\I} \nu_{\I, n} (A_n)$.

The reverse directions are deduced from equivariant semiprojectivity
of $C (G)$,
which is Theorem~4.4 of~\cite{Gdla}.
The proofs are similar.
We only do~(\ref{Item_1920_S1DL_TRPC}), which has the most steps.
We give a full proof, since the steps must be done in the right order.

To simplify notation, we assume that $A_k \S A$ for all $k \in \Nz$,
and that the maps $\nu_{l, k}$ and $\nu_{\I, k}$ are all inclusions.
Thus, $A = {\overline{\bigcup_{l = 1}^{\I} A_l}}$.
Moreover, $\af^{(l)}_g = \af_g |_{A_l}$
for all $g \in G$ and $l \in \N$, and we just write~$\af_g$.

Let $N \in \Nz$, let $F \S A_{N}$ be finite,
let $S \S C (G)$ be finite,
let $\ep > 0$, let $x \in A_{+}$ satisfy $\| x \| = 1$,
and let $y \in (A^{\af})_{+} \SM \{ 0 \}$.
Since $\af$ has the \trpc, Proposition 3.10 of \cite{MhkPh1} provides
a \pj{} $e \in (A_{\I, \af} \cap A')^{\af_{\I}}$
and an equivariant unital \hm{}
$\ps \colon C (G) \to e (A_{\I, \af} \cap A') e$
such that the following hold.
\begin{enumerate}
\setcounter{enumi}{\value{TmpEnumi}}
\item\label{It_CSq_2115_1mp_af_sm}
$1 - e$ is $\af$-small in $A_{\I, \af}$.
\item\label{It_CSq_2115_1mp_sm_Aaf}
$1 - e$ is small in $(A^{\af})_{\I}$.
\item\label{It_CSq_2115_1mp_p}
$1 - e \precsim_{(A^{\af})_{\I}} e$.
\item\label{It_CSq_2115_Norm}
Identifying $A$ with its image in $A_{\I, \af}$,
we have $\| e x e\| = 1$.
\setcounter{TmpEnumi}{\value{enumi}}
\end{enumerate}

Choose $c^{(1)}, c^{(2)}, \ldots \in C (G)$ such that
$\bigl\{ c^{(1)}, c^{(2)}, \ldots  \bigr\}$ is dense in $C (G)$.
For $j \in \N$ choose
$d^{(j)} = \bigl( d^{(j)}_m \bigr)_{m \in \N} \in l^{\I}_{\af} (\N, A)$
such that $\pi_A \bigl( d^{(j)} \bigr) = \ps \bigl( c^{(j)} \bigr)$.
Use Lemma 2.9 of \cite{MhkPh1} to lift $e$
to an $\af^{\I}$-invariant \pj{}
$r = (r_m)_{m \in \N} \in l^{\I}_{\af} (\N, A)$.
Since $\af$ is a direct limit action,
$A^{\af} = {\overline{\bigcup_{l = 1}^{\I} (A_l)^{\af^{(l)}} }}$.
Therefore every \pj{} in $A^{\af}$ is a norm limit of \pj{s}
in $\bigcup_{l = 1}^{\I} (A_l)^{\af^{(l)}}$.
For $m \in \N$ choose $l (m) \in \N$ such that
there is a \pj{} $q_m \in (A_{l (m)})^{\af^{(l (m))}}$ satisfying
$\| q_m - r_m \| < \frac{1}{m}$,
and also so large that for $j = 1, 2, \ldots, m$ we have
$\dist \bigl( d^{(j)}_m, \, A_{l (m)} \bigr) < \frac{1}{m}$.
We may assume $l (1) \leq l (2) \leq \cdots$.
Set $q = (q_m)_{m \in \N} \in l^{\I}_{\af} (\N, A)$.
Then $\pi_A (q) = e$.
Let $B \S l^{\I}_{\af} (\N, A)$ be the closed subalgebra consisting
of all sequences $b = (b_m)_{m \in \N} \in l^{\I}_{\af} (\N, A)$
such that for all $m \in \N$ we have $b_m \in q_m A_{l (m)} q_m$.
Let $D = \pi_A (B) \S e A_{\I, \af} e \S A_{\I, \af}$.

We claim that $\ps (C (G)) \S D$.
It suffices to let $j \in \N$
and prove that $\ps \bigl( c^{(j)} \bigr) \in D$.
By construction, for all $m \geq j$ there is $b_m \in A_{l (m)}$
such that $\bigl\| b_m - d^{(j)}_m \bigr\| < \frac{1}{m}$.
Setting $b = (b_m)_{m \in \N} \in l^{\I} (\N, A)$,
we get $\pi_A (b) = \ps \bigl( c^{(j)} \bigr)$.
Since $c_0 (\N, A) \S l^{\I}_{\af} (\N, A)$
and $d^{(j)} \in l^{\I}_{\af} (\N, A)$, this implies that
$b \in l^{\I}_{\af} (\N, A)$.
Therefore also
\[
\ps \bigl( c^{(j)} \bigr)
 = e \ps \bigl( c^{(j)} \bigr) e
 = \pi_A (q) \pi_A (b) \pi_A (q)
 = \pi_A (q b q)
 \in D.
\]
The claim is proved.

For $k \in \N$ let $J_k \S B$ be the ideal consisting of all
sequences $(a_m)_{m \in \N} \in B$ such that $a_m = 0$ for all $m > k$.
Then
\[
J_1 \S J_2 \S \cdots
\andeqn
{\overline{\bigcup_{k = 1}^{\I} J_k}}
 = B \cap c_0 (\N, A) = {\operatorname{Ker}} (\pi_A |_B).
\]
Let $\kp_k \colon B \to B / J_k$ be the quotient map.
Use equivariant semiprojectivity of $C (G)$
(Theorem~4.4 of~\cite{Gdla}) to find $m_0 \in \N$ such that
$\ps \colon C (G) \to D = B / {\operatorname{Ker}} (\pi_A |_B)$
lifts to an equivariant unital \hm{} $\rh \colon C (G) \to B / J_{m_0}$,
that is, $\kp_{m_0} \circ \rh = \ps$.
We may require $m_0 \geq N$.
There is an obvious identification of $B / J_{m_0}$ with the \ca{}
of all bounded sequences $(a_m)_{m > m_0}$
such that $a_m \in q_m A_{l (m)} q_m$ for all $m > m_0$
and the map $g \mapsto (\af_g (a_m))_{m > m_0}$ is \ct.
Under this identification, there are equivariant unital \hm{s}
$\rh_m \colon C (G) \to q_m A_{l (m)} q_m$ such that
$\rh (f) = (\rh_m (f))_{m > m_0}$ for all $f \in C (G)$.

Since $\kp_{m_0} \circ \rh = \ps$, which has range contained in
$A_{\I, \af} \cap A'$,
for all $a \in A$ and $f \in C (G)$ we have
$\lim_{m \to \I} \| \rh_m (f) a - a \rh_m (f) \| = 0$.
In particular, there is $m_1 \geq m_0$ such that for all $m \geq m_1$,
all $f \in S$, and all $a \in F$,
we have $\| \rh_m (f) a - a \rh_m (f) \| < \ep$.
Since $\| e x e \| = 1$ and $\pi_A (q) = e$,
there is $m_2 \in \N$ such that for all $m \geq m_2$,
we have $\| q_m x q_m \| > 1 - \ep$.
Since $1 - e$ is $\af$-small and small in $(A^{\af})_{\I}$,
there is $m_3 \in \N$ such that for all $m \geq m_3$,
we have $1 - q_m \precsim_A x$ and $1 - q_m \precsim_{A^{\af}} y$.
Since $1 - e \precsim_{(A^{\af})_{\I}} e$,
there is $v \in (A^{\af})_{\I}$ such that $v^* v = 1 - e$ and
$v v^* \leq e$.
Then $1 - e = v^* e v$.
Choose $w = (w_{m})_{m \in \N} \in l^{\I} (\N, A^{\af})$
such that $\pi_A (w) = v$.
Then $\lim_{m \to \infty} \| w_m^* q_m w_m - (1 - q_m) \| = 0$.
Therefore there is $m_4 \in \N$ such that for every $m \geq m_4$
we have $\| w_m^* q_m w_m - (1 - q_m) \| < 1$.
Set $m = \max (m_1, m_2, m_3, m_4)$,
take the number $n$ in the statement to be $\max (N, l (m))$,
and set $p = q_m$ and $\ph = \rh_m$.
Lemma 2.11 of \cite{MhkPh1} implies that
$1 - q_m \precsim_{A^{\alpha}} q_m$,
and the rest of the conclusion is clear.
\end{proof}

Let $A$ be a unital \ca.
It is known (in~\cite{garduhfabs} see Theorem 2.17, Example 3.22, and
Example 3.23)
that the existence of an action of $S^1$ on~$A$
with the Rokhlin property implies severe restrictions on~$A$.
One can in fact rule out at least direct limit actions
on a simple unital AF~algebra
with even the naive tracial Rokhlin property.

\begin{prp}\label{P_1921_NoAction}
Let $A$ be an in\fd{} simple unital AF~algebra
and let $G$ be a one dimensional compact abelian Lie group.
There is no direct limit action of $G$ on~$A$,
with respect to any realization of $A$ as a direct limit
of \fd{} \ca{s}, which has the naive tracial Rokhlin property
(Definition~\ref{D_1920_NTRP}).
\end{prp}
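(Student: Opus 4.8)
The plan is to combine the reformulation of the naive tracial Rokhlin property in Proposition~\ref{P_1920_S1DLim}(\ref{Item_1920_S1DL_NTRP}) with the elementary fact that the infinite dimensional algebra $C (G)$ admits no unital equivariant homomorphism into a \fd{} algebra. Suppose, for contradiction, that $A = \dirlim_n A_n$ with each $A_n$ \fd, that $\af = \dirlim \af^{(n)}$ is a direct limit action of $G$, and that $\af$ has the naive tracial Rokhlin property (Definition~\ref{D_1920_NTRP}). Since $A$ is a simple unital AF~algebra and $G$ is a compact abelian Lie group with $\dim (G) = 1 \leq 1$, Proposition~\ref{P_1920_S1DLim}(\ref{Item_1920_S1DL_NTRP}) applies. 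Applying it with $N = 0$, $F = \varnothing$, $S = \varnothing$, any $\ep \in (0, 1)$, and any $x \in A_{+}$ with $\| x \| = 1$, I obtain $n \in \Nz$, a projection $p \in (A_n)^{\af^{(n)}}$, and a genuine unital equivariant \hm{} $\ph \colon C (G) \to p A_n p$; the norm condition $\| \nu_{\I, n} (p) x \nu_{\I, n} (p) \| > 1 - \ep$ forces $p \neq 0$.

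First I would record the structure of $\ph$. Because $A_n$ is \fd, so is $p A_n p$, and hence the image $\ph (C (G))$ is a \fd{} commutative \ca, isomorphic to ${\mathbb{C}}^k$ for some $k \geq 1$ (with $k \geq 1$ since $\ph (1) = p \neq 0$). Thus there are distinct points $g_1, g_2, \ldots, g_k \in G$ and nonzero \mops{} $e_1, e_2, \ldots, e_k \in p A_n p$ with $\sum_{i = 1}^{k} e_i = p$ such that $\ph (f) = \sum_{i = 1}^{k} f (g_i) \, e_i$ for all $f \in C (G)$; the $g_i$ are exactly the characters of $C (G)$ factoring through $C (G) \to \ph (C (G)) \cong {\mathbb{C}}^k$.

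Next I would use equivariance to constrain the set $\{ g_1, g_2, \ldots, g_k \}$. Recalling Notation~\ref{N_1X07_Lt}, equivariance of $\ph$ says $\ph (\Lt_g f) = \af_g (\ph (f))$ for all $g \in G$ and $f \in C (G)$, that is, $\sum_i f (g^{-1} g_i) \, e_i = \sum_i f (g_i) \, \af_g (e_i)$, since $(\Lt_g f) (g_i) = f (g^{-1} g_i)$. Fix $g \in G$ and suppose some $g^{-1} g_{i_0}$ were not in $\{ g_1, \ldots, g_k \}$. Choosing $f \in C (G)$ with $f (g^{-1} g_{i_0}) = 1$ and $f (g_j) = 0$ for $j = 1, 2, \ldots, k$ (possible as $G$ is Hausdorff and these are finitely many points, and the $g^{-1} g_i$ are distinct), the left-hand side has norm at least $\| e_{i_0} \| = 1$, while the right-hand side equals $\af_g (\ph (f)) = 0$, a contradiction. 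Hence $\{ g^{-1} g_i \} \subseteq \{ g_i \}$, and since the $g_i$ are distinct both sets have exactly $k$ elements, so $\{ g_1, \ldots, g_k \}$ is invariant under left translation by every $g \in G$.

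Finally, transitivity of left translation gives the contradiction: fixing $i_0$, the set $\{ g^{-1} g_{i_0} \colon g \in G \} = G g_{i_0} = G$ is contained in the finite set $\{ g_1, \ldots, g_k \}$, which is impossible because $G$ is infinite (it contains $S^1$ as its identity component). This proves the proposition. I expect essentially all the content to lie in the reduction step: Proposition~\ref{P_1920_S1DLim} already does the heavy lifting of upgrading the approximately equivariant central multiplicative maps to an honest equivariant \hm{} landing in a single finite stage, and it is there that equivariant semiprojectivity of $C (G)$ and the dimension hypothesis on $G$ are used. Once that reduction is in hand, the remaining argument is just the observation that a \fd{} algebra admits no unital equivariant \hm{} from $C (G)$ when $G$ is infinite.
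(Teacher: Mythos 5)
Your proposal is correct and follows essentially the same route as the paper: both reduce via Proposition~\ref{P_1920_S1DLim}(\ref{Item_1920_S1DL_NTRP}) to an exact unital equivariant \hm{} $\ph \colon C (G) \to p A_n p$ into a finite stage and then derive a contradiction from the fact that no such map into a \fd{} algebra exists for infinite~$G$. The only difference is the endgame: the paper notes that $C (G)$ is $G$-simple, so $\ph$ is injective, contradicting $\dim C (G) = \infty$, whereas you unwind $\ph$ explicitly into point evaluations and show the finite set of evaluation points would have to be translation-invariant --- an equivalent, slightly more hands-on version of the same observation.
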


\begin{proof}
Let $\bigl( (A_n)_{n \in \Nz}, \, (\nu_{n, m})_{m \leq n} \bigr)$
be an equivariant direct system of \fd{} \ca{s}
with actions $\af^{(n)} \colon G \to \Aut (A)$
and such that the direct limit action $\af = \dirlim \af^{(n)}$
has the naive tracial Rokhlin property.
Choose any \pj{} $x \in A \SM \{ 0, 1 \}$.
Apply Proposition \ref{P_1920_S1DLim}(\ref{Item_1920_S1DL_NTRP})
with $m = 0$, $F = \{ 1 \}$, $S = \{ 1 \}$, $\ep = \frac{1}{2}$,
and $x$ as given.
We get $n \in \Nz$, a nonzero \pj{} $p \in (A_n)^{\alpha^{(n)}}$,
and a unital equivariant \hm{} $\ph \colon C (G) \to p A_n p$.
Since $C (G)$ is $G$-simple and $\ph$ is equivariant,
$\ph$ must be injective.
This is a contradiction because $G$ is infinite and $A_n$ is \fd.
\end{proof}

\begin{cor}\label{C_1921_NoTRPC}
Let $A$ be an in\fd{} simple unital AF~algebra
and let $G$ be a one dimensional compact abelian Lie group.
There is no direct limit action of $G$ on~$A$,
with respect to any realization of $A$ as a direct limit
of \fd{} \ca{s}, which has the tracial Rokhlin property with comparison.
\end{cor}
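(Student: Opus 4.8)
The plan is to deduce this directly from Proposition~\ref{P_1921_NoAction} by observing that the \trpc{} is formally stronger than the naive tracial Rokhlin property of Definition~\ref{D_1920_NTRP}, so that any action with the former automatically has the latter. First I would note that the fixed point algebra $A^{\af}$ is unital, so that $1 \in (A^{\af})_{+} \SM \{ 0 \}$ provides a legitimate choice for the element~$y$ that appears in Definition~\ref{traR} but not in Definition~\ref{D_1920_NTRP}. Then, given a finite set $F \S A$, a finite set $S \S C (G)$, some $\ep > 0$, and $x \in A_{+}$ with $\| x \| = 1$ as in Definition~\ref{D_1920_NTRP}, I would apply the defining condition of the \trpc{} to these same $F$, $S$, $\ep$, and~$x$, together with $y = 1$. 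This produces a projection $p \in A^{\af}$ and a unital completely positive map $\ph \colon C (G) \to p A p$ satisfying Conditions (\ref{Item_893_FS_equi_cen_multi_approx}), (\ref{1_pxcompactsets}), and~(\ref{Item_902_pxp_TRP}) of Definition~\ref{traR}.

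These three conditions are precisely Conditions (\ref{It_1920_NTRP_FSE}), (\ref{It_1920_NTRP_subx}), and~(\ref{It_1920_NTRP_pxp}) of Definition~\ref{D_1920_NTRP}. Moreover, a unital completely positive map is automatically contractive, so $\ph$ is of the type demanded there. Hence $\af$ has the naive tracial Rokhlin property, and Proposition~\ref{P_1921_NoAction} then immediately rules out its existence as a direct limit action of a one dimensional compact abelian Lie group on an infinite dimensional simple unital AF~algebra, giving the corollary.

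There is essentially no real obstacle here: the entire content is the bookkeeping that discarding Conditions (\ref{1_pycompactsets}) and~(\ref{1_ppcompactsets}) of Definition~\ref{traR}, and forgetting the auxiliary element~$y$, converts the \trpc{} into the naive tracial Rokhlin property. The only points needing a word of care are that Definition~\ref{D_1920_NTRP} requires $A$ to be separable and $G$ to be second countable; both hold in our setting, since a simple unital AF~algebra is separable by construction and a compact Lie group is second countable, so the hypotheses of Proposition~\ref{P_1921_NoAction} are met.
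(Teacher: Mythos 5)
Your proof is correct and is essentially the paper's own argument: the paper's proof consists of the single observation that the tracial Rokhlin property with comparison implies the naive tracial Rokhlin property (exactly the reduction you carry out, with $y = 1$ and Conditions (\ref{Item_893_FS_equi_cen_multi_approx}), (\ref{1_pxcompactsets}), and~(\ref{Item_902_pxp_TRP}) of Definition~\ref{traR} yielding Definition~\ref{D_1920_NTRP}), followed by an appeal to Proposition~\ref{P_1921_NoAction}. Your version merely makes the bookkeeping explicit, which is fine.
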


\begin{proof}
This is immediate from Proposition~\ref{P_1921_NoAction}, because
the tracial Rokhlin property with comparison
implies the naive tracial Rokhlin property.
\end{proof}

\section{Open problems}\label{Sec_5506_Open}

In this section, we collect for easy reference some open problems.

We begin with problems related to the choice of definitions.
We state only two, but there are other related questions.
We expect that there is an example as in the first problem,
but it seems hard to find.

\begin{pbm}\label{Pb_5506_NTRP}
Is there a simple separable \uca~$A$, a compact group~$G$,
and an action $\af \colon G \to \Aut (A)$ that has the
naive tracial Rokhlin property (Definition~\ref{D_1920_NTRP})
but not the restricted tracial Rokhlin property with comparison
(Definition~\ref{traR})?
\end{pbm}

\begin{pbm}\label{Pb_5506_Restr}
Is there a simple separable \uca~$A$, a compact group~$G$,
and as action $\af \colon G \to \Aut (A)$ that has the
restricted tracial Rokhlin property with comparison
but not the tracial Rokhlin property with comparison
(both in Definition~\ref{traR})?
\end{pbm}

Proposition~\ref{tensortrpacts}, on tensor products of actions with the
restricted tracial Rokhlin property with comparison,
suggests consideration of a possible converse.
For example, in a more basic case,
let $A$ and $B$ be simple \uca{s}, let $G$ and $H$ be finite groups,
let $\af \colon G \to \Aut (A)$ and $\bt \colon H \to \Aut (B)$
be actions, and let $\gm \colon G \times H \to \Aut (A \otimes_{\min} B)$
be the tensor product action.
Suppose that $\gm$ has the tracial Rokhlin property,
or even the Rokhlin property.
What does this imply about $\af$ and $\bt$?
The question has an obvious analog for compact $G$ and~$H$,
and the restricted tracial Rokhlin property with comparison
or the Rokhlin property.
It also has an analog for finite Rokhlin dimension
with commuting towers.

One might hope, for example, that if $\gm$ has the Rokhlin property,
then so do $\af$ and~$\bt$.
This is false, even for finite groups,
by Example~4.3 of~\cite{IP_LgRkDim}.
In that example, $B = {\mathcal{O}}_2$, and one expects worse
behavior in the purely infinite case.
The following problem remains open.

\begin{pbm}\label{Pb_5421_From_tens}
Let $A$ and $B$ be simple \uca{s}, let $G$ and $H$ be finite groups,
let $\af \colon G \to \Aut (A)$ and $\bt \colon H \to \Aut (B)$
be actions, and let $\gm \colon G \times H \to \Aut (A \otimes_{\min} B)$
be the tensor product action.
Assume that $A \otimes_{\min} B$ is stably finite.
\begin{enumerate}
\item\label{I_5421_From_tens_R}
If $\gm$ has the Rokhlin property, does it follow that
$\af$ and $\bt$ have the Rokhlin property?
\item\label{I_5421_From_tens_TRP}
If $\gm$ has the tracial Rokhlin property, does it follow that
$\af$ and $\bt$ have the tracial Rokhlin property?
\item\label{I_5501_From_tens_Rdim}
If $\gm$ has finite Rokhlin dimension with commuting towers,
does it follow that
$\af$ and $\bt$ have finite Rokhlin dimension with commuting towers?
\end{enumerate}
\end{pbm}

Motivated by Theorem 3.13 of~\cite{Gdla}, according to which
the Rokhlin property implies that the equivariant K-theory $K_*^G (A)$
is annihilated by the augmentation ideal $I (G)$,
that is, $I (G) K_*^G (A) = 0$,
and by Corollary 4.15 of~\cite{Gar_rokhlin_2017},
according to which for compact Lie groups,
finite Rokhlin dimension with commuting towers implies the existence
of~$n$ such that $I (G)^n K_*^G (A) = 0$,
one might hope to use equivariant K-theory to get partial results.
(See the discussion before Lemma~\ref{L_1918_K1} for equivariant K-theory,
the representation ring, and its augmentation ideal.)
For example, one might hope to get partial results
for~(\ref{I_5421_From_tens_R})
by first showing that $I (G) K_*^G (A) = 0$ and $I (H) K_*^H (B) = 0$.
One would need conditions under which $I (G) K_*^G (A) = 0$
implies the Rokhlin property.
This is of course false for the trivial action of $G$ on~${\mathcal{O}}_2$.
But there are probably some (strong) conditions
on an action $\af \colon G \to \Aut (A)$ for a compact Lie group~$G$,
necessarily including stable finiteness of~$A$ and probably
including special structural conditions on~$\af$,
under which this is true.
See Corollary 4.25 of~\cite{Gar_rokhlin_2017} for such a result
for the very special case of locally representable AF~actions.

The proof of Lemma~\ref{L_5328_OI_NTor}
suggests that $K_*^{S^1} (B)$ is of interest.

\begin{pbm}\label{Pb_5506_KStS1OI}
Let $B$ be a unital purely infinite simple separable nuclear \ca.
Let $\bt \colon S^1 \to \Aut (B)$
be the action of Theorem~\ref{T_2610_Kbg}.
What is $K_*^{S^1} (B)$?
\end{pbm}

We don't know this group even for the case $B = \OI$,
the case in Construction~\ref{Cns_2114_OIA}
(see Lemma~\ref{L_2605_Cns_OI}),
although the naive conjecture is that it is just $R (S^1)$.
We don't even know $K_*^{S^1} (\OI)^{\wedge}$, since we don't
know whether $K_*^{S^1} (\OI)$ satisfies the hypothesis (**) in
Theorem~2.4 of~\cite{Ph89a}.

The hypothesis that $K_* (B)$ be torsion free in
Proposition~\ref{P_5503_OIB_NoFRDC} is gross overkill.
It is chosen because the proof is essentially immediate from the
work already done, and because it holds for the actions to which
we apply the result.
Presumably $K_* (B)$ having an element of infinite order,
or perhaps even less, is good enough.
On the other hand, the proof does not work for $B = {\mathcal{O}}_2$,
suggesting the following problem.

\begin{pbm}\label{Pb_5303_Tens_O2}
In Proposition~\ref{P_5503_OIB_NoFRDC}, take $B = {\mathcal{O}}_2$.
Does the resulting action have
finite Rokhlin dimension with commuting towers?
\end{pbm}

We know of no example in the stably finite case that is
K-theoretically similar to that of Construction~\ref{Cns_2114_OIA},
say in the sense of Lemma~\ref{L_5328_OI_NTor}.
In view of the proof of Lemma~\ref{L_5328_OI_NTor}, perhaps the
appropriate question is as follows.

\begin{pbm}\label{Pb_5506_StFinHty}
Are there a stably finite simple separable unital \ca~$A$
and an action $\af \colon S^1 \to \Aut (A)$
that has the restricted tracial Rokhlin property with comparison
but is not homotopic to the trivial action?
\end{pbm}

As mentioned in the introduction to Section~\ref{Sec_2114_OI},
it should also be not too hard to generalize Construction \ref{Cns_2114_OIA}
to actions of $(S^1)^m$ for $m \in \{ 2, 3, 4, \ldots, \I \}$.
It seems harder to deal with nonabelian groups.

\begin{pbm}\label{Pb_2114_NC}
Find an action of a nonabelian connected
compact Lie group (such as ${\operatorname{SU}} (2)$) on $\OI$
that has the restricted tracial Rokhlin property with comparison.
\end{pbm}

No such action can have finite Rokhlin dimension with commuting towers,
by Theorem~4.6 of~\cite{HrsPh1}.

\begin{pbm}\label{Pb_5503_NC2}
Find some simple separable \uca~$A$
and an action of a nonabelian connected compact Lie group on $A$
that has the restricted tracial Rokhlin property with comparison,
but does not have finite Rokhlin dimension with commuting towers.
\end{pbm}

On the other hand, there should be no difficulty in constructing
an analog of Example~\ref{Cn_1X17_Z2Inf} with a nonabelian group.

\end{document}